\documentclass[a4paper,11pt,oneside,final]{amsart}
\usepackage{fullpage}
\usepackage[english]{babel}
\usepackage[T1]{fontenc}
\usepackage{mathtools}
\usepackage{enumerate}

\usepackage{etex}
\usepackage[utf8x]{inputenc}
\usepackage{amsmath,amssymb,amsfonts,amsthm}
\usepackage{mathrsfs}
\usepackage{hyperref}
\usepackage{tikz-cd}
\usepackage{ctable}

\theoremstyle{plain}
\newtheorem{thm}{Theorem}[section]
\newtheorem{cor}[thm]{Corollary}
\newtheorem{lem}[thm]{Lemma}
\newtheorem{prop}[thm]{Proposition}
\newtheorem{statement}[thm]{Statement}

\theoremstyle{definition}
\newtheorem{rem}[thm]{Remark}

\newtheorem*{condition*}{Condition}

\newtheorem{defin}[thm]{Definition}

\newtheorem*{example*}{Example}
\numberwithin{equation}{section}
\numberwithin{thm}{section}

\DeclareMathOperator{\Gal}{Gal}

\DeclareMathOperator{\bfk}{\mathbf{k}}
\DeclareMathOperator{\bfj}{\mathbf{j}}

\DeclareMathOperator{\bfa}{\mathbf{a}}

\DeclareMathOperator{\bfX}{\mathbf{X}}

\DeclareMathOperator{\bfA}{\mathbf{A}}
\DeclareMathOperator{\bfx}{\mathbf{x}}

\DeclareMathOperator{\Aut}{Aut}

\DeclareMathOperator{\disc}{disc}
\DeclareMathOperator{\G}{Gal}

\DeclareMathOperator{\GL}{GL}

\newcommand{\cov}{\mathrm{cov}}
\newcommand{\Abb}{\mathbb{A}}
\newcommand{\Pbb}{\mathbb{P}}

\newcommand{\Del}{\Delta}
\newcommand{\pfrak}{\mathfrak{p}}
\newcommand{\Ecal}{\mathcal{E}}
\newcommand{\Lcal}{\mathcal{L}}
\newcommand{\Ocal}{\mathcal{O}}
\newcommand{\Kcal}{\mathcal{K}}

\newcommand{\Q}{\mathbb{Q}}
\newcommand{\Z}{\mathbb{Z}}
\newcommand{\F}{\mathbb{F}}

\newcommand{\C}{\mathbb{C}}
\newcommand{\A}{\mathbb{A}}

\renewcommand{\GL}{\text{GL}}

\newcommand{\bSpec}{\textbf{Spec}}

\newcommand{\calO}{\mathcal{O}}

\newcommand{\bx}{\mathbf{x}}

\newcommand{\bz}{\mathbf{z}}

\newcommand{\ba}{\mathbf{a}}

\renewcommand{\bx}{\mathbf{x}}
\newcommand{\bX}{\mathbf{X}}

\newcommand{\ep}{\varepsilon}
\newcommand{\al}{\alpha}
\newcommand{\del}{\delta}

\newcommand{\ga}{\gamma}

\newcommand{\sig}{\sigma}
\newcommand{\beq}{\begin{equation}}
\newcommand{\eeq}{\end{equation}}
\newcommand{\ndiv}{\nmid}
\newcommand{\con}{\equiv}
\newcommand{\modd}[1]{\; ( \text{mod} \; #1)}

\newcommand{\red}{\mathrm{red}}
\newcommand{\lin}{\mathrm{lin}}

\definecolor{pink}{rgb}{1,.2,.6}
\definecolor{orange}{rgb}{0.7,0.3,0}
\definecolor{blue}{rgb}{.2,.6,.75}
\definecolor{green}{rgb}{.4,.7,.4}
\definecolor{purple}{RGB}{127,0,255}
 \definecolor{gray}{RGB}{211,211,211}

\begin{document}
\title{Genuine and strongly genuine polynomials:\\ With an application to the persistence of Galois groups under specialization}

\author[Bonolis]{Dante Bonolis}
\address{Duke University, 120 Science Drive, Durham NC 27708}
\email{dante.bonolis@duke.edu}

\author[Pierce]{Lillian B. Pierce}
\address{Duke University, 120 Science Drive, Durham NC 27708}
\email{lillian.pierce@duke.edu}

\author[Woo]{Katharine Woo}
\address{450 Serra Mall, Building 380, Stanford CA 94305}
\email{khwoo98@stanford.edu }

\begin{abstract}
 We develop the theory of strongly $n$-genuine polynomials $F(Y,X_1,\ldots,X_n)$, which have the property that the number of specializations $F(Y,X_1,\bx')$ with $\bx'=(x_2,\ldots,x_n) \in \Z^{n-1}$ (respectively $\bx' \in \F_p^{n-1}$) such that $F(Y,X_1,\bx')$ is reducible over $\overline{\Q}$ (respectively over $\overline{\F}_p$) can be well-controlled quantitatively.  We also develop the theory of a larger class of $n$-genuine polynomials $F(Y,X_1,\ldots,X_n)$, which have the property that the number of specializations $F(Y,X_1,\bx')$ with $\bx' \in \Z^{n-1}$  (respectively $\bx' \in \F_p^{n-1}$) such that $F(Y,X_1,\bx')$ splits completely  over $\overline{\Q}$ (respectively over $\overline{\F}_p$) into factors that are linear in $Y$ can be well-controlled quantitatively. For each of these classes, we prove that there are four equivalent characterizations. As an application, we demonstrate that  $n$-genuine and strongly $n$-genuine polynomials can be used to prove, for any polynomial $F(Y,X_1,\ldots,X_n)$, an upper bound for the number of specializations $F(Y,\bx)$ with $\bx=(x_1,\ldots,x_n) \in \Z^n$    such that the Galois group of the splitting field of $F(Y,\bx)$ over $\Q$ is not isomorphic to the Galois group of the splitting field of $F(Y,X_1,\ldots,X_n)$ over $\Q(X_1,\ldots,X_n)$. We simultaneously prove analogous results   over any  number field.
\end{abstract}
\keywords{Hilbert Irreducibility Theorem, distribution of Galois groups, thin sets}
\subjclass[2020]{12E05,  12E25} 
\maketitle
 \vspace{-1cm}

\section{Introduction}\label{sec_introduction}

 Consider a polynomial $F(Y,\bX)=F(Y,X_1,\ldots,X_n) \in \Z[Y,X_1,\ldots,X_n]$. If $F(Y,\bX)$ is irreducible over $\Q(X_1,\ldots,X_n)$, then Hilbert's Irreducibility Theorem    supplies a dense set $V \subset \Q^{n}$ such that for all $\bx \in V$, $F(Y,\bx)$ is irreducible over $\Q$. A quantitative version  would ask for an upper bound on the number of points of bounded height for which $F(Y,\bx)$ is reducible. Analogously, if $G$ is the Galois group of the splitting field of $F(Y,\bX)$ over $\Q(X_1,\ldots,X_n)$,  one can ask a qualitative question---is it generically true that for $\bx \in \Q^n$ the Galois group $G(\bx)$ of the splitting field of $F(Y,\bx)$ over $\Q$ is isomorphic to $G$?---or a quantitative version: an upper bound on the number of  $\bx$ of bounded height such that the Galois group $G(\bx)$ is not isomorphic to $G$.
 
 Both of these  questions can be formulated  in terms of counting points of bounded height in a thin set, a notion described  in a lecture course of J.-P. Serre  at the Coll\`ege de France in 1980-1981, which we cite in the form \cite{Ser97}. 
 To be precise, a subset $M \subset \Abb^n(\Q)$  is thin if there is an algebraic variety $X$ defined over $\Q$ and a morphism $\pi : X \rightarrow \Abb^n$ such that

(i) $M \subset \pi(X(\Q))$, and 

(ii) the fibre of $\pi$ over the generic point is finite and $\pi$ has no rational section over $\Q$.  

\noindent
Serre noted a qualitative statement (see \S \ref{sec_prove_kemma_irred_Galois_lie_in_thin_set} or \cite[\S 9.2-9.3]{Ser97}): 
 
\begin{lem}\label{lemma_irred_Galois_kie_in_thin_set}
Let $K$ be a number field. Let $F(Y,X_1,\ldots,X_n) \in \mathcal{O}_K[Y,X_1,\ldots,X_n]$ be  irreducible   over $K(X_1,\ldots,X_n)$. 

\begin{enumerate}[(i)]
	
\item  There is a thin set $M \subset \mathbb{A}^n$ such that for $(x_1,\ldots,x_n) \not\in M$, $F(Y,x_1,\ldots,x_n)$ is irreducible over $K$. 

\item  There is a thin set $M' \subset \mathbb{A}^n$ such that for $(x_1,\ldots,x_n) \not\in M'$, the Galois group of the splitting field of $F(Y,x_1,\ldots,x_n)$ over $K$  is isomorphic to the Galois group of the splitting field of $F(Y,X_1,\ldots,X_n)$   over $K(X_1,\ldots,X_n)$. 
 \end{enumerate}
\end{lem}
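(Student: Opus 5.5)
We follow Serre's argument, via the Galois closure and a finite list of "intermediate" covers. Let $L$ be the splitting field of $F(Y,\bX)$ over $K(\bX)$ and put $G=\Gal(L/K(\bX))$, viewed as a transitive subgroup of $S_d$, where $d=\deg_Y F$. Choose a primitive element $\theta$ of $L/K(\bX)$ that is integral over $K[\bX]$, with minimal polynomial $P(T,\bX)\in \mathcal{O}_K[\bX][T]$. Consider the affine variety $Z=\{P(T,\bX)=0\}$, restricted to a dense open $U\subset\Abb^n$ over which the following hold: the leading coefficients of $F$ and $P$ in $Y$ and $T$ do not vanish; $\disc_Y F$ and $\disc_T P$ do not vanish; and each root of $F$ is a polynomial in $\theta$ with coefficients in $\mathcal{O}_K[\bX][1/h]$ for some $h$. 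Then $Z_U\to U$ is a finite étale Galois cover with group $G$.

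For each proper subgroup $H<G$, form the quotient $Z_H=Z_U/H$, concretely cut out by the minimal polynomial $P_H(T,\bX)$ of a primitive element of the fixed field $L^H$. Since $[L^H:K(\bX)]>1$, this polynomial has degree $>1$ and is irreducible. So $\pi_H:Z_H\to\Abb^n$ is generically finite of degree $>1$ with $Z_H$ irreducible, and hence $\pi_H$ has no rational section. Each $\pi_H(Z_H(K))$ is therefore thin, and so is $\Abb^n\setminus U$, which is a proper closed subset (take $X$ to be that subset and $\pi$ the inclusion). Define $M'=(\Abb^n\setminus U)\cup\bigcup_{H<G}\pi_H(Z_H(K))$. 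This is a finite union of thin sets, hence thin.

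The key step is the specialization argument. Fix $\bx\in U(K)\setminus M'$ and a point $z$ of $Z$ over $\bx$, with residue field $K(\bx)(z)$. The decomposition group $D_z\le G$ is identified with $\Gal$ of the specialized splitting field over $K$, and the identification is compatible with the action on the roots of $F(Y,\bx)$; this uses étaleness, which makes the roots distinct and the specialization of roots bijective. If $D_z\ne G$, then the image of $z$ in $Z_{D_z}$ is a $K$-rational point lying over $\bx$, since $D_z$ fixes it. This forces $\bx\in\pi_{D_z}(Z_{D_z}(K))\subset M'$, a contradiction. Hence $D_z=G$, which proves (ii), and the isomorphism is even compatible with the permutation action on the roots.

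Part (i) follows the same way, taking $M$ to be $(\Abb^n\setminus U)$ together with the union of $\pi_H(Z_H(K))$ over just the subgroups $H$ that act intransitively on the $d$ roots. Alternatively, one can simply take $M=M'$: a transitive Galois group gives an irreducible separable $F(Y,\bx)$, and the nonvanishing leading coefficient ensures the degree does not drop.

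The main obstacle is making the decomposition-group identification rigorous: one must show that for $\bx\in U(K)$ the fibre of $Z_U$ is a $G$-torsor and that the stabilizer of a geometric point equals the Galois group of the residue field extension. I would handle this by working with the étale algebra $K[T]/P(T,\bx)$, which is a product of copies of the splitting field permuted simply transitively by $G$. Over the open set $U$ all other steps are bookkeeping.
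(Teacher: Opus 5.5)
Your proof is correct and follows the same route as the paper, which is Serre's argument. Both take the union over proper subgroups $H<G$ of the images of $K$-points of the quotient covers by $H$, and show that a specialization with Galois group smaller than $G$ lifts to a $K$-rational point of one of these quotients; your decomposition-group formulation on the finite \'etale locus $U$, with $\Abb^n\setminus U$ added to the thin set, is a more careful version of the paper's argument using $R(Y)=\prod_{\sigma\in G(\bx)}(Y-\sigma(\gamma))$ and its specialization $r(Y)$, which tacitly relies on the same genericity conditions.
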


These facts motivate a general quantitative question: how many 
  points of bounded height can lie in a thin set? This was famously asked by Serre, and has stimulated  research ever since (see \S \ref{sec_relation_to_thin_sets}). 
In the present paper, we focus on   quantitative questions related to the persistence of Galois groups under specialization, working over an arbitrary number field. To set notation, let $K$ be a number field  of degree $h$ with ring of integers $\calO_K$.  Fix an integral basis $w_1,\dots,w_h\in \calO_K$. For $x\in \calO_K$ with the unique representation $x=a_1w_1+\dots+a_hw_h$, let $H_K(x) = (\max_i |a_i|)^{h}$. For a vector $\bx\in \calO_K^n$, let $\|\bx\|= \max_{i=1,\dots,n} H_K(x_i).$ Given a polynomial $F(Y,X_1,...,X_n)$   with coefficients in $\calO_K$, let $\|F\|$  denote the maximum absolute norm of any coefficient of $F$, namely for $F(Y,\bX) = \sum_{\Vec{e}} b_{\Vec{e}} Y^{e_0} X_1^{e_1}\dots X_n^{e_n}$, set $\|F\|:= \max_{\Vec{e}} |N_{K/\Q}(b_{\Vec{e}})|.$
One of our aims is to prove the main claim of Cohen
 \cite[Theorem 2.1]{Coh81}, for any polynomial (not required to be irreducible):

\begin{thm}\label{thm_Cohen}
Let   $F(Y,X_1,...,X_n) \in \calO_K[Y,X_1,\ldots,X_n]$   have total degree at most $D$ and Galois group $G$ over $K(X_1,\ldots,X_n)$. For each $\bx \in \calO_K^n$, let $G(\bx)$ denote the Galois group of the splitting field of $F(Y,\bx)$ over $K$. 
There exists a constant $c$   depending only on $n,D,K$ such that  for all $N\geq 3$, 
 \[ \# \{ \bx \in \calO_K^n,  \|\bx\|\leq N: \text{$G(\bx)\not\simeq G$}\} \ll_{n,D,K}  \|F\|^{c} N^{n-\frac{1}{2}}\log N.\]
\end{thm}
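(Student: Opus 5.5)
We follow Cohen's strategy: reduce the Galois question to irreducibility questions for auxiliary polynomials, then count the bad specializations with a large sieve over primes. The role of the genuine and strongly genuine polynomials developed in this paper is to control the reducible fibres that arise when we sieve in one variable at a time.

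\emph{Reduction to irreducibility.} First we may assume $F$ is separable in $Y$ with discriminant $\Delta(\bX)\not\equiv 0$; otherwise we pass to the squarefree part of $F$. The specializations $\bx$ with $\Delta(\bx)=0$ lie on a hypersurface and contribute $O(N^{n-1})$. Let $y_1,\ldots,y_m$ be the roots of $F$ in a splitting field $L$ of $K(\bX)$, where $m\le D$. Form the resolvent
\[
R(Z,\bX)=\prod_{\sigma\in S_m}\Bigl(Z-\sum_i \sigma(i)\, y_i\Bigr)
\]
with suitable integer weights, so that its roots are distinct generically. Then $R$ has coefficients in $\calO_K[\bX]$, its degree and height are bounded polynomially in $D$ and $\|F\|$, and $G$ is the stabilizer of an irreducible factor $R_1(Z,\bX)$ of $R$. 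When $\mathrm{disc}_Z R(\bx)\neq 0$, the standard argument (as in Lemma~\ref{lemma_irred_Galois_kie_in_thin_set}(ii)) gives $G(\bx)\simeq G$ whenever $R_1(Z,\bx)$ is irreducible over $K$. We have $G(\bx)\hookrightarrow G$ always, and $\deg R_1=|G|$. It therefore suffices to prove the bound of Theorem~\ref{thm_Cohen} for $\#\{\bx:\ R_1(Z,\bx)\ \text{reducible over }K\}$, with $R_1$ irreducible over $K(\bX)$ and of bounded degree.

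\emph{Counting reducible specializations.} Reducibility of $R_1(Z,\bx)$ over $K$ forces the following: for every prime ideal $\pfrak$ of degree one where the reduction behaves well, $R_1(Z,\bx)\bmod \pfrak$ has no root in $\F_p$, or more precisely its factorization type avoids a fixed set of types. The plan is to apply the large sieve in all $n$ variables. The aim is to show that for most primes $p\le Q$, the proportion of residues $\bx\bmod\pfrak$ for which $R_1(Z,\bx)$ has a root (equivalently, the Frobenius lands in a conjugacy class excluded for every intransitive subgroup) is at least $\delta(D)>0$, uniformly. Over $\F_p$ this follows from a Chebotarev/Lang--Weil count for the variety $R_1(Z,\bx)=0$. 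The sieve with $Q\approx N^{1/2}$ then yields $\ll N^{n-1/2}\log N$. The estimate must be uniform in $\|F\|$, which means excluding primes dividing a nonzero integer of height $\|F\|^{O(1)}$, together with explicit Lang--Weil error terms. That is where the factor $\|F\|^c$ enters.

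\emph{Main obstacle.} The hard step is the uniform lower bound for the local densities. For this we need $R_1 \bmod p$ to stay absolutely irreducible, or at least for the relevant fibres to be controlled, for all but $\|F\|^{O(1)}$-many primes, with explicit dependence. Our first attempt would be to apply a linear change of variables $X_1\mapsto X_1+\sum a_jX_j$ with small $a_j$, making $R_1(Z,X_1,\bx')$ strongly $n$-genuine, or $n$-genuine for the full-splitting condition. The equivalent characterizations developed in this paper then bound the number of $\bx'\in\F_p^{n-1}$ (respectively $\bx'\in\calO_K^{n-1}$ with $\|\bx'\|\le N$) for which the two-variable specialization $R_1(Z,X_1,\bx')$ is reducible, by $O(p^{n-2})$ (respectively $O(N^{n-1-1/2})$-type) quantities. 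For the remaining $\bx'$ we sieve in $x_1$ alone, applying Lang--Weil to the curve $R_1(Z,X_1,\bx')=0$, which gives $\ll N^{1/2}\log N$ bad $x_1$. Summing over $\bx'$ yields the theorem. Finally, the case where $F$ is reducible is handled by the same resolvent argument, since the resolvent construction never used irreducibility of $F$.
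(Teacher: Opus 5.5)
Your resolvent reduction to $R_1$ is fine. Fibering over $\bx'=(x_2,\ldots,x_n)$ and sieving in $x_1$ for each fixed $\bx'$ is also a reasonable variant of the paper's $n$-dimensional sieve, which uses the fibration only to evaluate local densities modulo $\pfrak$. The proposal breaks down, however, exactly at the step the paper's new theory is built for.

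\textbf{Your change of variables cannot work, and no argument for a good one is given.} The substitution $X_1\mapsto X_1+\sum_{j\ge 2}a_jX_j$ fixes $K(X_2,\ldots,X_n)$ pointwise. For fixed $\bx'$ the new fibre is $R_1(Z,X_1+c,\bx')$ with $c=\sum_j a_jx_j$, which is just a translate of the old fibre. So it is reducible over $\overline{\Q}$ (or $\overline{\F}_p$) for exactly the same $\bx'$ as before. Equivalently, condition (\ref{L_X1_special_n*}) for $i_0=1$ is invariant under your substitution, and by Theorem \ref{thm_strongly_gen_ppties} ((II)$\Leftrightarrow$(IV) for $i_0=1$) this is the condition that controls the $X_1$-fibres. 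For (\ref{Cohen_counterexample}), every fibre $(Y-X_1-c)^2-x_2^2-x_3^2$ stays reducible for all $(x_2,x_3)$, whatever the $a_j$.

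What is needed is the transposed shift $X_i\mapsto X_i+a_iX_1$ ($i\ge 2$), which tilts the specialization lines to direction $(1,a_2,\ldots,a_n)$. One must then prove that some $\ba$ with $\|\ba\|\ll 1$ actually works. That is Theorem \ref{thm_appendix_shift_to_strongly_genuine}, the main new content of the paper, and you supply nothing for it. The paper's argument runs as follows:
\begin{enumerate}[(i)]
\item List the intermediate fields $L_F(\bX)\subsetneq K_j\subseteq\Omega_F$.
\item Each is $\ell_j$-genuine for some $\ell_j\ge 1$, because $L_F$ is algebraically closed in $\Omega_F$ (Corollary \ref{cor_min_poly_at_least_strongly_1_gen}).
\item Using Theorem \ref{thm_gen_ppties} and the form $B_{\lin}$, build nonzero polynomials $g^{P_j}$ whose non-vanishing at $\ba$ forces $K_{j,\ba}\not\subset\overline{L_F(X_2,\ldots,X_n)}(X_1)$ (Lemma \ref{lemma_shift_all_vars_R}).
\item Pick a short $\ba$ avoiding all their zeros.
\end{enumerate}
Only (II) for the single index $i_0=1$ is needed, not strong $n$-genuineness.

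\textbf{You ignore the constant field $L_F=\Omega_F\cap\overline{K}$.} Your $R_1$ is the minimal polynomial of $\Omega_F$ over $K(\bX)$. When $L_F\neq K$ it factors over $L_F(\bX)$, so it is never absolutely irreducible and hence never strongly genuine (Lemma \ref{lemma_integrally_closed_implies_absirred}). No linear change helps, since $L_{F_\sigma}=L_F$ (as in Lemma \ref{lemma_shift_before_or_later}).

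Take $F=Y^3-X_1$ over $\Q$. Here $L_F=\Q(\omega)$, and every fibre $R_1(Z,X_1,\bx')$ is reducible over $\overline{\Q}$ and over every $\overline{\F}_p$ with $p\neq 3$. So your set of ``remaining $\bx'$'' is empty. One must work with $M_F$ over $L_F(\bX)$.

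Your density claim also fails as stated. Frobenius at $\pfrak$ is confined to the coset $S_\pfrak$ of $\Gal(\Omega_F/L_F(\bX))$ determined by the Artin symbol of $\pfrak$ in $L_F/K$. In the same example, $G=S_3$ and $H=\langle(12)\rangle$ occurs as $G(\bx)$ whenever $x_1$ is a nonzero cube. For $p\equiv 2\bmod 3$ every $x_1$ is a cube mod $p$, so Frobenius always lies in $\mathscr{C}(H)$, and the density you need is $0$ at half of all primes. This is why Cohen restricts to primes split in $K'$ with a suitable Artin symbol in $L_F$, via an effective Chebotarev theorem for $K'L_F$. That restriction, not Lang--Weil error terms, is the source of $\|F\|^c$.

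Your sieve condition is also misstated. Reducibility of $R_1(Z,\bx)$ does not force $R_1(Z,\bx)\bmod\pfrak$ to have no root, since trivial Frobenius lies in every subgroup. What is forced is $\mathrm{Frob}\in\mathscr{C}(G(\bx))$. So one sieves separately for each proper $H$, using $G\setminus\mathscr{C}(H)\neq\emptyset$. A single class avoiding every proper subgroup exists only when $G$ is cyclic.
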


This bound has long stood as a baseline upper bound for the number of points in any thin set, and is also frequently cited in the context of persistence of Galois groups under specialization.  The proof we describe here fills a subtle gap in the original proof   when $n \geq 2$ by applying the theory of  $n$-genuine and strongly $n$-genuine polynomials, which are generic classes of polynomials  recently introduced in our work \cite{BPW25x}. 
Stronger  quantitative results than Theorem \ref{thm_Cohen} are now known (see \S \ref{sec_HIT}), but we believe it is valuable to clarify the proof method reported in  \cite[Theorem 2.1]{Coh81}, and we expect the natural features of $n$-genuine and strongly $n$-genuine polynomials characterized in the present manuscript will be broadly useful.  
These features also pertain to persistence of a given  factorization property under specialization  of some of the variables, but now  factoring over $\overline{\Q}$, or over $\overline{\F}_p$ for varying primes $p$. (The latter type of consideration plays an important role in applications of a sieve method, such as the large sieve or the polynomial sieve.)

 More precisely, the class of $n$-genuine polynomials $F(Y,X_1,\ldots,X_n)$ is designed to provide good quantitative control for the number of $\bx' =(x_2,\ldots,x_n) \in \Q^{n-1}$ such that $F(Y,X_1,\bx')$ splits completely (into linear factors with respect to $Y$) over $\overline{\Q}$, as well as for the number of $\bx'   \in \F_p^{n-1}$ such that $F(Y,X_1,\bx')$ splits completely over $\overline{\F}_p$ (for all but finitely many $p$).  The class of strongly $n$-genuine polynomials $F(Y,X_1,\ldots,X_n)$ is designed to provide good quantitative control for the number of $\bx'  \in \Q^{n-1}$ such that $F(Y,X_1,\bx')$ is reducible over $\overline{\Q}$, as well as for the number of $\bx'  \in \F_p^{n-1}$ such that $F(Y,X_1,\bx')$ is reducible over $\overline{\F}_p$ (for all but finitely many $p$). We will in fact develop this theory more generally over any number field.

\subsection{Introducing $n$-genuine and strongly $n$-genuine polynomials}
 Let $K$ be a number field.
  
\begin{defin}\label{dfn_strongly_field}
We say that a finite (nontrivial) extension $M$ of $K(\bX)=K(X_1,\ldots,X_n)$  is an {\bf $n$-genuine extension} if for every $G(Y,\bX)\in K[Y,\bX] = K[Y,X_1,\ldots,X_n]$ such that $$M = K(\bX)[Y]/(G(Y,\bX)),$$ 
$G(Y,\bX)$ has nonzero degree in each of $X_1,\ldots,X_n$. We say that $M$ is a {\bf strongly $n$-genuine extension} of $K(\bX)$ if for all subextensions $M'$ satisfying  $$K(\bX) \subsetneq M' \subset M,$$
    $M'$ is an $n$-genuine extension of $K(\bX)$.
\end{defin}

\begin{defin}\label{dfn_strongly_poly}
   A polynomial $G(Y,X_1,\ldots,X_n)\in K[Y,X_1,\ldots,X_n]$ that is monic in $Y$ and irreducible over $K(X_1,\ldots,X_n)$ is an {\bf $n$-genuine polynomial}   if $K(\bX)[Y]/(G(Y,\bX))$ is an $n$-genuine  extension of $K(\bX).$
A polynomial $G(Y,\bX)\in K[Y,\bX]$ that is monic in $Y$ and irreducible over $K(X_1,\ldots,X_n)$   is a {\bf strongly $n$-genuine polynomial}   if $K(\bX)[Y]/(G(Y,\bX))$ is a strongly $n$-genuine   extension of $K(\bX).$
\end{defin}

 We will later also define a natural generalization of these classes, which specifies when a polynomial $G(Y,X_1,\ldots,X_n)$ or an extension of $K(X_1,\ldots,X_n)$ is $\ell$-genuine, for some $1 \leq \ell \leq n$; see Definition \ref{dfn_ell_genuine_generalized}.
The definitions above remain valid over an arbitrary field; see  Remark \ref{remark_fields}.

 Different characterizations of these classes can be more convenient, based on the application of interest, so  we provide four equivalent characterizations in Theorem \ref{thm_strongly_gen_ppties} (strongly $n$-genuine case) and Theorem \ref{thm_gen_ppties} ($n$-genuine case). Here, we summarize the natural consequences for each class; these consequences played an essential role in \cite{BPW25x} and will play an essential role in the recovery of Theorem \ref{thm_Cohen} in this paper.

To set terminology, for an arbitrary field $\Kcal$, and a given algebraic closure  $\overline{\Kcal}$,  we say a monic polynomial 
 $f(Y,Z) \in \Kcal[Y,Z]$ is reducible over $\overline{\Kcal}$ if we can write
 \[ f(Y,Z) = f_1(Y,Z)f_2(Y,Z) \]
 with $f_1, f_2 \in \overline{\Kcal}[Y,Z]$ and $1 \leq \deg f_1 < \deg f$.
We say $f(Y,Z)$ splits completely over $\overline{\Kcal}$ if  we can write
\[f(Y,Z) = \prod_{j} (Y-Q_j(Z))\]
   with $Q_j(Z)\in \overline{\Kcal}[Z]$ for all $j$. 
   
  For example, the property of being $n$-genuine over $\Q(\bX)$ allows us to control quantitatively how many specializations of a polynomial   split completely over $\overline{\Q}.$ (We also prove versions of the following results over any number field; see Theorem \ref{thm_gen_linear_factor_consequence_k} and Theorem  \ref{thm_strongly_gen_reducible_consequence_k}, respectively.)

\begin{thm}[Genuine]\label{thm_gen_linear_factor_consequence}
Let $n \geq 2$. Let $F(Y,\bX) \in \Z[Y,X_1,\ldots,X_n]$ be an $n$-genuine polynomial of total degree $D$. Then for all $B \geq 1$, 
\[ \# \{\bx' \in \Z^{n-1} \cap [-B,B]^{n-1} : \text{$F(Y,X_1,\bx')$ splits completely over $\overline{\Q}$}\}\ll_{n,D} B^{n-2}.\]
Also, there exists a finite set $\mathcal{E}$ of exceptional primes, with $|\Ecal| \ll_{n,D} \log \|F\|/ \log \log \|F\|$, such that for all $p \not\in \Ecal$,  
\[ \# \{\bx' \in \F_p^{n-1}: \text{$F(Y,X_1,\bx')$ splits completely over $\overline{\F}_p$}\}\ll_{n,D} p^{n-2}.\]
\end{thm}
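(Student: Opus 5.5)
We prove Theorem \ref{thm_gen_linear_factor_consequence} by showing that the set of $\bx'$ for which $F(Y,X_1,\bx')$ splits completely is contained in a proper Zariski-closed subset $Z\subset\Abb^{n-1}$, cut out by a nonzero polynomial of degree $O_{n,D}(1)$. Over $\Z$ the trivial bound (Schwartz--Zippel) then gives $\ll_{n,D} B^{n-2}$ integral points in $[-B,B]^{n-1}$. Over $\F_p$ we work with the reduction of the same equations: for $p$ outside an exceptional set the reduced polynomial is still nonzero and still cuts out the $\F_p$ splitting locus, so Schwartz--Zippel over $\F_p$ gives $\ll_{n,D} p^{n-2}$.

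\textbf{Step 1 (the splitting locus is constructible and uniformly algebraic).} Write $d=\deg_Y F$. Introduce indeterminate coefficients for $d$ polynomials $Q_j(X_1)$ of degree $\le D$; a priori $\deg Q_j\le D$, since the roots in $Y$ of $F(Y,X_1,\bx')$ are integral over $\overline{\Q}[X_1]$ with degree bounded by $D$. Equating coefficients in
\[
F(Y,X_1,\bx')=\prod_j \bigl(Y-Q_j(X_1)\bigr)
\]
defines a variety $W\subset\Abb^{n-1}\times\Abb^{N}$, with $N=O_{n,D}(1)$, given by equations of degree $O_{n,D}(1)$ with integer coefficients of size $\|F\|^{O(1)}$. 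The splitting locus is the projection $S=\pi(W)$, and $S$ is closed because the splitting condition is closed: $\pi$ is finite, being essentially the map to coefficients of a product. By elimination theory (Chow forms or resultants), $S$ is defined by polynomials in $\Z[\bx']$ of degree $O_{n,D}(1)$ and height $\|F\|^{O_{n,D}(1)}$. The same construction works over $\F_p$.

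\textbf{Step 2 (the key step: $S\neq\Abb^{n-1}$ over $\overline{\Q}$).} This is where $n$-genuineness enters, and it is the main obstacle. Suppose instead that $F(Y,X_1,\bx')$ splits completely for Zariski-generic $\bx'$. Then over the function field $L=\overline{K(X_2,\ldots,X_n)}$, the polynomial $F$ splits into factors $Y-Q_j(X_1)$ with $Q_j\in L[X_1]$. This means the roots of $F$ lie in $L(X_1)$, and indeed in $\overline{\Q}(X_2,\ldots,X_n)^{\mathrm{alg}}[X_1]$. Let $\alpha$ be a root. It is a polynomial in $X_1$ with coefficients algebraic over $K(X_2,\ldots,X_n)$, and its coefficients generate a finite extension $E$ of $K(X_2,\ldots,X_n)$. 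By the primitive element theorem choose a generator $\beta$ of $E(X_1)\cap M$ together with suitable specializations. The aim is to exhibit the field $M=K(\bX)(\alpha)$ as $K(\bX)[Y]/(G)$ with $G$ free of $X_1$. Concretely, for a generic linear combination $\gamma=\sum_i c_i a_i$ of the coefficients $a_i$ of $\alpha$, we have $K(\bX)(\alpha)=K(\bX)(\gamma)$: one checks that $\alpha$ and the field generated by its coefficients generate the same field over $K(\bX)$, using Galois conjugates and Vandermonde in $X_1$. The minimal polynomial of $\gamma$ then has coefficients in $K(X_2,\ldots,X_n)$, which contradicts Definition \ref{dfn_strongly_field}. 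We expect to instead invoke the equivalent characterization from Theorem \ref{thm_gen_ppties}, which presumably states exactly that $n$-genuine is equivalent to non-splitting over $\overline{K}(X_2,\ldots,X_n)(X_1)$-type fields.

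\textbf{Step 3 (primes).} Step 2 gives a defining polynomial $P\in\Z[\bx']$ of $S$ that is nonzero, with coefficients of size $\|F\|^{O(1)}$. Let $\Ecal$ consist of the primes dividing a fixed nonzero coefficient of $P$, together with the primes dividing the leading data used in the elimination. This set has size $\ll \log\|F\|/\log\log\|F\|$, since a number of size at most $H$ has at most $O(\log H/\log\log H)$ prime divisors. For $p\notin\Ecal$, the elimination commutes with reduction mod $p$, so the $\F_p$ splitting locus lies in $\{\bar P=0\}$ with $\bar P\neq0$. This gives the bound $\ll_{n,D}p^{n-2}$. The delicate point is ensuring that reduction preserves the elimination, which we control by including in $\Ecal$ the primes dividing resultant and leading coefficients.
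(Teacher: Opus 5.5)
Your argument is correct in substance, but it reaches the key nonvanishing by a different route from the paper.

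The paper's proof (Theorem \ref{thm_gen_linear_factor_consequence_k}) quotes Theorem \ref{thm_gen_ppties} (IV) for $i_0=1$: the Noether form $B_{\lin}(a_{\ell,m}(\bX'))$ is not identically zero. It then uses splits completely $\Rightarrow$ linear factor $\Rightarrow$ $B_{\lin}(a_{\ell,m}(\bx'))=0$ (Lemma \ref{lemma_actual_linear_factor}), then Schwartz--Zippel, and takes $\Ecal$ to be the primes dividing the content of $B_{\lin}(a_{\ell,m}(\bX'))$. The nonvanishing itself comes from (I) $\Rightarrow$ (II) $\Rightarrow$ (III) $\Rightarrow$ (IV). There, (II) $\Rightarrow$ (III) (Lemma \ref{lemma_II_implies_III_gen}) uses Hilbert irreducibility and the EGA IV constructibility input of Lemma \ref{lemma_open_V_gen} to produce an explicit good specialization.

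You argue at the generic point instead. The splitting locus is closed, so it is proper iff $F(Y,X_1,X_2,\ldots,X_n)$ does not split over $\overline{\Q(X_2,\ldots,X_n)}[X_1]$. A splitting there would force $\Lcal_F=E(X_1)$ with $E$ finite over $\Q(X_2,\ldots,X_n)$, contradicting $n$-genuineness. This is valid and avoids both HIT and the EGA input singled out in Remark \ref{remark_fields}. In the paper's language it proves (II) for $i_0$ $\Leftrightarrow$ (IV) for $i_0$ directly, as Lemmas \ref{lemma_actual_linear_factor} and \ref{lemma_splits_field_identity} applied over $\Kcal=\Q(X_2,\ldots,X_n)$ with $Z=X_1$. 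What the paper's longer route buys is the full four-way characterization. Its separate implications (e.g.\ (I) $\Rightarrow$ (III), (II) $\Rightarrow$ (III), (IV) $\Rightarrow$ (II)) are reused in the shifting arguments of \S\ref{sec_shift_poly}.

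Two things need tightening.

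In Step 2, drop the sentence about $\beta$ and the hedge about Theorem \ref{thm_gen_ppties}. The clean argument runs as follows.
\begin{itemize}
\item Write $\alpha=\sum_i a_iX_1^i$ with $a_i$ algebraic over $\Q(X_2,\ldots,X_n)$, and let $\tilde E$ be a Galois closure of $E$.
\item Every automorphism of $\tilde E(X_1)$ over $\Q(\bX)$ acts only on coefficients, since $X_1$ is transcendental over $\overline{\Q(X_2,\ldots,X_n)}$. So fixing $\alpha$ forces fixing each $a_i$.
\item Hence $\Lcal_F=E(X_1)=\Q(\bX)(\gamma)$ with $\gamma$ primitive for $E$.
\item The minimal polynomial of $\gamma$ over $\Q(\bX)$ has coefficients in $\Q(\bX)\cap\overline{\Q(X_2,\ldots,X_n)}=\Q(X_2,\ldots,X_n)$, so it is free of $X_1$.
\end{itemize}

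In Steps 1 and 3, you assert but do not show that ``elimination commutes with reduction mod $p$'', nor that the extra ``leading data'' primes are few. Do the elimination universally over $\Z$, for the generic monic polynomial in $Y,X_1$ of degree $\le D$. The map $(Q_j)_j\mapsto\prod_j(Y-Q_j)$ is finite over $\Z$. Finite morphisms have closed image, and set-theoretic images commute with base change. So eliminants $G_1,\ldots,G_s\in\Z[(A_{\ell,m})]$, depending only on $D$, cut out the splitting locus over every field; this is exactly what Lemma \ref{lemma_Noether} packages. Substituting $a_{\ell,m}(\bX')$ gives degree $O_{n,D}(1)$ and height $\|F\|^{O_{n,D}(1)}$. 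The only exceptional primes are then those dividing the content of one nonzero $G_j(a_{\ell,m}(\bX'))$.
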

The property of being strongly $n$-genuine allows us to control, just as effectively, how many specializations are reducible, even though this is (in general) potentially a much larger class than those specializations that split completely:
\begin{thm}[Strongly genuine]\label{thm_strongly_gen_reducible_consequence}
Let $n \geq 2$. Let $F(Y,\bX) \in \Z[Y,X_1,\ldots,X_n]$ be a strongly $n$-genuine polynomial of total degree $D$. Then for all $B \geq 1$, 
\[ \# \{\bx' \in \Z^{n-1} \cap [-B,B]^{n-1} : \text{$F(Y,X_1,\bx')$ is reducible over $\overline{\Q}$}\}\ll_{n,D} B^{n-2}.\]
Also, there exists a finite set $\mathcal{E}$ of exceptional primes, with $|\Ecal| \ll_{n,D} \log \|F\|/ \log \log \|F\|$, such that for all $p \not\in \Ecal$,  
\[ \# \{\bx' \in \F_p^{n-1}: \text{$F(Y,X_1,\bx')$ is reducible over $\overline{\F}_p$}\}\ll_{n,D} p^{n-2}.\]
\end{thm}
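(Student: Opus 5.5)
The plan is to reduce both counts to counting zeros of one nonzero polynomial $R(X_2,\ldots,X_n)$ with integer coefficients and degree $\ll_{n,D} 1$. The point is that absolute reducibility of a bivariate polynomial of bounded degree is a Zariski-closed condition on its coefficients. We use the classical Noether irreducibility forms, or their effective versions due to Ruppert and Kaltofen. For each degree bound $D$ there are finitely many polynomials $\Phi_1,\ldots,\Phi_r$ with integer coefficients in the coefficients of a polynomial $f(Y,Z)$ of degree at most $D$. They have degree $\ll_D 1$ and the following property: for any field $\Kcal$ of characteristic $0$ or of characteristic $p$, a polynomial $f\in \Kcal[Y,Z]$ of degree at most $D$ is reducible over $\overline{\Kcal}$ if and only if all the $\Phi_i$ vanish at its coefficients. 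Since $F$ is monic in $Y$, the total degree and the $Y$-degree of $F(Y,X_1,\bx')$ are stable under specialization, including modulo $p$, so the forms apply uniformly to every specialization.

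The key step, where strong genuineness enters, is to show that the generic specialization is absolutely irreducible. That is, we must show $F(Y,X_1,X_2,\ldots,X_n)$ is irreducible over $\overline{\Q(X_2,\ldots,X_n)}$ as a polynomial in $Y,X_1$. Let $M=\Q(\bX)[Y]/(F)$, viewed as a function field in one variable over $\Q(X_2,\ldots,X_n)$. Let $L$ be the algebraic closure of $\Q(X_2,\ldots,X_n)$ in $M$. Because $F$ is irreducible over $\Q(\bX)$, the standard criterion says that $F$ is absolutely irreducible over $\Q(X_2,\ldots,X_n)$ if and only if $L=\Q(X_2,\ldots,X_n)$; this requires separability, which is automatic in characteristic $0$.

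Suppose instead that $L\supsetneq \Q(X_2,\ldots,X_n)$. Then $M'=L(X_1)$ satisfies $\Q(\bX)\subsetneq M'\subset M$. Choose a primitive element $\theta$ of $L/\Q(X_2,\ldots,X_n)$ and clear denominators so that $\theta$ is integral. Its minimal polynomial $G(Y,X_2,\ldots,X_n)$ stays irreducible over $\Q(\bX)$, because $X_1$ is transcendental over $L$. Hence $M'=\Q(\bX)[Y]/(G)$ with $G$ of degree zero in $X_1$. This contradicts the requirement that $M'$ be $n$-genuine, which strong $n$-genuineness of $F$ imposes.

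It follows that some $\Phi_i$, evaluated at the coefficients of $F(Y,X_1,X_2,\ldots,X_n)$ viewed as a polynomial in $Y,X_1$ over $\Z[X_2,\ldots,X_n]$, gives a nonzero polynomial $R\in\Z[X_2,\ldots,X_n]$. Its degree is $\ll_{n,D}1$ and its coefficients have size $\ll \|F\|^{c}$ with $c=c(n,D)$. Every $\bx'$ for which $F(Y,X_1,\bx')$ is reducible over $\overline{\Q}$ satisfies $R(\bx')=0$. The Schwartz–Zippel bound (or trivial slicing) gives at most $\deg R\cdot(2B+1)^{n-2}\ll_{n,D}B^{n-2}$ such integer points in the box.

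For the mod $p$ statement, let $\Ecal$ be the set of primes dividing every coefficient of $R$. For $p\notin\Ecal$, the reduction $\bar R$ is nonzero in $\F_p[X_2,\ldots,X_n]$. Since the $\Phi_i$ have integer coefficients and are valid over $\F_p$, every $\bx'\in\F_p^{n-1}$ with $F(Y,X_1,\bx')$ reducible over $\overline{\F}_p$ satisfies $\bar R(\bx')=0$. Schwartz–Zippel over $\F_p$ then gives $\ll_{n,D}p^{n-2}$ such points. Since $\Ecal$ consists of primes dividing a single nonzero integer of size $\ll\|F\|^{c}$, we get $|\Ecal|\ll \log\|F\|/\log\log\|F\|$.

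The main obstacle is using irreducibility forms that are genuinely valid in every characteristic $p$, with integer coefficients and degree and height bounds depending only on $D$. The classical Noether forms are stated for all characteristics, and Ruppert's and Kaltofen's constructions give the explicit height control. If that proves delicate in small characteristic, the fallback is to enlarge $\Ecal$ by the primes $p\ll_D 1$, which does not affect the bound on $|\Ecal|$.
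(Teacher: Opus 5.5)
Your proposal is correct, and it reaches the key nonvanishing statement by a different route from the paper. After that step it finishes exactly as the paper does: Schwartz--Zippel over $\Z$ and over $\F_p$, with $\Ecal$ the set of primes dividing the content of a nonzero form of height $\ll\|F\|^{c}$.

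The paper deduces nonvanishing of the form from Theorem~\ref{thm_strongly_gen_ppties}, (I)$\Rightarrow$(II)$\Rightarrow$(IV). Its step (I)$\Rightarrow$(II) is the same field-theoretic argument you give. Its step (II)$\Rightarrow$(IV) (Lemma~\ref{lemma_Cohen_strongly_genuine_reducible}) works by specialization:
\begin{itemize}
\item EGA IV 9.7.7, via Lemma~\ref{lemma_F_strongly_n_geniune_open_V}, gives a nonempty open set of $\bx'$ over which $k$ stays algebraically closed in the specialized function field;
\item Hilbert irreducibility gives a dense set on which $F(Y,X_1,\bx')$ is irreducible over $k$;
\item a point in the intersection gives an absolutely irreducible specialization.
\end{itemize}

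You instead stay at the generic point. Lemma~\ref{lemma_integrally_closed_implies_absirred}, applied over $\Kcal=\Q(X_2,\ldots,X_n)$, gives irreducibility of $F$ over $\overline{\Kcal}$ as a polynomial in $Y,X_1$. Noether's forms are the same integral forms over every field of characteristic $0$ (Lemma~\ref{lemma_Noether}), so evaluating them in $\Kcal$ shows at once that one of them is a nonzero element of $\Z[X_2,\ldots,X_n]$.

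This route is shorter and needs neither EGA nor Hilbert irreducibility. It also avoids Lemma~\ref{lemma_F_strongly_n_geniune_open_V}, which Remark~\ref{remark_fields} singles out as the step needing modification over general fields. The paper's route additionally exhibits an explicit good specialization (condition (III)), but that also follows a posteriori from your nonzero $R$. Using a single nonvanishing $\Phi_i$, rather than a combined form, also sidesteps the finite-field issue in Remark~\ref{remark_Noether_form}.

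One point must be corrected. Noether's forms detect ``reducible over $\overline{\Kcal}$ \emph{or} of degree $<D$'', not reducibility alone. Indeed, the reducible polynomials of degree $\le D$ are not Zariski-closed: $(\epsilon Y+1)(X_1+1)\to X_1+1$.

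Also, monicity in $Y$ does not make the total degree in $(Y,X_1)$ stable under specialization. For example, $Y^2+X_2X_1^3$ drops from degree $3$ to degree $2$ at $x_2=0$.

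Consequently the degree parameter in the forms must be $\deg_{Y,X_1}F$, not the total degree $D$ of $F$. For instance, $F=Y^2+X_1X_2^5$ is strongly $2$-genuine with $\deg_{Y,X_1}F=2<6=D$. With parameter $6$, every form vanishes identically at the generic point, so $R\equiv 0$.

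With parameter $\deg_{Y,X_1}F$, the generic polynomial over $\Q(X_2,\ldots,X_n)$ has full degree, so absolute irreducibility does force some form to be nonzero. At special points, a degree drop only adds points to $\{R=0\}$, which is harmless for an upper bound. This matches how the paper normalizes $B_\red$, relative to $\deg_{Y,X_{i_0}}F$. With this change your argument is complete.
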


\begin{rem}[Generic]\label{remark_generic} The classes of $n$-genuine and strongly $n$-genuine polynomials  are generic, which enhances their utility. Any strongly $n$-genuine extension is $n$-genuine; thus any strongly $n$-genuine polynomial is $n$-genuine. Let $\mathcal{M}(D,k_1,\ldots,k_n)$ denote the moduli space of polynomials in  $\Q[Y,X_1,\ldots,X_n]$ comprised of polynomials that are monic in $Y$ and satisfy $\deg_Y(F) = D$ and $\deg_{X_i}(F) \leq k_i$; this is a closed irreducible subset of the moduli space of polynomials. Strongly $n$-genuine polynomials   are generic in $\mathcal{M}(D,k_1,\ldots,k_n)$; hence  $n$-genuine polynomials are also generic in this sense. This is a consequence of
\cite[Cor. 3.2, 3.3]{BPW25x}; in fact those results confirm that an even smaller family, the strongly $(1,n)$-allowable polynomials, a subset of the strongly $n$-genuine polynomials, are generic in this sense.  
\end{rem}

\subsection{The strategy}
To see how these classes of polynomials assist in proving Theorem \ref{thm_Cohen}, we set the following notation.
 Let us denote by $\Omega_F$ the splitting field of $F(Y,\bX)$ over $K(\bfX)$. 
Define the number field $L_F=\Omega_F\cap\overline{K}$, so there is an intermediate extension $K(\bfX)\subset L_F(\bfX)\subset\Omega_F$. We will  let $M_F(Y,\bfX) \in L_F(\bfX)[Y]$ be the minimal polynomial of $\Omega_F$ over $L_F(\bfX)$; we may assume that $M_F(Y,\bX)$ is monic in $Y$ and irreducible over $L_F(\bX)$. 
Using the properties of strongly $n$-genuine polynomials, we will prove:

\begin{thm}[Special case of Theorem \ref{thm_Cohen}]\label{thm_Cohen_special}
Let  $K/\Q$ be a number field with ring of integers $\Ocal_K$. Let $F(Y,X_1,...,X_n) \in \calO_K[Y,X_1,\ldots,X_n]$   have total degree at most $D$ and Galois group $G$ over $K(X_1,\ldots,X_n)$.   Suppose that
\beq\label{extra_hyp}
\text{the associated minimal polynomial $M_F(Y,\bX)$ is strongly $n$-genuine.}
\eeq
For each $\bx \in \calO_K^n$, let $G(\bx)$ denote the Galois group of the splitting field of $F(Y,\bx)$ over $K$. 
There exists a constant $c$   depending only on $n,D,K$ such that  for all $N \geq 3$, 
 \[ \# \{ \bx \in \calO_K^n,  \|\bx\|\leq N: \text{$G(\bx) \not\simeq G$}\} \ll_{n,D,K}  \|F\|^{c} N^{n-\frac{1}{2}}\log N.\]
\end{thm}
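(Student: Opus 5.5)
We will follow Cohen's strategy of reducing the Galois-group question to an irreducibility question for one auxiliary polynomial, and then counting reducible specializations with a large sieve in the variable $X_1$ alone, fibred over $\bx'=(x_2,\ldots,x_n)$. The strongly $n$-genuine hypothesis \eqref{extra_hyp} is what controls the fibres that the sieve cannot handle.

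\textbf{Step 1: reduction to irreducibility.} Let $\theta$ be a primitive element of $\Omega_F$ over $L_F(\bX)$ that is integral over $\calO_{L_F}[\bX]$. We take $\theta$ to be a linear combination of the roots of $F$ with small integer coefficients, so that the minimal polynomial $M_F$ has degree $|G|$ and coefficients of size $\ll \|F\|^{O(1)}$. Fix $\bx$ such that $F(Y,\bx)$ has nonzero discriminant and nonzero discriminant of the relevant resolvent, which excludes $O(\|F\|^{O(1)}N^{n-1/2})$ points. For such $\bx$, $G(\bx)$ embeds in $G$, and equality holds as soon as two conditions hold:
\begin{itemize}
\item[(a)] the specialization $M_F(Y,\bx)$ is irreducible over $L_F$;
\item[(b)] $[L_F:K]$ is preserved, which is automatic because $L_F$ is a constant field.
\end{itemize}
The set of $\bx$ with $G(\bx)\not\simeq G$ is therefore contained in the union of this small exceptional set and the set of $\bx$ for which $M_F(Y,\bx)$ is reducible over $L_F$. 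Both $L_F$ and the degree of $M_F$ are bounded in terms of $n,D,K$.

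\textbf{Step 2: splitting the count.} Write $\bx=(x_1,\bx')$ and split the $\bx'$ in the box into two classes.
\begin{itemize}
\item \emph{Bad $\bx'$:} those for which $M_F(Y,X_1,\bx')$ is reducible over $\overline{\Q}$. Apply Theorem \ref{thm_strongly_gen_reducible_consequence_k}, the number-field version of Theorem \ref{thm_strongly_gen_reducible_consequence}, to $M_F$ viewed over $L_F$. This gives $\ll N^{n-2}$ bad $\bx'$ in the box; to be safe we allow a harmless power of $\|F\|$ in this bound. Each bad $\bx'$ contributes at most $N$ choices of $x_1$, so these fibres contribute $\ll \|F\|^{c}N^{n-1}$ in total.
\item \emph{Good $\bx'$:} here $f_{\bx'}(Y,X_1)=M_F(Y,X_1,\bx')$ is absolutely irreducible. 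We must count $x_1$ with $\|x_1\|\le N$ such that $f_{\bx'}(Y,x_1)$ is reducible over $L_F$, uniformly in $\bx'$, with a bound $\ll \|F\|^{c}N^{1/2}\log N$.
\end{itemize}

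\textbf{Step 3: the one-variable large sieve.} Use the mod $p$ part of Theorem \ref{thm_strongly_gen_reducible_consequence_k}. For primes $p\notin\Ecal$, there are only $O(p^{n-2})$ residues $\bx'$ mod $p$ for which $f_{\bx'}$ is reducible over $\overline{\F}_p$. For the remaining $\bx'$ mod $p$, $f_{\bx'}\bmod p$ is absolutely irreducible in two variables, so Chebotarev/Lang--Weil gives the following for the curve $f_{\bx'}(Y,X_1)=0$ over $\F_p$: a positive proportion $\gg p$ of residues $x_1$ make $f_{\bx'}(Y,x_1)$ irreducible over $\F_p$, because the geometric monodromy group is transitive and contains a full cycle-type element (a Jordan-type argument). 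When $f_{\bx'}(Y,x_1)$ is reducible over $L_F$, it remains reducible modulo every prime, so such $x_1$ avoid a set of $\gg p$ residues mod $p$, for a positive proportion of $p\le Q$.

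The difficulty is that this good behaviour must hold mod $p$ for our \emph{fixed} integral $\bx'$ and most $p$. A fixed integral $\bx'$ could lie in the bad locus mod $p$ for many $p$. We handle this by double counting in the sieve: sum over all $\bx$ and all $p\le Q$. The pairs $(\bx',p)$ with $\bx'$ bad mod $p$ have total mass $\ll N^{n-1}\sum_p p^{-1}$, which is controlled. Alternatively, apply the large sieve in all $n$ variables, with local densities computed by fibering over $\bx'$ mod $p$. We expect this second route to be cleaner: the sieve gives $\ll N^n/\sum_{p\le Q}\omega(p)/p$ plus $Q^{2}$-type error terms, with $\omega(p)\gg p$ on average, and the choice $Q=N^{1/2}$ yields $N^{n-1/2}\log N$. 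The exceptional primes $\Ecal$ and the primes dividing the discriminants cost only a factor $\|F\|^{c}$.

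\textbf{Main obstacle.} We expect the main obstacle to be Step 3: proving, uniformly over good $\bx'$ mod $p$, that $\gg p$ residues $x_1$ give an $\F_p$-irreducible specialization. This needs an effective Chebotarev statement for the absolutely irreducible curve defined by $f_{\bx'}$. It also needs care because the geometric and arithmetic monodromy groups might differ, which is where having passed to $L_F$ (removing constant extensions) is essential. Alongside this, we must account cleanly for the bad locus in the sieve, which is precisely the point where strong $n$-genuineness replaces the gap in Cohen's original argument.
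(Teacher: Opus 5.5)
There is a genuine gap in Step 3, in the local condition you sieve with. Detecting $G(\bx)\simeq G$ through irreducibility of $M_F(Y,\bx)$ over $L_F$ is fine as a reduction (though $\deg_Y M_F=|G|/[L_F:K]$, not $|G|$). The problem is that your sieve then needs $\gg p$ residues $x_1$ for which $M_F(Y,x_1,\bx')$ is irreducible modulo $\mathfrak{P}$, and these generally do not exist.

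Here is why. $M_F$ is the minimal polynomial of a primitive element $\theta$ of the \emph{Galois} extension $\Omega_F/L_F(\bX)$, with group $G':=\Gal(\Omega_F/L_F(\bX))$. Every root is $r_\sigma(\theta)$ for a rational function $r_\sigma$ over $L_F(\bX)$. Reduce these relations at a point where the discriminant does not vanish modulo $\mathfrak{P}$. The Frobenius generator over $L_{\mathfrak P}$ then permutes the roots of $M_F(Y,\bx)\bmod\mathfrak P$ as left multiplication by some $\tau\in G'$. So the reduction splits into $|G'|/\mathrm{ord}(\tau)$ irreducible factors of degree $\mathrm{ord}(\tau)$, and it is irreducible only if $G'$ is cyclic and generated by $\tau$. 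For any non-cyclic $G'$ (already $G'=S_3$, e.g.\ $F=Y^3+X_1Y+X_2$), no unramified residue class is excluded. Your $\omega(p)$ is then $O(p^{n-1})$ and the large sieve gives nothing. The ``full cycle-type element'' you attribute to a Jordan-type argument does not exist in general. Jordan's lemma produces a \emph{fixed-point-free} element of a transitive group, which controls existence of roots, not irreducibility.

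The repair is the decomposition the paper takes from Cohen. Write $\{G(\bx)\not\simeq G\}$ as the union over proper subgroups $H\subsetneq G$ of the sets where $G(\bx)$ is conjugate into $H$. Sieve each set with the local condition $[\mathrm{Frob}_{\bx,\mathfrak p}]\subset G\setminus\mathscr{C}(H)$, which no such $\bx$ can satisfy. Jordan's lemma gives $G\setminus\mathscr{C}(H)\neq\emptyset$. The function-field Chebotarev theorem then gives $\gg_{n,D}|K_{\mathfrak p}|$ admissible $x_1$ (main term $\frac{|G\setminus\mathscr{C}(H)|}{|G|}p$ when $L_F=K=\Q$) on every fibre $\bx'$ for which $\Omega_{\bx',\mathfrak p}$ has group $G'$ and is regular over $L_{\mathfrak p}$. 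Proposition \ref{prop_Cohen_corrected_appendix} shows that all but $O(|K_{\mathfrak p}|^{n-2})$ fibres are of this kind once $B^{M_F}_{\red,i_0}\not\equiv 0$. This is exactly where strong $n$-genuineness enters, via Theorem \ref{thm_strongly_gen_ppties} (I)$\Rightarrow$(IV).

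With this setup your Step 2 count of bad integral $\bx'$ becomes unnecessary. The fibre-by-fibre one-variable route could not have been uniform in $\bx'$ anyway, since the height of $M_F(Y,X_1,\bx')$ grows with $N$.

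Finally, when $L_F\neq K$, your remark (b) hides a real issue. The Frobenius lies in the coset of $G'$ in $G$ fixed by the Artin symbol of $\mathfrak p$ in $L_F/K$. Only primes whose coset is not contained in $\mathscr{C}(H)$ contribute to the sieve. Counting those primes by Chebotarev in $K'L_F$ is what produces the factor $\|F\|^{c}$.
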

Indeed, with the additional hypothesis (\ref{extra_hyp}), the original proof given in \cite[Thm. 2.1]{Coh81} can proceed. Moreover, we will show that a  weaker alternative hypothesis can replace the extra hypothesis (\ref{extra_hyp}) and still yield the conclusion. The weaker alternative hypothesis is ultimately easier to work with, although more complicated to state; let us call it (*) for the moment. (Precisely, (*) is (\ref{L_X1_special_n*}), and  Theorem \ref{thm_Cohen_special_weaker} proves the analogue of Theorem \ref{thm_Cohen_special} under (*)).
 To recover Theorem \ref{thm_Cohen} in full, 
 we will use the properties of $n$-genuine polynomials in order to reduce any instance of Theorem \ref{thm_Cohen} to a special case where (*) holds and the original proof of \cite[Thm. 2.1]{Coh81} can proceed.
 More precisely, given a linear transformation $\sig \in \GL_n(\Q)$, let $F_\sig(Y,\bX):=F(Y,\sig(\bX))$. We will show that for any $F$ considered by Theorem \ref{thm_Cohen} there is a linear transformation $\sig \in \GL_n(\Q)$ (with small norm) such that the minimal polynomial $M_{F_\sig}(Y,\bX)$ (of the splitting field of $F_\sig$) acquires property (*), so that $F_\sig$ lies in a special case for which the outcome of Theorem \ref{thm_Cohen} is already known (by Theorem \ref{thm_Cohen_special_weaker}). We will then bound the number of $\|\bx\|$ counted by Theorem \ref{thm_Cohen} for $F$, by a related count for $F_\sig$. The fact that the linear transformation $\sig$
has small norm will allow Theorem \ref{thm_Cohen} to inherit the (at most) polynomial dependence on $\|F\|$ from the special case we apply to $F_\sig$.  (For further remarks on dependence on $\|F\|$, see \S \ref{sec_relation_to_thin_sets}
and in particular Remark \ref{remark_dependence}.)

\subsection{Outline of the paper}
In \S \ref{sec_prelim} we briefly describe some relevant previous literature, and record several standard lemmas.
In \S \ref{sec_Cohen_original} we outline the gap for $n \geq 2$ in the original proof of Theorem \ref{thm_Cohen},  illustrate its relation to  Noether's lemma, and show how to fill the gap if we assume an additional hypothesis in a  key lemma. In \S \ref{sec_strongly_gen} we describe the general theory of strongly $n$-genuine polynomials, characterize their essential properties, and prove Theorem \ref{thm_strongly_gen_reducible_consequence} (also over any number field). In \S \ref{sec_apply_strongly_genuine} we briefly show  how this theory allows the original method of \cite{Coh81} to proceed (under an additional hypothesis), resulting in the special case of Theorem \ref{thm_Cohen_special}, as well as the special case Theorem \ref{thm_Cohen_special_weaker} with condition (*). To complete the recovery of Theorem \ref{thm_Cohen} in full, we also require the larger class of $n$-genuine polynomials. Thus in \S \ref{sec_genuine} we describe the general theory of $n$-genuine polynomials, characterize their essential properties, and prove Theorem \ref{thm_gen_linear_factor_consequence} (also over any number field). Finally, in \S \ref{sec_shift_poly} we employ $n$-genuine and strongly $n$-genuine polynomials to complete the proof of Theorem \ref{thm_Cohen}.

\subsection{Notation and terminology}

  For clarity, we reserve $K$ to denote the number field considered in Theorem \ref{thm_Cohen}. We will also let $k$ denote any fixed number field. When we consider an arbitrary field (finite or infinite) our generic notation will be $\Kcal$.

 For a given subset $I \subseteq \{1,\ldots,n\}$, denote $\bX_I = (X_i)_{i \in I}$. For example, if $I=\{2,\ldots,n\}$ then $\bX_I = (X_2,\ldots,X_n)$, so that as a function field $k(\bX_I)=k(X_2,\ldots,X_n)$, and a polynomial $G(Y,\bX_I)$ can depend on $Y$ and $X_2,\ldots,X_n$ but not $X_1$. Denote $I^c=\{1,\ldots,n\} \setminus I$. With a slight abuse of notation with respect to the ordering of variables, we define $G_{\bx_I}(Y,\bX_{I^c}):=G(Y,\bX_{I^c},\bx_{I})$ so that $X_i$ is specialized to $x_i$ for each $i \in I$ for a given $\bx_I \in k^{|I|}$, while $Y$ and $X_i$ for $i \in \{1,\ldots,n\} \setminus I$ are indeterminates.  Given a polynomial $G(Y,X_1,\ldots, X_n)$, we let $\deg G$ denote the total degree of $G$ while $\deg_Y G$ denotes the degree of $G$ as a polynomial in $Y$. Additionally we may specify the total degree of $G$ as a polynomial in a subset of the variables; for example, $\deg_{Y,X_1} G(Y,X_1,\ldots,X_n)$ denotes the total degree  of $G$ as a polynomial in $Y, X_1$. We say $G(Y,X_1,\ldots,X_n)$ is monic in $Y$ if 
 \[G(Y,\bX) = Y^{D_Y} + Y^{D_Y-1}f_{D_Y-1}(\bx) +\cdots + Y f_1(\bX) + f_0(\bX)
 \] 
 for polynomials $f_j(\bX)$.

Given an extension of fields $\Lcal/\Kcal$, $\Lcal$ is said to be a regular extension of $\Kcal$ if the extension is separable, and moreover $\Kcal$ is integrally closed in $\Lcal$, that is, $\overline{\Kcal} \cap \Lcal=\Kcal$. One of the outcomes of the present work is that the notion of being strongly $n$-genuine is equivalent to certain extensions being regular. Precisely, let $k$ be a number field, let $H(Y,X_1,\ldots,X_n) \in \Ocal_k[Y,X_1,\ldots,X_n]$ be irreducible over $k(\bX)=k(X_1,\ldots,X_n)$, and define $\Lcal_H=k(\bX)[Y]/(H(Y,\bX))$. In the case $n=1$ for example, $H(Y,X_1)$ is a strongly $1$-genuine polynomial if and only if $\Lcal_H$ is a regular extension of $k$, namely $\overline{k} \cap \Lcal_H = k$. (This is proved in Theorem \ref{thm_strongly_gen_ppties_n1}; see also Lemma \ref{lemma_at_least_strongly_1_gen}.) More generally, $H(Y,X_1,\ldots,X_n)$ is a strongly $n$-genuine polynomial if and only if  for each subset $I' \subset \{1,\ldots,n\}$ that omits one index, $\Lcal_H$ is a regular extension of $L(\bX_{I'})$ (Theorem \ref{thm_strongly_gen_ppties}).

\subsubsection{Comparison to original notation} For clarity, we distinguish once and for all between the notation of Cohen \cite{Coh81} and Serre \cite{Ser97}, and the notation we employ.  
 In Cohen's work, for a polynomial $F$ with coefficients in $\calO_K$, recall that Theorem \ref{thm_Cohen}  uses $\|F\|$ to denote the maximum absolute norm of any coefficient of $F$. This agrees  conceptually with Cohen's work, but the notation is different.
In the original paper, Cohen uses the notation $|F|$ to denote the maximum absolute norm of the coefficients of $F$, and the notation $\|F\| = \log |F|$.  (Cohen's statement of Theorem \ref{thm_Cohen} also allows the more general setting  that $K/k$ is a finite extension of number fields, and  $\bx \in \calO_k^n$ varies in the ring of integers of the smaller field, but we do not pursue that variant here.)

 The height $\|\bx\|$ we have defined for $\bx \in \Ocal_K^n$ agrees with Cohen's notation. In Serre's original discussion \cite[\S13]{Ser97}, a slightly different height is considered: for $x\in \calO_K$, temporarily define
$$|x| := \max_{\sigma:K\hookrightarrow \C} |\sigma(x)|.$$
For a vector $\bx=(x_1,\dots,x_n) \in \calO_K^n$, then Serre takes $\|x\|_{\mathrm{Serre}} = \max_{1\leq i\leq n} |x_i|.$ In comparison with Cohen's choice of height, note that for our fixed integral basis $w_1,\ldots,w_h \in \Ocal_K$ and any embedding $\sigma:K\hookrightarrow \C$, we have that 
$$|\sigma(x)| \leq |a_1||\sigma(w_1)|+\dots+|a_h||\sigma(w_h)|.$$
Hence, $|x|\ll_{w_1,\dots,w_h,K} H_K(x)^{1/h}.$ On the other hand, note that for  any embedding $\sigma:K\hookrightarrow \C$ we have that 
$$\sigma(x) = a_1 \sigma(w_1)+\dots +a_h\sigma(w_h),$$
and hence
\begin{equation*}
    \begin{pmatrix} 
\sigma_1(x) \\ 
\sigma_2(x) \\ 
\vdots \\ 
\sigma_h(x) 
\end{pmatrix} = 
\begin{pmatrix} 
\sigma_1(w_1) & \sigma_1(w_2) & \dots & \sigma_1(w_h) \\ 
\sigma_2(w_1) & \sigma_2(w_2) & \dots & \sigma_2(w_n) \\ 
\vdots & \vdots & \ddots & \vdots \\ 
\sigma_h(w_1) & \sigma_h(w_2) & \dots & \sigma_h(w_h) 
\end{pmatrix} 
\begin{pmatrix} 
a_1 \\ 
a_2 \\ 
\vdots \\ 
a_h 
\end{pmatrix}.
\end{equation*}
Since $w_1,\dots,w_h$ is an integral basis, the above matrix is invertible and thus we also have that 
$$H_K(x)^{1/h} = \max_{1\leq i\leq h} |a_i| \ll_{w_1,\dots,w_h,K} |x|.$$
Thus, Cohen's choice of height (and hence our choice) is comparable to Serre's height in the sense that $\|x\| \ll_{K,w_1,\dots,w_n} \|x\|_{\mathrm{Serre}}^h\ll_{K,w_1,\dots,w_n} \|x\|.$

 \begin{rem}[Arbitrary fields]\label{remark_fields}
   In this paper, we develop the theory of $n$-genuine and strongly $n$-genuine polynomials in the setting of number fields, but the definitions are valid over an arbitrary field $\Kcal$. To our knowledge, the only proofs in the characterization theorems of \S \ref{sec_strongly_gen} and \S \ref{sec_genuine} which would require modification when working over an arbitrary field rather than a number field would be Lemma \ref{lemma_F_strongly_n_geniune_open_V} (which is used to show (II) for $i_0$ $\Rightarrow$ (IV) for $i_0$ in Theorem \ref{thm_strongly_gen_ppties}), and   Lemma \ref{lemma_open_V_gen} (which is used to prove (II) for $i_0$ $\Rightarrow$ (III) for $i_0$ in Theorem \ref{thm_gen_ppties}).
\end{rem}

 \section{Previous literature and preliminaries}\label{sec_prelim}
 
\subsection{Recent literature beyond Theorem \ref{thm_Cohen}}\label{sec_HIT}

Let $F(Y,X_1,\ldots,X_n) \in \Q[Y,X_1,\ldots,X_n]$ be irreducible over $\Q$, and of degree $d$ in $Y$. Considering $F$ as a polynomial over the function field $\Q(X_1,\ldots,X_n)$, let $\al_1,\ldots,\al_d$ denote its roots in an algebraic closure $\overline{\Q(X_1,\ldots,X_n)}$; the roots are distinct, under the condition that $F(Y,X_1,\ldots,X_n)$ is irreducible. The Galois group $G$ of $F(Y,X_1,\ldots,X_n)$ is a subgroup of the symmetric group $S_d$; that is, it acts on the roots by permutations, and there is an injective homomorphism $\rho: G \hookrightarrow S_d$. If $\bx \in \Z^n$ is chosen such that $F(Y,\bx)$ is also irreducible, then the Galois group $G(\bx)$ of $F(Y,\bx)$ over $\Q$ is a subgroup of $G$, determined up to conjugation. Indeed, let $\Del(\bX)$ denote the discriminant of $F(Y,\bX)$ as a polynomial in $Y$; note that $\Del(\bX) \not\con 0$ since $F(Y,\bX)$ is irreducible over $\Q$. (Indeed, by the Hilbert Irreducibility Theorem, there is some $\bx_0 \in \Q^n$ such that $F(Y,\bx_0)$ is irreducible over $\Q$, hence separable, and thus square-free, so that $\Del(\bx_0) \neq 0.$) For all $\bx \in \Z^n$ such that $\deg F(Y,\bx)=d$ and $\Del(\bx) \neq 0$,  there exist injective homormorphisms $\rho_\bx: G(\bx) \hookrightarrow S_d$ and $\iota: G(\bx) \hookrightarrow G$ such that $\rho_\bx = \rho \circ \iota$; see \cite[Lemma 1]{CasDie17}.  (By applying the trivial bound in Lemma \ref{lemma_Schwartz_domain}, we can see that these two conditions on $\bx$ are verified for all but $\ll_{n,\deg F} B^{n-1}$ values of $\bx \in \Z^n \cap [-B,B]^n,$ for example.)
Thus for any subgroup $K \subseteq G$, one can ask for an upper bound on 
\[ M(K,G;B):=\#\{ \bx \in \Z^n \cap [-B,B]^n: \text{splitting field of $F(Y,\bx)$ over $\Q$ has Galois group $K$}\}.\]
In this setting, Castillo and Dietmann have proved that for any subgroup $K$ of $G$, 
\beq\label{CasDie}
M(K,G;B)\ll_{F,\ep} B^{n-1+\del_K+\ep} \qquad \text{for any $\ep>0$,}
\eeq
in which $\del_K : = [G: K]^{-1}$, where $[G:K]$ denotes the index of $K$ in $G$, so that $\del_K \leq 1/2$ for any proper subgroup.
For $F(Y,\bX)$ irreducible, this implies a sharper result (for certain groups $G$) than Theorem \ref{thm_Cohen} (over $\Q$), since it implies that  
\[ \#\{ \bx \in \Z^n \cap [-B,B]^n: G(\bx) \not\simeq G\}\ll_{F,\ep} B^{n-1+\ga_G + \ep} \qquad \text{for any $\ep>0$},\]
in which 
\[ \ga_G = \max \{ [G:K]^{-1}: \text{$K$ is a proper subgroup of $G$}\}.\]
For $G=S_d$, note that $\ga_G=1/2$, but $\ga_G$ can be smaller for certain other groups. For example, if $G=A_d$ with $d \geq 5$ then $\del_G=1/d$; see \cite{CasDie17} for details. 
Castillo and Dietmann's method  employs Galois resolvents and results from the determinant method on bounding the number of integral points on curves; this strategy generalizes earlier  work of Dietmann \cite{Die12}.  Zywina has also obtained (\ref{CasDie}) over any number field, as long as $K$ is any subset of $G$ that is stable under conjugation (for example, when $K$ is a normal subgroup of $G$); that method employed the larger sieve in place of the large sieve \cite[unpublished]{Zyw10x}. Castillo and Dietmann remarked in \cite{CasDie17} that in principle their method should allow the dependence on $F$ in the implicit constant to be quantified.

Our focus in this paper centers on $n \geq 2$, but we remark that certain improvements have been achieved in the case $n=1$; most recently,  Parades and Sasyk have achieved  a quantification of the implicit constant in (\ref{CasDie}) for $n=1$, and removed the $B^\ep$ factor completely \cite[Thm. 1.4]{ParSas24}. This  employs a version of the $p$-adic determinant method on bounding the number of integral points on curves, and their work is also valid over any global field. If $K$ denotes the global field, the dependence on $F$ in the implicit constant is polylogarithmic in the so-called $K$-relative height of $F$, which in the case $K=\Q$ is simply $\|F\|$, the maximum absolute value of any coefficient of $F$. 

An upper bound (such as Theorem \ref{thm_Cohen}) that explicitly controls the dependence on $\|F\|$ is interesting, for example, because it allows one to deduce an explicit estimate for the ``smallest'' specialization $\bx$ such that $G(\bx) \simeq G$. (We return to the question of dependence on $\|F\|$ in the next section, and in particular in Remark \ref{remark_dependence}.)

\subsection{Relation to thin sets of type II}\label{sec_relation_to_thin_sets}
  Theorem \ref{thm_Cohen} has frequently been cited in the study of thin sets.
Any thin set in $\A^n$ is a finite union of two types of thin set, called Type I and Type II  (see \cite[Ch. 9]{Ser97},  \cite[\S 3.1]{Ser92} or \cite{BPW25x_sur}).

\emph{Type I:} A   thin set $M \subset \Abb^n(\Q)$   is of type I if  there is a Zariski-closed subvariety $V \subsetneq \Abb^n$   such that $M \subset V(\Q)$. 

\emph{Type II:} A  projective thin set $M \subset \Abb^n(\Q)$ 
  is of type II
 if there is an irreducible affine algebraic variety $Z$ over $\Q$ with $\dim  Z =  n$, and a   dominant morphism $\pi: Z \rightarrow \Abb^n$ with generically finite fibres,  of degree $d \geq 2$,  with $M \subset \pi (Z(\Q))$.
   
    The types can also be defined analogously over any number field $K$, and the discussion of this section applies in such generality, although we focus on $\Q$ for simplicity.
There is a useful interpretation   of the Type I/Type II dichotomy in terms of   polynomials, following Serre \cite[\S 9.1]{Ser97}. If $M\subset \mathbb{A}^n(\Q)$ is an affine thin set of type I, then there is a nonconstant    polynomial $G \in \Q[X_1,\ldots,X_n]$   such that 
\[ M \subset \{ \bx \in \Q^n: G(x_1,\ldots,x_n)=0\}.\]
On the other hand, given an irreducible  $F(Y,X_{1},...,X_{n})\in\mathbb{Q}(X_{1},...,X_{n})[Y]$,  a polynomial in $Y$   with $\deg_{Y} F\geq 2$,  then the following set   is an affine thin set of type II:
\beq\label{affine_typeII_polynomial_interp}
  \{\bfx\in  \Q^n:  \text{$\bfx$ not a pole of any coefficient of $F$, $F(Y,\bfx)=0$ is solvable over $\Q$}\} \subset \Abb^n(\Q).\eeq
By replacing  $F(Y,\bX)$ by a multiple $\tilde{F}(Y,\bX)$ of $F$ by an appropriate polynomial in $X_1,\ldots,X_n$ so that $\tilde{F}(Y,\bX) \in \Q[Y,X_1,\ldots,X_n]$, the set depicted above is contained in the set 
\[
\{\bx \in \Q^n: \text{$\tilde{F}(Y,\bx)=0$ is solvable over $\Q$}\}.\]
Thus it is no loss of generality   to assume that within (\ref{affine_typeII_polynomial_interp}), $F$ is   a polynomial $Y,X_1,\ldots,X_n$. Moreover, modulo a thin set of type I, every thin set of type II takes the form of (\ref{affine_typeII_polynomial_interp}); see \cite[Lemma 1.2]{BPW25x_sur}.

For a quantitative statement, we now suppose  $M \subset \mathbb{A}^n(\Z)  $ is a thin set, and for each integral point $\bx=(x_1,\ldots,x_n) \in \Abb^n(\Z)$ define 
$\|\bx\|=\max_{1 \leq i \leq n}|x_i|$. 
Then define the counting function
\beq\label{thin_counting_function_affine_dfn}
N_{\Abb^{n}}(M,B) := \#\{\bx \in M \subset \mathbb{A}^n(\Z): \|\bx\| \leq B\}.
\eeq
Certainly $N_{\Abb^{n}}(M,B) \ll_n B^n$ is trivially true for all $B \geq 1$.  Serre established a baseline upper bound:
for any thin set $M \subset \mathbb{A}^n(\Z)$, for some unspecified $C(M) \geq 1$ and $0<\ga(M)<1$,
\beq\label{thin_set_bound_affine_intro}
N_{\Abb^{n}}(M,B) \leq C(M) B^{n-1/2} (\log (B+2))^{\ga(M)}, \qquad \text{for all $B \geq 1$.}
\eeq
 Serre's   motivation in \cite[Ch. 13 Thm. 1]{Ser97} included   questions of Hilbert irreducibility, and specialization of Galois groups, as well as   hilbertian fields. (A field $k$ is hilbertian precisely when for all $n \geq 1$, $\Pbb^{n}(k)$ is not a thin set;  $\Q$ and all number fields are hilbertian \cite[\S 9.5-9.6]{Ser97}.) Serre observed that (\ref{thin_set_bound_affine_intro}) is in fact sharp (up to the power of log), although he predicted a stronger upper bound in an analogous projective setting. We leave that broader discussion to the recent survey \cite{BPW25x_sur}, but remark here that if we define 
\beq\label{NFB_intro}
N^{\mathrm{cov}}_{\Abb^{n}}(F,B):=\#\{\bfx\in[-B,B]^n \cap \Z^n:  \text{ $F(Y,\bfx)=0$ is solvable over $\Z$}\},
\eeq
  it can be shown that to prove (\ref{thin_set_bound_affine_intro}) it suffices to prove  that for all polynomials  $F$ with $\deg_Y F \geq 2$ that are absolutely irreducible (that is, irreducible over $\overline{\Q}$),
 \beq\label{NFB_nonuniform_intro}
N^\cov_{\Abb^n}(F,B) \leq C(F) B^{n-\frac{1}{2}}  (\log(B+2))^{\ga(F)} \qquad \text{for all $B \geq 1$,}
\eeq
in which $C(F)$ and $\ga(F)$ are positive constants that may depend on $F$. 
Note that Theorem \ref{thm_Cohen}  also implies (\ref{thin_set_bound_affine_intro}). Indeed, if $F(Y,\bX)$ is irreducible and $F(Y,\bx)=0$ is solvable over $\Z$, then $F(Y,\bx)$ is certainly reducible over $\Q$, so that $G(\bx)$ cannot be isomorphic to $G$.   Consequently Theorem \ref{thm_Cohen} implies  (\ref{NFB_nonuniform_intro}) (and hence also a version of (\ref{thin_set_bound_affine_intro})) in which $C(F)$ has at most polynomial dependence on $\|F\|$, and $\ga(F)=1$.   

As remarked above, (\ref{thin_set_bound_affine_intro}) is sharp, as can be demonstrated by considering the polynomial $F(Y,X_1,\ldots,X_n)=Y^2 - (X_1+ \cdots +X_n)$. For certain special shapes of polynomial $F$, better bounds than (\ref{thin_set_bound_affine_intro}) have been obtained, in works such as \cite{Mun09, HBPie12, Bon21, BonPie24withcor} and most recently by Buggenhaut, Cluckers, Salberger, Santens, and Vermeulen \cite{BCSSV25x} and by the present authors \cite{BPW25x}. The former paper \cite{BCSSV25x} improves the main exponent in (\ref{thin_set_bound_affine_intro}) to $n-1$ if $F$ has the special form $
F(Y,\bX) =F_{\mathrm{top}}(Y,\bX) + F_0(Y,\bX)
$
in which $F_0(Y^e,\bX)$ has total degree strictly smaller than $de$, and for some $d \geq 2$,
\[F_{\mathrm{top}}(Y,\bX) = Y^d +Y^{d-1}f_1(\bX) + \cdots + Yf_{d-1}(\bX) + f_d(\bX)\]
is an absolutely irreducible polynomial,
in which each $f_i$ is homogeneous of degree $e \cdot i$, so that $F_{\mathrm{top}}(Y^e,\bX)$ is homogeneous of degree $de$. The bound in \cite{BCSSV25x} exhibits at most polynomial dependence on $\|F\|$. The latter paper \cite{BPW25x}  motivated the initial construction of the classes of $n$-genuine and strongly $n$-genuine polynomials. Its main theorem improves 
the main exponent in (\ref{thin_set_bound_affine_intro}) to $n-1+1/(n+1)$ if for some integer $m \geq 2$, $F(Y,\bX)$ is a polynomial in $Y^m$, and $F$ has the property that for any linear transformation $L \in \GL_n(\Q)$, $F(Y,L(\bX))$ is strongly $n$-genuine. (If $m=1$, the same result is obtained, but conditional on GRH.) The bound in \cite{BPW25x} exhibits weaker dependence on $\|F\|$, in the sense that it has at most polylogarithmic dependence on $\|F\|$; this is qualitatively the same order of dependence on $\|F\|$ as found by Parades and Sasyk \cite{ParSas24} in their work on quantitative HIT, mentioned earlier.

\begin{rem}[Dependence on $\|F\|$]\label{remark_dependence}
Can an upper bound for the number of integral points in an affine thin set be made uniform, that is, independent of the  coefficients of the defining polynomial(s) of the thin set? For an affine thin set of type I, the answer is famously yes (the subject of the well-known Uniform Dimension Growth Conjecture, surveyed in \cite{BPW25x_sur}).  For an affine thin set of type II, the answer is not yet clear, and a subtlety has recently been raised. 
In particular, we have constructed examples in \cite[Thm. 1.6]{BPW25x_sur} that violate a putative uniform upper bound for integral points in affine thin sets of type II.  Precisely, let $n\geq 1$ be given. As $k$ varies over positive integers, there is a family of  thin sets $M_k \in \Abb^n(\Z)$  of type II, defined by polynomials $F_k(Y,X_1,\dots,X_n)$ with $\|F_k\|=k$,   for which the counting function defined in (\ref{thin_counting_function_affine_dfn}) (or in this case, equivalently (\ref{NFB_intro})) has the following property: there is no constant $c>0$ such that 
\beq\label{counterex} 
N_{\Abb^{n}} (M_k, B) \ll_{n,\deg F_k}B^{n-1}(\log B)^{c}
\eeq
can hold as $k \rightarrow \infty$, with an implicit constant independent of $k$. In particular for $n=1$, there is no universal constant $C(n,d)$ (dependent only on dimension and degree) such that $N_{\Abb^{1}} (M, B) \leq C(n,d)$ for all thin sets $M \subset \mathbb{A}^1$ and $B \geq 1$.   The examples do not violate, however, a putative uniform upper bound of the form 
\[N_{\Abb^{n}} (M_k, B) \ll_{n,\deg F_k,\ep}B^{n-1+\ep}\]
for a given $\ep>0$.

Such questions on uniformity are also relevant to studying quantitative Hilbert irreducibility theorems,   by the inclusion properties recorded in Lemma \ref{lemma_irred_Galois_kie_in_thin_set}. The question of uniformity, in this context, has been raised by Yasumoto \cite{Yas88}.  Let $K$ be a number field.  Yasumoto proved that if $F(Y,X,T) \in K[Y,X,T]$ is irreducible, then there is a constant $C(F)$ such that for each $t \in \mathcal{O}_K$, if $F(Y,X,t)$ is irreducible then $F(Y,x_t,t)$ is irreducible for some natural number $x_t < C(F)$ \cite[Thm. 2]{Yas88}.  This exhibits partial uniformity, in that $C(F)$ is independent of $t$.
  Yasumoto furthermore asked  Open Problem 1:  For each $d \geq 1$, is there a constant $C(d)$ such that for every irreducible polynomial $F(Y,T) \in K[Y,T]$ with $\deg F \leq d$, there is a natural number $t < C(d)$ with $F(Y,t)$ irreducible? The observations above, for thin sets, suggest the answer may be no. 

\end{rem}

\subsection{The qualitative result of Lemma \ref{lemma_irred_Galois_kie_in_thin_set}}\label{sec_prove_kemma_irred_Galois_lie_in_thin_set}
For clarity, we recapitulate the proof of \cite[Prop. 1, 2 \S 9.2]{Ser97} for Lemma \ref{lemma_irred_Galois_kie_in_thin_set}; see also \cite[Prop. 3.3.1 and 3.3.5]{Ser92}.
 \begin{proof}
We start by proving (ii): Let $\Omega_{F}$ be the Galois closure of $F(Y,X_{1},...,X_{n})$ over $K(\bfX)$, $L(Y,\bfX)\in \mathcal{O}_{K}[Y,\bfX]$ its minimal polynomial over $K(\bfX)$, and $G=\Gal (\Omega_{F}/K(\bfX))$. Consider $\mathcal{X}=\bSpec (K(\bfX)[Y]/L)=\bSpec(\Omega_{F})$. Then the inclusion $K(\bfX)\hookrightarrow \Omega_{F}$, induces a morphism $\pi: \mathcal{X} \rightarrow\mathbb{A}^{n}$. Then for every proper subgroup $H\leq G$ , one can consider $\mathcal{X}/H=\bSpec ((\Omega_{F})^{H})$. Since $(\Omega_{F})^{G}=K(\bfX)$, we get a dominant morphism 
\[
\pi_{H}:\mathcal{X}/H\rightarrow \mathcal{X}/G=\bSpec ((\Omega_{F})^{G})=\bSpec (K(\bfX))=   \mathbb{A}^{n},
\] 
of degree $[G:H]>1$. Now the set $T \subset \Abb^n(K)$ defined by 
\[
T:=\bigcup_{\substack{H\subset G\\ H\text{ proper subgroup}}}\pi_{H}((\mathcal{X}/H)(K)),
\]
is a thin set. 

It remains to show that if $\bfx\in K^n$ is such that $G(\bfx)$, the Galois group of $F(Y,\bfx)$ over $K$, is strictly smaller than $G$, then $\bfx\in T$. In what follows we are going to denote by $d=\deg L=[\Omega_{F}:K(\bfX)]$, and for every $\bfx$, $\Omega_{\bfx}$ will denote the Galois closure of $F(Y,\bfx)$ over $K$, and $G(\bfx)=\Gal(\Omega_{\bfx}/K)$. Let $\gamma$ be a root of $L(Y,\bfX)$. Then we claim that if $G(\bfx)\not\simeq G$, then 
\[
R(Y):=\prod_{\sigma\in G(\bfx)}(Y-\sigma(\gamma))\not\in K(\bfX)[Y].
\]
Indeed, since $G(\bfx)\not\simeq G$, then $\deg R = |G(\bx)| < |G| = \deg L$, yet $L$ is the minimal polynomial of $\gamma$ over $K(\bX)$; hence $R$   cannot lie in $K(\bX)[Y]$. On the other hand, if we denote $\gamma_{\bfx}=\gamma\mod\bfx$, one gets
\[
r(Y):=\prod_{\sigma\in G(\bfx)}(Y-\sigma(\gamma_{\bfx}))\in K[Y];
\]
this is an element of $K[Y]$ since  $r(Y)$ is certainly in $\Omega_{\bfx}[Y]$ and is invariant under the action of $G(\bx) =\Gal(\Omega_{\bfx}/K)$.

Let $ K(\bfX)\subset M\subset \Omega_{F}$ be the minimal extensions which contains all the coefficients of $R(Y)$, i.e. $R(Y)\in M [Y]$. Then since $M\supsetneq K(\bfX)$, we have $\Gal (\Omega_F/M )=H\subsetneq G$, and $M=(\Omega_{F})^{H}$. Let $S(Y,\bfX)$ be the minimal polynomial of $M$ over $K(\bfX)$, i.e. $K(\bfX)[Y]/S=M=(\Omega_{F})^{H}$. Hence $\mathcal{X}/H=\bSpec (K(\bfX)[Y]/S)$. On the other hand, since $r(Y)\in K[Y]$ (so that specializing to $\bx$ collapses $M$ to $K$), then $S(Y,\bfx)$ is solvable over $K$ and hence $\bfx\in \pi_H((\mathcal{X}/H)(K))$, as we wanted. Thus, if $\bx\in K^n$ satisfies that $G(\bx)\not\simeq G$, we must have that $\bx\in T$.

For part $(i)$, it is enough to observe that if $\bfx\in K$ is such that $F(Y,\bfx)$ is reducible, then $G(\bfx)$ is strictly smaller than $G$, hence one can apply part (ii).
\end{proof}

 \subsection{Useful lemmas}
 We  recall a version of Hilbert's Irreducibility Theorem from \cite[Ch. 13 and Ch. 14 \S 3]{FriJar23} and \cite[Ch. 9, pp. 233-235]{Lan83}.
\begin{lem}[Hilbert]\label{lemma_HIT}
Let $K/\mathbb{Q}$ be a number field and $F_{1},...,F_{s}\in K(X_{1},...,X_{n},T_{1},...,T_{r})$ be  polynomials in $n+r$ variables, irreducible over $K$. There exists a dense subset $U\subset\mathbb{A}^{r}_K$ such that for any $(t_{1},...,t_{r})\in U$, $F_{1}(X_{1},...,X_{n},t_{1},...,t_{r}),...,F_{s}(X_{1},...,X_{n},t_{1},...,t_{r})$ are irreducible over $K$.
\end{lem}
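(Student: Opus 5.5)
We plan to argue in the classical way, reducing in stages to the statement that a single \emph{separable Hilbert set} in one variable over $\Q$ is nonempty and contains infinitely many integers. First we dispose of the finiteness of the family: if $U_j$ is a Hilbert subset for $F_j$, then $\bigcap_j U_j$ is again a Hilbert subset. Indeed, a Hilbert set is by definition the set of $\mathbf{t}$ avoiding a proper Zariski closed set and at which finitely many given irreducible polynomials remain irreducible, so finite intersections are of the same form. It therefore suffices to show that every Hilbert subset $H \subset K^r$ is Zariski dense. Since the intersection of $H$ with any nonempty Zariski open set is again a Hilbert set, density reduces to nonemptiness of every Hilbert set, that is, to $K$ being hilbertian.

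Next we reduce the number of variables. We remove the $X$-variables by Kronecker's substitution $X_i = Y^{d^{i-1}}$ with $d$ exceeding all partial degrees. A factorization of $F(Y,Y^d,\dots,Y^{d^{n-1}},\mathbf{t})$ corresponds to one of finitely many candidate splittings of the monomial support. Irreducibility of $F(\bX,\mathbf{t})$ is then implied by the irreducibility of finitely many auxiliary one-variable polynomials $G_k(Y,\mathbf{t})$, namely the irreducible factors over $K(\mathbf{t})$ of the substituted polynomial, together with the non-vanishing of some resultants; the latter are type I conditions.

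We then reduce from $r$ parameters to one. Assuming the one-parameter case, we specialize $T_r = t_r$ by induction, keeping $F(Y,T_1,\ldots,T_{r-1},t_r)$ irreducible over $K(T_1,\ldots,T_{r-1})$. Here we treat $T_1,\dots,T_{r-1}$ as extra $X$-variables and apply the one-parameter result over $K$ to $T_r$. This gives infinitely many such $t_r$ outside any proper closed set, so by induction we get density in $K^r$.

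Finally we reduce from $K$ to $\Q$. If $G(Y,T)$ is irreducible over $K(T)$, we pass to a primitive element $\theta$ of the extension it generates over $\Q(T)$ and take its minimal polynomial over $\Q(T)$. A specialization $t \in \Q$ for which this minimal polynomial stays irreducible of full degree $[K(T,\theta):\Q(T)]$ forces $G(Y,t)$ to be irreducible over $K$, by the degree count $[K(\theta_t):\Q]=[K:\Q]\,[K(\theta_t):K]$. So it suffices to treat $K=\Q$ with $t \in \Z$.

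The core step, which we expect to be the main obstacle, is the $\Q$, one-parameter case. If $G(Y,t)$ is reducible, then some factor of $G(Y,t)$ has coefficients that are elementary symmetric functions in a proper subset of the roots. Each such symmetric function is given by a Puiseux series $\phi(T)$ in $T^{-1/e}$ at $T=\infty$, and by irreducibility over $\Q(T)$ it is not in $\Q(T)$, hence not a polynomial in $T$. For large integers $t$ the values $\phi(t)$ are algebraic integers up to a bounded denominator, and they must be rational whenever the factor is defined over $\Q$. Using the standard lemma that a non-polynomial Puiseux series takes integer values at only $O(B^{1-1/e}) \le O(B^{1/2})$ integers $t \le B$, proved by taking $e$-fold finite differences to kill the polynomial part, we conclude that the reducible specializations have density zero among $t \in [1,B]$. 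Since there are finitely many root subsets, this bound covers all of them, so irreducible specializations exist, indeed outside any finite set. The delicate points are uniform control of the denominators and the finite-difference estimate. If needed, we would instead cite Siegel's theorem on integral points for the curves $S(Y,T)=0$ attached to the proper subextensions.
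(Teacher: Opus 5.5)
The paper does not prove this lemma; it quotes it from Fried--Jarden and Lang. Your formal reductions are the standard ones and are fine as sketches: finite intersections of Hilbert sets, density from nonemptiness of every Hilbert set, Kronecker substitution, induction on $r$, and descent from $K$ to $\Q$ via a primitive element. The gap is in the only non-formal step, the counting lemma for Puiseux series. That lemma is misstated, and the method you propose for it does not give what you need.

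\emph{The $e$-fold differences.} They do not kill the polynomial part. Its degree is the pole order at $T=\infty$, which has nothing to do with $e$. You need $k$-th differences with $k$ larger than the largest exponent occurring in $\phi$.

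\emph{Consecutive differences do not give density zero.} This is the more serious problem. Differences at consecutive integers only show that $\phi$ cannot be integral at $k+1$ consecutive large integers. You have one such set for each proper subset of roots and each coefficient, and a finite union of sets, each with no long runs, can cover every large integer (think of evens and odds). So the existence of irreducible specializations does not follow.

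\emph{The stated bound is false.} Take $\phi(t)=\sqrt{2t^2+1}=\sqrt2\,t+\tfrac{\sqrt2}{4t}+\cdots$, which comes from $Y^2-2T^2-1$. It has $e=1$ and is not a polynomial, yet it is integral at the infinitely many Pell values $t=2,12,70,\ldots$. This contradicts $O(B^{1-1/e})=O(1)$. Also, the inequality $B^{1-1/e}\le B^{1/2}$ fails for every $e\ge 3$.

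\emph{A smaller slip.} ``Not in $\Q(T)$, hence not a polynomial'' is a non sequitur, since $\phi$ could be a polynomial with irrational coefficients. That case is harmless by Lagrange interpolation, but it must be handled.

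What is missing is a divided-difference argument over non-consecutive points. Suppose $\phi(t_j)\in\frac1D\Z$ at integers $t_0<\cdots<t_k$ in $[x,x+h]$, with $k$ larger than every exponent of $\phi$. Work with whichever of $\mathrm{Re}\,\phi$, $\mathrm{Im}\,\phi$ is not a polynomial. Then
\[
D\prod_{i<j}(t_j-t_i)\cdot \phi[t_0,\ldots,t_k]\in\Z .
\]
By the mean value theorem this number equals $D\prod_{i<j}(t_j-t_i)\,\phi^{(k)}(\xi)/k!$. It is nonzero, because the $k$-th derivative of the non-polynomial part is asymptotic to a nonzero multiple of $\xi^{\mu-k}$, where $\mu<k$ is the leading non-polynomial exponent. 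It is also $\ll h^{k(k+1)/2}x^{\mu-k}$. Choosing $h=cx^{\gamma}$ with $\gamma=2(k-\mu)/(k(k+1))>0$ forces at most $k$ bad $t$ in each such interval. This gives a power saving $O(B^{1-\delta})$, with $\delta>0$, for each $\phi$, so the finite union still has density zero. Alternatively you can cite Bombieri--Pila or Dörge's lemma.

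Your fallback to Siegel's theorem does not close the gap by itself. Genus-$0$ curves with one or two points at infinity, exactly the ones behind $\sqrt{t}$ and the Pell example, can have infinitely many integral points, so they need a separate count.
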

We require a trivial bound: 
 \begin{lem}[Trivial bound, Schwartz-Zippel] \label{lemma_Schwartz_domain}
 Let $A$ be a domain, such as $\Z$, or $\F_p$ for a prime $p$, or the ring of integers $\calO_K$ in a number field $K$, or the finite residue field $K_\pfrak=K/\pfrak$ for a prime ideal $\pfrak$ in $\Ocal_K$. Let $F \in A[X_1,\ldots,X_n]$ be a nonzero   polynomial of degree $e \geq 1$, and $S \subset A$ a finite subset. Then 
\[ \#\{(x_1,\ldots,x_n) \in S^n : F(x_1,\ldots,x_n)=0\} \leq e |S|^{n-1}.\]
\end{lem}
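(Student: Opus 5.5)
The plan is induction on the number of variables $n$, using only that a nonzero polynomial over a domain has at most $e$ roots in any one variable.

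For $n=1$, write $F(X_1)$ of degree $e\geq 1$. Since $A$ is a domain, $F$ has at most $e$ roots in $A$, hence at most $e = e|S|^{0}$ roots in $S$.

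For the inductive step, assume the bound for polynomials in $n-1$ variables over $A$. Expand $F$ as a polynomial in $X_n$,
\[ F(X_1,\ldots,X_n)=\sum_{j=0}^{k} g_j(X_1,\ldots,X_{n-1})X_n^j, \]
with $g_k\neq 0$, so that $\deg g_k\leq e-k$. Split the zeros $(\bx',x_n)\in S^n$ of $F$, where $\bx'=(x_1,\ldots,x_{n-1})$, into two classes.

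The first class consists of those with $g_k(\bx')=0$. If $k=e$, then $g_k$ is a nonzero constant and this class is empty. If $k<e$, then $g_k$ is a nonzero polynomial of degree at most $e-k$, possibly of degree $0$, in which case again there are no zeros. In either case the induction hypothesis gives at most $(e-k)|S|^{n-2}$ choices of $\bx'$. Each $\bx'$ admits at most $|S|$ values of $x_n$, so this class contributes at most $(e-k)|S|^{n-1}$.

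The second class consists of those with $g_k(\bx')\neq 0$. For such $\bx'$, the specialization $F(\bx',X_n)$ is a nonzero polynomial of degree $k$ in $X_n$ over the domain $A$. It therefore has at most $k$ roots, and this class contributes at most $k|S|^{n-1}$.

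Adding the two contributions gives at most $e|S|^{n-1}$ zeros, which is the claim.

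The only point needing care is degree bookkeeping. One must use $\deg g_k\leq e-k$ and treat separately the case where $g_k$ is constant (including $k=0$), so that the induction hypothesis, which requires degree $\geq 1$, is only invoked legitimately. Everything else is routine, and the argument works verbatim for each listed domain.
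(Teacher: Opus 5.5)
Your proof is correct and takes essentially the same approach as the paper, which proves the bound by induction on the number of variables (citing Heath-Brown and BCLP). Splitting the zeros according to whether the leading coefficient $g_k$ in $X_n$ vanishes, with $\deg g_k \le e-k$ and separate handling of constant $g_k$ (including $k=0$), is exactly that standard argument.
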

The proof is by induction on dimension, and can be found in many places, such as \cite[Thm. 1]{HB02} or \cite[Lemma 10.1]{BCLP23}. (While \cite[Thm. 1]{HB02} is stated in the setting where $F$ is absolutely irreducible, and  \cite[Lemma 10.1]{BCLP23} is stated for any domain $A$ in the case where $F$ is homogeneous, either proof applies in the present setting.)

\begin{lem}\label{lemma_count_in_ring}
Let $K/\Q $ be a number field with ring of integers $\calO_K$ and $h:=[K:\Q]$. Then for all $B \gg 1$,
\[  \#\{x\in  \calO_K: H_K(x)\leq B\} = (2\lfloor B^{1/h}\rfloor +1)^h\asymp_h B.\]
Given any nonzero rational integer $g$, the number of distinct prime ideals $\pfrak$ in $\Ocal_k$ that divide g is $\ll_{h} \log g/\log \log g$. 
\end{lem}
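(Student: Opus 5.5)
The lemma has two independent parts, and I will treat them separately.

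\emph{The count.} Write each $x \in \calO_K$ uniquely as $x = a_1 w_1 + \cdots + a_h w_h$ with $a_i \in \Z$. Then the condition $H_K(x) = (\max_i |a_i|)^h \leq B$ is equivalent to $\max_i |a_i| \leq B^{1/h}$. Since the $a_i$ are integers, this is the same as $|a_i| \leq \lfloor B^{1/h} \rfloor$ for every $i$. So the map $x \mapsto (a_1,\ldots,a_h)$ is a bijection from the set in question onto the integer box $\{-\lfloor B^{1/h}\rfloor, \ldots, \lfloor B^{1/h}\rfloor\}^h$. That box has $(2\lfloor B^{1/h}\rfloor + 1)^h$ points.

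For $B \geq 1$ we have $B^{1/h} \leq 2\lfloor B^{1/h}\rfloor + 1 \leq 3B^{1/h}$. Raising to the $h$-th power gives the count $\asymp_h B$, with constants between $1$ and $3^h$.

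\emph{The prime ideals.} Let $k$ have degree $h$ (the statement uses $\Ocal_k$; the argument does not depend on which number field). Every prime ideal $\pfrak \mid g$ lies above a rational prime $p \mid g$. Above each rational prime there are at most $[k:\Q] = h$ prime ideals, because $\sum_{\pfrak \mid p} e_\pfrak f_\pfrak = h$. So the number of prime ideals dividing $g$ is at most $h\,\omega(g)$, where $\omega(g)$ counts the distinct rational primes dividing $|g|$.

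It remains to recall the classical bound $\omega(g) \ll \log g/\log\log g$ (we may take $|g| \geq 3$; small $g$ only affect constants). The argument runs as follows. If $\omega(g) = r$, then $|g|$ is at least the product of the first $r$ primes, which is $\geq r!$, and also at least $2^r$. Hence $\log |g| \geq \log r! \gg r \log r$. The function $r \mapsto r\log r$ is increasing, and inverting $r\log r \ll \log|g|$ gives $r \ll \log|g|/\log\log|g|$.

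The only point needing care is this last inversion. Suppose, for contradiction, that $r \geq C\log|g|/\log\log|g|$ with $C$ large. Then $\log r \geq \tfrac12 \log\log|g|$ once $|g|$ is large, and so $r\log r \geq \tfrac{C}{2}\log|g|$, which contradicts $r\log r \ll \log|g|$. Bounded values of $g$ are absorbed into the implied constant.
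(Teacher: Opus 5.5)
Your proof is correct and follows essentially the same route as the paper. The count comes from the bijection between $\{x\in\calO_K : H_K(x)\le B\}$ and the integer box in integral-basis coordinates. The prime-ideal bound comes from the fact that at most $h$ prime ideals lie above each rational prime $p\mid g$, combined with $\omega(g)\ll \log g/\log\log g$; the paper cites this as standard, while you also derive it, along with the explicit constants $1$ and $3^h$ in the $\asymp_h B$ claim.
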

\begin{proof}
For the first claim, it suffices to observe that if $H_K(x)\leq B$ and we write $x = a_1w_1+\dots+a_hw_h$ for the fixed integral basis $w_1,\ldots,w_h$ of $\Ocal_K$, then $|a_i|\leq B^{1/h}$ for each $i$. Thus, 
\begin{equation*}
    \#\{x\in \calO_K: H_K(x)\leq B\} = \{(a_1,\dots,a_h)\in \Z^h: |a_i|\leq B^{1/h} \forall i\} = (2\lfloor B^{1/h}\rfloor +1)^h. 
\end{equation*}
The second claim uses the fact that given any rational prime $p$, at most $h$ distinct prime ideals $\pfrak$ in $\Ocal_K$ divide $p$ (the extremal situation occurs when $p$ splits completely in $\Ocal_K$)  \cite[Ch. 4 \S 2 Cor. 2]{Nar90}.  Thus the claim follows, from the standard fact that the number of distinct (rational) prime divisors of $g$ is $\ll \log g/ \log \log g$.     
\end{proof}

\section{Original strategy and Noether's lemma}\label{sec_Cohen_original}

A subtle  gap in the argument recorded for Theorem \ref{thm_Cohen} for $n \geq 2$ arises in \cite[Lemma 4.2]{Coh81}. To describe the gap, we temporarily restrict to the case $K=\Q$ for simplicity, and consider the following statement (which we amend with an additional hypothesis in Proposition \ref{prop_Cohen_corrected_appendix}).
 
\begin{statement}[over $\Q$]\label{statement_Cohen}
    
Let $F(Y,X_1,\ldots,X_n) \in \Z[Y,X_1,\ldots,X_n]$ be a squarefree polynomial of total degree $D$. Let $\Omega_F$ be the splitting field of $F(Y,\bX)$ over $\Q(\bX)$ and $L_F:=\Omega_F \cap \overline{\Q}$.  For any prime $p$, for any prime ideal $\mathfrak{P}$ in $\calO_{L_F}$ that divides $p$, let $L_p$ denote the finite residue field $L_F/\mathfrak{P}$.

\begin{enumerate}[(i)]
\item If $n = 1$, \cite[Lemma 4.2(i)]{Coh81} states there exists a finite   set $\mathcal{E}$ of rational primes with $|\mathcal{E}| \ll_{n,D} \log\|F\|/\log\log \|F\|$ such that  for each  prime $p \not\in \mathcal{E}$, the splitting field $\Omega_p$ of $F(Y,X_{1})$ over $L_{p}(X_{1})$ is a regular extension of $L_p$.
\item     If $n \geq 2$,    for each rational prime $p$ and $(x_2,\ldots,x_n) \in \F_p^{n-1}$, let $\Omega_{x_{2},...,x_{n},p}$ denote the splitting field of $F(Y,X_{1},x_{2},...,x_{n})$ over $L_{p}(X_{1})$.
Let 
\begin{align*} 
M(p) = \#\{ (x_{2},\ldots,x_{n})\in\mathbb{F}_{p}^{n-1}: \;&
\text{$
\G(\Omega_{x_{2},...,x_{n},p}, L_{p}(X_{1}))\neq \G(\Omega_F, \Q(\bfX)),$} \\ &\text{or $\Omega_{x_{2},...,x_{n},p}$ is not a regular extension of $L_p$}\}.
\end{align*}
Then \cite[Lemma 4.2(ii)]{Coh81} states there exists a finite   set $\mathcal{E}$ of exceptional primes with $|\mathcal{E}| \ll_{n,D} \log\|F\|/\log\log \|F\|$ such that  for each  prime $p \not\in \mathcal{E}$, 
\beq\label{Cohen_false_claim}
M(p) \ll_{n,D}p^{n-2}.
\eeq
\end{enumerate}

\end{statement}
  We provide a counterexample to (\ref{Cohen_false_claim}) for $n=2$.  Let 
\beq\label{Cohen_counterexample}
F(Y,X_{1},X_{2},X_{3})=(Y-X_{1})^{2}-X_{2}^{2}-X_{3}^{2}.
\eeq
Fix any prime $p \geq 3$. For all pairs $(x_2,x_3) \in \F_p^2$ such that
$x_{2}^{2}+x_{3}^{2}$ is a square modulo $p$, then 
\[
F(Y,X_{1},x_{2},x_{3})=(Y-X_{1})^{2}-x_{2}^{2}-x_{3}^{2}
\]
is reducible over $\F_p$. If true, (\ref{Cohen_false_claim}) would imply that for all $p$ sufficiently large,  $M(p) =O_n(p)$. However,  $M(p) \gg p^2$ for all sufficiently large $p$, since  $x_{2}^{2}+x_{3}^{2}$ is a square modulo $p$  for roughly half the pairs $(x_2,x_3) \in \F_p^2$. To see this, observe that the number of $x_2,x_3$ for which $x_2^2 + x_3^2$ is a square modulo $p$ is 
\[ \frac{1}{2} \sum_{x_2,x_3 \modd{p}} (\chi_p(x_2^2+x_3^2) + 1 ) =: \frac{1}{2}S(\chi_p)+ \frac{1}{2} p^2 \gg p^2,\]
 where $\chi_p (\cdot) $ is the Legendre symbol modulo $p$. The last inequality follows since $|S(\chi_p)|\ll p^{3/2}$, by summing over $x_2$ with square-root cancellation, then over $x_3$ trivially \cite[Thm. 2.2]{Kat02}. Consequently, (\ref{Cohen_false_claim}) cannot hold for this example. 
Next we describe the source of the difficulty, and amend it with an additional hypothesis.

\subsection{Noether's lemma}
Noether's Lemma is a classical result that is used to detect when a divisibility property of a polynomial (e.g. being reducible, or splitting completely) holds over the algebraic closure of a given field; it is particularly useful for studying this property for specializations, and  over $\F_p$ for primes $p$ varying outside a finite exceptional set. We need a refined form of Noether's Lemma, which we cite from \cite[Lemma 2.6]{BPW25x}, based on \cite[Ch. V Thm. 2A]{Sch76}.

\begin{defin}   Let $\Kcal$ be a field, and $\overline{\Kcal}$ a given algebraic closure. Let $F(Y,\bX) \in \Kcal[Y,X_1,\ldots,X_n]$. Let $e$ denote a multi-degree $e = (e_0,e_1,\ldots,e_n)$ with non-negative integral entries, and set $|e| = e_0+e_1+ \cdots +e_n $. For a given multi-degree $e$ with  $1 \leq |e| < \deg F$,  we say that $F(Y,\bX)$ satisfies  divisibility condition $\mathcal{D}(e)$ over $\overline{\Kcal}$ if there exists a factorization $$F(Y,\bX) = G(Y,\bX)H(Y,\bX)$$
    where $G$ and $H$ lie in $\overline{\Kcal}[Y,X_1,\ldots,X_n]$, $\deg H < \deg F$, and $\deg_Y G \leq e_0,$ $\deg_{X_j} G \leq e_j$ for $j = 1,\ldots, n$. 
\end{defin}
For example,   $F(Y,\bX) \in \Kcal [Y,X_1,\ldots,X_n]$ with total degree $D >1$ is absolutely irreducible (that is, irreducible over $\overline{\Kcal}$) precisely when $F$ does not satisfy condition $\mathcal{D}(e)$ for any multi-degree $e$ with $1 \leq |e| < \deg F$.

\begin{lem}[Variant of Noether's Lemma]\label{lemma_Noether}
    Let $\Kcal$ be a field. Fix $D \in \Z$ with $D\geq 2$.
    
(i) Let $\mathcal{D}(e)$ be a divisibility condition for a fixed multi-degree $e =(e_0,e_1,\ldots,e_n)$ with $1 \leq |e|<D$. Then there exist   forms $G_1,...,G_s$ in variables $(A_{i_0,...,i_n})_{i_0+\cdots+i_n\leq D}$ such that a polynomial $$F(Y,X_1,\ldots,X_n) = \sum_{i_0+\cdots+i_n\leq D} a_{i_0,...,i_n} Y^{i_0}X_1^{i_1}\cdots X_n^{i_n}$$
   \beq\label{Noether_condition}\text{satisfies $\mathcal{D}(e)$ over $\overline{\Kcal}$ or is of degree $<D$,}
   \eeq
   if and only if $$G_j((a_{i_0,...,i_n})) = 0, \forall j=1,...,s.$$
The forms $G_1,...,G_s$ depend only on $n, D$ and $e$, and are independent of the field $\Kcal$ in the sense that if $\mathrm{char}(\Kcal)=0$ they have rational integer coefficients and if $\mathrm{char}(\Kcal) = p\neq 0$, the polynomials are obtained by reducing the integral coefficients modulo $p$. 
Moreover, $s \ll_{n,D,e}1$, and $\deg G_j \ll_{n,D,e} 1$ for all $j=1,\ldots,s$. If $\mathrm{char}(\Kcal)=0$, 
\[ \|G_j\| \ll_{n,D,e} 1\] 
  for all $j=1,\ldots,s$. 

  (ii) The same result as (i) holds if (\ref{Noether_condition}) is replaced by: is reducible over $\overline{\Kcal}$ or is of degree $\deg F < D$.
\end{lem}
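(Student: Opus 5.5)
The plan is to treat the coefficients $(a_{i_0,\ldots,i_n})$ of a polynomial of degree at most $D$ as coordinates on an affine space $\Abb^N$, with $N$ depending only on $n,D$. We then show that the set of coefficient vectors satisfying (\ref{Noether_condition}) is the image of a projection of a Zariski-closed set defined over $\Z$. Elimination theory then yields the forms $G_j$.

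\emph{Step 1: parametrize the condition.} Write $\deg F \le D$. Condition $\mathcal{D}(e)$ asks for $G$ with $\deg_Y G\le e_0$, $\deg_{X_j}G\le e_j$, and for $H$ with $\deg H\le D-1$, such that $F=GH$. The alternative is that $F$ has degree $<D$, which is equivalent to the vanishing of all top-degree coefficients of $F$. The cleanest move is to homogenize. View $G$ and $H$ as polynomials with unknown coefficient vectors $\mathbf{g}$ and $\mathbf{h}$ in bounded-dimensional affine spaces. The identity $F=GH$ is then a system of bilinear equations $a_{\mathbf{i}}=\sum g_{\mathbf{j}}h_{\mathbf{k}}$ with integer coefficients. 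The set
\[ W=\{(\mathbf{a},\mathbf{g},\mathbf{h}) : \mathbf{a}=\mathbf{g}\ast\mathbf{h}\}\]
is defined over $\Z$. We must exclude the trivial factorizations. Requiring $\deg H<\deg F$ forces $G$ to be nonconstant whenever $\deg F=D$. The degenerate case $G$ constant can only occur when $\deg F<D$, and that case is already part of the disjunction. So the disjunction in (\ref{Noether_condition}) is exactly the projection of $W$ to the $\mathbf{a}$-coordinates, together with the top-degree-vanishing locus.

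\emph{Step 2: closedness via projectivity.} The map $(\mathbf{g},\mathbf{h})\mapsto \mathbf{g}\ast\mathbf{h}$ is bihomogeneous. The condition ``$F=GH$ with $\mathbf{g}\neq 0,\mathbf{h}\neq0$'' is invariant under rescaling $\mathbf{g}$ and $\mathbf{h}$ separately, up to rescaling $F$. Hence we can instead consider the incidence set of $([\mathbf{a}],[\mathbf{g}],[\mathbf{h}])\in \Pbb^{N-1}\times\Pbb^{a}\times\Pbb^{b}$ where the $2\times 2$ minors expressing $\mathbf{a}\parallel \mathbf{g}\ast\mathbf{h}$ vanish. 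This incidence set is closed and defined over $\Z$. The fundamental theorem of elimination theory says projection along projective factors is closed. Concretely, via resultant systems as in \cite[Ch. V]{Sch76}, it produces integer forms $G_1,\ldots,G_s$ in $\mathbf{a}$ whose common zeros over any algebraically closed field are exactly the projection. These forms arise from a universal construction (the Macaulay/Kronecker resultant system) carried out over $\Z$. Their number, degrees and heights therefore depend only on $n,D,e$, which gives $\|G_j\|\ll_{n,D,e}1$.

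\emph{Step 3: independence of $\Kcal$.} Because elimination over $\Z$ commutes with base change to any field, the same forms reduced mod $p$ cut out the image over $\overline{\F}_p$. A factorization over $\overline{\Kcal}$ corresponds to a $\overline{\Kcal}$-point of the incidence variety, so the criterion holds for all $\Kcal$. The main obstacle is this base-change compatibility: set-theoretic images need not be compatible with reduction mod $p$ in general. I would handle it by using the explicit Kronecker elimination of \cite{Sch76}, which is valid over any field with integer coefficients independent of characteristic, rather than an abstract closedness argument.

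For (ii), note that reducibility over $\overline{\Kcal}$ means $F=GH$ with $1\le \deg G<\deg F$. If $\deg F=D$, this is the union over the finitely many multidegrees $e$ with $1\le|e|<D$ of the conditions $\mathcal{D}(e)$. The union of finitely many Zariski-closed sets is cut out by all products $G^{(e_1)}_{j_1}\cdots G^{(e_r)}_{j_r}$, one factor from each family. These products still have integer coefficients, and their number, degrees and heights are bounded in terms of $n,D$. This proves (ii).
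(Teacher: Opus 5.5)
Your argument for (i) is sound. It is essentially the elimination-theoretic route behind the source the paper relies on; the paper gives no proof of its own, citing \cite[Lemma 2.6]{BPW25x} and \cite[Ch.~V, Thm.~2A]{Sch76}.

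\begin{itemize}
\item The projection of your incidence set is exactly the locus ``$\mathcal{D}(e)$ or $\deg F<D$''. The pair $G=1$, $H=F$ absorbs $\deg F<D$. For $\deg F=D$, the requirement $\deg H\le D-1$ forces $G$ nonconstant, and $|e|<D$ forces $\deg G<D$.
\item The bounds on $s$, $\deg G_j$ and $\|G_j\|$ are automatic, since the forms depend only on $(n,D,e)$.
\item The base-change issue you single out as the main obstacle is harmless here. $Z\to\Pbb^{N-1}_{\Z}$ is proper, so its image is a closed set $V_+(G_1,\ldots,G_s)$ with forms $G_j\in\Z[\mathbf{A}]$. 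For $\mathbf{a}\in\overline{\Kcal}^N\setminus\{0\}$, all $G_j(\mathbf{a})$ vanish (coefficients read in $\overline{\Kcal}$) iff $[\mathbf{a}]$ lies over this closed set. That holds iff the fibre over $[\mathbf{a}]$ is nonempty, which by the Nullstellensatz means a $\overline{\Kcal}$-point $([\mathbf g],[\mathbf h])$ exists. So compatibility with reduction mod $p$ comes for free; Kronecker's explicit elimination is only needed to avoid scheme language.
\end{itemize}

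The genuine gap is in (ii). For $\deg F=D$, reducibility over $\overline{\Kcal}$ is \emph{not} the union of the conditions $\mathcal{D}(e)$ with $1\le|e|<D$. The reason is that $|e|=e_0+\cdots+e_n$ bounds the sum of the partial degrees of $G$, and this sum can be as large as $(n+1)\deg G$.

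\begin{itemize}
\item Counterexample: take $D=2$. The admissible $e$ have $|e|=1$, so $G$ would have to be a nonconstant polynomial in a single variable. But $F=Y^2-X_1^2=(Y-X_1)(Y+X_1)$ is reducible in every characteristic and has no such divisor. Hence the products of forms you build do not all vanish at this $F$.
\item You cannot repair this by enlarging the boxes. Once $|e|\ge D$, the definition of $\mathcal{D}(e)$ admits the trivial factorization $G=F$, $H$ constant, whenever $F$ fits in the box. For example, $YX_1+1$ satisfies $\mathcal{D}((1,1))$.
\end{itemize}

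The correct fix is to bound total degrees. Rerun your Steps 1--3 with $G$ and $H$ both ranging over polynomials of total degree $\le D-1$.

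\begin{itemize}
\item If $\deg F=D$, a factorization $F=GH$ with $\deg G,\deg H\le D-1$ forces $1\le\deg G\le D-1$, so it is exactly a nontrivial factorization.
\item If $\deg F<D$, the pair $(1,F)$ is admissible.
\end{itemize}

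The closedness and base-change arguments are unchanged, and no union over $e$ is needed. The remark just before the lemma, which characterizes absolute irreducibility via $\mathcal{D}(e)$ with $1\le|e|<\deg F$, contains the same slip. So (ii) has to be proved directly in this way rather than deduced from (i).
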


\begin{rem}\label{remark_Noether_form}
  To be   precise,  Lemma \ref{lemma_Noether}  provides a collection of forms $\{G_1,...,G_s\}$   rather than a single form. Over   any field $\mathcal{K}$ of characteristic zero, by setting $B = G_1^{2 \ell_1} + \cdots + G_s^{2\ell_s}$ for appropriate  $\ell_i \geq 1$ we obtain a single form that vanishes in $\mathcal{K}$  if and only if each form in   $\{G_1,\ldots,G_s\}$ does, and with degree $2r$, where $r$ is the $\mathrm{lcm}$ of the degrees of $G_1,\ldots,G_s$. We will denote such a form $B_\red$ when testing for reducibility and by $B_\lin$ when testing for having a linear factor.  We are not afforded this luxury over certain finite fields, as the sum $\sum_i G_i^{2\ell_i}$ might vanish in the finite field even if not all $G_i$ do; however this issue only occurs at finitely many primes (depending on $\mathcal{D}(e)$). Suppose $p$ is such a prime: in any argument where we consider the vanishing of $B$ (most commonly $B_\red$ or $B_\lin$), it is equivalent to check that  $G_{i}$ vanishes over $\F_p$ for every $i$.  For brevity, we will refer consistently to the output of Lemma \ref{lemma_Noether} as ``a form'', with the convention that in positive characteristic settings, this indicates each form in the collection is tested individually for vanishing.
\end{rem}

\subsection{Relation to Noether's lemma}
The method presented to prove Statement \ref{statement_Cohen} in \cite[Lemma 4.2(ii)]{Coh81} considers (in Cohen's notation) the minimal polynomial, say $g(Y,\bX)$, of the splitting field $\Omega_F$ of $F(Y,\bX)$ over $\overline{\Q}(\bX)$ and writes it as a polynomial in $Y$ and $X_1$ in an expansion of the form 
\[
    g(Y,X_{1},X_{2},...,X_{n})=\sum_{i,j}g_{i,j}(X_{2},...,X_{n})Y^{i}X_{1}^{j}.
    \]
    Then the argument considers $B_{\red}(\{g_{i,j}(X_{2},...,X_{n})\})$ where $B_\red$ is a form provided by Noether's  Lemma \ref{lemma_Noether}   with the property that 
    \[ \text{$B_{\red}(\{g_{i,j}(x_{2},...,x_{n})\}) \neq 0$ iff $g(Y,X_1,x_2,\ldots,x_n)$ is irred. over $\overline{\Q}$ and has degree $=\deg g(Y,\bX)$.}\]
    It is stated  that $B_{\red}(\{g_{i,j}(X_{2},...,X_{n})\})\not\equiv 0$, and this is important to the remainder of the argument. However, under the hypothesis of \cite[Lemma 4.2(ii)]{Coh81}, it can occur that the polynomial $B_{\red}(\{g_{i,j}(X_{2},...,X_{n})\})\equiv 0$; for example this occurs for $F(Y,\bX)$ defined in (\ref{Cohen_counterexample}).

    To see that $B_{\red}(\{g_{i,j}(X_2,...,X_n))\equiv 0$ for (\ref{Cohen_counterexample}), recall Noether's Lemma (ii) provides a form $B_\red$ to test irreducibility over $\overline{K}$ for polynomials of degree $\leq D$, and the construction of $B_\red$ is global in the sense that if  $K$ has characteristic zero then $B_\red$ has rational integer coefficients, and if $K$ has characteristic $p$ then it is obtained by reducing the integral coefficients mod $p$.
    
   Regarding the  example (\ref{Cohen_counterexample}), we can  take $g(Y,\bX) = F(Y,\bX)$. Let $p$ be a prime and consider for any pair $(x_2,x_3) \in \F_p^2$ the specialization $$F(Y,X_1,x_2,x_3) = (Y-X_1)^2 - x_2^2-x_3^2.$$
    This specialization is reducible over the algebraic closure $\overline{\F}_p$ for every choice of $(x_2,x_3) \in \F_p^2$.  Consequently, for every prime $p$ and every $(x_2,x_3)\in \F_p^2$, 
    $$B_{\red}(\{g_{i,j}(x_2,x_3)\})= 0 \bmod p,$$
    so $B_{\red}(\{g_{i,j}(X_2,X_3)) \con 0 \modd{p}$ for all prime $p$.
    Since $B_\red$ is constructed globally, we must have that $B_{\red}(\{g_{i,j}(X_2,X_3)\})\equiv 0$  as a polynomial in $\Z[X_2,X_3]$.

\subsection{Statement \ref{statement_Cohen}   is true with an additional hypothesis}\label{sec_Cohen_add_hyp}
Fortunately, a version of Statement \ref{statement_Cohen}   is true  if we include an additional hypothesis that $B_\red$   does not vanish everywhere. For clarity, we record this precisely.
We work in a general setting over a number field $K$ and with $F(Y,\bfX)\in\calO_{K}[Y,\bfX] $. We set the following notation, which we will use throughout our discussion of Theorem \ref{thm_Cohen}.
 Let us denote by $\Omega_F$ the splitting field of $F(Y,\bX)$ over $K(\bfX)$; for every $\bfx\in\calO_{K}^{n}$ we denote by $\Omega_{\bfx}$ the splitting field of $F(Y,\bfx)$ over $K$.
Let $L_F=\Omega_F\cap\overline{K}$. We have the intermediate extension $K(\bfX)\subset L_F(\bfX)\subset\Omega_F$. We will  let $M_F(Y,\bfX) \in L_F(\bfX)[Y]$ be the minimal polynomial of $\Omega_F$ over $L_F(\bfX)$; we may assume that $M_F(Y,\bX)$ is monic in $Y$ and irreducible over $L_F(\bX)$. For each prime ideal  $\mathfrak{p}$ in $\calO_K$, let $\mathfrak{P}$ be any prime ideal dividing $\mathfrak{p}$ in $\calO_{L_F}$;  let $K_\mathfrak{p}$ denote the finite field extension $K/\mathfrak{p}$ with order $|K_\mathfrak{p}|$, and let $L_\mathfrak{p}$ denote the finite field extension $L_F/\mathfrak{P}$.  

 For any $i_0 \in \{1,\ldots,n\}$ and $I' :=\{1,\ldots,n\}\setminus \{i_0\}$, we can  expand
$M_F(Y,\bX)$ in terms of $Y,X_{i_0}$ as 
\[ M_F(Y,X_{i_0},\bX_{I'}) = \sum_{\substack{\ell,m\\ \ell+m \leq D}} a_{\ell,m}(\bX_{I'})Y^\ell X_{i_0}^m.\]
     Let $B_{\red,i_0}^{M_F}(\bX_{I'}) \in \Z[\bX_{I'}]$  denote the form produced by Noether's Lemma  \ref{lemma_Noether} (ii) applied with respect to  the algebraic closure $\overline{L_F} = \overline{\Q}$,
          with the property 
     \[      \text{$B_{\red,i_0}^{M_F}( \bx_{I'})=0 \Leftrightarrow M_F(Y,X_{i_0},\bx_{I'})$ is reducible over $\overline{\Q}$ or $\deg M_F(Y,X_{i_0},\bx_{I'}) <  \deg_{Y,X_{i_0}} M_F$. }\]
If $n=1$, we take $i_0=1$ (and $I'$ is the empty set), so that $B_{\red,1}^{M_F}$ is an integer, and is nonzero if and only if $M_F(Y,X_1)$ is irreducible over $\overline{\Q}$. Following Remark \ref{remark_Noether_form}, we note that over $\overline{L_F}=\overline{\Q}$ we can take $B_{\red,i_0}^{M_F}(\bX_{I'})$ to be a single form.

\begin{prop}[Correction to Statement \ref{statement_Cohen} over $K$]\label{prop_Cohen_corrected_appendix}
Let $F(Y,\bX) \in \calO_K[Y,X_1,\ldots,X_n]$ be a squarefree polynomial of total degree $D$.  Let $\Omega_F$ be the splitting field of $F(Y,\bX)$ over $K(\bX)$ and  $L_F=\Omega_F \cap \overline{K}$. Let $M_F(Y,\bX) \in L_F(\bfX)[Y]$ denote the minimal polynomial of $\Omega_F$ over $L_F(\bX)$.
 \begin{enumerate}[(i)]
\item  Suppose $n = 1$. {\bf Assume that $B_{\red,1}^{M_F} \neq 0$.} There exists a finite   set $\mathcal{E}$ of exceptional prime ideals in $\mathcal{O}_K$ with $|\mathcal{E}| \ll_{n,D} \log\|F\|/\log\log \|F\|$ such that  for each  prime ideal $\mathfrak{p} \not\in \mathcal{E}$, the splitting field $\Omega_\pfrak$ of $F(Y,X_{1})$ over $L_{\mathfrak{p}}(X_{1})$ is a regular extension of $L_\pfrak$.  
   
 \item  Suppose $n\geq 2$.
For each index $i_0 \in \{1,\ldots,n\}$,  set $I'=\{1,\ldots,n\}\setminus \{i_0\}$. Consider $M_F(Y,\bX)$ as a polynomial in $Y,X_{i_0}$ with coefficients that are polynomials in $\bX_{I'}$.
          {\bf Assume that for some $i_0 \in \{1,\ldots,n\}$, $B_{\red,i_0}^{M_F}(\bX_{I'})$ is not identically zero} as a polynomial in $\bX_{I'}$.
       For each prime ideal  $\mathfrak{p}$ in $\calO_K$, and tuple $\bx_{I'} \in K_{\mathfrak{p}}^{n-1}$, let $\Omega_{\bx_{I'},\mathfrak{p}}$ denote the splitting field of $F(Y,X_{i_0},\bx_{I'})$ over $L_{\mathfrak{p}}(X_{i_0})$. Let
       \begin{align*} 
       M(\mathfrak{p}) = \# \{ \bx_{I'}\in K_{\mathfrak{p}}^{n-1} : & \G(\Omega_{\bx_{I'},\mathfrak{p}}, L_{\mathfrak{p}}(X_{i_0}))\neq \G(\Omega_F, L_F(\bfX)), 
       \\ & \text{or $\Omega_{\bx_{I'},\mathfrak{p}}$ is not a regular extension of $L_\pfrak$.}\} 
\end{align*}
Then there exists a finite   set $\mathcal{E}$ of exceptional prime ideals in $\calO_K$ with $|\mathcal{E}| \ll_{n,D} \log\|F\|/\log\log \|F\|$ such that  for each  prime ideal $\mathfrak{p} \not\in \mathcal{E}$, 
$$M(\mathfrak{p}) \ll_{n,D}|K_{\mathfrak{p}}|^{n-2}.$$
\end{enumerate}
    \end{prop}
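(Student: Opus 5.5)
Since $\Omega_F = L_F(\bX)[Y]/(M_F)$, the plan is to transfer everything to the polynomial $M_F$ and use Noether's Lemma \ref{lemma_Noether} (ii) to control absolute irreducibility of its reductions and specializations. After multiplying by a suitable element of $\calO_{L_F}[\bX]$ and a rational integer of size $\ll \|F\|^{O(1)}$, I will take $M_F$ to have coefficients in $\calO_{L_F}$, with degree and height bounded polynomially in terms of $D$ and $\|F\|$; this uses the usual constructions via resolvents, and $[L_F:K]\le D!$. I will also fix a discriminant-type quantity $\Delta(\bX)$: the discriminant of $F$ in $Y$ times the leading data of $M_F$, together with the expressions of the roots of $F$ as polynomials in a root $\gamma$ of $M_F$ (with denominators) and of $\gamma$ as a combination of roots of $F$. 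The standard specialization fact is then this. Let $\bx_{I'}$ and $\mathfrak P$ be such that the reduction of $M_F(Y,X_{i_0},\bx_{I'})$ modulo $\mathfrak P$ keeps its $(Y,X_{i_0})$-degree and is irreducible over $\overline{L_\pfrak}$, and such that $\Delta(X_{i_0},\bx_{I'})\not\equiv 0$ mod $\mathfrak P$. Then $L_\pfrak(X_{i_0})[Y]/(\overline{M_F})$ is the splitting field $\Omega_{\bx_{I'},\pfrak}$. It is regular over $L_\pfrak$, because absolute irreducibility forces $\overline{L_\pfrak}\cap\Omega_{\bx_{I'},\pfrak}=L_\pfrak$. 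Its Galois group has order $\deg_Y M_F=|\G(\Omega_F,L_F(\bX))|$ and embeds into that group by the usual reduction of automorphisms, so the two groups are isomorphic.

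For (i), $B_{\red,1}^{M_F}$ is a nonzero element of $\calO_{L_F}$ (a polynomial in the coefficients of $M_F$). Its norm is a nonzero rational integer of size $\|F\|^{O_{n,D}(1)}$. Let $\mathcal{E}$ be the primes of $\calO_K$ dividing this norm, the norm of the leading coefficients, or a nonzero coefficient of $\Delta$. Lemma \ref{lemma_count_in_ring} then gives $|\mathcal{E}|\ll \log\|F\|/\log\log\|F\|$. For $\pfrak\notin\mathcal{E}$, Noether's lemma in characteristic $p$ (whose forms are reductions of the integral forms, tested one at a time per Remark \ref{remark_Noether_form}) shows that $\overline{M_F}$ is absolutely irreducible of full degree, and the fact above finishes the argument.

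For (ii), fix $i_0$ with $B^{M_F}_{\red,i_0}(\bX_{I'})\not\equiv0$. The polynomial $P(\bX_{I'}):=B^{M_F}_{\red,i_0}(\bX_{I'})\cdot\delta(\bX_{I'})$ is nonzero, where $\delta$ is a nonzero coefficient of $\Delta$ viewed as a polynomial in $X_{i_0}$. Its degree is $O_{n,D}(1)$, and its coefficients lie in $\calO_{L_F}$ with height $\|F\|^{O(1)}$. Choose one nonzero coefficient $c$ of $P$ and put into $\mathcal E$ the primes of $\calO_K$ dividing $N_{L_F/\Q}(c)$. This gives the bound on $|\mathcal{E}|$ again by Lemma \ref{lemma_count_in_ring}. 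For $\pfrak\notin\mathcal{E}$, the reduction of $P$ modulo $\mathfrak P$ is a nonzero polynomial over $L_\pfrak$. Every $\bx_{I'}\in K_\pfrak^{n-1}$ counted by $M(\pfrak)$ satisfies $P(\bx_{I'})=0$ in $L_\pfrak$, since otherwise the specialization fact applies. Schwartz--Zippel (Lemma \ref{lemma_Schwartz_domain}) over $L_\pfrak$ with $S=K_\pfrak$ then gives $M(\pfrak)\le \deg P\cdot|K_\pfrak|^{n-2}\ll_{n,D}|K_\pfrak|^{n-2}$.

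The main obstacle is the specialization fact: showing that nonvanishing of $P$ really forces the correct Galois group and regularity. One must check that $L_\pfrak(X_{i_0})[Y]/(\overline{M_F})$ is normal and contains all roots of $\overline F$; this is what the rational expressions between the roots, with invertible denominators, are for. There is a second, related difficulty with the positive-characteristic Noether forms. I expect to handle it by requiring that at least one individual form $G_j$ has a nonzero specialization whose coefficient avoids $\mathfrak P$, rather than working with the sum of squares.
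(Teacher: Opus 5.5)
Your proposal is correct and follows essentially the same route as the paper: Noether's form for $M_F$ in the variables $Y,X_{i_0}$, an exceptional set of primes dividing a nonzero coefficient, absolute irreducibility of the reduced specialization giving both regularity and the full Galois group, and Schwartz--Zippel to count the remaining $\bx_{I'}$. Your extra factor $\Delta$ (discriminant, leading coefficients, denominators) makes rigorous the identification $\Omega_{\bx_{I'},\pfrak}=L_\pfrak(X_{i_0})(\bar\gamma)$, which the paper asserts without justification. Your worry about the sum-of-powers form in characteristic $p$ is unnecessary, since a nonzero value of $\sum_j \bar G_j^{2\ell_j}$ already forces some $\bar G_j$ to be nonzero.
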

 
\begin{proof}
Consider first the case $n \geq 2$. For simplicity, we suppose the hypothesis is true for $i_0=1$, and let $\bX' =(X_2,\ldots,X_n)$. Let  $B_\red^{M_F}(\bX')$ denote the form produced by Noether's lemma, as defined above (for $i_0=1$)

 In the proof of \cite[Lemma 4.2(ii)]{Coh81}, Cohen begins by studying, in the original notation, the minimal polynomial $g$ of $\Omega$ over $\overline{K}(\bX)$. In our notation, this is the polynomial $M_F(Y,\bX)$. If $B_\red^{M_F}(\bX')$ is not identically zero, then the proof of Proposition \ref{prop_Cohen_corrected_appendix} follows verbatim as in Cohen's original argument. Thus the additional hypothesis included in this lemma suffices for Cohen's argument to proceed.

For completeness, we now outline the argument. For simplicity we restrict to the case $K=\Q$ and to those polynomials $F$ for which $L_F := \Omega_F\cap \overline{\Q}$ satisfies $L_F = \Q$. The more general case is   analogous, and Cohen's paper presents the full setting.
 In the case we consider, $M_F(Y,\bX) \in \Z[Y,\bX]$ and for any rational prime $p$, $L_p=\F_p$, although we will continue to use the notation $L_F$ and $L_p$ several times below. 

 By hypothesis $B_{\red}^{M_F}(\bX')$ is not identically zero as a polynomial in $\bX' = (X_2,\ldots,X_n)$.   Consequently, for all rational primes $p$ that do not divide the gcd of the coefficients of $B_{\red}^{M_F}(\bX')$, the reduction of $B_{\red}^{M_F}(\bX')$ modulo $p$ is a   polynomial in $\F_p[\bX']$, not identically zero. Denote this gcd temporarily by $g$,  and denote by $\|B_\red^{M_F}\|$ the maximum absolute value of any coefficient   in the polynomial $B_{\red}^{M_F}(\bX')$. Note that $g \leq \|B_\red^{M_F}\|$, and by Lemma \ref{lemma_Noether}, $\log \|B_\red^{M_F}\| \ll_{n,D} \log \|F\|$. Upon defining the exceptional set $\mathcal{E}$ to be the set of all primes $p|g$, it follows that $|\mathcal{E}| \ll_{n,D} \log \|F\| / \log \log \|F\|$, as claimed.  Since $B_\red^{M_F}$ is a form (recall Remark \ref{remark_Noether_form}), observe that any prime $p$ where $M_F$ vanishes identically will satisfy that $p\in \mathcal{E}$.
 
 Now suppose $p \not\in \mathcal{E}$ is fixed. Again by Lemma \ref{lemma_Noether}, for a given tuple $\bx'=(x_2,\ldots,x_n) \in \F_p^{n-1}$, if   $B_{\red}^{M_F}(x_2,\ldots,x_n)\neq 0$ in $\F_p$ then $M_F(Y,X_1,x_2,\ldots,x_n)$ is irreducible over $\overline{\F}_p$. From this we claim  it follows that $\Omega_{\bx',p}$ is a regular extension of $L_p$ with Galois group $\Gal(\Omega_F,L_F(\bX))$. 

We first show that this implies that $\Omega_{\bx',p}$ is a regular extension of $L_p$, namely that $\overline{L_p} \cap \Omega_{\bx',p}=L_p$. Let us prove the contrapositive (alternatively, see Lemma \ref{lemma_integrally_closed_implies_absirred}). Suppose that $\Omega_{\bx',p}$ is not a regular extension of $L_p$, so there exists a field $L'$ with
  $L_p\subsetneq L' \subset \overline{L_p}\cap \Omega_{\bx',p}.$ Then $L_p(X_1) \subsetneq L'(X_1)$,  so that $[L'(X_1):L_p(X_1)]\geq 2$, which implies $[\Omega_{\bx',p}: L'(X_1)]<[\Omega_{\bx',p}: L_p(X_1)]$. Since $M_F(Y,X_1,\bx')$ is irreducible over $L_p(X_1)$ and generates the extension $\Omega_{\bx',p}$, and since $[\Omega_{\bx',p}: L'(X_1)]<\deg_Y M_F$, we must have that $M_F(Y,X_1,\bx')$ is reducible over $L'$, an extension of $\F_p$, and hence it is reducible over $\overline{\F}_p$.   This establishes the first claim.

 Next, we establish that $\Omega_{\bx',p}$ has Galois group $\Gal(\Omega_F,L_F(\bX))$. Set $z(\bX) \in \overline{L_F(\bX)}$ such that $\Omega_F = L_F(\bX)(z(\bX))$; that is to say $z(\bX)$ is a root of the minimal polynomial $M_F(Y,\bX)$.   Then $\Omega_{\bx',p} = L_p(X_1)(z(X_1,\bx')).$ If $\Gal(\Omega_{\bx',p},L_p(\bX_1))\subsetneq \Gal(\Omega_F,L_F(\bX))$, then this implies that $[\Omega_{\bx',p}:L_p(\bX_1)]<[\Omega_F:L_F(\bX)]$. So, $z(X_1,\bx')$ must satisfy a strictly lower degree minimal polynomial relation over $L_p(X_1)$, say $H(Y,X_1)$, where $H(Y,X_1) \in L_p(X_1)[Y]$ with $\deg_Y H < \deg_Y M_F(Y,X_1,\bX')$. However, since $M_F(z(X_1,\bx'),X_1,\bx')=0$, this would imply that $H(Y,X_1)\mid M_F(Y,X_1,\bx')$ which is a contradiction, since by construction $M_F(Y,X_1,\bx')$ is irreducible over $\overline{\F}_p=\overline{L_p}$. So, we must have that the Galois group is $\Gal(\Omega_F,L_F(\bX))$. 
 
 Thus to complete the proof, we need only bound from above the number of $\bx' \in \F_p^{n-1}$ with $B_{\red}^{M_F}(\bx') = 0$. Since in the present case $B_{\red}^{M_F}$  is not the zero polynomial in $\F_p$, the trivial bound (Lemma \ref{lemma_Schwartz_domain}) shows that 
 \[ \#\{\bx' \in \F_p^{n-1}: B_{\red}^{M_F}(\bx') = 0\} \leq (\deg B_\red^{M_F}) p^{n-2}.\] 
 By Lemma \ref{lemma_Noether}, $\deg B_\red^{M_F} \ll_{n,D} 1$, so that this is sufficient for the conclusion of the proposition.

Consider the case $n=1$. The proof sketched above continues to apply, with the   minor modification that $B_{\red}^{M_F}$ is an integer with $\log \|B_{\red}^{M_F}\| \ll_{n,D} \log \|F\|$. The exceptional set $\Ecal$ is the set of primes $p | B_{\red}^{M_F}$, so that again $|\mathcal{E}| \ll_{n,D} \log \|F\| / \log \log \|F\|$. If $p \not\in \Ecal$, then $M_F(Y,X_1)$ is irreducible over $\overline{\F}_p$, from which it follows that $\Omega_{p}$ is a regular extension of $L_p$, by arguing as above (or see Lemma \ref{lemma_integrally_closed_implies_absirred}).  

For either $n=1$ or $n \geq 2$, the general case over a number field follows exactly the same sequence of ideas.
 \end{proof}

\subsection{The role of Proposition \ref{prop_Cohen_corrected_appendix} in the proof of Theorem \ref{thm_Cohen} and Theorem \ref{thm_Cohen_special}}

 Once the \emph{conclusion} of Proposition \ref{prop_Cohen_corrected_appendix} is obtained for a given polynomial $F$, the rest of Cohen's proof of Theorem \ref{thm_Cohen} simply can proceed. In this section, we describe the rest of the argument. After the  discussion of this section, all that remains to prove 
 the special case Theorem \ref{thm_Cohen_special} is to show that for any polynomial $F$ such that $M_F$ is strongly $n$-genuine,  the additional hypothesis of Proposition \ref{prop_Cohen_corrected_appendix} holds. This will be accomplished in \S \ref{sec_strongly_gen} and \S \ref{sec_apply_strongly_genuine}. Then in \S \ref{sec_apply_strongly_genuine}, we will begin the process of reducing the proof of Theorem \ref{thm_Cohen} in full generality to a special case (related to Theorem \ref{thm_Cohen_special}) in which the additional hypothesis of Proposition \ref{prop_Cohen_corrected_appendix} holds.
 
In this section, rather than reciting the remainder of Cohen's proof in full detail, we sketch the main ideas, restricting attention initially to the case $K=\Q$ and those polynomials $F$ for which $L_F := \Omega_F\cap \overline{\Q}$ satisfies $L_F = \Q$. 
 In the proof below, given any $\bx \in \Q^n$, let $\Omega_{F,\bx}$ denote the splitting field of $F(Y,\bx)$ over $\Q$, so that $G(\bx)=\Gal(\Omega_{F,\bx},\Q)$. Recall also from the notation of Theorem \ref{thm_Cohen} that for any $\bx' \in \Q^{n-1}$ and any prime $p \in \Q = L_F$, $\Omega_{\bx',p}$ denotes the splitting field of $F(Y,X_1,\bx')$ over $L_p(X_1)=\F_p(X_1)$.

 To reach the conclusion of Theorem \ref{thm_Cohen} (or Theorem \ref{thm_Cohen_special}) we may assume that $F$ is squarefree, for repeated factors make no effect on its Galois group $G=\Gal(\Omega_F/\Q)$. Cohen's proof lies in an application of a large sieve inequality. (Specifically the large sieve inequality is given in \cite[Lemma 4.3]{Coh81} and the application in \cite[Lemma 5.2]{Coh81}.) Let $H$ denote a subgroup of $G$ and $\mathscr{C}(H)$ the union of its conjugates in $G$. For any $\bx \in \Q^n$ and prime $p$, let $[\textrm{Frob}_{\bx,p}]$ denote the conjugacy class of the Frobenius element of $p$ in $G(\bx)$. 
 Let $\Del_F(\bX)$ denote $\disc (F(Y,\bX))$, the discriminant of $F(Y,\bX)$ over $\Q(\bX)$; note that $\Del_F(\bX) \not\con 0$ since $F$ is squarefree. For each $Y \geq 1$, set
$$P_H(\bx,Y) := \#\{p\leq Y: p\nmid \Del_F(\bx), [\textrm{Frob}_{\bx,p}]\subset G\backslash \mathscr{C}(H)\}.$$
In particular if $H\subsetneq G$ and $G(\bx) = H$, then $P_H(\bx,Y)=0$.   Define for any $Y \geq 1$:
\beq\label{Cohen_PHY_dfn} 
P_H(Y) := \sum_{p\leq Y} p^{-n}\cdot \#\{\bx\bmod p: p\ndiv \Del_F(\bx), [\textrm{Frob}_{\bx,p}]\subset G\backslash \mathscr{C}(H)\}.
\eeq
For each $H \subsetneq G$, the large sieve inequality shows that for any $Y$,
\[\sum_{\|\bx \| \leq N}(P_H(\bx,Y)-P_H(Y))^2 \leq (N^n+Y^{2n}) P_H(Y),\]
for a universal constant $C$ (depending only on the base field). Since $P_H(\bx,Y) =0$ when $G(\bx)=H$, we can deduce from this that
$$\#\{\bx\in \Z^{n},\|\bx\|\leq N:G(\bx) \cong H\} \leq C(N^n+Y^{2n}) P_H(Y)^{-1}.$$
This suggests choosing $Y=N^{1/2}$, and reduces the problem to producing a lower bound for $P_H(Y)$ for each $H \subsetneq G$. In particular, if we can obtain $P_H(Y) \geq C_0 Y/ \log Y$ for each subgroup $H$ (with $C_0$ independent of $H$), then upon summing over $H \subsetneq G$ it follows that
$$\#\{\bx\in \Z^{n},\|\bx\|\leq N:G(\bx) \not\cong G\} \leq C (N^n+Y^{2n}) (C_0Y/\log Y)^{-1} \leq CC_0^{-1} N^{n-1/2}\log N,$$
a suitable bound for Theorem \ref{thm_Cohen}.

For a fixed $H \subsetneq G$, let us write
\[ P_H(Y) = \sum_{p \leq Y} p^{-n} (N_1(p)-N_2(p)),\]
in which 
\begin{align*}
    N_1(p)&=  \#\{\bx \modd{p}:   [\textrm{Frob}_{\bx,p}]\subset G\backslash \mathscr{C}(H)\}\\
      N_2(p)&=  \#\{\bx \modd{p}: p | \Del_F(\bx), [\textrm{Frob}_{\bx,p}]\subset G\backslash \mathscr{C}(H)  \} \leq \#\{\bx \modd{p}: p | \Del_F(\bx)  \}.
\end{align*}
Now $\Del_F(\bX)$ is a polynomial in $\bX$ with $\log \|\Del_F\| \ll_{n,\deg F} \|F\|$; denote the $\gcd$ of the coefficients of $\Del_F$ by $g$ so that $g \leq \|\Del_F\|$. Then $\Del_F(\bX) \con 0 \modd{p}$ precisely when $p | g$; let $\mathcal{E}_0=\{p: p|g\}$; then $|\Ecal_0| \ll \log g / \log \log g \ll_{n,D} \log \|F\|/ \log \log \|F\|$.
For each prime $p \not\in \mathcal{E}_0$, then $N_2(p)\ll_D p^{n-1}$, by the trivial bound (Lemma \ref{lemma_Schwartz_domain}),  so that in particular $N_2(p) \leq \tfrac{1}{4D!}p^n$ for all $p \not\in \mathcal{E}_0$ with $p \gg_{n,D} 1$.

Next, write $N_1(p)=N_1'(p) + N_1''(p)$ in which 
\begin{align*}
    N_1'(p)&=  \sum_{\substack{\bx' \in \F_p^{n-1}\\\Gal(\Omega_{\bx',p},L_p(X_1)) = \Gal(\Omega_F,L_F(\bX))}}\#\{x_1 \in \F_p:   [\textrm{Frob}_{\bx,p}]\subset G\backslash \mathscr{C}(H) \}\\
      N_1''(p)&= \sum_{\substack{\bx' \in \F_p^{n-1}\\\Gal(\Omega_{\bx',p},L_p(X_1)) \neq \Gal(\Omega_F,L_F(\bX))}}\#\{x_1 \in \F_p:   [\textrm{Frob}_{\bx,p}]\subset G\backslash \mathscr{C}(H) \}.
\end{align*}
By non-negativity, to bound $N_1(p)$ from below, it suffices to bound $N_1'(p)$ from below. Let $\mathcal{E}$ denote the exceptional set provided by Proposition \ref{prop_Cohen_corrected_appendix}.
We first observe that if the conclusion of Proposition \ref{prop_Cohen_corrected_appendix} holds (replacing the original application of \cite[Lemma 4.2]{Coh81}),  then for every prime $p \not\in \Ecal$, the sum over $\bx'$ in $N_1''(p)$ is over at most $\ll_{n,D}p^{n-2}$ terms, so that the sum over $\bx'$ in $N_1'(p)$ must be over $\gg_{n,D}p^{n-1}$ terms. 
To treat $N_1'(p)$, fix any $\bx'=(x_2,\dots,x_n) \in \F_p^{n-1}$ such  that $\Gal(\Omega_{\bx',p},L_p(X_1)) = \Gal(\Omega_F,L_F(\bX)).$ For such a tuple $\bx'$, a function field version of the Chebotarev density theorem \cite[Lemma 4.4]{Coh81} shows that 
\beq\label{Cohen_Chebotarev_conclusion} 
\#\{x_1 \in \F_p:   [\textrm{Frob}_{\bx,p}]\subset G\backslash \mathscr{C}(H) \} = \frac{|G\setminus \mathscr{C}(H)|}{|G|}p + O_{n,D}(p^{1/2}).\eeq
For each proper subgroup $H\subsetneq G$, $|G \setminus \mathscr{C}(H)| \geq 1$, so summing over the relevant tuples $\bx'$ yields
\[N_1(p) \geq N_1'(p) \gg \frac{|G\setminus\mathscr{C}(H)|}{|G|}p^{n} + O_{n,D}(p^{n-1/2}) \geq  \tfrac{1}{2}\cdot \tfrac{1}{D!} p^n,\] 
say, as long as $p \gg_{n,D}1$ and $p \not\in  \Ecal$.  
Hence in total, for each $p \not\in (\Ecal \cup \Ecal_0)$ with $p \gg_{n,D} 1$, $N_1(p) - N_2(p) \geq \tfrac{1}{4D!}p^n$. We may conclude that
\[ P_H(Y) \geq \sum_{\substack{p \leq Y\\p \not\in (\Ecal \cup \Ecal_0), p \gg_{n,D} 1}}p^{-n} \cdot \tfrac{1}{4D!}p^n  = \tfrac{1}{4D!}\pi(Y)+ O_{n,D}(1+|\Ecal \cup \Ecal_0|) \geq C_0 Y/\log Y\]
for some constant $C_0=C_0(n,D)$,
as long as $Y/\log Y \gg |\Ecal \cup \Ecal_0|$. In particular, by the upper bound provided for $|\mathcal{E}|$ in Proposition \ref{prop_Cohen_corrected_appendix}, it suffices to take $Y \gg_{n,D} \log \|F\|$. This leads to the conclusion of Theorem \ref{thm_Cohen} for all $N \gg_{n,D}(\log \|F\|)^2$. For all $N \ll_{n,D}(\log \|F\|)^2$, we apply the trivial bound 
\[\# \{\bx \in \Z^n: \|\bx \| \leq N: G(\bx) \not\cong G\} \ll_n N^n \ll_{n,D}(\log \|F\|)^{2n} \ll_{n,D}(\log \|F\|)^n N^{n-1/2}(\log N),\]
which also suffices for the theorem.

For $K=\Q$ and polynomials $F$ such that $L_F=\Q$, this in fact yields better dependence on $\|F\|$ than Theorem \ref{thm_Cohen} states; thus for clarity we briefly indicate how polynomial dependence on $\|F\|$ could arise in the more general case of a number field $K$ and $L_F=\Omega_F \cap \overline{K}$. In this case, let $K'$ denote the  Galois closure of $K$ over $\Q$, and consider the compositum $K'L_F$ over $K$, which Cohen shows has   absolute discriminant $\Del_F \ll \|F\|^{c}$ for some $c=c(K,n,D)$. 
The construction given above is then generalized to this setting, and in particular the definition of $P_H(Y)$ in (\ref{Cohen_PHY_dfn}) is generalized to  a certain sum over prime ideals $\pfrak$ in $\Ocal_K$ that split completely in $K'$. For each such $\pfrak$, the outcome of the Chebotarev density theorem in (\ref{Cohen_Chebotarev_conclusion}) is an asymptotic with $(|G \setminus \mathscr{C}(H)|/|G|)\cdot p$ now replaced by
$(|S_\pfrak \setminus \mathscr{C}(H)|/|S_\pfrak|)\cdot |K_\pfrak|$,
in which $S_\pfrak$ is a certain subset of $\Gal(F(Y,\bX),K_\pfrak(\bX))$. (Precisely, $S_\pfrak$ is the subset of $\Gal(F(Y,\bX),K_\pfrak(\bX))$ whose fixed field of constants is $K_\pfrak$; $S_\pfrak$ is defined up to conjugation by an element in $G$.) In particular, for each $\pfrak$ such that $S_\pfrak \not\subset \mathscr{C}(H)$, the main term in the asymptotic is nonvanishing, so that it is at least $\geq \tfrac{1}{D!}|K_\pfrak|$. In this setting, we must now ensure that for sufficiently many $\pfrak$ considered in the sum defining $P_H(Y)$, $S_\pfrak \not\subset \mathscr{C}(H)$. 
Cohen achieves this by counting   primes with an Artin symbol condition; namely   for $Y$ sufficiently large he shows that:
 \[\#\{\pfrak \in \Ocal_K, \mathrm{Nm}(\pfrak)=|K_\pfrak| \leq Y:  \text{$\pfrak$ splits completely in $K'$,} \left[\frac{L_F,K}{\pfrak}\right] \not\subset \mathscr{C}(H)\} \geq C_1 Y/\log Y.
 \]
In particular, this is verified (by another application of a Chebotarev density theorem) for some $C_1 \gg \Del_F^{-1}$ as long as $Y > (\Del_F)^{c'}$ for some $c'=c'(n,D,K)$ \cite[Lemma 6.2]{Coh81}.  This dependence on $\Del_F$, which is only controlled polynomially by $\|F\|$, limits the large sieve argument to the case in which $N \gg \|F\|^{c''}$ for some $c''$. Then, the remaining cases with $N \ll \|F\|^{c''}$ are handled by a trivial bound, which enlarges the implicit constant in the upper bound in Theorem \ref{thm_Cohen}. This is the source of the possible polynomial dependence on $\|F\|$ in the general case of Theorem \ref{thm_Cohen} over a number field.

\section{The theory of strongly $n$-genuine polynomials }\label{sec_strongly_gen}

In this section, we prove the   key properties of strongly $n$-genuine polynomials over a  number field $k$. (See also Remark \ref{remark_fields} about arbitrary fields.) 
In preparation, suppose $f\in \calO_k[Y,Z]$ has total degree at most $D$ and  expand
    \[
    f(Y,Z)=\sum_{\substack{\ell,m\\\ell+m\leq D}}a_{\ell,m}Y^{\ell}Z^{m}.
    \]
    Then Noether's Lemma \ref{lemma_Noether} produces a   form $B_\red=B_{\red}( (a_{\ell,m})_{\ell,m})$, with coefficients in $\Z$, such that
    \beq\label{define_utility_of_B}
    \text{$B_{\red}( (a_{\ell,m})_{\ell,m})=0 \Leftrightarrow f(Y,Z)$ is reducible over $\overline{\Q}=\overline{k}$ or $\deg f < D$. }
    \eeq
 For clarity, note that if we apply this criterion to a polynomial $F(Y,X_{i_0},\bX_{I'})$ as a polynomial in $Y,X_{i_0}$ (with $I' = I \setminus \{i_0\}$), 
then for a given specialization $\bx_{I'} \in k^{|I'|}$,  $B_\red((a_{\ell,m}(\bx_{I'}))_{\ell,m})=0$   if and only if $F(Y,X_{i_0},\bx_{I'})$ is reducible over $\overline{\Q}$ or the total degree of $F(Y,X_{i_0},\bx_{I'})$ is strictly less than $\deg_{Y,X_{i_0}}F(Y,X_{i_0},\bX_{I'})$, the total degree of $F(Y,X_{i_0},\bX_{I'})$ \emph{as a polynomial in $Y,X_{i_0}$.}

There are four equivalent properties that characterize strongly $n$-genuine polynomials. For notational clarity, we record a version specifically for $n=1$, followed by a version for all $n \geq 2$.
\begin{thm}[Strongly $1$-genuine]\label{thm_strongly_gen_ppties_n1}
Let $k/\Q$ be a finite extension, and let  $F \in \calO_k[Y,X_1]$  be irreducible over $k(X_1)$ and of total degree $D$.  
Define $\mathcal{L}_F = k(X_1)[Y]/(F(Y,X_1))$. 
The following are equivalent: 

(I) $F$ is a strongly $1$-genuine polynomial, that is to say, $k(X_1)[Y]/(F(Y,X_1))$ is a strongly $1$-genuine extension of $k(X_1)$. 

(II) $ \overline{k} \cap \mathcal{L}_F = k$, 
that is to say, $k$ is integrally closed in $\Lcal_F$ (or equivalently $\Lcal_F$ is a regular extension of $k$).

(III)   $F(Y,X_1)$ is irreducible over $\overline{k} = \overline{\Q}$. 

(IV)  Expanding the polynomial  as
   $
    F(Y,X_1)=\sum_{\ell+m\leq D}a_{\ell,m} Y^{\ell}X_{1}^{m},
$
   the form $B_{\red}((a_{\ell,m})_{\ell,m})$  defined as in (\ref{define_utility_of_B}), when evaluated at the coefficients $a_{\ell,m}$, is a nonzero integer.  
\end{thm}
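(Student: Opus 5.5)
Our plan is to prove the chain (I)$\Leftrightarrow$(II)$\Leftrightarrow$(III)$\Leftrightarrow$(IV). The equivalence (III)$\Leftrightarrow$(IV) is essentially the content of Noether's Lemma \ref{lemma_Noether}(ii) applied to $F$ viewed as a polynomial in $(Y,X_1)$ of total degree exactly $D$. Since $\deg F = D$, the clause ``or $\deg f<D$'' in (\ref{define_utility_of_B}) never applies. Hence $B_\red((a_{\ell,m}))\neq 0$ if and only if $F$ is irreducible over $\overline{\Q}=\overline{k}$. The value is an integer combination of elements of $\Ocal_k$, so strictly it lies in $\Ocal_k$. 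We read ``nonzero integer'' as ``nonzero algebraic integer''; this causes no issue for the equivalence.

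For (II)$\Leftrightarrow$(III), we use the standard fact that a separable finitely generated extension $\Lcal/k$ is regular if and only if $\Lcal\otimes_k\overline{k}$ is a field. Since $F$ is irreducible in $k(X_1)[Y]$, Gauss's lemma lets us assume it is primitive in $k[X_1][Y]$ (a nonconstant-in-$Y$ content factor would contradict irreducibility over $k(X_1)$ or be absorbed). Under this assumption, irreducibility over $\overline{k}$ is equivalent to irreducibility in $\overline{k}(X_1)[Y]$.

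For the direct argument, suppose first that $k\subsetneq k'\subset \overline{k}\cap\Lcal_F$. Then $[\Lcal_F:k'(X_1)]<\deg_Y F$, so the minimal polynomial of $Y$ over $k'(X_1)$ is a proper factor of $F$. This factor has coefficients in $k'(X_1)$, and clearing denominators via Gauss gives a factorization over $\overline{k}$. This is the argument already given in the proof of Proposition \ref{prop_Cohen_corrected_appendix}.

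Conversely, suppose $F=GH$ over $\overline{k}$. Take $k'$ the finite Galois extension generated by the coefficients of a $\overline{k}$-irreducible factor $G$ containing a root $\alpha$ of $F$. Using that $F$ is irreducible over $k(X_1)$, its $\overline{k}$-irreducible factors are $\Gal(k'/k)$-conjugates. A nontrivial factorization forces the field of definition $k''$ of $G$ (the fixed field of its stabilizer) to be a proper extension of $k$. Then $k''\subset k(X_1,\alpha)\cong\Lcal_F$, because the coefficients of the monic-in-$Y$ normalization of $G$ are symmetric functions of a subset of roots stable exactly under $\Gal(\overline{k(X_1)}/k''(X_1))$. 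Making this rigorous is the step we expect to be the main obstacle. Concretely, we must show that these coefficients, which are constants, lie in $k(X_1)(\alpha)$ and not merely in the Galois closure. We would handle this via the linear disjointness formulation: $k(X_1,\alpha)\otimes_k k''$ fails to be a field exactly when $F$ factors over $k''(X_1)$.

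For (I)$\Leftrightarrow$(II), we argue both directions by contrapositive.

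For (I)$\Rightarrow$(II): if $k'=\overline{k}\cap\Lcal_F\supsetneq k$, then $M'=k'(X_1)$ is an intermediate field with $k(X_1)\subsetneq M'\subset\Lcal_F$. It equals $k(X_1)[Y]/(m(Y))$, where $m$ is the minimal polynomial of a primitive element of $k'/k$. This $m$ has degree $0$ in $X_1$, so $M'$ is not $1$-genuine, and hence $F$ is not strongly $1$-genuine.

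For (II)$\Rightarrow$(I): suppose some intermediate $M'\supsetneq k(X_1)$ is not $1$-genuine. Then $M'=k(X_1)[Y]/(G(Y))$ with $G\in k[Y]$ irreducible over $k(X_1)$, hence over $k$. The class of $Y$ in $M'$ is then algebraic over $k$ and lies in $\Lcal_F\setminus k$, which contradicts (II).
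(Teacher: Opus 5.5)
Your arguments for (I)$\Leftrightarrow$(II) and (III)$\Leftrightarrow$(IV) are the paper's. For (II)$\Leftrightarrow$(III) you take a different route.

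\textbf{Your route for (II)$\Leftrightarrow$(III).} The paper cites Lemma \ref{lemma_integrally_closed_implies_absirred}. You instead quote the criterion that a separable extension $\Lcal/k$ is regular iff $\Lcal\otimes_k\overline{k}$ is a field. That route is complete once you add two routine steps. First, $\Lcal_F\otimes_k k''\cong k''(X_1)[Y]/(F)$ for every finite $k''/k$. Second, pass to the union over all such $k''\subset\overline{k}$. Together these show $\Lcal_F\otimes_k\overline{k}$ is a field iff $F$ is irreducible over $\overline{k}(X_1)$.

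\textbf{The ``main obstacle'' in your direct argument.} It is unnecessary: you never need $k''\subset k(X_1,\alpha)$. Your tensor observation also cannot by itself give that containment. If you want to avoid quoting the criterion, apply the observation to the Galois field $k'$ rather than to $k''$:
\begin{itemize}
\item $F$ factoring over $k'(X_1)$ gives $[\Lcal_Fk':\Lcal_F]<[k':k]$.
\item Natural irrationalities give $[\Lcal_Fk':\Lcal_F]=[k':k'\cap\Lcal_F]$.
\item Hence $k'\cap\Lcal_F\neq k$.
\end{itemize}
For a non-Galois $k''$, failure of linear disjointness does not by itself imply $k''\cap\Lcal_F\neq k$.

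\textbf{How the paper handles it.} It settles your obstacle by hand. Set $L=\overline{k}\cap\Omega_F$, which is Galois over $k$. Every $\sigma\in\Gal(\Omega_F/\Lcal_F)$ sends the monic minimal polynomial $G_1$ of the root over $L(X_1)$ to a monic polynomial of the same degree over $L(X_1)$ with the same root, so $\sigma$ fixes $G_1$. Write its coefficients as $\sum_j b_jX_1^j$ with $b_j\in L$. Comparing coefficients puts every $b_j$ in $\overline{k}\cap\Lcal_F$, and some $b_j\notin k$ because $G_1\notin k(X_1)[Y]$. Your route is shorter. The paper's is self-contained and stated over an arbitrary field, which is how it is reused over finite fields in Proposition \ref{prop_Cohen_corrected_appendix}.

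\textbf{Correction on primitivity.} Your reason for assuming $F$ primitive is wrong. A content factor $c(X_1)$ of positive degree is a unit in $k(X_1)$, so it contradicts nothing. It also cannot be absorbed: $F=X_1(Y^2-X_1)$ is irreducible over $k(X_1)$ and satisfies (II), since $\Lcal_F\cong k(\sqrt{X_1})$, yet it is reducible over $\overline{k}$. Primitivity has to be a hypothesis. It is supplied by monicity in $Y$, which is part of Definition \ref{dfn_strongly_poly} and is assumed in Lemma \ref{lemma_integrally_closed_implies_absirred}.
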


\begin{thm}[Strongly $n$-genuine]\label{thm_strongly_gen_ppties}
Let $k/\Q$ be a finite extension. Let $I$ be an index set of cardinality $n\geq 2$. Let  $F \in \calO_k[Y,\bX_I]$ be irreducible over $k(\bX_I)$ and   of total degree $D$. Define $\mathcal{L}_F = k(\bX_I)[Y]/(F(Y,\bX_I))$, and consider the following conditions.

(I) $F$ is a strongly $n$-genuine polynomial, that is to say, $k(\bX_I)[Y]/(F(Y,\bX_I))$ is a strongly $n$-genuine extension of $k(\bX_I)$.

 (II) For any $i_0 \in I$, upon defining $I':=I \setminus \{i_0\}$,  $k(\bX_{I'})$ is integrally closed in $\Lcal_F$: 
\[ \overline{k(\bX_{I'})} \cap \mathcal{L}_F = k(\bX_{I'}).\] 

(III) For any $i_0 \in I$, upon defining $I':=I \setminus \{i_0\}$,  there exists a choice of $\bx_{I'} \in k^{n-1}$ such that $F(Y,X_{i_0},\bx_{I'})$ is irreducible over $\overline{k} = \overline{\Q}$ and has  total degree  equal to $\deg_{Y,X_{i_0}}F(Y,\bX_I)$.

 (IV) For any $i_0 \in I$, upon defining $I':=I \setminus \{i_0\}$,   expand the polynomial $F(Y,\bX_I)$ in $Y$ and $X_{i_0}$ as
    \beq\label{app_F_expansion_for_B}
    F(Y,\bX_I)=\sum_{\substack{\ell,m\\\ell+m\leq D}}a_{\ell,m}(\bX_{I'})Y^{\ell}X_{i_{0}}^{m}.
    \eeq
   The form $B_{\red}(a_{\ell,m}(\bX_{I'}))$ defined as in (\ref{define_utility_of_B}) is not identically zero as a polynomial in $\bX_{I'}$. 

The following are equivalent:
\[ \text{(I) $\Leftrightarrow$ (II) $\Leftrightarrow$(III) $\Leftrightarrow$(IV).}\]
 Furthermore, for any fixed index $i_0 \in I$, the following are equivalent:
\[   \text{(II) for $i_0$ $\Leftrightarrow$  (III) for $i_0$ $\Leftrightarrow$  (IV) for $i_0$}.\]

\end{thm}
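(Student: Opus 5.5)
Fix an index $i_0\in I$, set $I'=I\setminus\{i_0\}$, and write $\mathcal{K}=k(\bX_{I'})$. Over $\mathcal{K}$, $F$ is a polynomial in $Y,X_{i_0}$, and $\Lcal_F=\mathcal{K}(X_{i_0})[Y]/(F)$. I would first prove the equivalences (II)$\Leftrightarrow$(III)$\Leftrightarrow$(IV) for this fixed $i_0$, and then prove (I)$\Leftrightarrow$[(II) for all $i_0$].

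\emph{(III)$\Leftrightarrow$(IV).} This is immediate from (\ref{define_utility_of_B}). A specialization $\bx_{I'}$ as in (III) exists if and only if some $\bx_{I'}\in k^{n-1}$ has $B_\red(a_{\ell,m}(\bx_{I'}))\neq 0$. Since $k$ is infinite, this holds exactly when the polynomial $B_\red(a_{\ell,m}(\bX_{I'}))$ is not identically zero.

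\emph{(II)$\Leftrightarrow$(IV) for fixed $i_0$.} Apply the $n=1$ theorem (Theorem \ref{thm_strongly_gen_ppties_n1}) over the base field $\mathcal{K}$ in place of $k$. Its argument for (II)$\Leftrightarrow$(III) is pure field theory, so I expect it to go through for any characteristic-zero base. It says that $\overline{\mathcal{K}}\cap\Lcal_F=\mathcal{K}$ if and only if $F(Y,X_{i_0},\bX_{I'})$ is irreducible over $\overline{\mathcal{K}}$, viewed as a polynomial in $Y,X_{i_0}$. Noether's Lemma \ref{lemma_Noether}(ii) is valid over the field $\mathcal{K}$, with the same universal forms, so this absolute irreducibility is equivalent to $B_\red(a_{\ell,m}(\bX_{I'}))\neq0$ as an element of $\mathcal{K}$. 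The clause ``degree $<D$'' in Noether's lemma is harmless here, since I apply it with $D$ replaced by $\deg_{Y,X_{i_0}}F$. The delicate point is that the $n=1$ theorem is stated for number fields. I would check that its proof only uses the regular-extension argument of Lemma \ref{lemma_integrally_closed_implies_absirred} together with the standard fact that for a regular extension $\mathcal{L}/\mathcal{K}$ of transcendence degree one, the minimal polynomial remains irreducible over $\overline{\mathcal{K}}$. Remark \ref{remark_fields} warns that one step (Lemma \ref{lemma_F_strongly_n_geniune_open_V}) uses number-field input, and this is where that input enters. The alternative I would keep ready is to route (II)$\Rightarrow$(III) through Hilbert irreducibility (Lemma \ref{lemma_HIT}). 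Absolute irreducibility over $\overline{\mathcal{K}}$ gives irreducibility of $F$ over $\overline{k}(\bX_{I'})(X_{i_0})$, and a Bertini/Hilbert-type specialization then yields some $\bx_{I'}$ keeping $F$ irreducible over $\overline{k}$ with no drop in degree. I expect this step to be the main obstacle.

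\emph{(I)$\Leftrightarrow$(II) for all $i_0$.} First suppose (II) fails for some $i_0$. Then there is an element $\alpha\in(\overline{\mathcal{K}}\cap\Lcal_F)\setminus\mathcal{K}$. The field $M'=k(\bX_I)(\alpha)$ is a proper subextension of $\Lcal_F$ over $k(\bX_I)$. The minimal polynomial of $\alpha$ over $\mathcal{K}$ has coefficients free of $X_{i_0}$, and clearing denominators gives a generator $G(Y,\bX_{I'})$ of $M'$ of degree $0$ in $X_{i_0}$. Hence $M'$ is not $n$-genuine, so (I) fails. This needs that this minimal polynomial stays irreducible over $k(\bX_I)=\mathcal{K}(X_{i_0})$, which holds since $X_{i_0}$ is transcendental over $\mathcal{K}$. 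Conversely, suppose (I) fails. Then some $M'$ with $k(\bX_I)\subsetneq M'\subset\Lcal_F$ equals $k(\bX_I)[Y]/(G)$ with $\deg_{X_{i_0}}G=0$. A root $\alpha$ of $G$ in $M'\subset\Lcal_F$ is algebraic over $\mathcal{K}$, and $\alpha\notin\mathcal{K}$ because $M'\neq k(\bX_I)$. This contradicts (II) for that $i_0$. Combining the two directions with the fixed-$i_0$ equivalences gives all the stated equivalences.
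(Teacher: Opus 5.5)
Your argument is correct. For the fixed-$i_0$ equivalences it takes a genuinely different, and more elementary, route than the paper.

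\textbf{What matches the paper.} Your (III)$\Leftrightarrow$(IV) step is the paper's. Your (I)$\Leftrightarrow$(II) step is also the paper's, except that you work with a single element $\alpha$ where the paper uses the field $N_{i_0}=(\overline{k(\bX_{I'})}\cap\Lcal_F)(X_{i_0})$.

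\textbf{Where you differ: (II)$\Leftrightarrow$(IV).} The paper handles the two directions separately.
\begin{itemize}
\item For (III)$\Rightarrow$(II) it uses a contrapositive that specializes the field $\overline{k(\bX_{I'})}\cap\Lcal_F$ at $\bx_{I'}$.
\item For (II)$\Rightarrow$(IV) it argues geometrically. EGA IV 9.7.7 (Lemma \ref{lemma_F_strongly_n_geniune_open_V}) gives an open set of $\bx_{I'}$ over which $k$ is integrally closed in the function field of the fibre. Hilbert irreducibility gives a dense set on which $F(Y,X_{i_0},\bx_{I'})$ is irreducible over $k$. At a common point the specialization is absolutely irreducible.
\end{itemize}
You instead stay at the generic point. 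Lemma \ref{lemma_integrally_closed_implies_absirred}, applied over the base field $k(\bX_{I'})$, turns (II) into irreducibility of $F$ over $\overline{k(\bX_{I'})}$. Noether's forms are universal with integer coefficients, so this is exactly the non-vanishing of $B_\red(a_{\ell,m}(\bX_{I'}))$ as an element of $k(\bX_{I'})$. Points enter only through $k$ being infinite.

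This is the classical Bertini--Noether argument. It needs neither EGA nor Hilbert irreducibility, and it avoids the informal step of specializing a field. It also proves the theorem over any field of characteristic zero, which removes, for this theorem, the caveat in Remark \ref{remark_fields}.

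\textbf{Your hesitation is unfounded.} Lemma \ref{lemma_integrally_closed_implies_absirred} and Lemma \ref{lemma_Noether} are both stated in the paper over an arbitrary field. Lemma \ref{lemma_F_strongly_n_geniune_open_V} plays no role in Theorem \ref{thm_strongly_gen_ppties_n1}. It is used only for (II)$\Rightarrow$(IV) when $n\geq 2$, which is precisely the step your generic-point argument replaces.

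So your main route is complete and the Hilbert/Bertini fallback can be dropped. It would be the harder path: $\overline{k}$ is not Hilbertian, so you would in effect be re-proving a Bertini-type statement.

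\textbf{One hypothesis to state explicitly.} You need $F$ monic in $Y$. This is part of the paper's convention for genuine polynomials, and Lemma \ref{lemma_integrally_closed_implies_absirred} requires it when applied over $k(\bX_{I'})$. Without it, (II) and (III) can diverge: for example $F=X_{i_1}F_0$ with $F_0$ strongly genuine satisfies (II) for $i_0=i_1$ but not (III).
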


We note that for a given index $i_0$, condition (II) for $i_0$ is the statement that $\Lcal_F$ is a regular extension of $k(\bX_{I'})$. 
\subsection{Absolute irreducibility and regularity}
In preparation, we encapsulate a useful property in the following lemma.  Its proof applies equally well over any field, and since we refer to it within Proposition \ref{prop_Cohen_corrected_appendix} in the setting of a finite field, we provide a general statement over an arbitrary field $\Kcal$.
 
 \begin{lem}\label{lemma_integrally_closed_implies_absirred} 
 Let $\Kcal$ be a field.
    Let $F\in \Kcal[Y,X_1,\ldots,X_n]$  be monic in $Y$ and irreducible over $\Kcal(X_1,\ldots,X_n)$, and denote $\mathcal{L}_F=  \Kcal(X_1,\ldots,X_n)[Y]/F$. Then $\Lcal_F$ is a regular extension of $\Kcal$, namely:
       \[
       \overline{\Kcal}\cap \mathcal{L}_F=\Kcal,
       \]
     if and only if $F(Y,\bX)$ is absolutely irreducible, that is, irreducible over $\overline{\Kcal}$.      
   \end{lem}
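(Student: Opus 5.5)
We prove the two implications separately, with $\bX=(X_1,\ldots,X_n)$ and $\Lcal_F=\Kcal(\bX)[Y]/(F)=\Kcal(\bX)(\theta)$, where $\theta$ is the class of $Y$. Set $d=\deg_Y F=[\Lcal_F:\Kcal(\bX)]$. Throughout we work inside a fixed algebraic closure of $\Kcal(\bX)$, which contains $\overline{\Kcal}$. The key tool is the standard fact that $\Kcal(\bX)$ and $\overline{\Kcal}$ are linearly disjoint over $\Kcal$, since $\bX$ are independent transcendentals. Hence for any finite extension $\Kcal'/\Kcal$ we have $[\Kcal'(\bX):\Kcal(\bX)]=[\Kcal':\Kcal]$.

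\emph{Absolutely irreducible $\Rightarrow$ regular.} Suppose $\Kcal'=\overline{\Kcal}\cap\Lcal_F$ strictly contains $\Kcal$, and take $\alpha\in\Kcal'\setminus\Kcal$. Then $\Kcal(\alpha)(\bX)\subset\Lcal_F$ and $[\Kcal(\alpha)(\bX):\Kcal(\bX)]=[\Kcal(\alpha):\Kcal]\ge 2$. It follows that $[\Lcal_F:\Kcal(\alpha)(\bX)]<d$, so $\theta$ has a minimal polynomial over $\Kcal(\alpha)(\bX)$ of degree $<d$. That minimal polynomial is a proper monic factor of $F$ in $\Kcal(\alpha)(\bX)[Y]$. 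Since $F$ is monic in $Y$ with coefficients in the integrally closed ring $\Kcal(\alpha)[\bX]$, Gauss's lemma puts this factor in $\Kcal(\alpha)[Y,\bX]$, where it has $Y$-degree between $1$ and $d-1$. So $F$ is reducible over $\overline{\Kcal}$, a contradiction.

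\emph{Regular $\Rightarrow$ absolutely irreducible.} Suppose $F=GH$ over $\overline{\Kcal}$ with $G,H$ nonconstant. Since $F$ is monic in $Y$, both factors have positive $Y$-degree: a factor of $Y$-degree $0$ would divide the leading coefficient $1$ and so be constant. Normalize $G$ to be monic in $Y$. All coefficients of $G$ lie in some finite extension $\Kcal_1/\Kcal$, so $F$ is reducible over $\Kcal_1(\bX)$. The root $\theta$ then satisfies a factor of $F$ of degree $<d$ over $\Kcal_1(\bX)$, and therefore $[\Kcal_1(\bX)(\theta):\Kcal_1(\bX)]<d=[\Kcal(\bX)(\theta):\Kcal(\bX)]$.

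Regularity now gives the contradiction. If $\Lcal_F/\Kcal$ is regular, then $\Lcal_F$ and $\overline{\Kcal}$ are linearly disjoint over $\Kcal$; this is the standard theorem that a regular extension is linearly disjoint from the algebraic closure. Applying this to the compositum,
\[
[\Kcal_1\Lcal_F:\Kcal(\bX)] = [\Kcal_1:\Kcal]\,d .
\]
On the other hand $\Kcal_1\Lcal_F=\Kcal_1(\bX)(\theta)$, whose degree over $\Kcal(\bX)$ is strictly less than $[\Kcal_1:\Kcal]\,d$ by the previous paragraph. This is a contradiction.

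The main obstacle is the linear-disjointness input, which needs separability when $\mathrm{char}\,\Kcal>0$. The paper's definition of regular includes separability, but the displayed condition $\overline{\Kcal}\cap\Lcal_F=\Kcal$ by itself only forces linear disjointness from the separable closure $\Kcal^{s}$. I would first run the argument with $\Kcal_1\subset\Kcal^{s}$, using that a finite Galois $\Kcal_1/\Kcal$ is linearly disjoint from $\Lcal_F$ because $\Kcal_1\cap\Lcal_F=\Kcal$. This handles every factorization defined over a separable extension. For factorizations that genuinely need an inseparable extension, I would invoke the separability built into the regularity hypothesis, citing the standard fact that a separable extension $\Lcal_F/\Kcal$ with $\Kcal$ algebraically closed in it is linearly disjoint from $\overline{\Kcal}$. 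In characteristic zero, including the number-field case the paper mainly needs, this subtlety disappears.
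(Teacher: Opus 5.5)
Your proof is correct. The direction ``absolutely irreducible $\Rightarrow$ $\overline{\Kcal}\cap\Lcal_F=\Kcal$'' is the paper's argument: a nontrivial constant subfield $\Kcal'$ pushes $[\Lcal_F:\Kcal'(\bX)]$ below $d$ and forces a factorization over $\Kcal'$. You also supply the Gauss-lemma step, which the paper skips, that makes the factors polynomial in $\bX$.

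The converse is where your route genuinely differs. The paper passes to a Galois closure $\Omega_F$ of $\Lcal_F$ over $\Kcal(\bX)$ and asserts that the factorization descends to $L=\overline{\Kcal}\cap\Omega_F$. It then shows the coefficients of the minimal polynomial of a root over $L(\bX)$ are fixed by $\mathrm{Gal}(\Omega_F/\Lcal_F)$, so their constant coefficients lie in $L\cap\Lcal_F=\Kcal$. You instead compute $[\Kcal_1\Lcal_F:\Kcal(\bX)]$ two ways and invoke linear disjointness.

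The paper's route uses only the Galois correspondence, but it tacitly assumes $\Lcal_F/\Kcal(\bX)$ is separable. Yours pinpoints exactly where separability enters, and your caution is justified. Take $\Kcal=\F_p(s,t)$ and $F=Y^p-sX_1^p-t$. Then $\overline{\Kcal}\cap\Lcal_F=\Kcal$: a constant $\beta\in\Lcal_F\setminus\Kcal$ would give $\Lcal_F=\Kcal(X_1)(\beta)$ with $c=\beta^p\in\Kcal$. Then $sX_1^p+t=\theta^p$ would lie in $\F_p(s^p,t^p,c)(X_1^p)$, forcing $s,t\in\F_p(s^p,t^p,c)$, which has degree only $p$ over $\F_p(s^p,t^p)$. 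Yet $F=(Y-s^{1/p}X_1-t^{1/p})^p$ over $\overline{\Kcal}$. So the displayed condition alone does not suffice in positive characteristic. The paper's proof breaks on this example at the descent-to-$L$ step, since here $L=\Kcal$. Nothing is lost for the paper's uses over number fields and finite fields, which are perfect.

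Your degree count also proves more than you use. $F$ is absolutely irreducible iff $[\Kcal_1\Lcal_F:\Kcal(\bX)]=[\Kcal_1:\Kcal]\,d$ for every finite $\Kcal_1/\Kcal$. That is the same as $\Lcal_F$ being linearly disjoint from $\overline{\Kcal}$ over $\Kcal$, i.e.\ regularity including separability. Read in the forward direction, this supplies the separability half of ``regular'', which neither your first paragraph nor the paper establishes.
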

 We will apply this directly   in the proof of Theorem \ref{thm_strongly_gen_ppties_n1}. We also note that in the context of Theorem \ref{thm_strongly_gen_ppties} over a number field $k$, if for some $i_0 \in  I$ the condition 
     $\overline{k(\bX_{I'})}\cap \mathcal{L}_F=k(\bX_{I'})$  holds,   then $\overline{k}\cap \mathcal{L}_F=k$, so that $\Lcal_F$ is a regular extension of $k$, and then this lemma shows $F$ must be absolutely irreducible.  

 \begin{proof} 
Let $F[Y,\bfX]$ be a polynomial such that $\overline{\Kcal}\cap \mathcal{L}_F=\Kcal$ and suppose for a contradiction that it is not absolutely irreducible (so it is reducible over $\overline{\Kcal}$). Denote by $\Omega_{F}$ the Galois closure of $\mathcal{L}_{F}$ over $\Kcal(\bX)$ and set $L:=\overline{\Kcal}\cap \Omega_{F}$.  Notice that since $\Omega_F$ is the Galois closure, $L/\Kcal$ must be Galois and hence $L(\bfX)/\Kcal(\bfX)$ is a Galois extension.  Since by assumption $F$ is not absolutely irreducible, we can write
\[
F=\prod_{i=1}^{\ell}G_{i}(Y,\bfX)\qquad\text{with each $G_i(Y,\bX) \in L[Y,\bX]$,}
\]
for some $\ell \geq 2$,
with each $G_i$ irreducible over $L(\bX)$ and monic in $Y$.
Now take $W$ to be one of the roots of $F$ so that $\mathcal{L}_{F}\cong \Kcal(\bfX)[W]$; then there exists $i$ such that $G_{i}(W)=0$. Without loss of generality, we may assume $i=1$, i.e. the polynomial
\[
G_{1}(Y,\bX) =Y^{m}+a_{m-1}Y^{m-1}+\cdots +a_{1}Y+a_{0},
\]
with $a_i \in L[X_1,\ldots,X_n]$,
is the minimal polynomial of $W$ over $L(\bfX)$  (and additionally observe that $m<\deg_Y F$). By construction, we have that  $G_{1}\not\in \Kcal(\bX)[Y]$, since $F$ is the minimal polynomial of $W$ over $\Kcal(\bfX)$. 

We claim that for every $\sigma\in\Gal(\Omega_{F}/\mathcal{L}_F)$,  
we have $\sigma(a_{i})=a_{i}$ for every $i=1,...,m-1$. We can see that 
\[
\begin{split}
0 &=\sigma(G_{1}(W,\bX)) =\sigma(W^{m}+a_{m-1}W^{m-1}+\cdots +a_{1}W+a_{0})\\&=\sigma(W^{m})+\sigma(a_{m-1})\sigma(W^{m-1})+\cdots +\sigma(a_{1})\sigma(W)+\sigma (a_{0})\\&=W^{m}+\sigma(a_{m-1})W^{m-1}+\cdots +\sigma(a_{1})W+\sigma (a_{0}),
\end{split}
\]
 where in the third step we have used the fact that $\sigma(W)=W$ since $\sigma\in\Gal(\Omega_{F}/\mathcal{L}_F)$. That is to say, we conclude that $W$ is a root of the polynomial 
 \[\widetilde{G}_{1}:=Y^{m}+\sigma(a_{m-1})Y^{m-1}+\cdots \sigma(a_{1})Y+\sigma (a_{0}).\]
 On the other hand, since $G_{1}$ is the minimal polynomial of $W$ over $L(\bfX)$, $\deg(\widetilde{G}_{1})=\deg(G_{1})=m$, and $G_{1},\widetilde{G}_{1}$ are both monic, we must have $G_{1}=\widetilde{G}_{1}$. Hence, we can conclude $\sigma(a_{i})=a_{i}$ for every $\sigma\in\Gal(\Omega_{F}/\mathcal{L}_F)$ and every $i\in\{1,...,m-1\}$. 
 
 Since $G_{1}\in L[Y,\bX]$ but not in $\Kcal(\bfX)[Y]$, we can find $i_{0}\in\{1,...,m-1\}$ such that $a_{i_{0}}\in L[\bfX]$ but not in $\Kcal(\bfX)$.
 Now write
 \[
 a_{i_{0}}(\bfX)=\sum_{\mathbf{i}}b_{\mathbf{i}}\bfX^{\mathbf{i}},
 \]
 where $b_{\mathbf{i}}\in L$; furthermore there must exist a multi-index $\mathbf{j}$ such that $b_{\mathbf{j}}\in L\setminus \Kcal$. On the other hand, from the fact that $\sigma(a_{i_{0}})=a_{i_{0}}$ for every $\sigma\in\Gal(\Omega_{F}/\mathcal{L}_F)$, it follows that $\sigma(b_{\mathbf{i}})=b_{\mathbf{i}}$ for every $\sigma\in\Gal(\Omega_{F}/\mathcal{L}_F)$ and for every $\mathbf{i}$; that is $b_{\mathbf{i}}\in \mathcal{L}_F$ for every $\mathbf{i}$. In particular, this implies that $b_{\mathbf{j}}\in L\cap \mathcal{L}_F \subset \overline{\Kcal}\cap \mathcal{L}_F$. Under the hypothesis of the lemma, this says that $b_{\mathbf{j}} \in \Kcal$, which leads us to our contradiction, as we already established that $b_{\mathbf{j}} \in L \setminus \Kcal$.  This establishes that if $\mathcal{L}_F$ is a regular extension of $\Kcal$ then $F(Y,\bX)$ is absolutely irreducible.

 To prove the other direction, we will prove the contrapositive: supposing that $\Kcal\subsetneq \overline{\Kcal}\cap \Lcal_F$, we will show that $F(Y,\bX)$ is not absolutely irreducible. Let $\Kcal\subsetneq \Kcal'\subset \overline{\Kcal} \cap \Lcal_F$ be a finite extension of $\Kcal$; it follows that $\Kcal(\bX)\subsetneq \Kcal'(\bX) \subset \Lcal_F$ and $[\Lcal_F:\Kcal'(\bX)] < [\Lcal_F:\Kcal(\bX)]=\deg_Y F.$ Hence, $F(Y,\bX)$ cannot be irreducible over $\Kcal'(\bX)$ (since otherwise we would have that $[\Lcal_F:\Kcal'(\bX)]=\deg_Y F$). Hence, we can find $G,H\in \Kcal'(\bX)[Y]$ such that $F(Y,\bX) = G(Y,\bX)H(Y,\bX)$; this gives us a factorization of $F(Y,\bX)$ over $\Kcal'$  and hence $F(Y,\bX)$ is not irreducible over $\overline{\Kcal}$.
 
\end{proof}

\subsection{Proof of Theorem \ref{thm_strongly_gen_ppties_n1} for $n=1$}
(I) is the statement that if  an extension $M'$ satisfies  $k(X_1) \subseteq  M' \subseteq k(X_1)[Y]/(F(Y,X_1)) = \Lcal_F$ and $M'=k(X_1)[Y]/R(Y)$ for some polynomial $R$ independent of $X_1$, then $M'=k(X_1)$. Any extension of the form $M'=k(X_1)[Y]/R(Y)$ is equivalent to $L'(X_1)$ for some extension  $L' \supset k$. Thus (I) is equivalent to the statement that if $L'$ is some extension with $k \subset L' \subset \overline{k}\cap \Lcal_F$, then $L'=k$; this confirms (I) $\Leftrightarrow$ (II).
  By Lemma \ref{lemma_integrally_closed_implies_absirred},   (II) $\Leftrightarrow$ (III).  Finally,   (III) $\Leftrightarrow$ (IV) by Noether's Lemma \ref{lemma_Noether} (ii).

\subsection{Proof of Theorem \ref{thm_strongly_gen_ppties} for $n=2$}
We first   prove all the relations that have brief, elementary arguments. Then we turn to the final more intricate relation (II) $\Leftrightarrow$ (IV), which we extract as a lemma. 

((I) $\Rightarrow$ (II))
By definition, $F$ is a strongly $n$-genuine polynomial if and only if $\mathcal{L}_F$ is a strongly $n$-genuine extension of $k(\bX_I)$. For each ${i_0}\in I$, denote $N_{{i_0}}:= (\overline{k(\bX_{I'})}\cap \mathcal{L}_F)(X_{{i_0}})$, and observe $k(\bX_I) \subseteq N_{i_0} \subseteq \mathcal{L}_F$. By construction, $N_{i_0}$ is an extension of $k(\bX_I)$ that is not  $n$-genuine.  Now under the hypothesis that $\mathcal{L}_F$ is strongly $n$-genuine, if for some  ${i_0}$ it were true that  $k(\bX_I)\subsetneq N_{{i_0}}\subseteq \mathcal{L}_F$, this would be a contradiction to the definition of a strongly $n$-genuine extension. Thus for each ${i_0} \in I$, $k(\bX_I) = N_{i_0}$.  This implies that for each ${i_0} \in I$, $ \overline{k(\bX_{I'})}\cap \mathcal{L}_F=k(\bX_{I'}).$ 

((II) $\Rightarrow$ (I))
 We will prove the contrapositive. Suppose $\mathcal{L}_F$ is not strongly $n$-genuine: then  there exists some intermediate extension $M'$ with $k(\bX_I)\subsetneq M'\subset \mathcal{L}_F$, which is not $n$-genuine. In particular, we can write $M'=k(\bX_I)[Y]/R(Y,\bX_J)$ for some polynomial $R$ depending only on $Y$ and $\bX_J$ for some subset $J \subsetneq I$.  For each ${i_0}\in I$, denote $N_{{i_0}}:= (\overline{k(\bX_{I'})}\cap \mathcal{L_F})(X_{{i_0}})$, as in the previous step. Observe that $M'\subset N_{{i_0}}$ for some ${i_0}$, and this implies $N_{{i_0}}\supsetneq k(\bX_I)$. In turn, this implies $ \overline{k(\bX_{I'})}\cap \mathcal{L}_F \supsetneq k(\bX_{I'}),$  in which $I' = I \setminus \{i_0\}$.

 The next elementary arguments assume we have fixed an index $i_0$.

 ((III) for $i_0$ $\Leftrightarrow$ (IV) for $i_0$) The condition (III) for a given index $i_0$ is equivalent to the condition (IV)  for the index $i_0$ by (\ref{define_utility_of_B}). Indeed, for each   $\bx_{I'} \in k^{|I'|}$,  the property
\[ \text{$F(Y,X_{i_0},\bx_{I'})$  is irreducible over $\overline{k}$ and has $\deg F(Y,X_{i_0},\bx_{I'})=\deg_{Y,X_{i_0}}F(Y,X_{i_0},\bX_{I'})$}\]
occurs
if and only if  $B_{\red}(a_{\ell,m}(\bx_{I'})) \neq 0.$

((III) for $i_0$ $\Rightarrow$ (II) for $i_0$) We will proceed by showing the contrapositive. Assume that $k(\bX_{I'})\subsetneq (\overline{k(\bX_{I'})} \cap \Lcal_F) =: M_{i_0}$, so that in particular $[M_{i_0}:k(\bX_{I'})] \geq 2$, which implies $[M_{i_0}(X_{i_0}):k(\bX_{I})] \geq 2$. We want to establish that for all $\bx_{I'}\in k^{n-1}$, we have that $F(Y,X_{i_0},\bx_{I'})$ is reducible over $\overline{k}.$ First, we observe that $F(Y,X_{i_0},\bX_{I'})$ is reducible over $M_{i_0}(X_{i_0})$. This follows from the fact that $F(Y,X_{i_0},X_{I'})$ is irreducible over $k(\bX_{I})$ and so $\deg_Y F=[\Lcal_F: k(\bX_I)]$, while on the other hand $[\Lcal_F: k(\bX_I)]=[\Lcal_F : M_{i_0}(X_{i_0})][M_{i_0}(X_{i_0}): k(\bX_I)]$, in which the last factor is $\geq 2$.  From this we conclude that $[\Lcal_F : M_{i_0}(X_{i_0})]< \deg_Y F$, and hence $F(Y,X_{i_0},\bX_{I'})$ is reducible over $M_{i_0}(X_{i_0})$ as claimed.

Finally, for any choice of $\bx_{I'}\in k^{n-1}$, let $M_{i_0,\bx_{I'}}$ denote the specialization of the field $M_{i_0}$ at the choice $\bx_{I'}\in k^{n-1}$. Consequently, we have that for any choice of $\bx_{I'}\in k^{n-1}$, $F(Y,X_{i_0},\bx_{I'})$ is reducible over $M_{i_0,\bx_{I'}}$. Since $M_{i_0,\bx_{I'}} \subset \overline{k}$, this completes the claim.

To complete the proof of the theorem, it suffices to show that (II) for $i_0$ $\Rightarrow$  (IV) for $i_0$. This we will derive in Lemma \ref{lemma_Cohen_strongly_genuine_reducible}, via the following lemma.

 \begin{lem}\label{lemma_F_strongly_n_geniune_open_V}
 Let $k/\Q$ be a finite extension. Let $I$ be an index set of cardinality $n \geq 2$.
  Let $F(Y,\bX_I) \in \calO_k[Y,\bX_I]$ be irreducible over $k(\bX_I)$ and define $\mathcal{L}_F=k(\bX_I)[Y]/F$. Suppose that for some $i_0 \in I$ and $I':=I \setminus \{i_0\}$,
  \[
       \overline{k(\bX_{I'})}\cap \mathcal{L}_F=k(\bX_{I'}),
       \]
       that is to say, $k(\bX_{I'})$ is integrally closed in $\mathcal{L}_F$.
       Then, there exists a nonempty open set $V\subset k^{n-1}$ such that for all $\bx_{I'}\in V$, 
       \[ 
       \overline{k} \cap (k(X_{i_0})[Y]/F(Y,X_{i_0},\bfx_{I'})) = k,
       \]
       that is to say, 
       $k$ is integrally closed in $k(X_{i_0})[Y]/F(Y,X_{i_0},\bfx_{I'})$.  
    \end{lem}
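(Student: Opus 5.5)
Write $\Kcal := k(\bX_{I'})$. Then $\Lcal_F = \Kcal(X_{i_0})[Y]/F$ is a function field in one variable over $\Kcal$. By hypothesis it is a regular extension of $\Kcal$. In characteristic zero, separability is automatic.

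The first step is to apply Lemma \ref{lemma_integrally_closed_implies_absirred} over the field $\Kcal$, with the single variable $X_{i_0}$. After dividing by the leading coefficient in $Y$ if needed, this shows that $F(Y,X_{i_0},\bX_{I'})$, viewed as a polynomial in $Y,X_{i_0}$, is irreducible over $\overline{\Kcal}=\overline{k(\bX_{I'})}$.

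The second step is to pass to specializations. We want a dense open set of $\bx_{I'}\in k^{n-1}$ for which $F(Y,X_{i_0},\bx_{I'})$ is irreducible over $\overline{k}$. This is a Bertini--Noether type statement. I would deduce it from Noether's Lemma \ref{lemma_Noether}(ii), applied over the field $\Kcal$ and to polynomials of degree $D_0:=\deg_{Y,X_{i_0}}F$. Expand $F=\sum a_{\ell,m}(\bX_{I'})Y^\ell X_{i_0}^m$. Since $F$ is irreducible over $\overline{\Kcal}$ and has degree $D_0$ in $Y,X_{i_0}$, the forms $G_j$ do not all vanish at $(a_{\ell,m}(\bX_{I'}))$ as elements of $\Kcal$. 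The forms have universal integer coefficients, independent of the field, so some $G_j(a_{\ell,m}(\bX_{I'}))$ is a nonzero polynomial in $k[\bX_{I'}]$. Let $V$ be the complement of its zero locus; this is a nonempty Zariski open subset of $k^{n-1}$, since $k$ is infinite. For $\bx_{I'}\in V$, Noether's Lemma applied now over $k$ gives that $F(Y,X_{i_0},\bx_{I'})$ is irreducible over $\overline{k}$ and keeps total degree $D_0$.

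The third step handles the monicity issue. $F$ is assumed monic in $Y$, as in Lemma \ref{lemma_integrally_closed_implies_absirred}; otherwise I would first shrink $V$ so that the leading $Y$-coefficient does not vanish. Under this, $F(Y,X_{i_0},\bx_{I'})$ is irreducible over $k(X_{i_0})$. Lemma \ref{lemma_integrally_closed_implies_absirred}, now applied over $k$, turns absolute irreducibility into $\overline{k}\cap k(X_{i_0})[Y]/F(Y,X_{i_0},\bx_{I'})=k$, which is the claim.

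The main obstacle is the second step: transporting Noether's criterion from the field $k(\bX_{I'})$ to its specializations. This uses the field-independence of the forms $G_j$ in characteristic $0$; it is exactly where the number field setting is used, as flagged in Remark \ref{remark_fields}. One must also check that "degree $<D_0$" is excluded generically, which is automatic because we use $D_0$ as the target degree.
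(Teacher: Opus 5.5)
Your proof is correct, and it takes a genuinely different route from the paper. The paper argues geometrically. For $W=\mathrm{Spec}(k[Y,\bX_I]/F)\to\mathbb{A}^{n-1}_k$, it invokes EGA IV 9.7.7, which says the locus of points with geometrically integral fibre is locally constructible. It observes that the generic fibre is geometrically integral by hypothesis, and deduces that this locus contains a nonempty open set.

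You instead apply Lemma~\ref{lemma_integrally_closed_implies_absirred} over the base field $k(\bX_{I'})$ with the single variable $X_{i_0}$. This turns the hypothesis into absolute irreducibility of $F$, as a polynomial in $Y,X_{i_0}$, over $\overline{k(\bX_{I'})}$. You then specialize using the fact that the Noether forms of Lemma~\ref{lemma_Noether} have universal integer coefficients; this is the classical Bertini--Noether argument.

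What your route buys:
\begin{itemize}
\item It avoids EGA entirely and is effective: $V$ is the non-vanishing locus of an explicit nonzero $G_j(a_{\ell,m}(\bX_{I'}))\in\calO_k[\bX_{I'}]$ of degree $O_{n,D}(1)$.
\item The first half of your second step already shows that the Noether forms do not all vanish identically at $(a_{\ell,m}(\bX_{I'}))$. That is exactly (IV) for $i_0$ in Theorem~\ref{thm_strongly_gen_ppties}, so your argument proves (II)$\Rightarrow$(IV) for $i_0$ directly. The Hilbert irreducibility detour in Lemma~\ref{lemma_Cohen_strongly_genuine_reducible} then becomes unnecessary.
\end{itemize}
The paper's argument is a short appeal to a general theorem. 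It works with geometric integrality of fibres directly, without passing through absolute irreducibility.

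Three small remarks.
\begin{itemize}
\item Lemma~\ref{lemma_Noether} is stated for $D_0\ge 2$; the case $D_0=1$ is trivial and should just be noted.
\item ``Dividing by the leading coefficient'' does not help if that coefficient involves $X_{i_0}$. For non-monic $F$ you should pass to the primitive part, though in the paper's applications $F$ is monic.
\item Your closing remark slightly misplaces where the setting is used. The Noether forms are universal in every characteristic (reduced mod $p$). What your argument actually needs is that $k$ be infinite, so that $V$ has $k$-points, and that the relevant extensions be separable, so that Lemma~\ref{lemma_integrally_closed_implies_absirred} applies. Characteristic zero per se is not the point.
\end{itemize}
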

    \begin{proof}
    Let $i_0$ be fixed as in the hypothesis.
    Consider $W=\text{Spec}\left(k
    [Y,\bX_I]/F\right)$ and the morphism $f$ induced by the inclusion $k[\bX_{I'}]\subset k[Y,\bX_I]/F$, namely:
    \[
    f:W\longrightarrow\mathbb{A}_{k}^{n-1}.
    \]
    Let $V'\subset\mathbb{A}_{k}^{n-1}$ be the set of the $u\in\mathbb{A}_{k}^{n-1}$ such that the fiber
    \[
    f_{u}:W_{u}\longrightarrow\mathbb{A}_{k(u)}^{n-1}
    \]
    is geometrically integral (i.e. $k(u)$ is integrally closed in $K(W_{u})$). (Here we use the standard notation   that $k(u)$ is the residue field at $u$ and $K(W_{u})$ is the field of rational functions on $W_{u}$, following e.g. \cite[Chapter II, section $3$, page 89]{Har77}.) By  \cite[EGA IV part 3, Theorem $9.7.7$]{EGAIV_part3},   the set $V'$ is locally constructible. On the other hand, the fiber on the generic point $\eta$, say $f_{\eta}$, is geometrically integral; this follows from the hypothesis that $k(\bX_{I'})$ is integrally closed in $k(\bX_I)[Y]/F=K(W)$.   Consequently, the generic point $\eta$ lies in  $V'$. Hence, since it contains the generic point, $V'$ contains an open subset $U'$: indeed, since $V'$ is locally constructible we can find an open covering $\mathbb{A}_{k}^{n-1}=\bigcup_{i}V_{i}$ such that for each $i$, $V_{i}\cap V'$ is a constructible set, i.e. it is a finite union of sets of the type $S\cap T^{c}$ for $T,S$ open sets. Since $\eta\in V'$, we can find $i$ such that $\eta\in V_{i}\cap V'$, but then we can find $S,T$ open sets such that $\eta\in S\cap T^{c}$. On the other hand, since $\{\eta\}$ is dense in $\mathbb{A}_{k}^{n-1}$, it follows that $T=\emptyset$. Then it suffices to set $U' = S \subset (V_i \cap V') \subset V'$.  Finally, we take $V$ to be the open set $V=U'\subseteq k^{n-1}$; for every $u\in V$,  which we will denote in terms of coordinates by $u=\bx_{I'}$, we have that $k(u)=k$ is integrally closed in $K(W_{u})=\text{Frac}\left(k[Y,X_{i_0},\bx_{I'}]/F(Y,X_{i_0},\bx_{I'})\right)$. 
\end{proof}

\begin{lem}[(II) for $i_0$ $\Rightarrow$ (IV) for $i_0$]\label{lemma_Cohen_strongly_genuine_reducible}
  Let $k/\Q$ be a finite extension. Let $I$ be an index set of cardinality $n \geq 2$.
  Let $F(Y,\bX_I) \in \calO_k[Y,\bX_I]$ be irreducible over $k(\bX_I)$ and define $\mathcal{L}_F=k(\bX_I)[Y]/F$. Suppose that for some $i_0 \in I$ and $I':=I \setminus \{i_0\}$,
  \[
       \overline{k(\bX_{I'})}\cap \mathcal{L}_F=k(\bX_{I'}),
       \]
       that is to say, $k(\bX_{I'})$ is integrally closed in $\mathcal{L}_F$.
   For this index $i_0$, define the polynomial 
         $  B_{\red}(a_{\ell,m}(\bX_{I'})) \in \Z[\bX_{I'}]$ as in (\ref{define_utility_of_B}), to detect whether $F(Y,X_{i_0},\bx_{I'})$ is reducible over $\overline{\Q}$. Then $B_{\red}(a_{\ell,m}(\bX_{I'})) $ is not identically zero as a polynomial in $\bX_{I'}$.

\end{lem}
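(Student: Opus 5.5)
The plan is to combine Lemma \ref{lemma_F_strongly_n_geniune_open_V} with Lemma \ref{lemma_integrally_closed_implies_absirred} and the defining property (\ref{define_utility_of_B}) of $B_\red$. The goal is to produce one point $\bx_{I'} \in k^{n-1}$ at which $B_{\red}(a_{\ell,m}(\bx_{I'})) \neq 0$. Such a point immediately shows that $B_{\red}(a_{\ell,m}(\bX_{I'}))$ is not the zero polynomial.

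First, I will apply Lemma \ref{lemma_F_strongly_n_geniune_open_V} under our hypothesis $\overline{k(\bX_{I'})}\cap \Lcal_F = k(\bX_{I'})$. This gives a nonempty open set $V \subset k^{n-1}$ such that for every $\bx_{I'} \in V$, the field $k$ is integrally closed in $k(X_{i_0})[Y]/F(Y,X_{i_0},\bx_{I'})$. Next, I will shrink $V$ so that the specializations are well behaved, intersecting it with the following nonempty open sets:
\begin{enumerate}[(a)]
\item the complement of the zero set of the leading coefficient(s) $a_{\ell,m}(\bX_{I'})$ of top total degree $\ell+m=\deg_{Y,X_{i_0}}F$, so that $\deg F(Y,X_{i_0},\bx_{I'}) = \deg_{Y,X_{i_0}} F$;
\item the complement of the zero set of the $Y$-discriminant-type and leading-in-$Y$ coefficients, so that $F(Y,X_{i_0},\bx_{I'})$ stays monic of the same $Y$-degree;
\item by Hilbert's Irreducibility Theorem (Lemma \ref{lemma_HIT}), a dense set on which $F(Y,X_{i_0},\bx_{I'})$ is irreducible over $k(X_{i_0})$.
\end{enumerate}
Since a nonempty Zariski open subset of $k^{n-1}$ intersects a Hilbert subset, there is some $\bx_{I'}$ satisfying all of these conditions at once.

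For such an $\bx_{I'}$, the polynomial $f(Y,X_{i_0}) := F(Y,X_{i_0},\bx_{I'})$ is monic in $Y$ and irreducible over $k(X_{i_0})$. Moreover, $k(X_{i_0})[Y]/f$ is a regular extension of $k$. Lemma \ref{lemma_integrally_closed_implies_absirred}, applied with $\Kcal = k$ and one variable, then shows that $f$ is irreducible over $\overline{k} = \overline{\Q}$. Since $f$ also has full total degree $\deg_{Y,X_{i_0}}F$, property (\ref{define_utility_of_B}) gives $B_{\red}(a_{\ell,m}(\bx_{I'})) \neq 0$, which is what we need.

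I expect the main obstacle to be the compatibility of the conditions. Lemma \ref{lemma_integrally_closed_implies_absirred} needs the specialized polynomial to be irreducible over $k(X_{i_0})$, so that the quotient is a field. On the other hand, Lemma \ref{lemma_F_strongly_n_geniune_open_V} is phrased through geometric integrality of the fiber $W_u$, and geometric integrality of that fiber already encodes irreducibility and reducedness of $f$ over $\overline{k}$ directly. So the cleanest route may instead be to read off absolute irreducibility straight from geometric integrality of $W_u$ for $u \in V$, since $W_u$ is the affine plane curve $f=0$. This would bypass the HIT step and leave only the open degree condition (a). I would try that first, and fall back on the Hilbert-set intersection only if the fiber interpretation needs care, for instance in ensuring that $W_u$ coincides with $\mathrm{Spec}\, k[Y,X_{i_0}]/f$ with no degree drop.
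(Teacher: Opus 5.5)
Your proposal is correct and is essentially the paper's argument. The paper intersects the Hilbert set from Lemma \ref{lemma_HIT} (minus the nowhere-dense degree-drop locus) with the open set $V$ of Lemma \ref{lemma_F_strongly_n_geniune_open_V} and invokes Lemma \ref{lemma_integrally_closed_implies_absirred}, phrasing it as a contradiction (assuming $B_{\red}$ vanishes on the Hilbert set) where you exhibit a good point directly. Your alternative route is also sound: for a $k$-point $u=\bx_{I'}$ the fiber is exactly $W_u=\mathrm{Spec}\, k[Y,X_{i_0}]/(F(Y,X_{i_0},\bx_{I'}))$, so the geometric integrality used to build $V$ already gives absolute irreducibility, and the Hilbert irreducibility step can be dropped, leaving only the open degree condition (a).
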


\begin{proof} 
     By Hilbert's irreducibility theorem (Lemma \ref{lemma_HIT}), since $F(Y,\bX_I)$ is irreducible over $k$, there exists a dense set $U \subset k^{n-1}$    such that 
    \beq\label{conclusion_F_irred_on_U}
    F(Y,X_{i_0},\bx_{I'})\text{ is irreducible over }k, \forall \bx_{I'}\in U.
    \eeq
On the other hand, expand $F$ as a polynomial in $Y,X_{i_0}$ as in (\ref{app_F_expansion_for_B}), and consider the polynomial  $B_{\red}(a_{\ell,m}(\bX_{I'})) $ as in (\ref{define_utility_of_B}).
    We will use Lemma \ref{lemma_F_strongly_n_geniune_open_V} to show that in particular there exists some $\bx_{I'}\in U$ such that $B_{\red}((a_{\ell,m}(\bx_{I'}))) \neq 0$.  
  Assume for contradiction that $B_\red = 0$ on $U$.  Then $F(Y,X_{i_0},\bx_{I'})$ is reducible over $\overline{\Q}$ (or $\deg F(Y,X_{i_0},\bx_{I'})<\deg_{Y,X_{i_0}} F$) for all $\bx_{I'}\in U$.  We first argue that there exists a dense subset $U'\subset U$ such that $F(Y,X_{i_0},\bx_{I'})$ is reducible over $\overline{\Q}$ for all $\bx_{I'} \in U'$. Indeed, if we let $W$ denote the subset of $U$ where $\deg F(Y,X_{i_0},\bx_{I'})< \deg_{Y,X_{i_0}} F$ for $\bx_{I'} \in W$, we claim $W$ is nowhere dense, so that $U' = U \setminus W$ then is the desired set. Now $W$ is contained in a finite union of sets, each of which is defined as the vanishing set of a polynomial (a nonzero polynomial in $\bx_{I'}$, that defines the coefficient of a certain monomial in $Y,X_{i_0}$ in $F(Y,X_{i_0},\bx_{I'})$), and so $W$ is a proper closed subset of lower dimension, and is nowhere dense.
  
  Now recall that by the construction in (\ref{conclusion_F_irred_on_U}), for each $\bx_{I'} \in U' \subset U$, $F(Y,X_{i_0},\bx_{I'})$ is irreducible over $k$ yet reducible over $\overline{k} = \overline{\Q}$; hence $F(Y,X_{i_0},\bx_{I'})$ must be reducible over some extension $k'$ with $k\subsetneq k'\subset \overline{\Q}$ (with $k'$ depending on $\bx_{I'}$). So, for all $\bx_{I'}  \in U'$, $k$ cannot be integrally closed in $k(X_{i_0})[Y]/F(Y,X_{i_0},\bfx_{I'})$   since its integral closure will contain $k'$.  (Alternatively, apply Lemma \ref{lemma_integrally_closed_implies_absirred} to see that $F(Y,X_{i_0},\bx')$ reducible over $\overline{k}$ implies $k \subsetneq \overline{k} \cap k(X_{i_0})[Y]/F(Y,X_{i_0},\bfx_{I'})$.)
  On the other hand, by Lemma \ref{lemma_F_strongly_n_geniune_open_V}, there exists a nonempty open set $V \subset k^{n-1}$ such that for all $\bx_{I'} \in V$, $k$ is integrally closed in $k(X_{i_0})[Y]/F(Y,X_{i_0},\bfx_{I'})$. Since $U'$ is dense and $V$ is open, $U'\cap V \neq \emptyset.$ Thus, we get a contradiction: if an element $\bx_{I'}$  exists in $ U'\cap V$, $k$ would be both integrally closed and not integrally closed in $k(X_{i_0})[Y]/F(Y,X_{i_0},\bfx_{I'})$. 
  
  Thus the supposition that $B_\red = 0$ on $U$ must be false.
  In particular, there exists a choice of $\bx_{I'}\in U$ such that $B_{\red}((a_{\ell,m}(\bx_{I'}))) \neq 0$, and finally $B_{\red}((a_{\ell,m}(\bX_{I'})))$ is a polynomial in $\Z[\bX_{I'}]$ that is not identically zero.

 \end{proof}

 This completes the proof of Theorem \ref{thm_strongly_gen_ppties} for $n \geq 2$.

\begin{rem}\label{remark_dfn_agree}
 Recall from Definition \ref{dfn_strongly_field} that $F$ is a strongly $n$-genuine polynomial if $F(Y,\bX_I)$ is irreducible over $k(\bX_I)$ and monic in $Y$, and $k[Y](\bX_I)/F(Y,\bX_I)$ is a strongly $n$-genuine extension.
 In \cite{BPW25x} we defined a strongly $n$-genuine polynomial $F$ to be an  absolutely irreducible polynomial (irreducible over $\overline{k}$) that is monic in $Y$ and such that $k[Y](\bX_I)/F(Y,\bX_I)$ is a strongly $n$-genuine extension. Notice that  if Definition \ref{dfn_strongly_field} holds, then property (II) in Theorem \ref{thm_strongly_gen_ppties} holds for any index $i_0\in I$. This implies that $\overline{k} \cap \Lcal_F = k$, so that by Lemma \ref{lemma_integrally_closed_implies_absirred},  $F(Y,\bX_I)$ is irreducible over $\overline{k}$, so that any polynomial that is strongly $n$-genuine in the sense of Definition \ref{dfn_strongly_field} (and monic in $Y$) is strongly $n$-genuine in the sense of \cite{BPW25x}.

 \end{rem}

\subsection{Natural consequence for strongly $n$-genuine polynomials}
Here we prove the quantitative property that follows from being strongly $n$-genuine.

\begin{thm}\label{thm_strongly_gen_reducible_consequence_k}
Let $n \geq 2$. Let $k/\Q$ be a finite extension, with ring of integers $\mathcal{O}_k$ and $m=[k:\Q]$. Let $F(Y,\bX) \in \Ocal_k[Y,X_1,\ldots,X_n]$ be a strongly $n$-genuine polynomial of total degree $D$. Then for all $B \gg 1$, 
\[ \# \{\bx' \in \Ocal_k^{n-1}, \|\bx'\|\leq B : \text{$F(Y,X_1,\bx')$ is reducible over $\overline{\Q}$}\}\ll_{m,n,\deg F} B^{n-2} (\log B)^{(n-2)(m-1)}.\]
Also, there exists a finite set $\mathcal{E}$ of exceptional prime ideals $\pfrak \in \Ocal_k$, with $|\Ecal| \ll_{m,n,D} \log \|F\|/ \log \log \|F\|$, such that for all $\pfrak \not\in \Ecal$,  
\[ \# \{\bx' \in k_\pfrak^{n-1}: \text{$F(Y,X_1,\bx')$ is reducible over $\overline{k_\pfrak}$}\}\ll_{n,\deg F} |k_\pfrak|^{n-2}.\]
\end{thm}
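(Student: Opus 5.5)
The plan is to reduce both counts to zero sets of a single auxiliary polynomial in $\bX'=(X_2,\ldots,X_n)$ and then apply the trivial bound. Take $i_0=1$ and $I'=\{2,\ldots,n\}$. Since $F$ is strongly $n$-genuine, Theorem \ref{thm_strongly_gen_ppties} ((I) $\Rightarrow$ (IV) for $i_0=1$) tells us that the Noether form
\[ B(\bX') := B_{\red}\bigl((a_{\ell,m}(\bX'))_{\ell,m}\bigr) \]
is not identically zero. Here $a_{\ell,m}$ are the coefficients in the expansion (\ref{app_F_expansion_for_B}), and $B$ has coefficients in $\Ocal_k$. By Lemma \ref{lemma_Noether} and (\ref{define_utility_of_B}), for each $\bx'$ we have $B(\bx')\neq 0$ exactly when $F(Y,X_1,\bx')$ is irreducible over $\overline{\Q}$ and has full degree in $(Y,X_1)$. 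Consequently every $\bx'$ in the first set satisfies $B(\bx')=0$. Moreover $\deg B\ll_{n,D}1$, because the form $B_\red$ has degree $\ll_{n,D}1$ and each $a_{\ell,m}$ has degree at most $D$.

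For the archimedean count, I will apply Lemma \ref{lemma_Schwartz_domain} over the domain $A=\Ocal_k$. The box $\{\|\bx'\|\le B\}$ equals $S^{n-1}$ with $S=\{x\in\Ocal_k: H_k(x)\le B\}$, and Lemma \ref{lemma_count_in_ring} gives $|S|\asymp_m B$. Therefore the number of zeros is at most $(\deg B)|S|^{n-2}\ll_{m,n,D}B^{n-2}$, which is stronger than the stated bound: the factor $(\log B)^{(n-2)(m-1)}$ is harmless slack.

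For the finite-field count, let $\Ecal$ be the set of prime ideals $\pfrak$ modulo which $B(\bX')$ vanishes identically; if $B_\red$ is a collection $\{G_j\}$ (Remark \ref{remark_Noether_form}), use a fixed $G_j$ for which $G_j((a_{\ell,m}(\bX')))\not\equiv 0$. Choose a nonzero coefficient $c\in\Ocal_k$ of that polynomial. Every $\pfrak\in\Ecal$ divides $c$, and hence divides the rational integer $N_{k/\Q}(c)$. The coefficients of $B$ are integer polynomial expressions of bounded degree in the coefficients of $F$, with integer coefficients of size $O_{n,D}(1)$, so $\log|N_{k/\Q}(c)|\ll_{m,n,D}\log\|F\|$. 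The second part of Lemma \ref{lemma_count_in_ring} then gives $|\Ecal|\ll_{m,n,D}\log\|F\|/\log\log\|F\|$.

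For $\pfrak\notin\Ecal$, the reduction $\bar B\in k_\pfrak[\bX']$ is nonzero. Noether's Lemma \ref{lemma_Noether} is valid in characteristic $p$ with the reduced forms, so reducibility of $F(Y,X_1,\bx')$ over $\overline{k_\pfrak}$ forces the relevant form to vanish at $\bx'$, i.e. $\bar B(\bx')=0$. Applying Lemma \ref{lemma_Schwartz_domain} over $k_\pfrak$ with $S=k_\pfrak$ gives at most $(\deg B)|k_\pfrak|^{n-2}$ such $\bx'$.

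The step I expect to need the most care is the bookkeeping around Remark \ref{remark_Noether_form} in positive characteristic. The summed form $\sum G_j^{2\ell_j}$ can vanish mod $\pfrak$ even when some $G_j$ does not, so I will work with a single $G_j$ that is nonvanishing over $\overline{\Q}$. This remains valid in characteristic $p$ because vanishing of all the $G_j$ is necessary for reducibility; I only need the forward implication, not the converse. The other point to check is the height bound $\log\|B\|\ll\log\|F\|$, which is routine from the construction of $B$.
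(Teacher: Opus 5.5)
Your proposal is essentially the paper's proof: (I) $\Rightarrow$ (IV) of Theorem \ref{thm_strongly_gen_ppties} at $i_0=1$, then Noether's Lemma, then Schwartz--Zippel over $\Ocal_k$ and over $k_\pfrak$, with $\Ecal$ the primes at which the Noether form vanishes identically; your extra care (coefficients in $\Ocal_k$ rather than $\Z$, testing a single $G_j$, and noting that the box is $S^{n-1}$ so the $\log B$ factor is superfluous) only tightens it. One caveat, which applies verbatim to the paper's argument: the step $\log|N_{k/\Q}(c)|\ll\log\|F\|$ is not routine and in fact fails when $\Ocal_k^\times$ is infinite, since $\|F\|$ only records norms of coefficients --- over $\Q(\sqrt{2})$ with $u=1+\sqrt{2}$, the strongly $2$-genuine $F=Y^2-X_1^2-2X_1X_2-u^{N}X_2^2$ has $\|F\|=4$ yet is congruent to $(Y-X_1-X_2)(Y+X_1+X_2)$ modulo every $\pfrak\mid u^{N}-1$, and for $N=T!$ these include every $\pfrak$ of norm at most $T+1$, so the second assertion needs $\|F\|$ to be a genuine height of the coefficients (it is fine for $k=\Q$).
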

\begin{proof}[Proof of Theorem \ref{thm_strongly_gen_reducible_consequence}]
Theorem \ref{thm_strongly_gen_reducible_consequence} is simply the special case when $k=\Q$, $k_\pfrak=\F_p$, so that $|k_\pfrak| = |\F_p|=p$.
\end{proof}
\begin{proof}[Proof of Theorem \ref{thm_strongly_gen_reducible_consequence_k}]
Since $F$ is strongly $n$-genuine, by Theorem \ref{thm_strongly_gen_ppties} (IV), for each $i_0 \in I$ and $I'=I\setminus\{i_0\}$, the form $B_\red(a_{\ell,j}(\bX_{I'})) \in \Z[\bX_{I'}]$ that detects whether $F(Y,X_{i_0},\bx_{I'})$ is reducible over $\overline{k} = \overline{\Q}$ is not identically zero. In particular this is true for $i_0=1$. Thus to prove the first claim, it suffices to observe that by Noether's Lemma \ref{lemma_Noether} (ii) followed by the trivial bound in Lemma \ref{lemma_Schwartz_domain} and then Lemma \ref{lemma_count_in_ring},
\begin{align*}
    \# \{\bx' & \in \Ocal_k^{n-1}, \|\bx'\|\leq B : \text{$F(Y,X_1,\bx')$ is reducible over $\overline{\Q}$}\}
   \\ 
   &\leq \#\{\bx' \in \Ocal_k^{n-1}, \|\bx'\|\leq B : B_\red(a_{\ell,j}(\bx_{I'}))=0\} \ll_{m,n,D} B^{n-2} (\log B)^{(n-2)(m-1)}.
\end{align*}
For the second claim, we define $\Ecal$ to be the set of all prime ideals that divide the gcd, call it $g$, of the coefficients of $B_\red(a_{\ell,j}(\bX_{I'})) \in \Z[\bX_{I'}]$. Note that $g \leq \|B_\red\|$ and by Lemma \ref{lemma_Noether},  $\log \|B_\red\| \ll_{n,D} \log \|F\|$. Consequently by Lemma \ref{lemma_count_in_ring}, $|\Ecal| \ll_{m,n,D} \log \|F\|/ \log \log \|F\|$. Then for $\pfrak \not\in \Ecal$, $B_\red(a_{\ell,j}(\bX_{I'}))$ is not identically zero over $k_\pfrak$. Thus to prove the second claim, we apply Noether's Lemma \ref{lemma_Noether} (ii) followed by the trivial bound in Lemma \ref{lemma_Schwartz_domain}:
\begin{align*}
    \# \{\bx' & \in k_\pfrak^{n-1} : \text{$F(Y,X_1,\bx')$ is reducible over $\overline{k_\pfrak}$}\}
   \\ 
   &\leq \#\{\bx' \in k_{\pfrak}^{n-1} : B_\red(a_{\ell,j}(\bx_{I'}))=0\} \leq \deg B_\red |k_{\pfrak}|^{n-2} .
\end{align*}
Since $B_\red \ll_{n,D}1$, this suffices.
\end{proof}
    
 \begin{rem}\label{remark_strongly_gen_consequence_weaker_hyp}
 As the method of proof showed, Theorem \ref{thm_strongly_gen_reducible_consequence_k} (and analogously Theorem \ref{thm_strongly_gen_reducible_consequence}) is still true under the following weaker hypothesis: that $F(Y,\bX)$ is irreducible over $k(\bX)$ and that for $i_0=1$ and $I'=\{2,\ldots,n\}$,
 \[   \overline{k(\bX_{I'})}\cap \mathcal{L}_F=k(\bX_{I'}).\]
 For then by Lemma \ref{lemma_Cohen_strongly_genuine_reducible}, the polynomial 
         $  B_{\red}(a_{\ell,j}(\bX_{I'})) \in \Z[\bX_{I'}]$ that detects whether $F(Y,X_1,\bx_{I'})$ is reducible over $\overline{\Q}$  is not identically zero, and the above proof can proceed.
 \end{rem}

\section{Special case of Theorem \ref{thm_Cohen}, and reduction to the special case}\label{sec_apply_strongly_genuine}

With the theory of strongly $n$-genuine polynomials in hand, we return to the verification  of Theorem \ref{thm_Cohen} in a special case (Theorem \ref{thm_Cohen_special}).  
 Recall the notation associated to Theorem \ref{thm_Cohen}, as defined in \S \ref{sec_Cohen_add_hyp}. Thus $\Omega_F$ is the splitting field of $F(Y,\bX)$ over $K(\bfX)$ and
  $L_F=\Omega_F\cap\overline{K}$, so that $K(\bfX)\subset L_F(\bfX)\subset\Omega_F$. We define $M_F(Y,\bfX) \in L_F(\bfX)[Y]$ to be the minimal polynomial of $\Omega_F$ over $L_F(\bfX)$; so we may assume that $M_F(Y,\bX)$ is monic in $Y$ and irreducible over $L_F(\bX)$, and $\Omega_F = L_F(X_1,\ldots,X_n)[Y]/(M_F(Y,\bX)).$ 
  
\subsection{Verification of Theorem \ref{thm_Cohen_special} for $n =1$}
For $n=1$, if $M_F$ is strongly $1$-genuine then 
  (in the notation of Proposition \ref{prop_Cohen_corrected_appendix} (i))  $B_{\red,1}^{M_F} \in \Z$ is nonzero, by Theorem \ref{thm_strongly_gen_ppties_n1} (I) $\Rightarrow$ (IV). Consequently the hypotheses of Proposition \ref{prop_Cohen_corrected_appendix} are all met, so the remainder of the proof of Theorem \ref{thm_Cohen} as in \cite[Thm. 2.1]{Coh81} can proceed, verifying the case $n=1$ of Theorem \ref{thm_Cohen_special}.

\begin{rem}[Original proof is correct for $n=1$]\label{remark_Cohen_n1_correct}
 Cohen's original proof requires no modification when $n=1$:
in the set-up where $M_F(Y,X_1)$ is the minimal polynomial of  $\Omega_F$ over $L_F$, $M_F$ must be strongly $1$-genuine. Indeed, it is strongly $1$-genuine by Theorem \ref{thm_strongly_gen_ppties_n1} (II) $\Rightarrow$ (I) since by definition $L_F:=\overline{K} \cap \Omega_F=\overline{\Q} \cap \Omega_F=\overline{L_F} \cap \Omega_F$.   Thus for $n=1$, Theorem \ref{thm_Cohen} is equivalent to Theorem \ref{thm_Cohen_special}. 
\end{rem}

   \subsection{Verification of Theorem \ref{thm_Cohen_special} for $n \geq 2$}
  For $n \geq 2$, suppose that for some $i_0 \in \{1,\ldots,n\}$, upon defining $I' = \{1,\ldots,n\}\setminus \{i_0\}$, 
     \beq\label{L_X1_special_n}
    \overline{L_F(\bX_{I'})} \cap \Omega_F = L_F(\bX_{I'}),
    \eeq
    that is to say $L_F(\bX_{I'})$ is integrally closed in $\Omega_F$.
 Then  (in the notation of Proposition \ref{prop_Cohen_corrected_appendix} (ii))  $B_{\red,i_0}^{M_F}(\bX_{I'}) \in \Z[\bX_{I'}]$ is not identically zero by Theorem \ref{thm_strongly_gen_ppties} (II) for $i_0$ $\Rightarrow$ (IV) for $i_0$. 
Consequently the hypotheses of Proposition \ref{prop_Cohen_corrected_appendix} are all met,   so that the outcome of Proposition \ref{prop_Cohen_corrected_appendix} replaces the desired outcome of Statement \ref{statement_Cohen} (ii). Thus the remainder of the proof of Theorem \ref{thm_Cohen} as in \cite[Thm. 2.1]{Coh81} can proceed, as already discussed.
In particular, if $M_F$ is strongly $n$-genuine then  (\ref{L_X1_special_n}) holds (in fact for all indices $i_0$)  by Theorem \ref{thm_strongly_gen_ppties} (I) $\Rightarrow$ (II).  This discussion has verified the proof of Theorem \ref{thm_Cohen_special} for $n\geq 2$.
Moreover, this discussion has proved another special case of Theorem \ref{thm_Cohen}, in which (\ref{L_X1_special_n*}) is the condition called (*) in the discussion below Theorem \ref{thm_Cohen_special}.

 \begin{thm}\label{thm_Cohen_special_weaker}
  Let  $K/\Q$ be a number field with ring of integers $\Ocal_K$. Let $F(Y,X_1,...,X_n) \in \calO_K[Y,X_1,\ldots,X_n]$   have total degree at most $D$ and Galois group $G$ over $K(X_1,\ldots,X_n)$. 
 If $n\geq 2$ suppose the additional condition: for some $i_0 \in \{1,\ldots,n\}$, upon defining $I' = \{1,\ldots,n\}\setminus \{i_0\}$, 
     \beq\label{L_X1_special_n*}
    \overline{L_F(\bX_{I'})} \cap \Omega_F = L_F(\bX_{I'}),
    \eeq
If $n=1$, no additional condition is required.
Then there exists a   constant $c$ depending only on $n,D,K$ such that  for all $N \geq 3$, 
 \[ \# \{ \bx \in \calO_K^n,  \|\bx\|\leq N: \text{$G(\bx)\not\simeq G$}\} \ll_{n,D,K}  \|F\|^{c} N^{n-\frac{1}{2}}\log N.\]
 \end{thm}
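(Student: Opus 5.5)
Theorem \ref{thm_Cohen_special_weaker} follows by combining the theory of \S\ref{sec_strongly_gen} with Proposition \ref{prop_Cohen_corrected_appendix} and then running Cohen's large sieve argument as sketched in \S\ref{sec_Cohen_original}. We may assume $F$ is squarefree, since repeated factors do not change $\Omega_F$ or any $\Omega_{\bx}$.

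The first step, and the only new input, is to verify the hypothesis of Proposition \ref{prop_Cohen_corrected_appendix}. We apply the theory of \S\ref{sec_strongly_gen} over the number field $k=L_F$ to the polynomial $M_F(Y,\bX)$. This polynomial is monic in $Y$ and irreducible over $L_F(\bX)$; after clearing denominators, and if necessary a harmless rescaling of $Y$, it has coefficients in $\Ocal_{L_F}$. Its associated field is $\Lcal_{M_F}=L_F(\bX)[Y]/(M_F)=\Omega_F$.

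\emph{Case $n=1$.} By definition $\overline{L_F}\cap\Omega_F=\overline{K}\cap\Omega_F=L_F$, so condition (II) of Theorem \ref{thm_strongly_gen_ppties_n1} holds. Hence (IV) holds, that is, $B^{M_F}_{\red,1}\neq 0$; this is exactly Remark \ref{remark_Cohen_n1_correct}.

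\emph{Case $n\ge 2$.} Hypothesis (\ref{L_X1_special_n*}) is precisely condition (II) for the index $i_0$ in Theorem \ref{thm_strongly_gen_ppties}, applied to $M_F$ over $k=L_F$. Lemma \ref{lemma_Cohen_strongly_genuine_reducible} (that is, (II) for $i_0$ $\Rightarrow$ (IV) for $i_0$) then gives that $B^{M_F}_{\red,i_0}(\bX_{I'})$ is not identically zero.

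In either case Proposition \ref{prop_Cohen_corrected_appendix} applies. After relabelling variables so that $i_0=1$, it supplies an exceptional set $\Ecal$ of prime ideals with $|\Ecal|\ll_{n,D}\log\|F\|/\log\log\|F\|$. For every $\pfrak\notin\Ecal$, all but $\ll_{n,D}|K_\pfrak|^{n-2}$ tuples $\bx_{I'}$ have the property that $\Omega_{\bx_{I'},\pfrak}$ is regular over $L_\pfrak$ and has full Galois group.

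The second step repeats the argument of \S\ref{sec_Cohen_original} verbatim.

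\emph{Large sieve.} For each proper subgroup $H\subsetneq G$ we apply the large sieve inequality with $Y=N^{1/2}$. This reduces the theorem to a lower bound $P_H(Y)\gg Y/\log Y$.

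\emph{Contribution $N_1'(\pfrak)$.} We split $N_1(\pfrak)$ according to whether the fibre over $\bx_{I'}$ has full Galois group. On the good fibres, of which there are $\gg|K_\pfrak|^{n-1}$, the function-field Chebotarev theorem \cite[Lemma 4.4]{Coh81} counts $x_{i_0}$ whose Frobenius lies outside $\mathscr{C}(H)$. Summing over these fibres gives $N_1'(\pfrak)\gg|K_\pfrak|^n$.

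\emph{Contribution $N_2(\pfrak)$.} The discriminant term $N_2(\pfrak)$ is $O(|K_\pfrak|^{n-1})$ off a further small exceptional set $\Ecal_0$, by Lemma \ref{lemma_Schwartz_domain} and Lemma \ref{lemma_count_in_ring}.

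\emph{General $K$ and $L_F$.} Here we restrict to prime ideals $\pfrak$ splitting completely in the Galois closure $K'$ of $K$. We use Cohen's Chebotarev count \cite[Lemma 6.2]{Coh81} to guarantee $\gg Y/\log Y$ primes with $S_\pfrak\not\subset\mathscr{C}(H)$, valid once $Y>\Del_F^{c'}$ with $\Del_F\ll\|F\|^c$.

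\emph{Conclusion.} Summing over $H$ gives the bound $\ll N^{n-1/2}\log N$ for $N\gg\|F\|^{c''}$. For smaller $N$, the trivial bound $N^n\ll\|F\|^{c''/2}N^{n-1/2}$ absorbs the remaining range and produces the factor $\|F\|^{c}$.

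I expect the main obstacle to be bookkeeping rather than conceptual, namely justifying the application of \S\ref{sec_strongly_gen} to $M_F$. Those results are stated for polynomials over $\Ocal_k$ for a number field $k$, with the Noether form tested over $\overline{\Q}$. The required checks are:
\begin{itemize}
\item that taking $k=L_F$ is legitimate;
\item that the integral normalization of $M_F$ preserves irreducibility;
\item that $\log\|B^{M_F}_{\red,i_0}\|\ll\log\|F\|$ (equivalently, that the height of $M_F$ is polynomially controlled by $\|F\|$, as in Cohen);
\item that prime ideals of $\Ocal_K$ versus $\Ocal_{L_F}$ are matched correctly when counting $\Ecal$.
\end{itemize}
Once these are in place, the remainder is Cohen's argument unchanged.
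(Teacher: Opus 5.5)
Your proposal is correct and follows the paper's argument. Hypothesis (\ref{L_X1_special_n*}) is condition (II) for $i_0$ of Theorem \ref{thm_strongly_gen_ppties} applied to $M_F$ over $L_F$, and for $n=1$ the analogous condition holds automatically (Remark \ref{remark_Cohen_n1_correct}); so Lemma \ref{lemma_Cohen_strongly_genuine_reducible} gives $B^{M_F}_{\red,i_0}\not\equiv 0$, Proposition \ref{prop_Cohen_corrected_appendix} applies, and Cohen's large sieve argument then runs unchanged, with the bookkeeping points you flag likewise deferred to Cohen's original setup in the paper.
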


\subsection{A general notion of $\ell$-genuine and strongly $\ell$-genuine polynomials}
 
 Before beginning the recovery of Theorem \ref{thm_Cohen} in full generality, we need to introduce a generalized notion of $\ell$-genuine and strongly $\ell$-genuine polynomials in $Y,X_1,\ldots,X_n$, now allowing $1 \leq \ell \leq n$ to be considered. (We continue to state the definitions over a number field $k$, although the elementary considerations of this section apply without change in the case of an arbitrary field.)

\begin{defin}[$\ell$-genuine extension]\label{dfn_ell_genuine_generalized}
        We say that a finite (nontrivial) extension $M$ of $k(\bX) = k(X_1,\dots,X_n)$ is an \textbf{$\ell$-genuine extension} if for every $G(Y,\bX)\in k[Y,\bX] = k[Y,X_1,\dots,X_n]$ such that 
        $$M = k(\bX)[Y]/(G(Y,\bX)),$$
        there is some index set $|I|\geq \ell$ such that $G(Y,\bX)$ has nonzero degree in $X_i$ for all $i\in I$. We say that $M$ is a \textbf{strongly $\ell$-genuine extension} of $k(\bX)$ if for all subextensions $M'$ satisfying $$k(\bX)\subsetneq M' \subset M,$$
        $M'$ is an $\ell$-genuine extension of $k(\bX)$.
    \end{defin}

   We will require the following simple observation about strongly $1$-genuine extensions; this merely applies the reasoning of Remark \ref{remark_Cohen_n1_correct} in an $n$-variable setting.

    \begin{lem}\label{lemma_at_least_strongly_1_gen}
    Let $k$ be a number field and let $\Lcal$ be a nontrivial extension of $k(X_1,\ldots,X_n)$.
 The following are equivalent:

(I)  $k=\Lcal \cap \overline{k}$, or equivalently there is no number field $k'$ with $k \subsetneq k' \subset (\Lcal \cap \overline{k})$. 

(II) $\Lcal$ is  a strongly $1$-genuine extension of $k(X_1,\ldots,X_n)$. 

(III) For any extension $\mathcal{M}$ with $k(X_1,\ldots,X_n) \subsetneq \mathcal{M} \subset \Lcal$, $\mathcal{M}$ is an $\ell$-genuine extension of $k(X_1,\ldots,X_n)$ for some $\ell \geq 1$. 
\end{lem}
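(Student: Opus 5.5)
The plan is to prove the cycle (I) $\Rightarrow$ (III) $\Rightarrow$ (II) $\Rightarrow$ (I), arguing by contrapositive throughout, in the spirit of the proof of Theorem \ref{thm_strongly_gen_ppties_n1}. By Definition \ref{dfn_ell_genuine_generalized}, an extension $\mathcal{M}$ of $k(\bX)$ fails to be $1$-genuine exactly when $\mathcal{M}=k(\bX)[Y]/(G(Y))$ for some $G\in k[Y]$, that is, some generating polynomial involves none of the $X_i$. Moreover, every finite nontrivial extension is trivially $0$-genuine. So (III) should be read as saying that each intermediate $\mathcal{M}$ is $\ell$-genuine for some $\ell\geq 1$, which is the same as being $1$-genuine, because $\ell$-genuine implies $1$-genuine for $\ell \geq 1$. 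Hence (II) $\Leftrightarrow$ (III) is essentially a matter of unwinding the definitions: (II) says every $\mathcal{M}$ with $k(\bX)\subsetneq \mathcal{M}\subset \Lcal$ is $1$-genuine, and (III) says each such $\mathcal{M}$ is $\ell$-genuine for some $\ell\geq1$. I would record this first.

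For (II) $\Rightarrow$ (I), I argue the contrapositive. Suppose there is a number field $k'$ with $k\subsetneq k'\subset \Lcal\cap\overline{k}$. Write $k'=k(\theta)$ by the primitive element theorem, and let $G(Y)\in k[Y]$ be the minimal polynomial of $\theta$, so $\deg G\geq 2$. Set $\mathcal{M}=k'(\bX)=k(\bX)(\theta)\subset\Lcal$. Since $\bX$ are independent transcendentals over $k$, the polynomial $G$ remains irreducible over $k(\bX)$; equivalently, $k$ is algebraically closed in $k(\bX)$. Therefore $\mathcal{M}\cong k(\bX)[Y]/(G(Y))$ is a proper extension of $k(\bX)$ presented by a polynomial of degree zero in every $X_i$. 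So $\mathcal{M}$ is not $1$-genuine, and $\Lcal$ is not strongly $1$-genuine.

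For (I) $\Rightarrow$ (II), again by contrapositive, suppose some $\mathcal{M}$ with $k(\bX)\subsetneq\mathcal{M}\subset\Lcal$ is not $1$-genuine. Then $\mathcal{M}=k(\bX)[Y]/(G(Y))$ with $G\in k[Y]$, and $G$ is irreducible over $k(\bX)$, hence over $k$, with $\deg G\geq 2$ because $\mathcal{M}\neq k(\bX)$. The class $\theta$ of $Y$ in $\mathcal{M}\subset \Lcal$ is a root of $G$, so it is algebraic over $k$, and it does not lie in $k$. Thus $k(\theta)$ is a number field with $k\subsetneq k(\theta)\subset \Lcal\cap\overline{k}$, which contradicts (I).

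The only point requiring care is the irreducibility of $G(Y)\in k[Y]$ over $k(\bX)$, used in the step (II) $\Rightarrow$ (I). I would justify it by Gauss's lemma: any factorization of $G$ over $k(\bX)$ can be taken in $k[\bX][Y]$, and setting the $\bX$ to $0$, or comparing degrees in $\bX$, yields a factorization over $k$. Apart from this, the proof is simply bookkeeping with the definitions.
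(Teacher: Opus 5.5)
Your proof is correct and takes essentially the same route as the paper. Both arguments identify a non-$1$-genuine intermediate field with one of the form $k'(\bX)=k(\bX)[Y]/(G(Y))$ for a number field $k\subsetneq k'\subset \Lcal\cap\overline{k}$, and both treat (II) $\Leftrightarrow$ (III) as unwinding the definitions. The only differences are that you split the paper's chain of equivalences into two contrapositives, and you supply the detail, via Gauss's lemma, that a $G\in k[Y]$ irreducible over $k$ stays irreducible over $k(\bX)$, which the paper leaves implicit.
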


\begin{proof}
    An extension $k'$ with   $k \subsetneq k' \subset (\overline{k} \cap \Lcal)$ exists if and only if $k(\bX) \subsetneq k'(\bX) \subset \Lcal$, if and only if there  exists a polynomial $G(Y)$  independent of $X_1,\ldots,X_n$ and with   $\deg_Y G  \geq 2$, such that $k(\bX) \subsetneq k(X_1,\ldots,X_n)[Y]/G(Y) \subset \Lcal$, which occurs if and only if $\Lcal$ is not strongly $\ell$-genuine for any $\ell \geq 1$. Thus (I) $\Leftrightarrow$ (II). Similarly, (II) $\Leftrightarrow$ (III) since $\Lcal$ is  strongly $1$-genuine if and only if for every polynomial $G(Y,\bX)$ such that   $k(\bX) \subsetneq k(X_1,\ldots,X_n)[Y]/G(Y, \bX)) \subset \Lcal$, $\deg_{X_i}G \geq 1$ for some $i$, which is equivalent to (III).  
\end{proof}

\begin{cor} \label{cor_min_poly_at_least_strongly_1_gen}
Let $\Omega_F$ be the splitting field of $F(Y,\bX)$ over $K(\bfX)$,
  $L_F:=\Omega_F\cap\overline{K}$, and $M_F(Y,\bfX) \in L_F(\bfX)[Y]$ be the minimal polynomial of $\Omega_F$ over $L_F(\bfX)$. Then there is no number field extension $L' \supsetneq L_F$ such that 
  \[ L_F(\bX) \subsetneq L'(\bX) \subset \Omega_F = L_F(X_1,\ldots,X_n)[Y]/(M_F(Y,\bX)).\]
 Equivalently $M_F(Y,\bX)$ is  strongly $1$-genuine over $L_F$, and $\Omega_F$ is a strongly $1$-genuine extension of $L_F(\bX)$, in the sense that every extension $\mathcal{M}$ with $L_F(X_1,\ldots,X_n) \subsetneq \mathcal{M} \subseteq \Omega_F$ is an $\ell$-genuine extension of $L_F(X_1,\ldots,X_n)$ for some $\ell \geq 1$.
    \end{cor}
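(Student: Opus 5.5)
The plan is to deduce the corollary directly from Lemma \ref{lemma_at_least_strongly_1_gen}, applied with base field $k = L_F$ and $\Lcal = \Omega_F$, regarded as an extension of $L_F(X_1,\ldots,X_n)$. First I will check that the setting is legitimate. $L_F = \Omega_F \cap \overline{K}$ is a finite extension of $K$, since it sits inside the algebraic part of a finite extension of $K(\bX)$. Hence it is a number field. Next, $\Omega_F \supseteq L_F(\bX)$, because $L_F \subset \Omega_F$ and $\bX \in \Omega_F$. Finally, $\Omega_F = L_F(\bX)[Y]/(M_F(Y,\bX))$ by the definition of $M_F$ as the minimal polynomial of a primitive element of $\Omega_F$ over $L_F(\bX)$; the primitive element exists because we are in characteristic zero. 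If $\Omega_F = L_F(\bX)$, the statement about intermediate extensions is vacuous, so I will assume the extension is nontrivial, as the lemma requires.

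The key step is verifying condition (I) of Lemma \ref{lemma_at_least_strongly_1_gen}, namely $\Omega_F \cap \overline{L_F} = L_F$. Since $L_F$ is algebraic over $K$, we have $\overline{L_F} = \overline{K}$ (both equal $\overline{\Q}$), so $\Omega_F \cap \overline{L_F} = \Omega_F \cap \overline{K} = L_F$ by definition. This is the same observation as in Remark \ref{remark_Cohen_n1_correct}. From it, the first claim follows immediately. Suppose a number field $L' \supsetneq L_F$ satisfies $L'(\bX) \subset \Omega_F$. Then $L' \subset \Omega_F \cap \overline{K} = L_F$, which is a contradiction. The one point to state carefully is that elements of $L'$, being algebraic over $K$, lie in $\overline{K}$.

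With (I) in hand, the lemma's implications (I) $\Rightarrow$ (II) $\Rightarrow$ (III) give the remaining two assertions. By (II), $\Omega_F$ is a strongly $1$-genuine extension of $L_F(\bX)$. By (III), every intermediate $\mathcal{M}$ with $L_F(\bX) \subsetneq \mathcal{M} \subseteq \Omega_F$ is $\ell$-genuine for some $\ell \geq 1$. Since $\Omega_F = L_F(\bX)[Y]/(M_F)$, the polynomial $M_F$ is strongly $1$-genuine over $L_F$ by the definition of a strongly genuine polynomial, adapted to the generalized $\ell$ notion of Definition \ref{dfn_ell_genuine_generalized}.

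The only potential obstacle is definitional. Definition \ref{dfn_strongly_poly} asks for coefficients in $L_F[Y,\bX]$, whereas $M_F$ a priori lies in $L_F(\bX)[Y]$. I will handle this as the paper does, by clearing denominators or taking $M_F$ to be a monic integral primitive element. Being strongly $1$-genuine is a property of the extension $\Omega_F$, not of the chosen generator, so this normalization does not affect the conclusion.
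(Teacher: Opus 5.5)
Your proposal is correct and matches the paper's argument. The first claim follows directly from $L_F = \overline{K}\cap\Omega_F = \overline{L_F}\cap\Omega_F$, and the equivalent formulations come from applying Lemma \ref{lemma_at_least_strongly_1_gen} with $k = L_F$ and $\Lcal = \Omega_F$. Your extra checks (that $L_F$ is a number field, the trivial case $\Omega_F = L_F(\bX)$, and independence of the chosen generator $M_F$) are harmless details that the paper leaves implicit.
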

    \begin{proof}
For the first claim, by construction, since $L_F :=\overline{K} \cap \Omega_F$, no number field $L'$ with $L_F \subsetneq L' \subset (\overline{K} \cap \Omega_F)=(\overline{L_F} \cap \Omega_F)$ exists. For the equivalence, we may apply Lemma \ref{lemma_at_least_strongly_1_gen}.
    \end{proof}

\subsection{Reduction of the general case for $n \geq 2$}

 With Corollary \ref{cor_min_poly_at_least_strongly_1_gen} in hand, we begin discussing the strategy to recover Theorem \ref{thm_Cohen} in full generality by reducing to an application of Theorem \ref{thm_Cohen_special_weaker}.
From now on we need only consider the case $n\geq 2$. 
 We introduce a notation for a shifted polynomial: 
 for any  $\mathbf{a}\in \calO_K^{n-1}$ define
    \beq\label{app_shift_poly_dfn}
    F_{\bfa}(Y,X_{1},...,X_{n})=F(Y,X_{1},X_{2}+a_{2}X_{1},...,X_{n}+a_{n}X_{1}).
    \eeq
This is a linear transformation, say $F_\ba (Y,\bX) = F(Y,\sig_\ba(\bX))$  where $\sig_\ba \in \GL_n(K)$ has  associated matrix 
\[\sig_\ba=
\left( \begin{array}{cccc}
    1 &0 &\hdots &0\\
    a_2 & 1 &\hdots &0 \\
    \vdots &\vdots&\vdots &\vdots \\
    a_n &0&\hdots & 1 \end{array} \right).
\]
    \begin{lem}\label{lemma_shift_before_or_later}
    In the notation of Corollary \ref{cor_min_poly_at_least_strongly_1_gen}, for each $\ba \in \calO_K^{n-1}$, $L_{(F_\ba)} = L_F$. Moreover, 
    $(M_{F})_{\bfa}(Y,\bX) = M_{F_\ba}(Y,\bX)$,
        that is to say, the shift of the minimal polynomial is the minimal polynomial of the shift.
        \end{lem}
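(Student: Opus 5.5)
The plan is to view the shift $\sig_\ba$ as a $K$-automorphism of the field $K(\bX)$ and transport all the field-theoretic data along it. Since $\sig_\ba$ is a unimodular lower triangular matrix with entries in $\calO_K$, the substitution $X_1\mapsto X_1$, $X_i\mapsto X_i+a_iX_1$ ($i\geq 2$) defines a $K$-algebra automorphism $\phi_\ba$ of $K[\bX]$, with inverse given by the shift by $-\ba$. It extends to an automorphism of $K(\bX)$, and then coefficientwise to $K(\bX)[Y]$, sending $F(Y,\bX)$ to $F_\ba(Y,\bX)$. I would then fix an algebraic closure $\overline{K(\bX)}$ and extend $\phi_\ba$ to an automorphism $\Phi$ of $\overline{K(\bX)}$. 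This is possible because any isomorphism of fields extends to an isomorphism of their algebraic closures, and the image of $\overline{K(\bX)}$ is again an algebraic closure of $K(\bX)$. Since $\phi_\ba$ fixes $K$, $\Phi$ restricts to an automorphism of $\overline{K}$, the algebraic elements over $K$ inside $\overline{K(\bX)}$.

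Next I would track the splitting fields. If $\alpha_1,\ldots,\alpha_d$ are the roots of $F(Y,\bX)$, then $\Phi(\alpha_1),\ldots,\Phi(\alpha_d)$ are the roots of $F_\ba(Y,\bX)$, because $\Phi$ acts on coefficients by $\phi_\ba$. Hence $\Phi(\Omega_F)=\Omega_{F_\ba}$. It follows that
\[
L_{F_\ba}=\Omega_{F_\ba}\cap\overline{K}=\Phi(\Omega_F)\cap\Phi(\overline{K})=\Phi(L_F).
\]
To conclude $\Phi(L_F)=L_F$, I would use that $L_F/K$ is Galois (as noted in the proof of Lemma \ref{lemma_integrally_closed_implies_absirred}, being the constant field of a Galois extension), so $L_F$ is stable under every $K$-embedding into $\overline{K}$. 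A cleaner route is to choose $\Phi$ fixing $\overline{K}$ pointwise. This is possible by first extending $\phi_\ba$ to $\overline{K}(\bX)$ through the same substitution, which is $\overline{K}$-linear, and then extending to the algebraic closure. With that choice, $\Phi(L_F)=L_F$ is immediate, so $L_{F_\ba}=L_F$.

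For the minimal polynomial I would fix a primitive element $z$ with $\Omega_F=L_F(\bX)(z)$ and minimal polynomial $M_F(Y,\bX)$ over $L_F(\bX)$. Applying $\Phi$ gives $\Omega_{F_\ba}=L_F(\bX)(\Phi(z))$, since $\Phi$ restricts on $L_F(\bX)$ to the substitution, which is an automorphism of $L_F(\bX)$. The element $\Phi(z)$ is a root of the image of $M_F$ under the coefficient map, namely $(M_F)_\ba$. This polynomial is monic in $Y$ and irreducible over $L_F(\bX)$, because the coefficient map is a ring automorphism of $L_F(\bX)[Y]$ and therefore preserves irreducibility. So $(M_F)_\ba$ is the minimal polynomial of the generator $\Phi(z)$ of $\Omega_{F_\ba}$, and this is the meaning of $(M_F)_\ba=M_{F_\ba}$.

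The main obstacle is the implicit dependence of ``the'' minimal polynomial on a choice of primitive element. I would make the statement precise by saying that once $M_F$ is chosen, using the generator $\Phi(z)$ for the shifted polynomial gives exactly $(M_F)_\ba$. This choice is legitimate because $M_{F_\ba}$ was only defined up to the choice of a primitive element.
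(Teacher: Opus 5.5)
Your proof is correct. For the first claim it takes a different route from the paper.

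\textbf{First claim.} The paper obtains $L_{F_\ba}=L_F$ through specializations. It writes $L_F=\bigcap_{\bx\in K^n}(\overline{K}\cap\Omega_{F,\bx})$ and notes that passing to $F_\ba$ only reindexes this intersection by the bijection $\sig_\ba$ of $K^n$. You avoid specialization altogether: extending the substitution to an automorphism $\Phi$ of $\overline{K(\bX)}$ fixing $\overline{K}$ gives $L_{F_\ba}=\Phi(\Omega_F)\cap\overline{K}=\Phi(L_F)=L_F$ at once.

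Your route is also more robust, because the paper's identity is not literally true over all of $K^n$. For $F=Y^2-2X_1^2$ one has $L_F=\Q(\sqrt{2})$, but $F(Y,\bx)=Y^2$ when $x_1=0$, so that specialization has splitting field $\Q$. To repair the paper's argument one would have to restrict to $\bx$ with nonvanishing discriminant and leading coefficient. One would then still need a specialization argument together with Hilbert irreducibility to identify the intersection with $L_F$.

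\textbf{Second claim.} Here the two proofs share the same idea of transporting roots by the substitution. The paper shows $\Omega_{F_\ba}\subseteq L_F(\bX)[Y]/((M_F)_\ba)$ and concludes by comparing degrees. The two degree equalities it uses are exactly what your observations supply: $\Phi$ is a field isomorphism, and the coefficient map preserves irreducibility. Your treatment also makes explicit that $M_{F_\ba}$ is determined only after choosing a primitive element, which the paper leaves implicit.

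\textbf{One small point.} Your minimal-polynomial step needs $\Phi$ to restrict to the bare substitution on $L_F(\bX)$. So you should commit to the choice of $\Phi$ fixing $\overline{K}$ pointwise. A $\Phi$ that merely stabilizes $L_F$ would send $z$ to a root of $(M_F^{\tau})_\ba$, where $\tau=\Phi|_{L_F}$ acts on the coefficients, rather than to a root of $(M_F)_\ba$.
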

        \begin{proof}
 We first prove that for $\ba\in \calO_K^{n-1}$, $L_{(F_\ba)} = L_F$. Since by definition $L_F := \overline{K}\cap \Omega_F$, note that $L_F = \cap_{\bx\in K^n} \overline{K}\cap \Omega_{F,\bx}$, where $\Omega_{F,\bx}$ is the splitting field of $F(Y,\bx)$. On the other hand, $L_{(F_\ba)} = \cap_{\bx \in K^n} \overline{K}\cap \Omega_{F,\sig_\ba(\bx)} = \cap_{\bx\in \sig_\ba(K^n)} \overline{K} \cap \Omega_{F,\bx}$, in which $\sig_\ba$ is the linear transformation associated to $\ba$, as defined above. Now, $\sig_\ba\in \GL_n(K)$ is invertible, so indeed $L_{(F_\ba)} =  \cap_{\bx\in K^n} \overline{K} \cap \Omega_{F,\bx}=L_F,$ and the claim is proved.

For the second claim, we will show that $\Omega_{F_\ba} = L_F(\bX)[Y]/(M_F)_\ba(Y,\bX).$ Let us write $k:=L_F = L_{F_\ba}$ and $\Omega_F = k(\bX)(\alpha_1(\bX),\hdots, \alpha_D(\bX))$, where $\alpha_i(\bX)$ are algebraic expressions and the roots of $F(Y,\bX).$ Then since $\Omega_F = k(\bX)[Y]/M_F(Y,\bX)$, we know that $\alpha_i(X_1,X_2+a_2X_1,...,X_n+a_nX_1)\in k(\bX)[Y]/(M_{F})_{\bfa}(Y,\bX)$ for each $i$. Thus, $\Omega_{F_\ba}\subset k(\bX)[Y]/(M_{F})_{\bfa}(Y,\bX).$ Furthermore, we know that \[[\Omega_{F_\ba}:k(\bX)] = [\Omega_F:k(\bX)] = [k(\bX)[Y]/(M_{F})_{\bfa}(Y,\bX):k(\bX)].\]
 Hence  $\Omega_{F_{\bfa}} = k(\bX)[Y]/(M_{F})_{\bfa}(Y,\bX)$ as desired.
 
    \end{proof}

  We will transform $F(Y,\bX)$ to produce a shifted polynomial $F_\ba$  whose corresponding minimal  polynomial $M_{F_\ba}(Y,\bX) = (M_F)_\ba(Y,\bX)$  satisfies the hypothesis (\ref{L_X1_special_n*}) for $i_0=1$, and moreover $\log\|F_\ba\|\ll_{n,D,m}\log\|F\|$, via the following theorem. 
\begin{thm}\label{thm_appendix_shift_to_strongly_genuine}
Let $K/\Q$ be a finite extension and denote $m = \deg (K/\Q)$.
 Suppose $n\geq 2$ and let $F(Y,\bX) \in \calO_K[Y,X_1,\ldots,X_n]$ be a squarefree polynomial of total  degree $D$ and $\deg_Y F \geq 2$.  For any $\bfa \in \calO_K^{n-1}$,
    let $\Omega_{\bfa}$ be the splitting field of $F_{\bfa}(Y,\bX)$ over $K(\bX)$ and define the number field  
    \[ L:=L_{(F_\ba)}=\Omega_{\bfa} \cap \overline{K} = \Omega_F \cap \overline{K} =L_F,\]
    which is independent of $\ba$ by Lemma \ref{lemma_shift_before_or_later}. 
    Then there exists some $\bfa \in \calO_K^{n-1}$ with $\|\bfa\|\ll_{n,D,m} 1$ such that
      \beq\label{strongly_genuine_replacement}
     \overline{L(X_{2},...,X_{n})} \cap  \Omega_\ba = L(X_{2},...,X_{n}).
      \eeq  
\end{thm}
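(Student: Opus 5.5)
The plan is to convert condition (\ref{strongly_genuine_replacement}) into the non-vanishing of a single polynomial in the shift parameters $\ba$. Then small $\ba$ follows from the trivial bound, and the real content is showing that this polynomial is not identically zero.

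\emph{Step 1: reduction to a polynomial in $(\ba,\bX')$.} By Lemma \ref{lemma_shift_before_or_later}, $\Omega_\ba=L(\bX)[Y]/(M_F)_\ba(Y,\bX)$ and $L_{(F_\ba)}=L$. So (\ref{strongly_genuine_replacement}) is exactly condition (II) for $i_0=1$ in Theorem \ref{thm_strongly_gen_ppties}, applied over the number field $L$ to the polynomial $(M_F)_\ba$. I will take $M_F\in\Ocal_L[Y,\bX]$ after clearing denominators, which is harmless. By (II)$\Leftrightarrow$(IV) for $i_0=1$, the condition is equivalent to the following. Expand $(M_F)_\ba$ in $Y,X_1$ with coefficients $a_{\ell,m}(\ba,\bX')$, where $\bX'=(X_2,\ldots,X_n)$. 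These coefficients are polynomials in both $\ba$ and $\bX'$. The requirement is that $B_\red(a_{\ell,m}(\ba,\bX'))$ not be identically zero in $\bX'$.

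Treat $\ba=(A_2,\ldots,A_n)$ as further indeterminates and set $\mathcal B(\bfA,\bX'):=B_\red(a_{\ell,m}(\bfA,\bX'))$. By Lemma \ref{lemma_Noether}, this polynomial has degree $\ll_{n,D}1$. I also need the degree of $M_F$ to be $\ll_{n,D}1$; this holds because $[\Omega_F:L(\bX)]\le D!$ and a primitive element can be chosen as a small integer combination of the roots. Once $\mathcal B\not\equiv0$ is known, pick a monomial in $\bX'$ whose coefficient $c(\bfA)$ is a nonzero polynomial of bounded degree. Lemma \ref{lemma_Schwartz_domain}, applied on a box of size $\ll_{n,D,m}1$ in $\Ocal_K^{n-1}$ (counted via Lemma \ref{lemma_count_in_ring}), then gives $\ba$ with $\|\ba\|\ll_{n,D,m}1$ and $c(\ba)\ne0$. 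This proves (\ref{strongly_genuine_replacement}).

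\emph{Step 2: $\mathcal B\not\equiv0$, the main obstacle.} It suffices to find a single $\ba\in\overline{\Q}^{\,n-1}$, or in some finite extension of $L$, at which the condition holds. Equivalently, I need one direction $v=(1,-a_2,\ldots,-a_n)$ along which $\Omega_F$ has no nontrivial subfield that is constant in the sense of being algebraic over $L(\bX_{I'})$ after the shift. The argument works through intermediate fields. The intermediate fields $L(\bX)\subsetneq\mathcal M\subseteq\Omega_F$ are finitely many, at most $2^{D!}$. For each such $\mathcal M$, let $S_{\mathcal M}$ be the set of directions $v$ such that $\mathcal M$ is generated by an element whose minimal polynomial involves $\bX$ only through linear forms vanishing on $v$.

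\begin{itemize}
\item If the shift by $\ba$ fails (\ref{strongly_genuine_replacement}), then the algebraic closure $N$ of $L(\bX_{I'})$ in $\Omega_\ba$ gives $\mathcal M=N(X_1)$ pulled back, and so $v\in S_{\mathcal M}$ for this nontrivial $\mathcal M$.
\item I will argue that $S_{\mathcal M}$ is contained in a proper linear subspace. The key point is that if $\mathcal M$ descends along two independent directions, it descends along their span: invariance under the two translation flows composes. If $S_{\mathcal M}$ spanned everything, $\mathcal M$ would be generated by a polynomial independent of all $X_i$, i.e.\ $\mathcal M=L'(\bX)$ with $L'\supsetneq L$. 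This contradicts Corollary \ref{cor_min_poly_at_least_strongly_1_gen}.
\end{itemize}

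A finite union of proper subspaces cannot cover the affine hyperplane $\{v_1=1\}$ over an infinite field, so a good $\ba$ exists and $\mathcal B\not\equiv0$.

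The step I expect to be hardest is the span-closure claim for $S_{\mathcal M}$. I would first prove it using the rigidity of descent: the constant subfield $\overline{L(\bX_{v^\perp})}\cap\mathcal M$ is canonical, so descents along $v$ and $w$ are compatible and combine. If that proves awkward, the fallback is to show directly that each $S_{\mathcal M}$, read as a condition on $\ba$, is Zariski-closed of bounded degree and proper. Closedness comes from Noether's Lemma applied to the generic shift. Properness again uses Corollary \ref{cor_min_poly_at_least_strongly_1_gen}, together with the case $n=2$ and an induction that specializes all but two of the $X_i$ generically.
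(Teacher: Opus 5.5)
Your route differs from the paper's and can be completed, but as written its decisive step is asserted rather than proved. The first bullet is fine: if $\ba$ fails (\ref{strongly_genuine_replacement}), some nontrivial intermediate field $\mathcal M$ of $\Omega_F/L(\bX)$ descends along the direction $v$ attached to $\ba$. The claim that $S_{\mathcal M}\cup\{0\}$ is a linear subspace, however, carries the whole argument. ``Invariance under the two translation flows composes'' does not establish it: composing invariances at best gives invariance under the plane of translations, whereas you need $\mathcal M$ to be generated by elements that are \emph{simultaneously} constant in both directions.

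One way to supply this, for independent $v,w\in L^n$, uses derivations.
\begin{enumerate}
\item Let $D_v=\sum_i v_i\,\partial/\partial X_i$. Since $\mathcal M/L(\bX)$ is finite separable, $D_v$ extends uniquely to $\mathcal M$, and uniqueness forces $D_vD_w=D_wD_v$ on $\mathcal M$.
\item Differentiating a minimal polynomial gives $\ker(D_v|_{\mathcal M})=\mathcal M\cap\overline{L(v^\perp)}$, where $L(v^\perp)$ is generated over $L$ by the linear forms vanishing on $v$. So $v\in S_{\mathcal M}$ says exactly that $\mathcal M=\ker(D_v|_{\mathcal M})\cdot L(\bX)$.
\item Change coordinates over $L$ so that $D_v=\partial_1$ and $D_w=\partial_2$. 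If $v\in S_{\mathcal M}$, then $\mathcal M=A(X_1)$ with $A=\ker\partial_1$, and $X_1$ is transcendental over $A$.
\item By commutativity $\partial_2$ preserves $A$, so it acts on $A(X_1)$ coefficientwise. Write $r=p/q$ with $p,q\in A[X_1]$ coprime and $q$ monic. Then $\partial_2 r=0$ gives $q\mid\partial_2 q$, which forces $\partial_2 q=0$ and then $\partial_2 p=0$. Hence $\ker\partial_2=C(X_1)$ with $C=\ker\partial_1\cap\ker\partial_2\subseteq\overline{L(X_3,\ldots,X_n)}$.
\item If also $w\in S_{\mathcal M}$, then $\mathcal M=\ker(\partial_2)(X_2)=C(X_1,X_2)$. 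So $\mathcal M$ descends along every vector of $\mathrm{span}(v,w)$.
\item Repeating this with several coordinates at once shows that $S_{\mathcal M}\cup\{0\}=L^n$ would force $\mathcal M=(\mathcal M\cap\overline L)(\bX)\supsetneq L(\bX)$. This contradicts Corollary~\ref{cor_min_poly_at_least_strongly_1_gen}.
\end{enumerate}

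Two minor points. When you pass from one good $\ba$ back to $\mathcal B\not\equiv 0$ via (II)$\Rightarrow$(IV), choose $\ba$ off the hypersurface where $\deg_{Y,X_1}(M_F)_\ba<\deg M_F$; otherwise $B_\red$ vanishes by its degree clause. Also, once Step~2 is proved, Step~1 is unnecessary. The bad $\ba\in L^{n-1}$ lie in $\ll_D 1$ proper affine subspaces, so Lemma~\ref{lemma_Schwartz_domain} applied to linear polynomials over $\Ocal_L$ already gives $\|\ba\|\ll_{n,D,m}1$.

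Compared with the paper, the reduction is the same. The paper also enumerates the intermediate fields $K_j$, and its final step (no $K_{j,\ba}$ lies in $N_{R_\ba}(X_1)$) is your first bullet. The difference is how a single $K_j$ is handled. The paper writes $K_j$ through an $|I|$-genuine polynomial $H$ in a subset of the variables and shifts one variable at a time (Lemmas~\ref{lemma_app_shifting}, \ref{lemma_P_shiftedH_not_zero}, \ref{lemma_P_nonzero_NRAI}). It then uses the $n$-genuine characterization, Theorem~\ref{thm_gen_ppties}, whose hard direction rests on Hilbert irreducibility and EGA constructibility, to show that $B_\lin$ of the generically shifted $H$ is not identically zero. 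This produces a nonzero polynomial $g^{P_j}(\ba)$ of bounded degree (Lemma~\ref{lemma_shift_all_vars_R}); your fallback plan is essentially this. Your main route, completed as above, gives a sharper and more elementary conclusion: for each $K_j$ the bad shifts form a single proper affine subspace, with Corollary~\ref{cor_min_poly_at_least_strongly_1_gen} as the only input. The paper's route is instead built to exercise the $n$-genuine theory it develops.
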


We will prove this theorem in \S \ref{sec_genuine} and  \S \ref{sec_shift_poly}. For now, we show why it suffices to complete the proof of Theorem \ref{thm_Cohen}.
 
\begin{proof}[Deduction of Theorem \ref{thm_Cohen}]
In  Theorem \ref{thm_Cohen}, consider the given polynomial $F(Y,X_1,\ldots,X_n)$ in $\calO_K[Y,X_1,\ldots,X_n]$  of total degree $ \leq D$ and Galois group $G$ over $K(\bX)$.  Fix once and for all a choice of  $\bfa \in \calO_K^{n-1}$ provided by Theorem \ref{thm_appendix_shift_to_strongly_genuine} so that (\ref{strongly_genuine_replacement}) holds, and define the shifted polynomial $F_{\bfa}$ as in (\ref{app_shift_poly_dfn}).
Since $\|\ba\| \ll_{n,D,m} 1$, then  $\|F_\ba\|\ll_{n,D,m} \|F\|$;  this follows from the relation that for any $a\in \calO_K$, $|N_{K/\Q}(a)|\ll_{m} H_K(a).$
Similarly, the property  $\|\bfa\|\ll_{n,D,m} 1$ implies that if   $\|\bx\|\leq N$  then $\bx_{\bfa} := (x_1,x_2-a_1x_1,\ldots,x_n-a_nx_1)$ also satisfies $\|\bx_{\bfa}\|\ll_{n,D,m} N$; this follows from the triangle inequality after expressing $x_1,\dots,x_n$ in terms of the chosen integral basis for $\calO_K$ that defines the height $H_K(\cdot)$. (Note the complementary signs of this shift, so that tautologically $F_{\bfa}(Y,x_{\bfa})=F(Y,\bx)$ for every $\bfa$.)

For each $\bx \in \calO_K^n$, denote the Galois group of $F(Y,\bx)$ over $K$ by $G(\bx)$. Our aim is to show that 
\[ \#\{\|\bx\|\leq N: G(\bx)\not\simeq G\} \ll_{n,D} \|F\|^{c/3} N^{n-1/2}\log N,\] 
for some $c$ depending only on $n,D,K$. Given $\bz \in \calO_K^n$, let $G_\ba(\bz)$ denote the Galois group of $F_\ba(Y,\bz)$ over $K$. If 
  $G(\bx)\not\simeq G$, then tautologically $G_\ba(\bx_{\bfa}) \not\simeq G$. Thus, 
\begin{equation*}
    \#\{\|\bx\|\leq N: G(\bx)\not\simeq G\} \leq \#\{\|\bx_{\bfa}\|\ll_{n,D,m} N: G_\ba(\bx_{\bfa})\not\simeq G\}.
\end{equation*}
Because (\ref{strongly_genuine_replacement}) holds,  Theorem \ref{thm_Cohen_special_weaker} applies to $F_\ba$, so that   for some $c$ (depending on $n,D,K$), for all $N\geq 3$,
\begin{equation*}
    \#\{\|\bz\|\ll_{n,D,m} N: G_\ba(\bz)\neq G\} \ll_{n,D,K} \|F_\ba\|^{c} N^{n-1/2}\log N\ll_{n,D,K} \|F\|^{c} N^{n-1/2}\log N,
\end{equation*}
where we have applied $\|F_{\bfa}\| \ll_{n,D,m} \|F\|$.

\end{proof}
Consequently, all that remains to recover Theorem \ref{thm_Cohen} is to prove Theorem \ref{thm_appendix_shift_to_strongly_genuine}. To do so, we first need to characterize the properties of $n$-genuine polynomials (analogous to Theorem \ref{thm_strongly_gen_ppties}); we turn to this in the next section.

\begin{rem}
Other theorems in \cite{Coh81} also apply \cite[Lemma 4.2(ii)]{Coh81} in their proof; it would be interesting to pursue whether the ideas of the present paper can be adapted to show that the other theorems can also be recovered by replacing Statement \ref{statement_Cohen} (ii) by Proposition \ref{prop_Cohen_corrected_appendix} and Theorem \ref{thm_strongly_gen_ppties}, possibly after passage to an appropriate strongly $n$-genuine polynomial, using the techniques developed in the present paper.

\end{rem}

\section{The theory of $n$-genuine polynomials}\label{sec_genuine}

 In this section, we characterize several useful properties of $n$-genuine polynomials, and prove a theorem analogous to Theorem \ref{thm_strongly_gen_ppties}. Now, for the larger class of $n$-genuine polynomials, property (II) of Theorem \ref{thm_strongly_gen_ppties} is replaced by a weaker condition. If $H(Y,\bX_I)$ is an $n$-genuine polynomial and $I' = I \setminus \{i_0\}$, we will see that  the integral closure of $k(\bX_{I'})$ in $\mathcal{L}_H:=k(\bX_{I})[Y]/H(Y,\bX_I)$ is  strictly smaller than $\mathcal{L}_H$, rather than being forced to be as small as $k(\bX_{I'})$. Additionally, the role of `reducibility' in (III) and (IV) of Theorem \ref{thm_strongly_gen_ppties} is replaced by the property of `having a linear factor' in $Y$ (or equivalently, `splitting completely' into linear factors in $Y$).  We briefly work over an arbitrary field, to prepare for an application in Theorem \ref{thm_gen_linear_factor_consequence_k} (which implies Theorem \ref{thm_gen_linear_factor_consequence}).

\begin{defin}   Let $\Kcal$ be a field. We say that a polynomial $f(Y,Z) \in \Kcal [Y,Z]$ of total degree $\deg f \geq 2$ {\emph{has a linear factor in $Y$ over $ \overline{\Kcal}$}} if 
    \beq\label{dfn_Y_linear_factor} 
    f(Y,Z) = (Y - Q(Z)) \Tilde{H}(Y,Z),
    \eeq
    where $Q(Z)\in \overline{\Kcal}[Z]$ and $\Tilde{H}(Y,Z)\in \overline{\Kcal}[Y,Z].$
    \end{defin} 
    Suppose $f$ belongs to the family of polynomials of total degree at most $D$ and we expand
    \beq\label{fYZ_expand}
    f(Y,Z)=\sum_{\substack{\ell,m\\\ell+m\leq D}}a_{\ell,m}Y^{\ell}Z^{m}.
    \eeq
     In the notation of Noether's Lemma  \ref{lemma_Noether},   $f$ satisfies  divisibility condition $\mathcal{D}((1,e_1))$ over $\overline{\Kcal}$ for $1+e_1 < \deg f$ precisely when $f$ factors over $\overline{\Kcal}$ either as $f(Y,Z)=G_0(Z)\tilde{H}(Y,Z)$ with $\deg_Y G_0=0$ and $1 \leq \deg_Z G_0 < \deg f$, or $G_1(Y,Z)\tilde{H}(Y,Z)$ with $\deg_Y G_1=1$.   Noether's Lemma produces a   form $B_\lin=B_{\lin}( (a_{\ell,m})_{\ell,m})$ with coefficients in $\Z$ such that
    \beq\label{define_utility_of_P_prepare}
    \text{$B_{\lin}( (a_{\ell,m})_{\ell,m})=0 \Leftrightarrow f(Y,Z)$ satisfies $\mathcal{D}((1,e_1))$ over $\overline{\Kcal}$ or $\deg f < D$. }
    \eeq

   We may guarantee the ``linear factor in $Y$'' option under a mild condition:
\begin{lem}\label{lemma_actual_linear_factor}
    Let $\Kcal$ be a field. If a polynomial $f(Y,Z) \in \Kcal[Y,Z]$ of total degree at most $D$ is monic in $Y$, then 
       \beq\label{define_utility_of_P}
    \text{$B_{\lin}( (a_{\ell,m})_{\ell,m})=0 \Leftrightarrow f(Y,Z)$ has a linear factor in $Y$ over $\overline{\Kcal}$ or $\deg f < D$. }
    \eeq
\end{lem}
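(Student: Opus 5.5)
The plan is to compare the two conditions on the right-hand sides of (\ref{define_utility_of_P_prepare}) and (\ref{define_utility_of_P}). Since $B_{\lin}$ is the form from Noether's Lemma \ref{lemma_Noether} for the divisibility conditions $\mathcal{D}((1,e_1))$, it suffices to show that for $f$ monic in $Y$ with $\deg f = D$, the statement ``$f$ satisfies $\mathcal{D}((1,e_1))$ over $\overline{\Kcal}$ for some admissible $e_1$'' is equivalent to ``$f$ has a linear factor in $Y$ over $\overline{\Kcal}$''. The alternative ``$\deg f<D$'' is identical on both sides, so it needs no further attention. As in Remark \ref{remark_Noether_form}, I read $B_{\lin}$ as the combined form obtained from the finitely many multidegrees $(1,e_1)$ with $1+e_1<D$, or as the collection of forms in positive characteristic.

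For ($\Leftarrow$): suppose $f=(Y-Q(Z))\tilde H(Y,Z)$ with $\deg f=D\ge 2$. Set $G=Y-Q(Z)$, so $\deg_Y G=1$ and $\deg_Z G=\deg Q=:e_1$. We need $1+e_1<D$ and $\deg \tilde H<D$. Since $f$ is monic in $Y$ of $Y$-degree $d$, the leading $Y$-coefficient of $\tilde H$ is $1$, so $\tilde H$ has positive total degree unless $f=Y-Q$ itself. When $\deg_Y f\ge 2$ we get $\deg\tilde H\ge 1$, and total degree is additive over a field, so $\deg G<D$. In that case $G$ has total degree $\max(1,e_1)$, so $\deg G<D$ gives $e_1<D$.

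The bound $1+e_1<D$ is the delicate point. If it fails, the relevant multidegree bound has to be taken as total degree rather than coordinatewise, so I would check how $\mathcal{D}(e)$ is meant to be indexed over all $e$ with $1\le|e|<D$. The intended condition is really ``some factor $G$ with $\deg_Y G\le 1$ and $\deg G<D$''. I would handle this by noting that $\deg_Z Q\le \deg G<D$ and interpreting the family $\{(1,e_1)\}$ as covering all $e_1\le D-1$. The degenerate case $\deg_Y f=1$ has to be excluded or treated separately, since $f$ itself is then linear in $Y$ and the claim is tautological.

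For ($\Rightarrow$): suppose $f=G\tilde H$ with $\deg_Y G\le 1$ and $\deg\tilde H<D$, so $G$ is nonconstant. Because $f$ is monic in $Y$, comparing leading $Y$-coefficients shows that the leading $Y$-coefficients of $G$ and $\tilde H$ multiply to $1$. Both are polynomials in $Z$, so both are nonzero constants in $\overline{\Kcal}$. This rules out the case $G=G_0(Z)$ with $\deg_Z G_0\ge 1$: then $G_0$ itself would be a leading-coefficient factor, yet it must be a constant. Hence $\deg_Y G=1$ and $G=c(Y-Q(Z))$ with $c\in\overline{\Kcal}^\times$. Absorbing $c$ into $\tilde H$ gives (\ref{dfn_Y_linear_factor}). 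This monic-leading-coefficient argument is the crux, and I expect it to be the step that needs the most care: excluding factors purely in $Z$.
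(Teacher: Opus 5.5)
Your proposal follows the paper's proof: ($\Leftarrow$) is read off from the defining property (\ref{define_utility_of_P_prepare}) of $B_{\lin}$, and ($\Rightarrow$) is the same comparison of leading $Y$-coefficients, which shows that a monic $f$ admits no factor $G_0(Z)$ with $\deg_Z G_0\geq 1$. The caveats you raise are genuine, and the paper's one-line ``certainly'' for ($\Leftarrow$) passes over them too: its description of $\mathcal{D}((1,e_1))$ puts no bound on $\deg_Z G_1$, whereas under the literal constraint $1+e_1<D$ the polynomial $f=Y^2-Z^2$ with $D=2$ has a linear factor but satisfies no admissible $\mathcal{D}((1,e_1))$; moreover $\deg_Y f=1$ must indeed be excluded, since for $f=Y-Q(Z)$ of degree $D$ the right side holds trivially while $f$ is irreducible, so $B_{\lin}\neq 0$; your reading of $B_{\lin}$ is therefore the one the paper intends.
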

\begin{proof}
Certainly if $f(Y,Z)$ has a linear factor in $Y$ over $\overline{\Kcal}$ or $\deg f<D$, then $B_{\lin}( (a_{\ell,m})_{\ell,m})=0$, by (\ref{define_utility_of_P_prepare}). Suppose on the other hand that $B_{\lin}( (a_{\ell,m})_{\ell,m})=0$, so that $f$ satisfies $\mathcal{D}((1,e_1))$: then it suffices to note that a factorization $f(Y,Z)=G_0(Z)\tilde{H}(Y,Z)$ cannot hold over $\overline{\Kcal}$, under the assumption that $f$ is monic in $Y$. (Indeed, suppose $f(Y,Z)=G_0(Z)\tilde{H}(Y,Z)$ over $\overline{\Kcal}$, so that $\deg_Y f =\deg_Y \tilde{H}=: D_Y$, and let $Y^{D_Y}P(Z)$ denote the part of $\tilde{H}$ of highest order in $Y$. Then we must have $1 \con G_0(Z)P(Z)$, which is impossible since $\deg_Z G_0(Z) \geq 1$.)
\end{proof}

      For clarity, note that if we apply this criterion over a number field $k$ to a polynomial $F(Y,X_{i_0},\bX_{I'})$ as a polynomial in $Y,X_{i_0}$ (monic in $Y$, with $I' = I \setminus \{i_0\}$), 
then for a given specialization $\bx_{I'} \in k^{|I'|}$,  $B_\lin((a_{\ell,m}(\bx_{I'}))_{\ell,m})=0$   if and only if $F(Y,X_{i_0},\bx_{I'})$ has a linear factor in $Y$ over $\overline{\Q}$ or the total degree of $F(Y,X_{i_0},\bx_{I'})$ is strictly less than $\deg_{Y,X_{i_0}}F(Y,X_{i_0},\bX_{I'})$, the total degree of $F(Y,X_{i_0},\bX_{I'})$ \emph{as a polynomial in $Y,X_{i_0}$.}

  It is useful to observe that if $f(Y,Z)$ has a linear factor in $Y$ over $\overline{\Kcal}$, then it splits completely.

  \begin{lem}\label{lemma_linear_factor_splits_completely}
 Let $\Kcal$ be a field.   If $f(Y,Z) \in \Kcal[Y,Z]$  has a linear factor in $Y$ over $\overline{\Kcal}$ and is irreducible over $\Kcal(Z)$, then $$f(Y,Z) = \prod_{j} (Y-Q_j(Z))$$
    for $Q_j(Z)\in \overline{\Kcal}[Z]$ for all $j$, so that $f(Y,Z)$ splits completely over $\overline{\Kcal}$.
\end{lem}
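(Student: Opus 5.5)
The plan is to use a Galois action on the roots of $f$ over $\Kcal(Z)$. Since $f$ is monic in $Y$ (implicit, as in the setting of Lemma \ref{lemma_actual_linear_factor}; otherwise we first normalize its leading coefficient in $Y$ to $1$, which is harmless over $\Kcal(Z)$), suppose $f(Y,Z)=(Y-Q(Z))\tilde H(Y,Z)$ with $Q\in\overline{\Kcal}[Z]$. Then $Q(Z)$ is a root of $f$, viewed as a polynomial in $Y$ over $\Kcal(Z)$. Because $f$ is irreducible over $\Kcal(Z)$, $f$ is (up to the unit normalization) the minimal polynomial of $Q(Z)$ over $\Kcal(Z)$. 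So every root of $f$ in an algebraic closure of $\Kcal(Z)$ is a $\Kcal(Z)$-conjugate of $Q(Z)$.

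The key step is to identify these conjugates explicitly. All coefficients of $Q$ lie in a finite extension $\Kcal'$ of $\Kcal$, which we may take to be normal. Then $\Kcal'(Z)/\Kcal(Z)$ is normal, and its automorphism group is $\Aut(\Kcal'/\Kcal)$, acting on coefficients with $Z$ fixed. Any $\Kcal(Z)$-embedding $\tau$ of $\Kcal'(Z)$ into $\overline{\Kcal(Z)}$ maps $\Kcal'(Z)$ to itself by normality, and restricts to an element $\sigma\in\Aut(\Kcal'/\Kcal)$. Hence $\tau(Q(Z))=Q^\sigma(Z)$, where $Q^\sigma$ is obtained by applying $\sigma$ to the coefficients of $Q$; in particular $Q^\sigma\in\overline{\Kcal}[Z]$. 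Since every conjugate of $Q(Z)$ over $\Kcal(Z)$ arises as $\tau(Q(Z))$ for some such embedding, every root of $f$ has the form $Q_j(Z):=Q^{\sigma_j}(Z)\in\overline{\Kcal}[Z]$.

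Finally, a monic polynomial in $Y$ factors over an algebraic closure of $\Kcal(Z)$ as $\prod_j (Y-\beta_j)$ over its roots with multiplicity. All the $\beta_j$ are polynomials $Q_j(Z)\in\overline{\Kcal}[Z]$, so this gives $f(Y,Z)=\prod_j (Y-Q_j(Z))$ with the required form. Multiplicities need no separate treatment, since the root list includes them.

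The only point needing care is normality in positive characteristic, where $\Kcal'/\Kcal$ may be inseparable. Taking $\Kcal'$ to be the normal closure still works: embeddings of $\Kcal'(Z)$ over $\Kcal(Z)$ extend embeddings of $\Kcal'$ over $\Kcal$, because $Z$ is transcendental and the tensor product $\Kcal'\otimes_{\Kcal}\Kcal(Z)$ is controlled by $\Kcal'$. I expect this to be routine rather than a genuine obstacle.
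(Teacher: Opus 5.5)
Your proof is correct and follows the same route as the paper. Since $Q(Z)$ is a root of the irreducible $f$, every root is a $\Kcal(Z)$-conjugate of $Q(Z)$; these conjugates are obtained by acting on the coefficients of $Q$, so they lie in $\overline{\Kcal}[Z]$. You add details the paper's one-line argument leaves implicit: normality of $\Kcal'(Z)/\Kcal(Z)$ with embeddings restricting to $\Aut(\Kcal'/\Kcal)$, multiplicities in the inseparable case, and the tacit hypothesis that $f$ is monic in $Y$, without which one only gets $f=c(Z)\prod_j(Y-Q_j(Z))$.
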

\begin{proof}
    Since $f(Y,Z)$ is irreducible over $\Kcal(Z)$,  the automorphisms of $\Kcal(Z)[Y]/(f(Y,Z))$  act transitively on the roots of $f(Y,Z)$. Since $f(Y,Z)$ has a linear factor in $Y$ over $\overline{\Kcal}$,   $$f(Y,Z) = \prod_{\sigma_j} (Y-\sigma_j(Q(Z)))$$
    for some polynomial $Q(Z)\in \overline{\Kcal}[Z],$ and $\sigma_j$ varying over 
    the group of embeddings of the extension $\Kcal(Z)[Y]/(f(Y,Z))$ in its Galois closure.  We observe that $\sigma_j(Q(Z))\in \overline{\Kcal}[Z]$ to complete the proof.
\end{proof}
We will later call upon the following equivalence:
\begin{lem}\label{lemma_splits_field_identity}
Let $\Kcal$ be a field. If $f(Y,Z) \in \Kcal[Y,Z]$ is irreducible over $\Kcal(Z)$, then $f(Y,Z)$ has a linear factor in $Y$ over $\overline{\Kcal}$ if and only if 
\beq\label{splits_field_identity}
(\Kcal(Z)[Y]/f(Y,Z) \cap \overline{\Kcal})(Z) = \Kcal(Z)[Y]/f(Y,Z).
\eeq
\end{lem}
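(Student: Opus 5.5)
Set $\Lcal := \Kcal(Z)[Y]/f(Y,Z)$ and $\Kcal' := \Lcal \cap \overline{\Kcal}$, so that the identity (\ref{splits_field_identity}) reads $\Kcal'(Z) = \Lcal$. We always have $\Kcal(Z) \subseteq \Kcal'(Z) \subseteq \Lcal$. The plan is to prove the two implications separately. In both, $W$ denotes the class of $Y$ in $\Lcal$, which is a root of $f$ and satisfies $\Lcal = \Kcal(Z)(W)$.

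($\Rightarrow$) Suppose $f$ has a linear factor $Y - Q(Z)$ with $Q \in \overline{\Kcal}[Z]$. Since $f$ is irreducible over $\Kcal(Z)$, Lemma \ref{lemma_linear_factor_splits_completely} gives $f = \prod_j (Y - Q_j(Z))$ with each $Q_j \in \overline{\Kcal}[Z]$. Fix a Galois closure of $\Lcal$ over $\Kcal(Z)$ inside $\overline{\Kcal(Z)}$. Then $W$ equals one of the $Q_j(Z)$, say $Q_1(Z) = \sum_i c_i Z^i$ with $c_i \in \overline{\Kcal}$. The step needing care is to show $c_i \in \Lcal$. For this I would repeat the argument of Lemma \ref{lemma_integrally_closed_implies_absirred}. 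Take $\sigma$ in the Galois group of the Galois closure over $\Lcal$. Then $\sigma(W) = W$, and $\sigma$ fixes $Z$, so $\sum_i \sigma(c_i) Z^i = \sum_i c_i Z^i$. The element $Z$ is transcendental over $\overline{\Kcal}$, and the $\sigma(c_i)$ lie in $\overline{\Kcal}$ (as conjugates of algebraic elements). Comparing coefficients gives $\sigma(c_i) = c_i$. Hence $c_i \in \Lcal$ in the separable case; in positive characteristic I would restrict to the separable setting implicit in the paper, or handle the purely inseparable part separately. So each $c_i \in \Kcal'$, which gives $W \in \Kcal'(Z)$ and therefore $\Lcal = \Kcal(Z)(W) \subseteq \Kcal'(Z)$. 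This proves (\ref{splits_field_identity}).

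($\Leftarrow$) Suppose $\Lcal = \Kcal'(Z)$. Since $\Lcal$ is finite over $\Kcal(Z)$, the field $\Kcal'$ is a finite extension of $\Kcal$. Then $W \in \Kcal'(Z)$ is a rational function in $Z$ over $\Kcal'$. Because $f$ is monic in $Y$ (as throughout this section), $W$ is integral over $\Kcal[Z]$, hence integral over $\Kcal'[Z]$. The ring $\Kcal'[Z]$ is integrally closed, being a UFD, so $W = Q(Z) \in \Kcal'[Z] \subset \overline{\Kcal}[Z]$.

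Now $f(Q(Z), Z) = 0$. Dividing $f$ by $Y - Q(Z)$ in $\overline{\Kcal}[Z][Y]$, which is possible because the divisor is monic in $Y$, gives $f = (Y - Q(Z))\tilde H$ with $\tilde H \in \overline{\Kcal}[Y,Z]$. This is the required linear factor in $Y$.

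The main obstacle is the coefficient-comparison step in ($\Rightarrow$), namely justifying that the coefficients of the root lie in $\Lcal$. A secondary point is the monicity needed in ($\Leftarrow$). If the lemma is meant without assuming $f$ monic, I would instead clear denominators: if $W = P/R$ with $P,R \in \Kcal'[Z]$ coprime, then $(RY - P)$ divides $f$ over $\overline{\Kcal}$, and one must rule out a nonconstant $R$ using the leading coefficient in $Y$, as in Lemma \ref{lemma_actual_linear_factor}.
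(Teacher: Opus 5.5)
Your argument is correct for $f$ monic in $Y$ and separable (in particular over the number fields where the paper applies the lemma), and it follows the same route as the paper's proof. It also fills in two steps the paper passes over: that the coefficients of the root $W=Q_1(Z)$ actually lie in $\Lcal$, and that $W\in\Kcal'(Z)$ is a polynomial in $Z$, which is exactly where monicity enters.

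Be aware that neither hypothesis can be dropped, so your proposed fallbacks would not succeed. For monicity, $f=ZY-1$ gives $\Lcal=\Kcal(Z)$, so the field identity holds, yet $f$ has no factor $Y-Q(Z)$ with $Q\in\overline{\Kcal}[Z]$. For separability, take $\Kcal=\F_p(s,t)$ and the monic polynomial $f=Y^p-s-tZ^p$. It is irreducible over $\Kcal(Z)$ and equals $(Y-s^{1/p}-t^{1/p}Z)^p$, so it has a linear factor. However, $(\Lcal\cap\overline{\Kcal})(Z)\subsetneq\Lcal$: equality would force $s^{1/p},t^{1/p}\in\Lcal\cap\overline{\Kcal}$, whose degree over $\Kcal$ is at most $[\Lcal:\Kcal(Z)]=p<p^2$.
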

\begin{proof}
    If  $f$ has a linear factor in $Y$ over $\overline{\Kcal}$ then by Lemma \ref{lemma_linear_factor_splits_completely}, all the roots of $f$ lie in $\overline{\Kcal}[Z]$, so that (\ref{splits_field_identity}) holds. In the other direction, since $f$ is irreducible over $\Kcal(Z)$ then for any root $W$ of $f$, $\Kcal(Z)[Y]/f(Y,Z) \simeq \Kcal(Z)[W]$. Thus if (\ref{splits_field_identity}) holds then $(\Kcal(Z)[Y]/f(Y,Z) \cap \overline{\Kcal})(Z)  \simeq \Kcal(Z)[W]$, so that $W$ must lie in $\Kcal'(Z)$ for some finite extension $\Kcal'/\Kcal$. That is to say, $W \in \overline{\Kcal}(Z)$, and then this implies that a factorization of the form (\ref{dfn_Y_linear_factor}) holds, so that $f$ has a linear factor in $Y$ over $\overline{\Kcal}$.
\end{proof}
 
\begin{rem}
    Lemma \ref{lemma_splits_field_identity} plays a similar role in the proof of our classification theorem for $n$-genuine polynomials to the role that Lemma \ref{lemma_integrally_closed_implies_absirred} plays in Theorem \ref{thm_strongly_gen_ppties} for the classification of strongly $n$-genuine polynomials. They are related in the following way. Over a number field $k$, Lemma \ref{lemma_integrally_closed_implies_absirred} gives that if $f(Y,Z)$ is absolutely irreducible, then $M_f :=\overline{k}\cap (k(Z)[Y]/f(Y,Z)) $ satisfies $M_f= k$. Lemma \ref{lemma_splits_field_identity} states that if $f(Y,Z)$ does not have a linear factor over $\overline{k}$ (a strictly weaker condition than being absolutely irreducible), then $M_f(Z)\subsetneq k(Z)[Y]/f(Y,Z)$. The latter is a strictly weaker requirement than $M_f = k$. 
\end{rem}

Next we characterize the key properties of $n$-genuine polynomials. For clarity, we separate the cases $n=1$ and $n\geq 2$. 
\begin{thm}[$1$-Genuine] \label{thm_gen_ppties_n1}
Let $k/\Q$ be a finite extension.   Let $H \in \calO_k(X_1)[Y]$ be irreducible over $k(X_1)$, monic in $Y$, and   of total degree $D$. Define $\mathcal{L}_H = k(X_1)[Y]/(H(Y,X_1))$. 
The following are equivalent: 
  
  (I) $H$ is a $1$-genuine polynomial, that is to say,  $k(X_1)[Y]/H(Y,X_1)$ is a $1$-genuine extension of $k(X_1)$. 

  (II) $ (\overline{k} \cap \mathcal{L}_H)(X_1) \subsetneq \mathcal{L}_H.$

  (III)    $H(Y,X_{1})$  does not have a linear factor in $Y$ over $\overline{\Q}$.

   (IV) Expanding the polynomial   as
   $
    H(Y,X_1)=\sum_{\ell+m\leq D}a_{\ell,m}Y^{\ell}X_{1}^{m},
  $
   the form $B_{\lin}(a_{\ell,m})_{\ell,m})$ defined as in (\ref{define_utility_of_P}), when evaluated at the coefficients $a_{\ell,m}$, is a nonzero integer.  
\end{thm}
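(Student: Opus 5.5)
The plan is to mirror the proof of Theorem \ref{thm_strongly_gen_ppties_n1}, with Lemma \ref{lemma_splits_field_identity} playing the role that Lemma \ref{lemma_integrally_closed_implies_absirred} played there. I would establish (I) $\Leftrightarrow$ (II) from the definition, (II) $\Leftrightarrow$ (III) from Lemma \ref{lemma_splits_field_identity}, and (III) $\Leftrightarrow$ (IV) from Lemma \ref{lemma_actual_linear_factor}.

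\textbf{(I) $\Leftrightarrow$ (II).} Set $M_H := \overline{k}\cap\Lcal_H$, a finite extension of $k$.

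First suppose (II) fails, so that $M_H(X_1) = \Lcal_H$. By the primitive element theorem, $M_H = k(\theta)$ with minimal polynomial $R(Y)\in k[Y]$. Then $\Lcal_H = k(X_1)[Y]/(R(Y))$ is presented by a polynomial of degree zero in $X_1$, so $\Lcal_H$ is not $1$-genuine and (I) fails.

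Conversely, suppose $\Lcal_H = k(X_1)[Y]/(G(Y))$ for some $G\in k[Y,X_1]$ with $\deg_{X_1}G = 0$. The class of $Y$ is then a root of $G(Y)\in k[Y]$, so it is algebraic over $k$ and lies in $M_H$. Since this element generates $\Lcal_H$ over $k(X_1)$, we get $\Lcal_H\subset M_H(X_1)\subset \Lcal_H$, and (II) fails. Here a nontrivial extension is needed: if $\deg_Y H = 1$ then $\Lcal_H = k(X_1)$, and this degenerate case is excluded by the standing convention that genuine extensions are nontrivial.

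\textbf{(II) $\Leftrightarrow$ (III).} Apply Lemma \ref{lemma_splits_field_identity} with $\Kcal = k$ and $Z = X_1$. This is valid because $H$ is irreducible over $k(X_1)$. The lemma says that $H$ has a linear factor in $Y$ over $\overline{k}$ if and only if $M_H(X_1) = \Lcal_H$. Taking negations gives (II) $\Leftrightarrow$ (III), using $\overline{k}=\overline{\Q}$.

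\textbf{(III) $\Leftrightarrow$ (IV).} Since $H$ is monic in $Y$, Lemma \ref{lemma_actual_linear_factor} shows that $B_\lin((a_{\ell,m}))=0$ if and only if $H$ has a linear factor in $Y$ over $\overline{\Q}$ or $\deg H < D$. The second alternative is excluded because $D$ is taken to be the total degree of $H$. So $B_\lin \neq 0$ exactly when (III) holds.

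One point needs a remark. The coefficients $a_{\ell,m}$ lie in $\Ocal_k$, so $B_\lin$ evaluates to an element of $\Ocal_k$ rather than literally an integer. I would read ``nonzero integer'' as ``nonzero algebraic integer'', or note that one may take a norm, as in the $n=1$ strongly genuine case.

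\textbf{Expected difficulty.} The main subtlety is the step (I) $\Rightarrow$ (II), which needs the presentation $\Lcal_H = k(X_1)[Y]/(R(Y))$ to be an honest presentation. The primitive element theorem supplies this, and separability is automatic in characteristic zero. The other two equivalences are direct citations of the earlier lemmas.
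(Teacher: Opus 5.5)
Your proposal is correct and follows the paper's own proof: (I)$\Leftrightarrow$(II) comes directly from the definition, (II)$\Leftrightarrow$(III) from Lemma \ref{lemma_splits_field_identity}, and (III)$\Leftrightarrow$(IV) from Noether's Lemma together with Lemma \ref{lemma_actual_linear_factor}, using that $H$ is monic in $Y$. One small refinement: the honest presentation $\Lcal_H = k(X_1)[Y]/(R(Y))$ requires $R$ to remain irreducible over $k(X_1)$, and this follows from $k$ being algebraically closed in $k(X_1)$, not from the primitive element theorem; also, the degenerate case $\Lcal_H=k(X_1)=k(X_1)[Y]/(Y)$ already fails (I) without any appeal to a nontriviality convention.
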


\begin{thm}[$n$-Genuine] \label{thm_gen_ppties}
Let $k/\Q$ be a finite extension. Let $I$ be an index set of cardinality $n\geq 1$. Let $H \in \calO_k(\bX_I)[Y]$ be irreducible over $k(\bX_I)$, monic in $Y$, and   of total degree $D$. Define $\mathcal{L}_H = k(\bX_I)[Y]/(H(Y,\bX_I))$. 
Consider the following conditions.
  
  (I) $H$ is an $n$-genuine polynomial, that is to say,  $k(\bX_I)[Y]/H(Y,\bX_I)$ is an $n$-genuine extension of $k(\bX_I)$.

(II) Given any index $i_0$, upon defining $I':=I\setminus\{i_0\}$, the  following inclusion of fields is strict:
\[ (  \overline{k(\bX_{I'})} \cap \mathcal{L}_H)(X_{i_0}) \subsetneq \mathcal{L}_H .\]

  (III) Given any index $i_0$, upon defining $I':=I\setminus\{i_0\}$, there exists a point $\bx_{I'} \in k^{n-1}$ such that  $H(Y,X_{i_0},\bx_{I'})$, as a polynomial in $Y,X_{i_0}$, does not have a linear factor in $Y$ over $\overline{\Q}$,  and $\deg H(Y,X_{i_0},\bx_{I'}) = \deg_{Y,X_{i_0}} H(Y,\bX_I)$.

   (IV)  Given any index $i_0$, upon defining $I':=I\setminus\{i_0\}$, upon expanding the polynomial $H(Y,\bX_I)$ in $Y$ and $X_{i_0}$ as
    \beq\label{app_H_expansion}
    H(Y,\bX_I)=\sum_{\substack{\ell,m\\\ell+m\leq D}}a_{\ell,m}(\bX_{I'})Y^{\ell}X_{i_{0}}^{m},
    \eeq
   the form $B_{\lin}(a_{\ell,m}(\bX_{I'}))$ defined as in (\ref{define_utility_of_P}) is not identically zero as a polynomial in $\bX_{I'}$. 

The following are equivalent:
\[ \text{(I) $\Leftrightarrow$ (II) $\Leftrightarrow$(III) $\Leftrightarrow$(IV).}\]
 Furthermore, for any fixed index $i_0 \in I$, the following are equivalent:
\[   \text{(II) for $i_0$ $\Leftrightarrow$  (III) for $i_0$ $\Leftrightarrow$  (IV) for $i_0$}.\]

\end{thm}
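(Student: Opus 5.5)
The plan follows the structure used for Theorem \ref{thm_strongly_gen_ppties}. First prove the global equivalence (I) $\Leftrightarrow$ (II). Then, for a fixed $i_0$, prove (III) $\Leftrightarrow$ (IV), (III) $\Rightarrow$ (II) and (II) $\Rightarrow$ (IV). For $n=1$ (Theorem \ref{thm_gen_ppties_n1}) the chain collapses: $I'=\emptyset$, so (II) $\Leftrightarrow$ (III) is Lemma \ref{lemma_splits_field_identity} with $Z=X_1$, and (III) $\Leftrightarrow$ (IV) is Lemma \ref{lemma_actual_linear_factor}.

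\textbf{(I) $\Leftrightarrow$ (II).} Set $N_{i_0}:=(\overline{k(\bX_{I'})}\cap\Lcal_H)(X_{i_0})$.
\begin{itemize}
\item (I) $\Rightarrow$ (II). Suppose $N_{i_0}=\Lcal_H$ for some $i_0$. The subfield $\overline{k(\bX_{I'})}\cap\Lcal_H$ is finite over $k(\bX_{I'})$, so it is generated by a primitive element with minimal polynomial $R(Y,\bX_{I'})$ having no $X_{i_0}$-dependence. Since $X_{i_0}$ is transcendental over $k(\bX_{I'})$, $R$ stays irreducible over $k(\bX_I)$. Hence $\Lcal_H=k(\bX_I)[Y]/(R)$, where $R$ has degree zero in $X_{i_0}$, contradicting $n$-genuineness.
\item (II) $\Rightarrow$ (I). Suppose $\Lcal_H=k(\bX_I)[Y]/(G)$ with $\deg_{X_{i_0}}G=0$. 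A root of $G$ is algebraic over $k(\bX_{I'})$, so $\Lcal_H\subset N_{i_0}$ and hence $N_{i_0}=\Lcal_H$.
\end{itemize}

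\textbf{Fixed $i_0$.}
\begin{itemize}
\item (III) $\Leftrightarrow$ (IV). This is immediate from Lemma \ref{lemma_actual_linear_factor}, since $H$ is monic in $Y$.
\item (III) $\Rightarrow$ (II), by contrapositive. Assume $\Lcal_H=M_{i_0}(X_{i_0})$ with $M_{i_0}=\overline{k(\bX_{I'})}\cap\Lcal_H$. Then a root $W$ of $H$ lies in $M_{i_0}(X_{i_0})$, i.e. it is a rational function in $X_{i_0}$ whose coefficients are algebraic over $k(\bX_{I'})$. Because $H$ is monic in $Y$, $W$ is integral over $k[\bX_I]$, hence over $M_{i_0}'[X_{i_0}]$, where $M'_{i_0}$ is the integral closure of $k[\bX_{I'}]$ in $M_{i_0}$. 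As $M'_{i_0}[X_{i_0}]$ is integrally closed up to localization, $W$ is in fact a polynomial in $X_{i_0}$ once finitely many denominators in $\bx_{I'}$ are avoided. Specializing $\bx_{I'}$ (after passing to a splitting extension) then makes $W(X_{i_0},\bx_{I'})\in\overline{k}[X_{i_0}]$ a root of $H(Y,X_{i_0},\bx_{I'})$ for generic $\bx_{I'}$, and in fact for all $\bx_{I'}$, since $B_{\lin}$ then vanishes on a dense set and is a polynomial. This shows (III) fails for every $\bx_{I'}$, whenever the degree condition holds.
\item (II) $\Rightarrow$ (IV). This is the analogue of Lemma \ref{lemma_Cohen_strongly_genuine_reducible} and needs a counterpart to Lemma \ref{lemma_F_strongly_n_geniune_open_V}, which I will call Lemma \ref{lemma_open_V_gen}: if (II) holds for $i_0$, there is a nonempty open $V\subset k^{n-1}$ such that for $\bx_{I'}\in V$,
\[(\overline{k}\cap k(X_{i_0})[Y]/H(Y,X_{i_0},\bx_{I'}))(X_{i_0})\subsetneq k(X_{i_0})[Y]/H(Y,X_{i_0},\bx_{I'}).\]
Granting this, choose $U$ dense from Lemma \ref{lemma_HIT} so that $H(Y,X_{i_0},\bx_{I'})$ is irreducible over $k(X_{i_0})$, and remove the nowhere dense degree-drop locus. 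If $B_{\lin}$ vanished on $U$, then Lemma \ref{lemma_splits_field_identity} would force equality of fields on $U$, which contradicts the conclusion on $U\cap V\neq\emptyset$.
\end{itemize}

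\textbf{Main obstacle: Lemma \ref{lemma_open_V_gen}.} Unlike geometric integrality, there is no single EGA constructibility statement to cite. I would first pass to the finite extension $M_{i_0}$ of $k(\bX_{I'})$ and factor $H$ over $M_{i_0}(X_{i_0})$ into irreducible factors. By (II) and Lemma \ref{lemma_splits_field_identity} applied over the field $\overline{k(\bX_{I'})}$, $H$ has no factor linear in $Y$ over $\overline{k(\bX_{I'})}(X_{i_0})$. The "no linear factor" condition can also be tested by Noether's Lemma \ref{lemma_Noether} applied to $H$ as a polynomial in $Y,X_{i_0}$ with coefficients in the field $\overline{k(\bX_{I'})}$. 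This gives $B_{\lin}(a_{\ell,m}(\bX_{I'}))\neq0$ as an element of $k(\bX_{I'})$, that is, not identically zero, which is (IV) directly. This is because $B_{\lin}$ has integer coefficients and is universal over all fields of characteristic zero.

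This shortcut may in fact bypass $V$ entirely, and I would try it first. The subtlety to check is that the degree hypothesis in (\ref{define_utility_of_P}) holds generically, which it does because $\deg_{Y,X_{i_0}}$ is attained over the function field.
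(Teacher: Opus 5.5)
Your proposal is correct. Your (I)$\Leftrightarrow$(II) and (III)$\Leftrightarrow$(IV) coincide with the paper's, and your (III)$\Rightarrow$(II) is essentially Lemma \ref{lemma_H_not_n_gen_linear_factor} with $J=I'$. For the one substantive implication, however, you take a genuinely different and shorter route.

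\textbf{The paper's route.} For (II)$\Rightarrow$(III) the paper works at specialized points. It lets $G$ cut out $\overline{k(\bX_{I'})}\cap\Lcal_H$. It then uses EGA IV 9.7.7 (Lemma \ref{lemma_open_V_gen}) to get an open set of $\bx_{I'}$ over which $k[Y]/G(Y,\bx_{I'})$ stays integrally closed in $k(X_{i_0})[Y]/H(Y,X_{i_0},\bx_{I'})$. It intersects that open set with a Hilbert set for $H$ and $G$, and finally uses $\deg_Y G<\deg_Y H$ to rule out a linear factor at the resulting point.

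\textbf{Your shortcut.} You stay at the generic point instead, and the argument works.
\begin{enumerate}
\item Apply Lemma \ref{lemma_splits_field_identity} with base field $\Kcal=k(\bX_{I'})$ and $Z=X_{i_0}$. This is not ``over $\overline{k(\bX_{I'})}$'' as you wrote: the lemma needs irreducibility over $\Kcal(Z)=k(\bX_I)$, which is exactly the hypothesis. It shows that (II) for $i_0$ holds precisely when $H$ has no linear factor in $Y$ over $\overline{k(\bX_{I'})}$.
\item Apply Lemmas \ref{lemma_Noether} and \ref{lemma_actual_linear_factor} over the field $k(\bX_{I'})$, where $H$ has degree exactly $\deg_{Y,X_{i_0}}H$. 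They say this happens precisely when $B_{\lin}(a_{\ell,m}(\bX_{I'}))\neq 0$ in $k(\bX_{I'})$, which is literally (IV) for $i_0$.
\end{enumerate}
This is legitimate because Lemma \ref{lemma_Noether} holds over an arbitrary field, with the same integral forms in characteristic zero. Read $B_{\lin}$ as the full collection of Noether forms: a sum of even powers can vanish nontrivially when $k$ is not formally real. That caveat applies equally to the paper's single-form convention.

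\textbf{Consequences for your write-up.} Every step above is an equivalence, so you get (II)$\Leftrightarrow$(IV) for each fixed $i_0$ in one stroke. Your specialization argument for (III)$\Rightarrow$(II) and the open-set lemma you left unproved are therefore both superfluous. You should present the shortcut as the proof, not as a fallback behind an unproved lemma.

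\textbf{What each route buys.} Yours avoids EGA and Hilbert irreducibility entirely, and removes the geometric input singled out in Remark \ref{remark_fields}. The same device, using Lemma \ref{lemma_integrally_closed_implies_absirred} over $k(\bX_{I'})$, would likewise replace Lemmas \ref{lemma_F_strongly_n_geniune_open_V} and \ref{lemma_Cohen_strongly_genuine_reducible} in the strongly genuine case. The paper's route gives extra information about the integral closure of $k$ in the specialized fields over an open set, which the theorem itself does not need.
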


  \subsection{Proof of Theorem \ref{thm_gen_ppties_n1} for $n =1$}
  (I) is the statement that we cannot write $\Lcal_H = k(X_1)[Y]/R(Y)$ for any polynomial $R$ independent of $X_1$; equivalently, if $L'$ is some extension with $k \subset L' \subset (\overline{k} \cap \Lcal_H)$, then $L'(X_1) \subsetneq \Lcal_H$; this confirms (I) $\Leftrightarrow$ (II). 
 The equivalence  (II) $\Leftrightarrow$ (III) holds by Lemma \ref{lemma_splits_field_identity}, which shows $H(Y,X_1)$ has a linear factor in $Y$ over $\overline{k}$ if and only if $(\Lcal_H \cap \overline{k})(X_1) =\Lcal_H$.  
  By Noether's Lemma \ref{lemma_Noether} and Lemma \ref{lemma_actual_linear_factor}, (III) $\Leftrightarrow$ (IV), since $H$ is monic in $Y$.

 \subsection{Proof of Theorem \ref{thm_gen_ppties} for $n \geq 2$}
 We  prove all of the more elementary relations, before extracting a more subtle relation (II) $\Rightarrow$ (III) as a lemma. Throughout, for any index set $I$, we say that a polynomial $f(Y,\bX_I) \in k[Y,\bX_I]$ has  a linear factor in $Y$ over $\overline{k}$ if 
 \[f(Y,\bX_I) = (Y - Q(\bX_I)) \tilde{H}(Y,\bX_I),\]
 in which $Q(\bX_I)\in \overline{k}[\bX_I]$ and $\Tilde{H}(Y,\bX_I)\in \overline{k}[Y,\bX_I].$
  \begin{lem}\label{lemma_H_not_n_gen_linear_factor}
 Given $H(Y,\bX_I)\in \Ocal_k[Y,\bX_I]$ that  is irreducible over $k[Y,\bX_I]$ and monic in $Y$, define $\mathcal{L}_H = k(\bX_I)[Y]/(H(Y,\bX_I))$. Suppose  that for some set $J \subsetneq I$, there is a  polynomial $G(Y,\bX_J) \in k[Y,\bX_J]$, irreducible over $k[Y,\bX_J]$, such that 
\[ (k(\bX_J)[Y]/G(Y,\bX_J))(\bX_{I\setminus J})=\mathcal{L}_H .\]
  Then   for all $\bx_J \in k^{|J|}$, 
  $H(Y,\bX_{I\setminus J},\bx_J)$ has a linear factor in $Y$ over $\overline{k}$. 
\end{lem}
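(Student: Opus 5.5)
The plan is to show that a root $W$ of $H$ is integral over a ring of the form $k[\bX_J]$–algebra adjoined with $\bX_{I\setminus J}$, so that it is a polynomial in $\bX_{I\setminus J}$ with coefficients algebraic over $k(\bX_J)$. After specializing $\bX_J$, this makes it a polynomial in $\bX_{I\setminus J}$ with coefficients in $\overline{k}$.

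\emph{Step 1: describe a root.} Let $\alpha$ be a root of $G(Y,\bX_J)$, so $\alpha$ is algebraic over $k(\bX_J)$, and set $E:=k(\bX_J)(\alpha)$. The hypothesis says $\mathcal{L}_H=E(\bX_{I\setminus J})$, a purely transcendental extension of $E$. Take a root $W$ of $H$ in $\mathcal{L}_H$. Then $W=P(\bX_{I\setminus J})/R(\bX_{I\setminus J})$ with $P,R\in E[\bX_{I\setminus J}]$.

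\emph{Step 2: integrality.} Since $H$ is monic in $Y$ with coefficients in $k[\bX_I]\subset E[\bX_{I\setminus J}]$, $W$ is integral over the UFD $E[\bX_{I\setminus J}]$. That ring is integrally closed, so $W\in E[\bX_{I\setminus J}]$, i.e. $W=\sum_{\mathbf{i}} c_{\mathbf{i}}\,\bX_{I\setminus J}^{\mathbf{i}}$ with $c_{\mathbf{i}}\in E$. Hence $H(Y,\bX_I)=(Y-W)\tilde H(Y,\bX_I)$ with $\tilde H\in E[\bX_{I\setminus J}][Y]$. Applying the argument to every root (all lie in $\mathcal{L}_H$ up to conjugation; it suffices to use one), we get $H=\prod_j (Y-W_j)$ over $\overline{k(\bX_J)}[\bX_{I\setminus J}]$.

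\emph{Step 3: specialize $\bX_J=\bx_J$.} This is the main obstacle. The $c_{\mathbf{i}}$ are algebraic over $k(\bX_J)$, so they could have poles at $\bx_J$ or be multivalued. I would handle this by integrality again. Each elementary symmetric function of the $W_j$ is a coefficient of $H$, hence lies in $k[\bX_I]$. Therefore each $W_j$ is integral over $k[\bX_I]$. Comparing the coefficient of each monomial $\bX_{I\setminus J}^{\mathbf i}$ (for instance, via a monomial order), I would argue that each $c_{\mathbf{i}}$ is integral over $k[\bX_J]$. An alternative is to work with the factorization $H(Y,\bX_I)=\prod_j(Y-\sum_{\mathbf i}c_{j,\mathbf i}\bX_{I\setminus J}^{\mathbf i})$ directly: the coefficients $c_{j,\mathbf i}$ satisfy a polynomial system with coefficients in $k[\bX_J]$, so they lie in the integral closure of $k[\bX_J]$ in a finite extension. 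Then the morphism from the normalization to $\mathbb{A}^{|J|}$ is finite, hence surjective. So for every $\bx_J\in k^{|J|}$, there is a point above it, giving values $c_{j,\mathbf i}(\bx_J)\in\overline{k}$.

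\emph{Step 4: conclude.} Specializing the identity in Step 3 at such a point yields
\[H(Y,\bX_{I\setminus J},\bx_J)=\prod_j\bigl(Y-Q_j(\bX_{I\setminus J})\bigr),\qquad Q_j\in\overline{k}[\bX_{I\setminus J}].\]
In particular $H(Y,\bX_{I\setminus J},\bx_J)$ has a linear factor in $Y$ over $\overline{k}$, for every $\bx_J$. The ring map from the integral closure to $\overline{k}$ is what makes this work uniformly, with no exceptional set. This is exactly why Step 3's integrality over $k[\bX_J]$, which rests on $H$ being monic, is the crux.
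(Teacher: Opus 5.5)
\paragraph{Relation to the paper's proof.} Your Steps 1--2 are the paper's argument. A root $W$ of $H$ is integral over $k[\bX_I]$. Its integral closure in $\mathcal{L}_H=E(\bX_{I\setminus J})$ lies in the integrally closed ring $E[\bX_{I\setminus J}]$. Hence $W$ and the cofactor $\tilde H$ are polynomials in $\bX_{I\setminus J}$ with coefficients in $E=k(\bX_J)(W_G)$.

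You differ at Step 3. The paper simply ``specializes'' these algebraic functions of $\bX_J$ at an arbitrary $\bx_J$, with no discussion of poles or of the choice of branch. You correctly see that the coefficients must first be shown integral over $A:=k[\bX_J]$. After that, lying-over for $A\subset\bar A$ (the integral closure of $A$ in $E$) gives a maximal ideal $\mathfrak{M}$ above $(X_j-x_j)_{j\in J}$. This yields a homomorphism $\bar A\to\bar A/\mathfrak{M}\hookrightarrow\overline{k}$ extending evaluation at $\bx_J$. Applying it to $H=(Y-W)\tilde H$ proves the lemma for every $\bx_J$, with no exceptional set. One root suffices, since dividing by the monic $Y-W$ keeps the coefficients in $\bar A[\bX_{I\setminus J}]$. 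Finiteness of the normalization is not needed, because lying-over holds for any integral extension. So your route is more rigorous than the paper's exactly where rigor is needed.

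\paragraph{Gap: the integrality claim.} The two justifications you propose for integrality of the coefficients $c_{\mathbf i}$ do not work as written.
\begin{itemize}
\item Comparing leading terms under a monomial order only gives a relation for the leading coefficient that need not be monic. The top-order cancellation in $P(W)=0$ can occur among terms $a_sW^s$ with $s<N$ whose leading coefficients are non-units of $A$.
\item The statement that the $c_{j,\mathbf i}$ satisfy a polynomial system over $A$ does not imply integrality; $xy=1$ is a counterexample.
\end{itemize}

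\paragraph{A correct argument.} For $\mathbf t\in k^{|I\setminus J|}$, apply the $E$-algebra map $\bX_{I\setminus J}\mapsto\mathbf t$ to $H(W,\bX_I)=0$; this is legitimate because $W\in E[\bX_{I\setminus J}]$ by Step 2. Then $W(\mathbf t)=\sum_{\mathbf i}c_{\mathbf i}\mathbf t^{\mathbf i}\in E$ is a root of $H(Y,\mathbf t,\bX_J)$, which is monic in $Y$ with coefficients in $A$. So each $W(\mathbf t)$ is integral over $A$.

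Now let $\mathbf t$ run over a grid $S^{|I\setminus J|}$, where $S\subset k$ has more elements than the degree of $W$ in each variable. The evaluation matrix is a tensor product of invertible Vandermonde matrices. Hence each $c_{\mathbf i}$ is a $k$-linear combination of the $W(\mathbf t)$, and so is integral over $A$.

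Alternatively, cite the standard fact: if $C$ is the integral closure of $A$ in $B$, then the integral closure of $A[\mathbf T]$ in $B[\mathbf T]$ is $C[\mathbf T]$. With either repair your proof is complete.
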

In particular, the lemma applies if $H(Y,\bX_I)$ is not $|I|$-genuine.

\begin{proof}
Let $H(Y,\bX_I)$ and $G(Y,\bX_J)$ be given as in the lemma, so that $J \subsetneq I$ and 
\[ (k(\bX_J)[Y]/G(Y,\bX_J))(\bX_{I\setminus J})=\mathcal{L}_H .\]
Let $W_G$ be a root of $G$ so that $k(\bX_J)[Y]/G(Y,\bX_J) = k(\bX_J)(W_G)$; hence $\Lcal_H = k(\bX_I)(W_G)$. Let $W_H$ denote a root of $H(Y,\bX_I)$ such that $\Lcal_H = k(\bX_I)(W_H)$; then we can decompose 
\[H(Y,\bX_I) = (Y-W_H) \tilde{H}(Y)\]
over $\Lcal_H = k(\bX_I)(W_H) = k(\bX_{I})(W_G)$, that is to say $\tilde{H}(Y) \in \Lcal_H[Y]$. Note that as a polynomial in $Y$, $H(Y,\bX_I)$ is monic and has coefficients that are elements in $k[\bX_I]$ (that is, are polynomials in $\bX_I$); thus its root $W_H$ in $\Lcal_H$ must lie in the ring of integers of that field. Since this ring of integers is contained in $k(W_G)[\bX_I]$, this implies that  the root $W_H$ lies in $k(W_G)[\bX_{I}]$, and since  $W_G$ only depends on $\bX_J$, we finally conclude that $W_H$ is a polynomial (rather than  algebraic function) of $\bX_{I\setminus J}$. That is, we may write $W_H=Q(\bX_{I\setminus J},\bX_J)$ as a function that is a polynomial function of the variables $\bX_{I \setminus J}$ and an algebraic function of the variables $\bX_J$. 
A similar argument shows that $\tilde{H}(Y) \in k(W_G)[Y,\bX_{I}]$, so that it also depends only polynomially on $\bX_{I \setminus J}$.  That is to say, we may write $\tilde{H}(Y) = H^\sharp(Y,\bX_{I\setminus J}, \bX_J)$ as a function that is a polynomial function of $Y$ and the variables $\bX_{I \setminus J}$ and an algebraic function of the variables $\bX_J$. 

Consequently, when specialized to any $\bfx_{J}\in k^{|J|}$, we see that
\[
H(Y,\bX_{I\setminus J},\bx_J)=(Y-Q(\bX_{I\setminus J},\bx_J))H^\sharp (Y,\bX_{I\setminus J},\bx_J)
\]
for    $Q(\bX_{I\setminus J},\bx_J)\in\overline{k}[\bX_{I\setminus J}]$ and $H^\sharp (Y,\bX_{I\setminus J},\bx_J)\in\overline{k}[Y,\bX_{I\setminus J}]$, which are polynomials in the variables $Y$ and $\bX_{I \setminus J}$ with coefficients in $\overline{k}$. In particular, for any $\bx_{J} \in k^{|J|}$,  $H(Y,\bX_{I \setminus J},\bx_J)$ has a linear factor in $Y$ over $\overline{k}$, and the proof is complete.
\end{proof} 

 With this lemma in hand, we can prove several relations within Theorem \ref{thm_gen_ppties}.
 
 ((I) $\Rightarrow$ (II))  Suppose $H(Y,\bX_I)$ is $n$-genuine. Fix any $i_0 \in I$ and set $I'=I \setminus \{i_0\}$.    Let $G(Y,\bX_{I'})\in k(\bX_{I'})[Y]$ be a polynomial such $ k(\bX_{I'})[Y]/G(Y,\bX_{I'})$ is the integral closure of $k(\bX_{I'})$ in $\mathcal{L}_H= k(\bX_I)[Y]/H(Y,\bX)$, that is,
      \[ (k(\bX_{I'})[Y]/G(Y,\bX_{I'})) \simeq(\Lcal_H \cap \overline{k(\bX_{I'})}).\]
Since $H$ is $n$-genuine, it has $\deg_{X_{i_0}} H \geq 1$, whereas $G$ does not depend on $X_{i_0}$. Thus the hypothesis that $\Lcal_H$ is an $n$-genuine field implies
\[
(k(\bX_{I'})[Y]/G(Y,\bX_{I'}))(X_{i_0})\subsetneq \Lcal_H.
\]
Combining these two facts shows that (II) holds.

((II) $\Rightarrow$ (I)) We will prove the contrapositive. Suppose that $\mathcal{L}_H$ is not $n$-genuine, so there exists some polynomial $G(Y,\bX_J) \in k[Y,\bX_J]$ with an index set $J \subsetneq I$ such that 
\[ (k(\bX_J)[Y]/G(Y,\bX_J))(\bX_{I\setminus J})=\mathcal{L}_H .\]
Fix any index $i_0 \in I \setminus J$; then by writing $G$ nominally as a polynomial in $Y$ and $X_i$ with $i \neq i_0$, without loss of generality the above property holds in particular for the set $J=I'$ with $I'=I \setminus \{i_0\}$.
That is,  
\[(k(\bX_{I'})[Y]/G(Y,\bX_{I'}))(X_{i_0})= \mathcal{L}_H.\]
Note that $k(\bX_{I'})[Y]/G(Y,\bX_{I'}) \subset \overline{k(\bX_{I'})}$, and certainly $k(\bX_{I'})[Y]/G(Y,\bX_{I'}) \subset \Lcal_H$,
so the above relation implies that $(\overline{k(\bX_{I'})} \cap \mathcal{L}_H)(X_{i_0}) = \mathcal{L}_H$. This argument confirms (II) $\Rightarrow$ (I).

((III) $\Rightarrow$ (I)) We will prove the contrapositive, namely that if $H(Y,\bX_I)$ is not $n$-genuine, then for some $i_0 \in I,$ $H(Y,X_{i_0},\bx_{I'})$ has a linear factor in $Y$ over $\overline{\Q}$ for all $\bx_{I'} \in k^{n-1}$. This follows from Lemma \ref{lemma_H_not_n_gen_linear_factor}: since $H(Y,\bX_I)$ is not   $n$-genuine,
there exists a non-empty subset $J\subsetneq I$ and polynomial  $G(Y,\bX_J)$  such that
\[ (k(\bX_J)[Y]/G(Y,\bX_J))(\bX_{I\setminus J})=\mathcal{L}_H .\]
By the lemma, it follows that for every $\bfx_{J}  \in k^{|J|}$,   the polynomial $H (Y, \bX_{I \setminus J}, \bx_J)$  has a linear factor in $Y$ over $\overline{\Q}$. Then the desired conclusion is certainly true for any index $i_0 \in I \setminus J$.

((III) for $i_0$ $\Leftrightarrow$ (IV) for $i_0$) For a fixed index $i_0$, the condition in (III) for the index $i_0$ is equivalent to the condition in (IV) for the index in $i_0$,  by (\ref{define_utility_of_P}) of Lemma \ref{lemma_actual_linear_factor}. Indeed, for each point $\bx_{I'} \in k^{n-1}$,  the property
\[ \text{$H(Y,X_{i_0},\bx_{I'})$  does not have a linear factor in $Y$ over $\overline{\Q}$ and $\deg H(Y,X_{i_0},\bx_{I'})=\deg_{Y,X_{i_0}} H$}\]
occurs
if and only if  $B_{\lin}(a_{\ell,m}(\bx_{I'})) \neq 0.$

((III) for $i_0$ $\Rightarrow$ (II) for $i_0$) We will prove the contrapositive, namely that if 
\beq\label{LHX_assp} 
(\overline{k(\bX_{I'})}\cap \Lcal_H)(X_{i_0}) =  \Lcal_H,
\eeq
then for all $\bx_{I'}\in  k^{n-1}$, $H(Y,X_{i_0},\bx_{I'})$ has a linear factor in $Y$ over $\overline{k}$. There exists a polynomial $G(Y,\bX_{I'})\in k[Y,\bX_{I'}]$, irreducible over $k[Y,\bX_{I'}]$, such that
$$\overline{k(\bX_{I'})}\cap \Lcal_H = k(\bX_{I'})[Y]/G(Y,\bX_{I'}).$$
Under the hypothesis (\ref{LHX_assp}),
\[\Lcal_H = (k(\bX_{I'})[Y]/G(Y,\bX_{I'}))(X_{i_0}),\]
so we may apply Lemma \ref{lemma_H_not_n_gen_linear_factor} with subset $J=I'$, and obtain the desired conclusion.

This completes the proof of   the theorem, except for the relation that  (II) for $i_0$ $\Rightarrow$ (III) for $i_0$; this will be obtained in Lemma \ref{lemma_II_implies_III_gen}, via the following lemma.

\begin{lem}\label{lemma_open_V_gen}
     Let $k/\Q$ be a finite extension. Let $I$ be an index set of cardinality $n\geq 1$. Let $H \in \calO_k[Y,\bX_I]$ be irreducible over $k(\bX_I)$, monic in $Y$, and   of total degree $D$, and let $\mathcal{L}_H = k(\bX_I)[Y]/H(Y,\bX)$. For any $i_0 \in I$, and $I':=I \setminus \{i_0\}$, 
let $G$ be a polynomial in $k(\bX_{I'})[Y]$, irreducible over $k(\bX_{I'})$, such that $k(\bX_{I'})[Y]/G(Y,\bX_{I'})$ is the integral closure of $k(\bX_{I'})$ in $\mathcal{L}_H$, that is,
\[ k(\bX_{I'})[Y]/G(Y,\bX_{I'}) \simeq (\overline{k(\bX_{I'})} \cap \mathcal{L}_H).\]
Then there exists an open set $V \subset k^{n-1}$ such that for every $\bx_{I'} \in V$, $k[Y]/G(Y,\bx_{I'})$ is integrally closed in $k(X_{i_0})[Y]/H(Y,X_{i_0},\bx_{I'})$.
\end{lem}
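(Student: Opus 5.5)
The plan is to imitate the scheme-theoretic argument of Lemma \ref{lemma_F_strongly_n_geniune_open_V}, replacing the base $\mathbb{A}^{n-1}_k$ by the variety attached to the algebraic closure of $k(\bX_{I'})$ inside $\Lcal_H$. Write $E := \overline{k(\bX_{I'})}\cap \Lcal_H \simeq k(\bX_{I'})[Y]/G(Y,\bX_{I'})$. After clearing denominators (and multiplying by a suitable polynomial in $\bX_{I'}$ if needed), we may assume that $G$ has coefficients in $k[\bX_{I'}]$ and is monic in $Y$. Consider the affine varieties
\[ W = \mathrm{Spec}\big(k[Y,\bX_I]/(H)\big), \qquad Z = \mathrm{Spec}\big(k[T,\bX_{I'}]/(G(T,\bX_{I'}))\big).\]
The inclusion $E\subset \Lcal_H$ gives a rational map $W \dashrightarrow Z$ over $\mathbb{A}^{n-1}_k$. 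We restrict to a nonempty open $U_0\subset \mathbb{A}^{n-1}_k$, obtained by removing the vanishing loci of the discriminants of $G$ and $H$ (in $Y$) and of the denominators that express a root $T$ of $G$ as an element of $k(\bX_I)[W_H]$. Over $U_0$ this rational map becomes an honest morphism $g: W_{U_0}\to Z_{U_0}$, and $Z_{U_0}\to U_0$ is finite étale.

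The key point is that the generic fibre of $g$ is geometrically integral. Indeed, $g$ corresponds to the function field extension $E \subset \Lcal_H$, and $E$ is by definition integrally closed in $\Lcal_H$. More precisely, $\Lcal_H$ is the function field of a curve over $E$, namely the one with variable $X_{i_0}$, and $E$ is algebraically closed in $\Lcal_H$. We are in characteristic zero, so the extension is separable. Hence $\Lcal_H/E$ is regular, and the generic fibre $W_\eta$ over the generic point $\eta$ of $Z$ is geometrically integral. By \cite[EGA IV part 3, Theorem 9.7.7]{EGAIV_part3}, the set of points $z\in Z$ whose fibre $W_z$ is geometrically integral is locally constructible. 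It contains the generic point of the irreducible scheme $Z$ (irreducible because $G$ is irreducible), so, exactly as in the proof of Lemma \ref{lemma_F_strongly_n_geniune_open_V}, it contains a nonempty open set $U_Z\subset Z_{U_0}$.

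Next we push $U_Z$ down to the base. Since $Z_{U_0}\to U_0$ is finite, the image of the closed complement $Z_{U_0}\setminus U_Z$ is closed. It is also a proper subset, since it has dimension less than $n-1$. We take $V$ to be $U_0$ minus this image, intersected with $k^{n-1}$. Then for every $\bx_{I'}\in V$, every point of $Z$ above $\bx_{I'}$ lies in $U_Z$. These points are the closed points corresponding to the irreducible factors of $G(T,\bx_{I'})$ over $k$, and their residue fields are the fields $k[T]/(g_j(T))$. Consequently, for each factor $g_j$ the fibre of $W$ over the corresponding point, which is a curve in the variable $X_{i_0}$ over $k[T]/g_j$, is geometrically integral. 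In other words, $k[T]/g_j$ is integrally closed in its function field.

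The final step, and the one I expect to be the main obstacle, is to identify this statement with the desired conclusion. We must show that $k[Y]/G(Y,\bx_{I'})$ embeds in $k(X_{i_0})[Y]/H(Y,X_{i_0},\bx_{I'})$ compatibly with the specialization. We must also handle the possibility that $G(Y,\bx_{I'})$ or $H(Y,X_{i_0},\bx_{I'})$ becomes reducible, in which case both sides are products of fields and the claim should be read componentwise. To do this, I would use that the fibre $W_{\bx_{I'}}$ decomposes over the points of $Z$ above $\bx_{I'}$, so that the total ring of fractions of $W_{\bx_{I'}}$ is $\prod_j \mathrm{Frac}(W_{z_j})$, each factor being an algebra over $k[T]/g_j$. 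Geometric integrality of each $W_{z_j}$ over its residue field then says precisely that $k[Y]/G(Y,\bx_{I'}) = \prod_j k[T]/g_j$ is integrally closed in $k(X_{i_0})[Y]/H(Y,X_{i_0},\bx_{I'})$. If the componentwise bookkeeping proves awkward, I would shrink $V$ further by intersecting it with a Hilbert-irreducibility set. That set is not open, however, so I would prefer to make the componentwise interpretation work.
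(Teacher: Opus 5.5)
Your approach is the paper's. You map the model of $\Lcal_H$ to the model $Z$ of $E=\overline{k(\bX_{I'})}\cap\Lcal_H$, note the generic fibre is geometrically integral because $E$ is algebraically closed in $\Lcal_H$, get an open $U_Z\subset Z$ from EGA IV 9.7.7, and descend along the finite map $Z\to\mathbb{A}^{n-1}$. Your descent, removing the image of the closed set $Z_{U_0}\setminus U_Z$, is the right way to do that step. The paper's phrasing, that $\rho(V')$ contains an open $V$, does not by itself give $\rho^{-1}(V)\subset V'$. Your componentwise reading also covers reducible specializations, which the paper only sidesteps later by intersecting with a Hilbert set in Lemma \ref{lemma_II_implies_III_gen}.

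One step is false as stated, though the repair is routine. Removing closed subsets of the \emph{base} need not make $W\dashrightarrow Z$ a morphism, because writing a root $t$ of $G$ in terms of the class $w$ of $Y$ may need denominators involving $X_{i_0}$. Take $H=Y^2-X_1^2X_2$ and $i_0=1$; then $E=k(X_2)(\sqrt{X_2})$, $G=T^2-X_2$ and $t=w/X_1$. The map $(w,x_1,x_2)\mapsto(w/x_1,x_2)$ is undefined along the line $w=x_1=0$, which meets every fibre, and it cannot be extended there. For $c\in k$ a nonsquare, the $k$-point $(0,0,c)$ of $W$ would have to go to a $k$-point of $Z$ over $c$, and there is none. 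Since every nonempty open set contains such $c$, no $U_0$ works. (The paper's write-up, which takes $\bSpec$ of the function fields, does not address this point either.) To fix it, use that $t$ is integral over $k[\bX_I]$, so $\Delta_H t\in k[\bX_I][w]$, where $\Delta_H$ is the $Y$-discriminant of $H$. Thus $g$ is a morphism on $W^\circ=W\cap\{\Delta_H\neq0\}$, whose generic fibre over $Z$ still has function field $\Lcal_H$; apply EGA IV 9.7.7 to $W^\circ\to Z$. Also discard the proper closed set of $\bx_{I'}$ with $\Delta_H(X_{i_0},\bx_{I'})\equiv0$. For the remaining $\bx_{I'}$, each irreducible component of $W_{\bx_{I'}}$ is finite and surjective over the $X_{i_0}$-line, by monicity of $H$ in $Y$. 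So it meets $W^\circ$ in a dense open set, and $W^\circ_{\bx_{I'}}$ and $W_{\bx_{I'}}$ have the same total ring of fractions $k(X_{i_0})[Y]/H(Y,X_{i_0},\bx_{I'})$. Your componentwise identification then goes through. Alternatively, work on the normalization of $W$, where $t$ is regular.
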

 
\begin{proof}
Consider
$$W_H = \bSpec(k(X_{i_0},\bX_{I'})[Y]/H(Y,X_{i_0},\bX_{I'})), \qquad W_{G} = \bSpec(k(\bX_{I'})[Y]/G(Y,\bX_{I'})).$$
 The inclusion of fields gives us a morphism $$\pi: W_H\rightarrow W_{G}.$$
Let $V''$ be the set of points $u\in W_G$ where the fiber $$\pi_u:W_{H,\pi^{-1}(u)}\rightarrow W_{G,u}$$
is geometrically integral. We first claim that this contains a nonempty open set.  
By  \cite[EGA IV part 3, Theorem 9.7.7]{EGAIV_part3},  $V''$ is locally constructible. Since $G$ is defined so that $k(\bX_{I'})[Y]/G(Y,\bX_{I'})$ is integrally closed in $ k(\bX_I)[Y]/H(Y,\bX_I)$,  the generic fiber is geometrically integral and therefore contained in $V''$. Hence, $V''$ contains a nonempty open subset $V'$ such that for all $u \in V'$, $K(W_{G,u})$ is integrally closed in $K(W_{H,\pi^{-1}(u)})$. (Here we use the  notation   that $K(W_{u})$ is the field of rational functions on $W_{u}$.) 

Finally, let $\rho:W_{G}\rightarrow \mathbb{A}_k^{| I'|}$ and $\rho \circ \pi:W_{H}\rightarrow \mathbb{A}_k^{|I'|}$  be the morphisms induced by the maps 
\[
k(\bfX_{I'})\hookrightarrow k(\bX_{I'})[Y]/G(Y,\bX_{I'})\hookrightarrow k(X_{i_0},\bX_{I'})[Y]/H(Y,X_{i_0},\bX_{I'}).
\]
Since $k(\bX_{I'})[Y]/G(Y,\bX_{I'})$ is a finite extension of $k(\bfX_{I'})$, it follows that $\rho$ is a dominant and finite morphism, in particular $\rho$ is surjective, and closed (closed since $\rho$ is finite, and surjective because it is closed and dominant). Hence  for the open set $V' \subset V'' \subset W_G$ constructed above, $\rho(V')$ contains an open subset of $\mathbb{A}_k^{|I'|}$, say $V$. Notice that for every $\bfx_{I'}\in V$, one has that 
\[
\pi_{\bfx_{I'}}:W_{H,(\rho \circ \pi)^{-1}(\bfx_{I'})}\rightarrow W_{G,\rho^{-1}(\bfx_{I'})},
\]
is geometrically integral, since $\rho^{-1}(\bfx_{I'})\subset V'$, and 
\[
W_{H,(\rho \circ \pi)^{-1}(\bfx_{I'})}=\bigcup_{u\in \rho^{-1}(\bfx_{I'})}W_{H,\pi^{-1}(u)}.
\]
That is to say,
  $K(W_{G,\rho^{-1}(\bx_{I'})})=k[Y]/G(Y,\bx_{I'})$ is integrally closed in  $K(W_{H,(\rho \circ \pi)^{-1}(\bx_{I'})})=k(X_{i_{0}})[Y]/(H(Y,X_{i_{0}},\bx_{I'}))$.
\end{proof}

The following lemma confirms that (II for $i_0$) $\Rightarrow$ (III for $i_0$)  and thereby finishes the proof of   Theorem \ref{thm_gen_ppties}.
\begin{lem}[II for $i_0$ $\Rightarrow$ III for $i_0$]\label{lemma_II_implies_III_gen}
   Let $k/\Q$ be a finite extension. Let $I$ be an index set of cardinality $n\geq 1$. Let $H \in \calO_k[Y,\bX_I]$ be irreducible over $k(\bX_I)$, monic in $Y$, and   of total degree $D$, and let $\mathcal{L}_H = k(\bX_I)[Y]/H(Y,\bX)$. 
   
   Suppose that for some $i_0 \in I$ and $I' := I \setminus \{i_0\}$, 
 \[ (  \overline{k(\bX_{I'})} \cap \mathcal{L}_H)(X_{i_0}) \subsetneq \mathcal{L}_H .\]   
   Then for this index $i_0$, there exists a point $\bx_{I'} \in k^{n-1}$ such that  $H(Y,X_{i_0},\bx_{I'})$, as a polynomial in $Y,X_{i_0}$, does not have a linear factor in $Y$ over $\overline{\Q}$,  and has  $\deg H(Y,X_{i_0},\bx_{I'})=\deg_{Y,X_{i_0}}H(Y,\bX)$.
\end{lem}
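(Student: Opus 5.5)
We argue by contradiction, mirroring the proof of Lemma \ref{lemma_Cohen_strongly_genuine_reducible}, with Lemma \ref{lemma_open_V_gen} replacing Lemma \ref{lemma_F_strongly_n_geniune_open_V} and Lemma \ref{lemma_splits_field_identity} replacing Lemma \ref{lemma_integrally_closed_implies_absirred}. Let $G(Y,\bX_{I'})$ be as in Lemma \ref{lemma_open_V_gen}, so $\mathcal{M}:=k(\bX_{I'})[Y]/G \simeq \overline{k(\bX_{I'})}\cap \Lcal_H$. The hypothesis says $\mathcal{M}(X_{i_0})\subsetneq \Lcal_H$, i.e. $d_G:=\deg_Y G<\deg_Y H=:d_H$ (since $[\Lcal_H:k(\bX_I)]=d_H$ and $[\mathcal{M}(X_{i_0}):k(\bX_I)]=d_G$). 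Suppose, for contradiction, that for every $\bx_{I'}\in k^{n-1}$, $H(Y,X_{i_0},\bx_{I'})$ has a linear factor in $Y$ over $\overline{\Q}$ or drops degree. As in Lemma \ref{lemma_Cohen_strongly_genuine_reducible}, the degree-drop locus is contained in a proper Zariski closed set, so it is nowhere dense.

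Next I would produce a dense set of good specializations. By Hilbert's irreducibility theorem (Lemma \ref{lemma_HIT}), applied simultaneously to $H(Y,\bX_I)$ and $G(Y,\bX_{I'})$ (after clearing denominators of $G$), there is a dense $U\subset k^{n-1}$ on which $H(Y,X_{i_0},\bx_{I'})$ is irreducible over $k(X_{i_0})$, $G(Y,\bx_{I'})$ is irreducible over $k$ of degree $d_G$, and the degree of $H$ in $Y,X_{i_0}$ is preserved. Intersecting $U$ with the open set $V$ of Lemma \ref{lemma_open_V_gen} (which is nonempty, so meets the dense set $U$ minus the degree-drop locus), choose $\bx_{I'}\in U\cap V$ outside the degree-drop locus. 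By the contradiction hypothesis, $f(Y,X_{i_0}):=H(Y,X_{i_0},\bx_{I'})$ then has a linear factor in $Y$ over $\overline{\Q}$.

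Now apply Lemma \ref{lemma_splits_field_identity} to $f$, which is irreducible over $k(X_{i_0})$: it gives $(\Lcal_f\cap\overline{k})(X_{i_0})=\Lcal_f$, where $\Lcal_f=k(X_{i_0})[Y]/f$ has degree $d_H$ over $k(X_{i_0})$. On the other hand, since $\bx_{I'}\in V$, the number field $k_G:=k[Y]/G(Y,\bx_{I'})$ (degree $d_G$) embeds in $\Lcal_f$ and is integrally closed there, i.e. $\overline{k}\cap\Lcal_f=k_G$. Hence $\Lcal_f=k_G(X_{i_0})$ has degree $d_G<d_H$ over $k(X_{i_0})$, a contradiction.

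The main obstacle is the identification in the last step: Lemma \ref{lemma_open_V_gen} must be read so that the specialized field $k[Y]/G(Y,\bx_{I'})$ is genuinely a field of degree $d_G$ sitting inside $\Lcal_f$ (rather than an étale algebra, or a fiber over several points $u\in\rho^{-1}(\bx_{I'})$). I would handle this by using the Hilbert irreducibility condition on $G$, so that $\rho^{-1}(\bx_{I'})$ is a single point with residue field $k_G$; then the fiber of $\pi$ over it is integral with function field $\Lcal_f$, and geometric integrality gives exactly $\overline{k_G}\cap\Lcal_f=k_G$, hence $\overline{k}\cap\Lcal_f=k_G$.
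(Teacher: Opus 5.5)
Your proposal is correct and takes essentially the same route as the paper. Both arguments apply Hilbert irreducibility to $H$ and $G$ and pick $\bx_{I'}$ in the intersection with the open set of Lemma \ref{lemma_open_V_gen}; the degree gap $\deg_Y G<\deg_Y H$ then contradicts the identity $(\Lcal_f\cap\overline{k})(X_{i_0})=\Lcal_f$ forced by a linear factor (Lemmas \ref{lemma_linear_factor_splits_completely} and \ref{lemma_splits_field_identity}).

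The differences are cosmetic. Your global contradiction hypothesis is not needed, since the chosen point works directly. Your extra care in using irreducibility of $G(Y,\bx_{I'})$, so that the fiber over $\bx_{I'}$ is a single point with residue field $k[Y]/G(Y,\bx_{I'})$, is a detail the paper leaves implicit.
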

 
\begin{proof}
Under the hypotheses, let $G$ be a polynomial in $k(\bX_{I'})[Y]$, irreducible over $k(\bX_{I'})$, such that $k(\bX_{I'})[Y]/G(Y,\bX_{I'})$ is the integral closure of $k(\bX_{I'})$ in $\mathcal{L}_H$, that is,
\[ k(\bX_{I'})[Y]/G(Y,\bX_{I'}) \simeq (\overline{k(\bX_{I'})} \cap \mathcal{L}_H).\]
Without loss of generality, we may assume $G$ is monic in $Y$.
Then by hypothesis,
\beq\label{G_H_strictly}
(k(\bX_{I'})[Y]/G(Y,\bX_{I'}))(X_{i_0})\subsetneq k(\bX_I)[Y]/(H(Y,X_{i_{0}},\bX_{I'})).
\eeq
Since both $G$ and $H$ are irreducible over $k$, so that
\[
[(k(\bX_{I'})[Y]/G(Y,\bX_{I'}))(X_{i_0}):k(\bfX)]=\deg_Y G,\quad [\mathcal{L}_{H}:k(\bfX)]=\deg_Y H,
\]
this implies that
\beq\label{degG_less_degH} 
\deg_Y G(Y,\bX_{I'}) < \deg_Y H(Y,X_{i_0},\bX_{I'}).
\eeq

By Hilbert's Irreducibility Theorem (Lemma \ref{lemma_HIT}), there is a dense set  $U'\subset k^{|I'|} =\Abb^{|I'|}_k$ such that for all $\bx_{I'} \in U'$, $H(Y,X_{i_{0}},\bx_{I'})$ and $G(Y,\bx_{I'})$ are irreducible over $k$. Observe that there is a subset $U \subset U'$, also dense in $\Abb^{|I'|}_k$, such that for all $x_{I'} \in U$,  $\deg_Y H(Y,X_{i_{0}},\bx_{I'}) = \deg_Y H$, $\deg_{X_{i_0}} H(Y,X_{i_{0}},\bx_{I'}) = \deg_{X_{i_0}} H$ and $\deg_Y G(Y,\bx_{I'}) = \deg_Y G$. 
The degree of each polynomial with respect to $Y$ does not change upon specialization since $H$ and $G$ are monic in $Y$. For the degree of $X_{i_0}$,   if we let $V_0$ denote the subset of $U'$ where  $\deg_{X_{i_0}} H(Y,X_{i_{0}},\bx_{I'}) < \deg_{X_{i_0}} H$ for $\bx_{I'} \in V_0$, then $V_0$ is nowhere dense, and $U=U' \setminus V_0$ is the desired dense set. (For $V_0$ is contained in a finite union of sets, each of which is defined as the vanishing set of a polynomial (a nonzero polynomial in $\bx_{I'}$, that defines the coefficient of a certain monomial in ${X_{i_0}}$   in $H(Y,X_{i_{0}},\bx_{I'})$, so $V_0$ is a proper closed subset of lower dimension, and is nowhere dense.)   

Let $V \subset \Abb_k^{|I'|}$ denote the open set produced by Lemma \ref{lemma_open_V_gen}.
Since $V$ is open and $U$ is dense, we can find a point  $\bx_{I'}\in V \cap U$, and then   $H(Y,X_{i_{0}},\bx_{I'})$ and $G(Y,\bx_{I'})$ are irreducible over $k$, with $\deg H(Y,X_{i_{0}},\bx_{I'})=\deg_{Y,X_{i_0}} H(Y,\bX_I)$. Moreover   $ k[Y]/G(Y,\bx_{I'})$ is integrally closed in $ k(X_{i_{0}})[Y]/(H(Y,X_{i_{0}},\bx_{I'}))$, which we record as 
\beq\label{Gkbar_relation}
k(X_{i_{0}})[Y]/(H(Y,X_{i_{0}},\bx_{I'}))\cap \overline{k}=k[Y]/G(Y,\bx_{I'}) ,
\eeq
also using the fact that $ \overline{k[Y]/G(Y,\bx_{I'})} \simeq \overline{k}$.
 By our choice of $\bx_{I'}$, we know that 
\[\deg_Y H(Y,X_{i_0},\bx_{I'})= \deg_Y H(Y,X_{i_0},\bX_{I'}), \qquad \deg_Y G(Y,\bx_{I'})= \deg_Y G(Y,\bX_{I'}),\] 
and that both $H(Y,X_{i_0},\bx_{I'})$ and $G(Y,\bx_{I'})$ are irreducible over $k$. Thus, an application of (\ref{degG_less_degH}) shows that $\deg_Y G(Y,\bx_{I'})<\deg_Y H(Y,X_{i_0},\bx_{I'})$,  and from the irreducibility it follows that we maintain the strict inclusion of fields:
\begin{equation}\label{GH_closure_setup}
    k[Y]/(G(Y,\bx_{I'}))(X_{i_0})\subsetneq k(X_{i_0})[Y]/(H(Y,X_{i_0},\bx_{I'})).
\end{equation}

We claim this implies that $H(Y,X_{i_{0}},\bx_{I'})$ does not have a linear factor in $Y$ over $\overline{k}$. 
Suppose on the contrary that $H(Y,X_{i_{0}},\bx_{I'})$ has a linear factor in $Y$ over $\overline{k}$; then it splits completely by Lemma \ref{lemma_linear_factor_splits_completely}, so that
\[ H(Y,X_{i_0},\bx_{I'}) = \prod_j (Y-Q_j(X_{i_0}))\] 
for certain $Q_j(X_{i_0}) \in \overline{k}[X_{i_0}]$. 
But  this implies (as in Lemma   \ref{lemma_splits_field_identity}) that
\[
(k(X_{i_{0}})[Y]/(H(Y,X_{i_{0}},\bx_{I'}))\cap \overline{k})(X_{i_0})= k(X_{{i}_{0}})[Y]/(H(Y,X_{i_{0}},\bx_{I'})).
\]
By applying the identity (\ref{Gkbar_relation}) in the left-hand side, this is the statement 
\[
(k[Y]/G(Y,\bx_{I'}))(X_{i_0})= k(X_{{i}_{0}})[Y]/(H(Y,X_{i_{0}},\bx_{I'})),
\]
in contradiction to (\ref{GH_closure_setup}).
Thus $H(Y,X_{i_{0}},\bx_{I'})$ does not have a linear factor in $Y$ over $\overline{\Q}$, and the lemma is proved.
\end{proof}

\subsection{Natural consequence of being $n$-genuine}

 \begin{thm}\label{thm_gen_linear_factor_consequence_k}
Let $n \geq 2$. Let $k/\Q$ be a finite extension of degree $m$, with ring of integers $\mathcal{O}_k$. Let $F(Y,\bX) \in \Ocal_k[Y,X_1,\ldots,X_n]$ be an $n$-genuine polynomial of total degree $D$. Then for all $B \gg 1$, 
\[ \# \{\bx' \in \Ocal_k^{n-1}, \|\bx'\|\leq B : \text{$F(Y,X_1,\bx')$ splits completely over $\overline{\Q}$}\}\ll_{m,n,D} B^{n-2}(\log B)^{(n-2)(m-1)}.\]
Also, there exists a finite set $\mathcal{E}$ of exceptional prime ideals $\pfrak \in \Ocal_k$, with $|\Ecal| \ll_{m,n,D} \log \|F\|/ \log \log \|F\|$, such that for all $\pfrak \not\in \Ecal$,  
\[ \# \{\bx' \in k_\pfrak^{n-1}: \text{$F(Y,X_1,\bx')$ splits completely over $\overline{k_\pfrak}$}\}\ll_{n,D} |k_\pfrak|^{n-2}.\]
\end{thm}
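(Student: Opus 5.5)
The plan is to mirror the proof of Theorem \ref{thm_strongly_gen_reducible_consequence_k}, replacing the reducibility form $B_\red$ by the linear-factor form $B_\lin$ and Theorem \ref{thm_strongly_gen_ppties} by Theorem \ref{thm_gen_ppties}. Since $F$ is $n$-genuine (monic in $Y$, irreducible over $k(\bX)$), Theorem \ref{thm_gen_ppties} (I) $\Rightarrow$ (IV), applied with $i_0=1$ and $I'=\{2,\ldots,n\}$, shows that $B_\lin(a_{\ell,m}(\bX_{I'})) \in \Z[\bX_{I'}]$ is not identically zero. Here $B_\lin$ is built from the expansion (\ref{app_H_expansion}) of $F$ in $Y,X_1$. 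Its degree is $\ll_{n,D}1$, and by Lemma \ref{lemma_Noether} its coefficients satisfy $\log\|B_\lin\|\ll_{n,D}\log\|F\|$.

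For the count over $\Ocal_k$, note that if $F(Y,X_1,\bx')$ splits completely over $\overline{\Q}$, then in particular it has a linear factor in $Y$. Since $F(Y,X_1,\bx')$ remains monic in $Y$, Lemma \ref{lemma_actual_linear_factor} gives $B_\lin(a_{\ell,m}(\bx'))=0$. (If instead the total degree drops, $B_\lin$ vanishes as well, so such $\bx'$ are also included.) Hence the set to be bounded lies in the zero set of a nonzero polynomial of bounded degree in $n-1$ variables. Lemma \ref{lemma_Schwartz_domain}, applied with $S=\{x\in\Ocal_k: H_K(x)\le B\}$, together with Lemma \ref{lemma_count_in_ring}, then gives the stated bound, in exactly the same way as in the proof of Theorem \ref{thm_strongly_gen_reducible_consequence_k}. (The boxed region is a product set $S^{n-1}$ with $|S|\asymp B$, so this step in fact gives $\ll B^{n-2}$, which is stronger than the bound claimed.)

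For the finite-field statement, let $g$ be the gcd of the coefficients of $B_\lin(a_{\ell,m}(\bX_{I'}))$ and let $\Ecal$ be the set of prime ideals $\pfrak\mid g$. By Lemma \ref{lemma_count_in_ring}, $|\Ecal|\ll_{m,n,D}\log\|F\|/\log\log\|F\|$. In addition, we put into $\Ecal$ the primes dividing the finitely many integers (depending only on $n,D$) at which, per Remark \ref{remark_Noether_form}, the single form must be replaced by the individual forms $G_i$; for these one tests that some $G_i$ does not vanish identically, which costs only $O_{n,D}$ further primes. For $\pfrak\notin\Ecal$, the reduction of $B_\lin$ modulo $\pfrak$ is nonzero over $k_\pfrak$. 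Since $F$ is monic in $Y$, its reduction is still monic, so Lemma \ref{lemma_actual_linear_factor} applies over $\Kcal=k_\pfrak$ (Noether's forms are obtained by reducing the integral coefficients). Complete splitting over $\overline{k_\pfrak}$ then forces $B_\lin(\bx')=0$ in $k_\pfrak$, and Lemma \ref{lemma_Schwartz_domain} bounds the number of such $\bx'\in k_\pfrak^{n-1}$ by $\deg B_\lin\,|k_\pfrak|^{n-2}$.

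The main obstacle is bookkeeping rather than substance. The key nontrivial input, that $B_\lin$ does not vanish identically, is exactly Theorem \ref{thm_gen_ppties} (IV), whose hard direction is Lemma \ref{lemma_II_implies_III_gen}. The remaining care is in handling the collection-of-forms subtlety in positive characteristic while keeping $|\Ecal|$ within the stated bound. Note also that the reduction of $F$ modulo $\pfrak$ need not be irreducible, but the argument never uses irreducibility modulo $\pfrak$: it only uses the implication from "splits completely" to "has a linear factor" to "$B_\lin=0$."
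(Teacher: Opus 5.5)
Your proposal is correct and is essentially the paper's proof: nonvanishing of $B_{\lin}$ for $i_0=1$ via Theorem \ref{thm_gen_ppties} (I) $\Rightarrow$ (IV), then Lemma \ref{lemma_actual_linear_factor} with the trivial bound over $\Ocal_k$ and over $k_\pfrak$, with $\Ecal$ the primes dividing the coefficients of $B_{\lin}$. Your refinements are sound: you use only the implication ``splits completely $\Rightarrow$ has a linear factor'', whereas the paper writes an equality via Lemma \ref{lemma_linear_factor_splits_completely}, which presumes irreducibility of the specialization; and you note that the product box gives $\ll B^{n-2}$ outright. The extra primes you add via Remark \ref{remark_Noether_form} are unnecessary, since only ``linear factor $\Rightarrow$ every $G_i$ vanishes'' is used, so it suffices to count zeros of one $G_i(a_{\ell,m}(\bX'))$ that is nonzero modulo $\pfrak$.
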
 
\begin{proof}[Proof of Theorem \ref{thm_gen_linear_factor_consequence}]
Theorem \ref{thm_gen_linear_factor_consequence} is simply the special case when $k=\Q$, $k_\pfrak=\F_p$, so that $|k_\pfrak| = |\F_p|=p$.
\end{proof}
\begin{proof}[Proof of Theorem \ref{thm_gen_linear_factor_consequence_k}]
Since $F$ is   $n$-genuine, by Theorem \ref{thm_gen_ppties} (IV), for each $i_0 \in I$, the polynomial $B_\lin(a_{\ell,j}(\bX_{I'})) \in \Z[\bX_{I'}]$ that detects whether $F(Y,X_{i_0},\bx_{I'})$ has a linear factor over $\overline{k} = \overline{\Q}$ is not identically zero. 
In particular this is true for $i_0=1$, and then we denote $\bX_{I'}$ by $\bX'$. Thus to prove the first claim, it suffices to observe that by Lemma \ref{lemma_linear_factor_splits_completely}, then Lemma \ref{lemma_actual_linear_factor}, and then the trivial bound in Lemma \ref{lemma_Schwartz_domain} followed by Lemma \ref{lemma_count_in_ring},
\begin{align*}
    \# \{\bx' & \in \Ocal_k^{n-1}, \|\bx'\|\leq B : \text{$F(Y,X_1,\bx')$ splits completely over $\overline{\Q}$}\}
   \\ 
   &=     \# \{\bx'  \in \Ocal_k^{n-1}, \|\bx'\|\leq B : \text{$F(Y,X_1,\bx')$ has a linear factor in $Y$ over $\overline{\Q}$}\}\\
   &\leq \#\{\bx' \in \Ocal_k^{n-1}, \|\bx'\|\leq B : B_\lin(a_{\ell,j}(\bX'))=0\} \ll_{m,n,D} B^{n-2} (\log B)^{(n-2)(m-1)}.
\end{align*}
For the second claim, we define $\Ecal$ to be the set of all prime ideals that divide the gcd, call it $g$, of the coefficients of $B_\lin(a_{\ell,j}(\bX')) \in \Z[\bX']$. Note that $g \leq \|B_\lin\|$ and by Lemma \ref{lemma_Noether},  $\log \|B_\lin\| \ll_{n,D} \log \|F\|$. Consequently by Lemma \ref{lemma_count_in_ring}, $|\Ecal| \ll_{m,n,D} \log \|F\|/ \log \log \|F\|$. Then for $\pfrak \not\in \Ecal$, $B_\lin(a_{\ell,j}(\bX'))$ is not identically zero over $k_\pfrak$. To prove the second claim, it suffices again to apply Lemma \ref{lemma_linear_factor_splits_completely}, followed by Lemma \ref{lemma_actual_linear_factor} and the trivial bound in Lemma \ref{lemma_Schwartz_domain}:
\begin{align*}
    \# \{\bx' & \in k_\pfrak^{n-1} : \text{$F(Y,X_1,\bx')$ splits completely over $\overline{k_\pfrak}$}\} \\
    & = \# \{\bx'  \in k_\pfrak^{n-1} : \text{$F(Y,X_1,\bx')$ has a linear factor in $Y$ over $\overline{k_\pfrak}$}\}
   \\ 
   &\leq \#\{\bx' \in k_{\pfrak}^{n-1} : B_\lin(a_{\ell,j}(\bX'))=0\} \leq \deg B_\lin |k_{\pfrak}|^{n-2} .
\end{align*}
Since $\deg B_\lin \ll_{n,D} 1$, this suffices.
\end{proof}
    
 \begin{rem}\label{remark_gen_consequence_weaker_hyp}
 As the method of proof showed, Theorem \ref{thm_gen_linear_factor_consequence_k} (and analogously Theorem \ref{thm_gen_linear_factor_consequence}) is still true under the following weaker hypothesis: that $F(Y,\bX)$ is irreducible over $k(\bX)$, and monic in $Y$, and that for $i_0=1$ and $I'=\{2,\ldots,n\}$,
 \[   (\overline{k(\bX_{I'})}\cap \mathcal{L}_F)(X_1) \subsetneq \mathcal{L}_F.\]
 For then by Theorem \ref{thm_gen_ppties} ((II) for $i_0=1$ $\Rightarrow$ (IV) for $i_0=1$), the polynomial 
         $  B_{\lin}(a_{\ell,m}(\bX_{I'})) \in \Z[\bX_{I'}]$ that detects whether $F(Y,X_1,\bx_{I'})$ has a linear factor over $\overline{\Q}$  is not identically zero, and the above proof can proceed.
 \end{rem}

 \section{Shifting polynomials to prove Theorem \ref{thm_appendix_shift_to_strongly_genuine}}\label{sec_shift_poly}

 The remaining task to prove Theorem \ref{thm_Cohen} is to prove Theorem \ref{thm_appendix_shift_to_strongly_genuine}. Recall from (\ref{app_shift_poly_dfn}) that for any  $\mathbf{a}\in \calO_K^{n-1}$ we define the shift (with respect to $X_1$) by
   \[
    F_{\bfa}(Y,X_{1},...,X_{n})=F(Y,X_{1},X_{2}+a_{2}X_{1},...,X_{n}+a_{n}X_{1}).
\]
    We wish to show that for some shift $F_\ba$ of the polynomial $F$, the associated minimal polynomial $M_{F_\ba}$ satisfies condition (II) of Theorem \ref{thm_strongly_gen_ppties} for the particular index $i_0=1$. (We have distinguished the variable $X_1$, for notational simplicity, but we could in fact work with any fixed index $i_0$ and achieve an analogous outcome.) 
We first show that it suffices to prove the following result about shifting polynomials.  

 \begin{prop}\label{prop_app_construct_shift}
 Let $L/K/\Q$ be finite extensions.
Let $R\in\calO_{L}[Y,X_{1},...,X_{n}]$   be a  polynomial of total degree $D$, irreducible over $L(X_1,\ldots,X_n)$, such that 
\[\mathcal{L}_R := L(X_1,\ldots,X_n)[Y]/R(Y,X_1,\ldots,X_n)\] 
is a strongly $1$-genuine extension of $L(X_1,\ldots,X_n)$.
Then there exists a choice of $\mathbf{a}\in\calO_K^{n-1}$ with  $\|\bfa\|\ll_{n, D,[L:\Q]} 1$ such that for the shifted polynomial $R_\ba$,
    \beq\label{app_alg_closed}
\mathcal{L}_{R_{\bf{a}}}\cap\overline{L(X_{2},...,X_{n})}=L(X_{2},...,X_{n}).
    \eeq
\end{prop}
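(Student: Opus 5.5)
Since $\mathcal{L}_R$ is strongly $1$-genuine, Lemma \ref{lemma_at_least_strongly_1_gen} gives $\overline{L}\cap\mathcal{L}_R=L$, and by Lemma \ref{lemma_integrally_closed_implies_absirred} $R$ is absolutely irreducible. Condition (\ref{app_alg_closed}) for $R_\ba$ is condition (II) of Theorem \ref{thm_strongly_gen_ppties} for the index $i_0=1$. By the equivalence (II)$\Leftrightarrow$(III) for a fixed index, it therefore suffices to produce $\ba$ with $\|\ba\|\ll 1$ and one point $\bx'\in L^{n-1}$ such that $R_\ba(Y,X_1,\bx')$ is absolutely irreducible with full degree in $(Y,X_1)$. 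Since $R_\ba(Y,X_1,\bx')=R(Y,X_1,x_2+a_2X_1,\ldots,x_n+a_nX_1)$, this asks that the restriction of $R$ to the line $\{(t,x_2+a_2t,\ldots,x_n+a_nt)\}$, viewed as a curve in $(Y,t)$, stays absolutely irreducible. So the plan is a Bertini-type statement: restricting an absolutely irreducible hypersurface $\{R=0\}\subset\Abb^{n+1}$ to the preimage of a generic line in $\Abb^n$ keeps it irreducible.

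\textbf{Step 1 (Bertini).} View the lines $\ell_{\ba,\bx'}$ as points of an affine parameter space $\Abb^{2(n-1)}$. Apply Bertini's theorem to the finite (generically étale) cover $W=\{R=0\}\to\Abb^n$, where $W$ is geometrically irreducible of dimension $n$; one can use the classical Bertini irreducibility theorem for preimages of generic lines, since $n\ge 2$ and the lines pass through generic points in generic directions. This shows that for $(\ba,\bx')$ in a nonempty Zariski open subset of $\Abb^{2(n-1)}_{\overline{L}}$, the curve $W\cap\pi^{-1}(\ell_{\ba,\bx'})$ is geometrically irreducible. Adding the open condition that the $(Y,X_1)$-degree does not drop, we get that the polynomial
\[
\Phi(\ba,\bx')=B_{\red}\bigl(a_{\ell,m}(\ba,\bx')\bigr),
\]
where $a_{\ell,m}(\ba,\bx')$ are the coefficients of $R_\ba(Y,X_1,\bx')$, is not identically zero. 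This is the main obstacle. Rather than invoke Bertini as a black box, I would try to prove it by the paper's own machinery. Treat $\ba$ as new indeterminates $A_2,\ldots,A_n$ and consider
\[
\tilde R(Y,X_1,\ldots,X_n,\mathbf{A})=R(Y,X_1,X_2+A_2X_1,\ldots).
\]
Then verify (II) for $i_0=1$ over the base $L(\mathbf{A})$. The change of variables $X_j\mapsto X_j+A_jX_1$ is invertible over $L(\mathbf{A})$, so the function field is $\mathcal{L}_R(\mathbf{A})$, and one must show $L(\mathbf{A},X_2,\ldots,X_n)$ is algebraically closed in it. The idea is that an algebraic element over $L(\mathbf{A},\bX')$ would be invariant under the substitution group. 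One then has to argue, using the absolute irreducibility of $R$, that such an element comes from $\overline{L}\cap\mathcal{L}_R=L$.

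\textbf{Step 2 (small $\ba$).} Once $\Phi\not\equiv0$, write $\Phi(\ba,\bx')$ as a polynomial in $\bx'$ with coefficients $c_\gamma(\ba)$. Some $c_\gamma$ is nonzero, and its degree is $\ll_{n,D}1$ by Lemma \ref{lemma_Noether}. By Lemma \ref{lemma_Schwartz_domain} applied with $S$ a box in $\Z$ of size $\gg\deg c_\gamma$, there is $\ba\in\Z^{n-1}\subset\calO_K^{n-1}$ with $c_\gamma(\ba)\ne0$ and $\|\ba\|\ll_{n,D,[L:\Q]}1$. For this $\ba$, $\Phi(\ba,\cdot)\not\equiv0$, so (IV) and hence (III) for $i_0=1$ hold for $R_\ba$. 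Theorem \ref{thm_strongly_gen_ppties} ((IV)$\Rightarrow$(II) for $i_0=1$) then yields (\ref{app_alg_closed}).
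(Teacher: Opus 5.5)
Your route is genuinely different from the paper's. It is sound, provided Step~1 is supplied by a correctly stated Bertini theorem.

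\textbf{How the two proofs differ.} The paper never restricts to lines. It lists the nontrivial intermediate fields $K_1,\dots,K_e$ of $\mathcal{L}_R/L(\bX)$ and notes, via Lemma \ref{lemma_at_least_strongly_1_gen}, that each is $\ell_j$-genuine. Through Lemmas \ref{lemma_app_shifting}--\ref{lemma_shift_all_vars_R} it uses the linear-factor criterion $B_{\lin}$ of Theorem \ref{thm_gen_ppties} to produce polynomials $g^{P_j}$. Non-vanishing of $g^{P_j}$ at $\mathbf{a}$ forces $(K_{j,\mathbf{a}}\cap\overline{L(X_2,\dots,X_n)})(X_1)\subsetneq K_{j,\mathbf{a}}$. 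Hence $N_{R_{\mathbf a}}(X_1)$ contains no nontrivial intermediate field, so $N_{R_{\mathbf a}}=L(X_2,\dots,X_n)$.

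You use strong $1$-genuineness only to get absolute irreducibility of $R$. You then convert (\ref{app_alg_closed}) into the $B_{\red}$ criterion (IV) for $i_0=1$ via Theorem \ref{thm_strongly_gen_ppties}, and put all the geometry into a single Bertini statement.

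Your Step~2 is correct as written. Choosing $\mathbf{a}\in\Z^{n-1}$ in a box of side $\ll_{n,D}1$ by Schwartz--Zippel over $\calO_L$ also avoids the paper's norm construction over $\mathrm{Aut}(L/K)$. That construction only lands in $\calO_K[\mathbf{A}]$ when $L/K$ is Galois, as it is for $L=L_F$.

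What the paper's route buys is self-containedness inside the genuine-polynomial framework. Yours is shorter, but it outsources exactly the hard step.

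\textbf{What Step~1 must cite.} You need Bertini for preimages under a morphism, in Jouanolou's form: if $W$ is geometrically irreducible and $f\colon W\to\mathbb{P}^n$ has $\dim\overline{f(W)}\ge 2$, then $f^{-1}(H)$ is geometrically irreducible for $H$ in a dense open set of hyperplanes. You apply this $n-1$ times, or use its generic-line form; this is where $n\ge2$ enters. Bertini for plane sections of $\{R=0\}\subset\Abb^{n+1}$ does not suffice, because your planes $\{X_j=x_j+a_jX_1\}$ all contain the $Y$-direction and so are not generic. Two further inputs are needed:
\begin{enumerate}
\item generic reducedness (automatic in characteristic $0$), or non-vanishing of $\mathrm{disc}_Y R$ along the line, to pass from an irreducible zero set to an irreducible polynomial;
\item $R$ monic in $Y$ (true for $R=M_F$), to invoke Lemma \ref{lemma_integrally_closed_implies_absirred}.
\end{enumerate}

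\textbf{The in-house alternative has a gap.} Write $Z_j=X_j+A_jX_1$ and $E=L(\mathbf{A},X_2,\dots,X_n)$. Let $\sigma$ be an automorphism of $\mathcal{L}_R(\mathbf{A})$ that fixes $\mathcal{L}_R$ and preserves $E$. Then $(A_j-\sigma(A_j))X_1=\sigma(X_j)-X_j\in E$. Since $X_1$ is transcendental over $E$, $\sigma$ fixes every $A_j$. So there is no substitution group under which the algebraic closure of $E$ in $\mathcal{L}_R(\mathbf{A})$ is invariant.

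Proving the generic statement without Bertini essentially forces you back to the paper's argument. You would identify $(\overline{E}\cap\mathcal{L}_R(\mathbf{A}))(X_1)$ with some $K_j(\mathbf{A})$, and rule this out by the $B_{\lin}$ criterion for the generically shifted $P_j$.
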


\begin{proof}[Deduction of Theorem \ref{thm_appendix_shift_to_strongly_genuine}]
Suppose this proposition is true, and let us deduce Theorem \ref{thm_appendix_shift_to_strongly_genuine}. Under the hypotheses of Theorem \ref{thm_appendix_shift_to_strongly_genuine}, let $\Omega_F$ denote the splitting field of $F(Y,\bX)$ over $K(\bX)$, with minimal polynomial $M_F(Y,\bX)$ of $\Omega_F$ over $L_F(\bX)$, in which $L_F=\Omega_{F} \cap \overline{K}$. Since $\deg_Y F \geq 2$, note that   $\deg_Y M_F(Y,\bX) \geq 2$. Additionally, since $M_F(Y,\bX)$ is a minimal polynomial it must be irreducible over $L_F(\bX)$. Furthermore Corollary \ref{cor_min_poly_at_least_strongly_1_gen} shows that $M_F$ is strongly $1$-genuine over $L_F$, or equivalently $\Omega_F$ is a strongly $1$-genuine extension of $L_F(X_1,\ldots,X_n)$, by construction.  

Apply Proposition \ref{prop_app_construct_shift} with $L=L_F$, $K=K$, and the polynomial $R=M_F(Y,\bX) \in L(\bX)[Y]$.  The proposition produces a choice of $\bfa \in \calO_K^{n-1}$ such that the shifted polynomial, which we denote by 
\[(M_{F})_{\bfa}(Y,\bX) = M_F(Y,X_1,X_2+a_2X_1,\ldots,X_n+a_nX_1),\]
satisfies $\mathcal{L}_{(M_{F})_{\bfa}}\cap\overline{L(X_{2},...,X_{n})}=L(X_{2},...,X_{n})$. 
By the proposition, $\|\bfa\|\ll_{n, D,[L:\Q]} 1$, and $[L:\Q]\ll_{D,[K:\Q]}1$, so that $\|\bfa\|\ll_{n, D,[K:\Q]} 1$.
By  Lemma \ref{lemma_shift_before_or_later}, $(M_{F})_{\bfa}(Y,\bX) = M_{F_\ba}(Y,\bX)$.
Thus $\mathcal{L}_{(M_{F_{\bfa}})}\cap\overline{L(X_{2},...,X_{n})}=L(X_{2},...,X_{n})$, and since $\Lcal_{{(M_{F_\ba}})}=\Omega_\ba$ in the notation of Theorem \ref{thm_appendix_shift_to_strongly_genuine}, the proof of that theorem is complete.
\end{proof}

For orientation, here is an overview of the strategy to prove Proposition \ref{prop_app_construct_shift}. If 
\beq\label{R_desired} 
\mathcal{L}_{R}\cap\overline{L(X_{2},...,X_{n})}=L(X_{2},...,X_{n}),
\eeq
so that $L(X_{2},...,X_{n})$ is integrally closed in $\mathcal{L}_R$, then the proposition is already true, with the zero shift $\bfa=0$. If this does not hold, then since strongly $n$-genuine polynomials are generic (recall Remark \ref{remark_generic}), we hope that by shifting $R$ we can produce a polynomial for which the  relation (\ref{R_desired}) does hold. So suppose
\beq\label{R_initial} 
N_R := \mathcal{L}_{R}\cap\overline{L(X_{2},...,X_{n})}\supsetneq L(X_{2},...,X_{n}).
\eeq
Enumerate all the intermediate extensions $L(X_1,\ldots,X_n) \subsetneq K_j \subseteq \Lcal_R$ (including $\Lcal_R$) by $K_1,\ldots, K_e$, say. (Each of these fields is $\ell$-genuine for some $\ell \geq 1$, by Lemma \ref{lemma_at_least_strongly_1_gen}.)
If it were true that for all $j=1,\ldots, e$, $K_j \not\subset N_R(X_1)$ then we could conclude that $N_R(X_1)=L(X_1,\ldots,X_n)$, since the $K_j$ exhaust the nontrivial intermediate fields. This would imply that $N_R=L(X_2,\ldots,X_n)$, so that (\ref{R_desired}) holds.
Presently we are not in this case, but the strategy is to shift $R$ to $R_{\bfa}$, and again enumerate the intermediate fields $L(X_1,\ldots,X_n) \subsetneq K_{j,\bfa} \subseteq \Lcal_{R_{\bfa}}$,  now denoted by $K_{1,\bfa}, \ldots, K_{e,\bfa}$, and hope in particular to find a choice of shift $\bfa$ such that for all $j=1,\ldots, e$, $K_j \not\subset N_{R_{\bfa}} (X_1)$, thus forcing $N_{R_{\bfa}}=L(X_2,\ldots,X_n)$.

The property $K_{j,\bfa} \not\subset N_{R_{\bfa}} (X_1)$ will hold if $K_{j,\bfa} \not\subset (K_{j,\bfa} \cap \overline{L(X_{2},...,X_{n})})(X_1)$,  that is to say, 
\beq\label{K_j_onevar}
(K_{j,\bfa} \cap    \overline{L(X_{2},...,X_{n})})(X_1) \subsetneq K_{j,\bfa},
\eeq
or in other words, if $K_{j,\bfa}$ requires some nontrivial algebraic expression in $X_1$. (For indeed, the property $K_{j,\bfa}\subset N_{R_{\bfa}} (X_1)$ would imply $K_{j,\bfa} \subset \overline{L(X_2,\ldots,X_n)}(X_1)$, which would imply $K_{j,\bfa}\subset (K_{j,\bfa} \cap \overline{L(X_{2},...,X_{n})})(X_1)$.)
Since the original polynomial $R(Y,\bX)$ is strongly $1$-genuine, each of the intermediate fields $K_1,\ldots,K_e$ depends in a nondegenerate way on at least one variable, say $X_j$, and so by shifting $X_j \mapsto X_j+a_j X_1$ we aim to introduce nondegenerate dependence on $X_1$, to result in (\ref{K_j_onevar}). For  a given field, say $F$, (that is $\ell$-genuine for some $\ell \geq 1$), we perform such a shift in one variable to produce a property like (\ref{K_j_onevar}) in Lemma \ref{lemma_app_shifting}, and by shifting in multiple coordinates in Lemma \ref{lemma_shift_all_vars_R}.  Moreover, since being $n$-genuine is a generic property, we would expect most shifts $\bfa$ accomplish this. We  quantify this in Lemma \ref{lemma_shift_all_vars_R} by showing that for a given field $F$, there exists a polynomial $g^F$ such that for every $\bfa$ such that $g^F(\bfa) \neq 0$, we do obtain (\ref{K_j_onevar}). 
With this result in hand, we can apply it to each of $K_1,\ldots,K_e$ in turn, generate the corresponding polynomials $g^1,\ldots,g^e$, and find a shift $\bfa$ that is not a root of any of them. For this shift $\bfa$, we achieve (\ref{K_j_onevar}) simultaneously for all $j=1,\ldots, e$, and this is the key step to prove the proposition. We now turn to carrying out this strategy rigorously.

\subsection{Step 1: shifting one variable}

The main goal of this section is the following lemma:
 \begin{lem}\label{lemma_app_shifting}
 Let $L/\Q$ be a finite extension. Let $R\in\calO_{L}[Y,X_{1},...,X_{n}]$   be a polynomial of total degree $D$, irreducible over $L(X_1,\ldots,X_n)$.  
Assume that  $\mathcal{L}_R:=L(X_1,\ldots,X_n)[Y]/R$ is  an $\ell$-genuine extension  for some $\ell \geq 1$. 
   Then for every $i\in I$, for all but $O_{n,D}(1)$ choices of $\alpha_{i}\in L$, the following is true: if one considers the polynomial $R_{\alpha_{i},i}=R(Y,X_{1},...,X_{i}+\alpha_{i}X_{1},...,X_{n})$,  and the extension $\mathcal{L}_{R_{\alpha_{i},i}}:=L(X_1,\ldots,X_n)[Y]/R_{\alpha_{i},i}$ then the total degree of $R_{\alpha_{i},i}$ is $D$, and
 \[
 (\mathcal{L}_{R_{\alpha_{i},i}}\cap\overline{L(X_{2},...,X_{n})})(X_{1}) \subsetneq \mathcal{L}_{R_{\alpha_{i},i}}.
\]
\end{lem}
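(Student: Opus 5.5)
The plan is to reduce the field-theoretic conclusion to the non-vanishing of a Noether form, and to treat the shift parameter as an extra variable. I assume, as in the intended application $R=M_F$, that $R$ is monic in $Y$, and that $i\in\{2,\ldots,n\}$ is an index on which $\mathcal{L}_R$ depends genuinely. If $\mathcal{L}_R$ does not involve $X_i$ at all, the shift $X_i\mapsto X_i+\alpha_iX_1$ changes nothing. In that case I would either show the conclusion already holds at $\alpha_i=0$, or restrict attention to the indices supplied by $\ell$-genuineness, as in the overview preceding the lemma.

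First, replace the shift by an indeterminate $T$ and set
\[\widetilde R(Y,X_1,\ldots,X_n,T)=R(Y,X_1,\ldots,X_i+TX_1,\ldots,X_n).\]
Expand $\widetilde R$ in $Y,X_1$ as in (\ref{app_H_expansion}), with coefficients $a_{\ell,m}(\bX',T)$, where $\bX'=(X_2,\ldots,X_n)$, and form $\mathcal{B}(\bX',T):=B_{\lin}(a_{\ell,m}(\bX',T))$. This is a polynomial whose degree in $T$ is $\ll_{n,D}1$, by Lemma \ref{lemma_Noether}. Two facts then give the lemma.

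\emph{Total degree.} The top-degree part of $R_{\alpha_i,i}$ equals that of $R$ composed with the linear map, so it is a nonzero polynomial identity in $\alpha_i$. It fails for only $O_D(1)$ values of $\alpha_i$.

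\emph{Reduction to a nonzero polynomial.} If $\mathcal{B}\not\equiv 0$, write it as a polynomial in $\bX'$ with coefficients in $\Z[T]$. One coefficient is a nonzero polynomial in $T$ of degree $O_{n,D}(1)$. For every $\alpha_i$ not among its roots, $\mathcal{B}(\bX',\alpha_i)\not\equiv0$, which is exactly (IV) for $i_0=1$ for $R_{\alpha_i,i}$. By Theorem \ref{thm_gen_ppties} ((IV) for $i_0=1\Rightarrow$ (II) for $i_0=1$), this gives
\[(\mathcal{L}_{R_{\alpha_i,i}}\cap\overline{L(X_2,\ldots,X_n)})(X_1)\subsetneq \mathcal{L}_{R_{\alpha_i,i}}.\]
So everything reduces to showing $\mathcal{B}\not\equiv 0$. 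Equivalently, I must find a single $(\bx',t)$ for which $R(Y,X_1,x_2,\ldots,x_i+tX_1,\ldots,x_n)$ has full degree and no linear factor in $Y$ over $\overline{\Q}$.

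To find such a point, I argue by contradiction. Suppose that for all $(\bx',t)$ in a dense set, the restriction of $R$ to the line $X_j=x_j+tX_1\delta_{ij}$ splits completely in $Y$ over $\overline{\Q}$ (Lemma \ref{lemma_linear_factor_splits_completely}). Then a root $W$ of $R$, viewed as an algebraic function, restricts to a polynomial in $X_1$ along almost all of these lines. This family of lines includes lines in the $X_1$-direction ($t=0$) and lines with nonzero $X_i$-component.

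Next I would argue that an algebraic function that is polynomial of bounded degree along a dense family of lines in two independent directions is polynomial in $X_1$ and $X_i$ over $\overline{L(\bX_{I\setminus\{1,i\}})}$. Concretely, I would compare Puiseux expansions at infinity, or use the irreducibility of $R$ over $L(\bX)$ together with Lemma \ref{lemma_splits_field_identity} on the two-variable specializations. This forces
\[\mathcal{L}_R=(\mathcal{L}_R\cap\overline{L(\bX_{I\setminus\{1,i\}})})(X_1,X_i).\]
By the argument of Lemma \ref{lemma_H_not_n_gen_linear_factor} run in reverse, $\mathcal{L}_R$ would then be generated by a polynomial not involving $X_i$ (nor $X_1$). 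That contradicts the choice of $i$ as an index on which $\mathcal{L}_R$ depends genuinely, given that it is $\ell$-genuine.

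I expect this last step to be the main obstacle: passing from ``splits completely on generic slanted lines'' to ``the field is generated without $X_i$''. My first attempt would be to run Lemma \ref{lemma_H_not_n_gen_linear_factor}, Lemma \ref{lemma_splits_field_identity} and the openness argument of Lemma \ref{lemma_open_V_gen} on $\widetilde R$, treating $T$ as one of the variables $\bX_{I'}$ with $i_0=1$. Then (II)$\Leftrightarrow$(IV) of Theorem \ref{thm_gen_ppties} for $\widetilde R$ itself gives $\mathcal{B}\not\equiv0$ directly, provided I can verify
\[(\mathcal{L}_{\widetilde R}\cap\overline{L(\bX',T)})(X_1)\subsetneq\mathcal{L}_{\widetilde R}.\]
This in turn holds because the substitution $X_i\mapsto X_i+TX_1$ is invertible over $L(T)$. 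It carries a generator depending on $X_i$ to one depending nontrivially on $X_1$, and that dependence cannot be absorbed over $\overline{L(\bX',T)}$.
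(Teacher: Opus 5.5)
\textbf{Verdict: there is a genuine gap at the crux.} Your reductions are sound and follow the same skeleton as the paper. The total-degree claim is fine. So is the passage from $\mathcal{B}(\bX',T)\not\equiv 0$ to the conclusion for all but $O_{n,D}(1)$ values of $\alpha_i$: you take an $\bX'$-coefficient that is a nonzero polynomial in $T$, then apply (III)/(IV) $\Rightarrow$ (II) for $i_0=1$ in Theorem \ref{thm_gen_ppties}.

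The gap is that the one step carrying all the content, $\mathcal{B}\not\equiv 0$, is never established.
\begin{itemize}
\item The principle about algebraic functions that are polynomial along a dense family of lines in two directions is only gestured at (Puiseux expansions, or irreducibility plus Lemma \ref{lemma_splits_field_identity}).
\item Your fallback simply asserts $(\mathcal{L}_{\widetilde R}\cap\overline{L(\bX',T)})(X_1)\subsetneq\mathcal{L}_{\widetilde R}$ on the grounds that the $X_1$-dependence ``cannot be absorbed''. That is a restatement, with $T$ generic, of what the lemma asks you to prove.
\end{itemize}
There are also two smaller omissions. The lemma does not assume $R$ is monic in $Y$, but Theorem \ref{thm_gen_ppties} and Lemma \ref{lemma_actual_linear_factor} need monicity. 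You also do not treat $i=1$; that case is just the rescaling $X_1\mapsto(1+\alpha_1)X_1$, harmless for $\alpha_1\neq -1$.

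\textbf{The missing idea: genuineness gives you explicit witness points.} No argument about lines is needed. Write $\mathcal{L}_R=(L(\bX_I)[Y]/H)(\bX_{I^c})$ with $H$ monic and $|I|$-genuine. Work with $\widetilde H=H(Y,X_1,X_i+TX_1,\ldots)$ instead of $\widetilde R$; this also removes the monicity issue. By Theorem \ref{thm_gen_ppties} (I) $\Rightarrow$ (III), for each $i_0\in I$ there is a point $\bx_{I\setminus\{i_0\}}$ at which $H(Y,X_{i_0},\bx_{I\setminus\{i_0\}})$ has no linear factor in $Y$ over $\overline{\Q}$.
\begin{itemize}
\item If $1\notin I$: take this point for $i_0=i$, and set $x_i=0$ and $t\neq 0$. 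The specialization of $\widetilde H$ is $H(Y,tX_1,\bx_{I\setminus\{i\}})$. This is a rescaling of a polynomial with no linear factor in $Y$, so it has none either.
\item If $1\in I$: take the point for $i_0=1$ and set $t=0$. The specialization is $H(Y,X_1,\bx_{I'})$ itself.
\end{itemize}

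\textbf{How this closes your fallback route.} Suppose (II) for $i_0=1$ failed for $\widetilde H$, viewed in the $n+1$ variables $X_1,\bX_{I'},T$. Then Lemma \ref{lemma_H_not_n_gen_linear_factor}, applied with $J$ the set of all variables other than $X_1$, gives a linear factor in $Y$ at every specialization $(\bx_{I'},t)$. The witness point contradicts this, so (II) holds for $\widetilde H$. Lemma \ref{lemma_II_implies_III_gen}, applied to $\widetilde H$, then produces a point with no linear factor and full $(Y,X_1)$-degree. That is exactly $\mathcal{B}\not\equiv 0$.

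The paper (Lemma \ref{lemma_P_shiftedH_not_zero}) uses the same witness points but feeds them directly into $B_{\lin}$. If you do that instead, be careful: at $t=0$ the $(Y,X_1)$-degree of $\widetilde H$ can be smaller than for generic $t$ (e.g.\ $H=Y^2-X_1-X_2^5$). The degree clause of $B_{\lin}$ then has to be dealt with separately, which the contrapositive route above avoids.
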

  Consider such a polynomial   $R(Y,X_1,\ldots,X_n)$  that is $\ell$-genuine. For some $I\subset\{1,...,n\}$ with  $|I|\geq \ell$   there exists a polynomial $H(Y,\bX_I) \in \Ocal_L[Y,\bX_I]$ such that $H(Y,\bX_I)$   is $|I|$-genuine (and in particular monic in $Y$) and
    \[
    \mathcal{L}_R=(L(\bX_I)[Y]/H)(\bX_{I^c}).
    \] 
 If we shift a variable $X_i$ with $i \in I$, $i \neq 1$,   by a multiple of $X_1$, it will be useful to show that property (IV) for $i_0=1$ of Theorem \ref{thm_gen_ppties} still holds for the shifted polynomial; this is the subject of the next  lemma, for which we establish the following notation.

 Fix   an index $i \in I$ with $i \neq 1$, and consider for any $\al_i \in L$ the polynomial $ H(Y,X_{i}+\alpha_{i} X_{1},\bX_{I\setminus\{i\}})$, in which this notation implies $X_{i}+\al_{i}X_1$ appears in the place of $X_{i}$. Note that this is a polynomial in $X_1$ and $\bX_I$.
   We  expand this polynomial in terms of $Y$ and $X_1$ as
    \beq\label{app_H_expansion_X1}
    H(Y,\alpha_{i}X_{1}+X_{i},\bX_{I\setminus\{i\}})=\sum_{\substack{\ell,m\\\ell+m\leq D}}b_{\ell,m}(\bX_{I'},\alpha_{i}) Y^{\ell}X_{1}^{m}.
    \eeq
    Here and throughout, we use the convention that $I'=I \setminus \{1\}$ if $1 \in I$ and  $I'=I$ if $1 \not\in I$.
   Consider also the detection polynomial $B_{\lin}(b_{\ell,m}(\bX_{I'},\alpha_{i}))$ provided by  Lemma \ref{lemma_actual_linear_factor}, that vanishes if and only if: as a polynomial in $Y,X_1$, the expression (\ref{app_H_expansion_X1}) has a linear factor in $Y$ over $\overline{\Q}$ (or has  degree  $< \deg_{Y,X_1}H(Y,\bX_{I})$).  Note  $B_{\lin}(b_{\ell,m}(\bX_{I'},\alpha_{i}))$ is a polynomial in   $\bX_{I'}$ and $\alpha_{i}$ with coefficients in $\Z$, and degree $\ll_{n,D} 1$.  
   
   \begin{lem}\label{lemma_P_shiftedH_not_zero}
  In the above setting, suppose $H(Y,\bX_I)$ is $|I|$-genuine. Suppose $|I| \geq 2$ or $1 \not\in I$, and  define $I'=I \setminus \{1\}$ if $1 \in I$ or $I'=I$ if $1\not\in I$. Fix any $i \in I'$ and expand the polynomial $H(Y,\alpha_{i}X_{1}+X_{i},\bX_{I\setminus\{i\}})$  as in (\ref{app_H_expansion_X1}).
   Aside from $O_{n,D}(1)$ possible values of $\al_{i} \in L$, the polynomial $B_{\lin}(b_{\ell,m}(\bX_{I'},\alpha_{i}))$ is not identically zero as a polynomial in $\bX_{I'}$.
   \end{lem}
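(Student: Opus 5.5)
The plan is to turn the problem into showing that a single polynomial in the enlarged variable set $(\bX_{I'},\alpha_i)$ is not identically zero, and then extract the conclusion for all but $O_{n,D}(1)$ values of $\alpha_i$ by a degree count. Treat $\alpha_i$ as an indeterminate $T$ and set
\[ P(\bX_{I'},T):=B_{\lin}(b_{\ell,m}(\bX_{I'},T)).\]
By Lemma \ref{lemma_Noether} this lies in $\Z[\bX_{I'},T]$ and has degree $\ll_{n,D}1$. Suppose $P\not\equiv 0$. Expand $P$ in monomials of $\bX_{I'}$ with coefficients $c_\beta(T)\in\Z[T]$. Some $c_\beta$ is a nonzero polynomial of degree $\ll_{n,D}1$, so it has $O_{n,D}(1)$ roots. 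For every other $\alpha_i\in L$, the specialization $P(\bX_{I'},\alpha_i)$ is not identically zero. The lemma is therefore reduced to exhibiting one point $(\bx_{I'},\alpha)$ with $P(\bx_{I'},\alpha)\neq 0$. By (\ref{define_utility_of_P}) and Lemma \ref{lemma_actual_linear_factor} (note $H$ is monic in $Y$), such a point is one where
\[ H(Y,\alpha X_1+x_i,\bx_{I\setminus\{i\}})\]
has no linear factor in $Y$ over $\overline{\Q}$ and has full total degree in $(Y,X_1)$. Here, if $1\in I$, the variable $X_1$ stays free in the $X_1$ slot.

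Case $1\notin I$. Apply Theorem \ref{thm_gen_ppties} (I)$\Rightarrow$(III) to the $|I|$-genuine polynomial $H$ with index $i_0=i$. This gives $\bx_{I\setminus\{i\}}$ such that $h(Y,X_i):=H(Y,X_i,\bx_{I\setminus\{i\}})$ has no linear factor in $Y$ over $\overline{\Q}$ and $\deg h=\deg_{Y,X_i}H$. Now take $x_i=0$ and $\alpha\neq 0$. The specialized shifted polynomial is $h(Y,\alpha X_1)$, which arises from $h$ by an invertible linear substitution. Hence it has no linear factor in $Y$, and its total degree is $\deg h$. This equals $\deg_{Y,X_1}$ of the shifted family for generic $T$, because the top homogeneous part in $(Y,X_i)$ of $H$ becomes $T$-multiples of the top part in $(Y,X_1)$. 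So $P(\bx_{I'},\alpha)\neq 0$ and we are done in this case.

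Case $1\in I$, $|I|\ge 2$. This is where I expect the main difficulty, since $X_1$ already occurs in $H$, and the shift mixes $X_1$ with $X_i$ instead of introducing $X_1$ from scratch. The first step is to choose $\bx_{I\setminus\{1,i\}}$ generically so that $h(Y,X_1,X_i):=H(Y,X_1,X_i,\bx_{I\setminus\{1,i\}})$ is still $2$-genuine in $(X_1,X_i)$ with the correct degrees. For this I would use Hilbert irreducibility (Lemma \ref{lemma_HIT}) together with the open-set argument of Lemma \ref{lemma_open_V_gen}, applied to condition (II) for both indices $1$ and $i$, exactly as in the proof of Lemma \ref{lemma_II_implies_III_gen}. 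It then remains to find $x_i$ and $\alpha$ such that $h(Y,X_1,x_i+\alpha X_1)$ has no linear factor in $Y$.

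To do this, apply the invertible change of variables $(X_1,X_i)\mapsto(X_1,X_i+\alpha X_1)$ and then specialize the new second coordinate. The claim needed is that for all but finitely many $\alpha$, the polynomial $h_\alpha(Y,X_1,X_i):=h(Y,X_1,X_i+\alpha X_1)$ still satisfies condition (II) for the index $i_0=1$. To prove it, I would argue by contradiction. If it failed for infinitely many $\alpha$, then for those $\alpha$ the field $\mathcal{L}_h$ would be generated over $L(X_1,X_i)$ by elements algebraic over $L(X_i+\alpha X_1)$. Using finiteness of the set of intermediate fields of $\mathcal{L}_h/L(X_1,X_i)$, two distinct values $\alpha\neq\alpha'$ would give the same intermediate field. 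Since $X_i+\alpha X_1$ and $X_i+\alpha' X_1$ together generate $L(X_1,X_i)$, that field would then be algebraic over $L$. This contradicts $2$-genuineness, via Lemma \ref{lemma_at_least_strongly_1_gen} and condition (II). Once the claim holds for some $\alpha$, Theorem \ref{thm_gen_ppties} (II)$\Rightarrow$(III) supplies the point $x_i$, and hence a point with $P\neq0$. The degree bookkeeping, matching $\deg_{Y,X_1}$ after specialization with the generic shifted degree, is handled as in the first case by excluding finitely many further $\alpha$.
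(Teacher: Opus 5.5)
Your opening reduction is sound: show $B_{\lin}(b_{\ell,m}(\bX_{I'},\al))$ is nonzero at one point, then discard the $O_{n,D}(1)$ roots of a nonzero coefficient in $\al$. Your case $1\notin I$ is exactly the paper's argument. The problem is the case $1\in I$, where you missed the one-line route the paper takes.

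At $\al_i=0$ the shift is the identity, so the specialized polynomial is just $H(Y,X_1,\bx_{I'})$. Choose $\bx_{I'}$ by Theorem \ref{thm_gen_ppties} (I)$\Rightarrow$(III) for $i_0=1$, which is available because $H$ itself is $|I|$-genuine. This gives $B_{\lin}(b_{\ell,m}(\bx_{I'},0))\neq 0$. Hence $\al_i\mapsto B_{\lin}(b_{\ell,m}(\bx_{I'},\al_i))$ is a nonzero polynomial of degree $O_{n,D}(1)$, and every $\al_i$ outside its roots works. No reduction to two variables and no analysis of which shifts preserve condition (II) is needed.

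The paper measures the degree clause here against $\deg_{Y,X_1}H(Y,\bX_I)$. If you normalize against the generic shifted degree, as in your first case, the point $(\bx_{I'},0)$ can have a degree drop, e.g. $H=Y^2-X_1-X_i^3$. You would then add one observation. Since $H$ is monic in $Y$, any linear factor $Y-Q(X_1)$ has $\deg Q\le D$, and the values of $Q$ are integral over the parameter ring. So the set of $(\bx_{I'},\al)$ where the specialization has such a factor over $\overline{\Q}$ is Zariski closed, even allowing degree drop. Its complement is a nonempty open set containing $(\bx_{I'},0)$, and it meets the open set where the degree is full.

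Your substitute argument also has a genuine gap at its key step. Undo the substitution and set $T_\al:=X_i-\al X_1$ and $N_\al:=\overline{L(T_\al)}\cap\Lcal_h$. Failure of (II) at index $1$ for $h_\al$ then says $\Lcal_h=N_\al(X_1)$. So the field attached to every bad $\al$ is $\Lcal_h$ itself, and pigeonholing over intermediate fields of $\Lcal_h/L(X_1,X_i)$ gives no information. Nor does the fact that $T_\al,T_{\al'}$ generate $L(X_1,X_i)$ make anything algebraic over $L$. The fields $N_\al$ and $N_{\al'}$ differ, and no single generating field is known to be algebraic over both $L(T_\al)$ and $L(T_{\al'})$.

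The missing argument runs as follows:
\begin{enumerate}
\item We have $\Lcal_h=N_\al(T_{\al'})$, with $T_{\al'}$ transcendental over $N_\al$.
\item A primitive element $\theta$ of $N_{\al'}/L(T_{\al'})$ is algebraic over $\overline{L}(T_{\al'})$.
\item Since $\overline{L}(T_{\al'})$ is algebraically closed in $\overline{N_\al}(T_{\al'})$, we get $\theta\in\overline{L}(T_{\al'})\cap N_\al(T_{\al'})=(\overline{L}\cap N_\al)(T_{\al'})$.
\item Hence $\Lcal_h=N_{\al'}(X_1)=(\overline{L}\cap\Lcal_h)(X_1,X_i)$, contradicting genuineness.
\end{enumerate}
With this inserted, your claim holds, in fact with at most one bad $\al$. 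Your step (a) is also best done differently. Specialize the nonzero form from (IV) for $i_0=1$ together with Lemma \ref{lemma_HIT}, rather than using Lemma \ref{lemma_open_V_gen}, which only specializes all variables but one. All of this is avoidable via $\al_i=0$.
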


       \begin{proof} 
First, since $H(Y,\bX_I)$ is $|I|$-genuine, by Theorem \ref{thm_gen_ppties} (I) $\Rightarrow$ (III) we see that for every $i_0 \in I$, there exists $\bx_{I\setminus \{i_0\}} \in L^{|I|-1}$ such that
   \begin{multline}\label{app_H_no_linear_factor}
    \text{$H(Y,X_{i_0},\bx_{I\setminus\{i_0\}})$ does not have a linear factor in $Y$ over $\overline{\Q}$, }\\\text{and   $\deg_{Y,X_{i_0}}H(Y,X_{i_0},\bx_{I\setminus\{i_0\}})$ $=\deg_{Y,X_{i_0}} H(Y,\bX_I)$}.
    \end{multline}
  (If $I=\{i_0\}$, this states that $H(Y,X_{i_0})$ does not have a linear factor in $Y$ over $\overline{\Q}$.)   
      
         We  distinguish two cases, depending whether $1 \in I$ or $1 \not\in I$. 
  In the case
when $1\in I$ (and so $|I| \geq 2$), fix any $i \in I'=I\setminus \{1\}$. Also fix a choice $\bx_{I'}$   as provided by (\ref{app_H_no_linear_factor}) applied with $i_0=1$, and for this choice consider  $B_{\lin}(b_{\ell,m}(\bx_{I'},\alpha_{i}))$ as a polynomial in $\alpha_{i}$. For our choice of $\bx_{I'}$,  we claim when $\al_i=0$, 
$B_{\lin}(b_{\ell,m}(\bx_{I'},0)) \neq 0$. Indeed upon setting $\al_i=0$ in (\ref{app_H_expansion_X1}), by definition,
\[\left. H(Y,X_{i},\bX_{I\setminus\{i\}})\right|_{\bX_{I'}=\bx_{I'}}
= H(Y,X_1,\bx_{I'}).
\]
By our choice of $\bx_{I'}$, the right-hand side satisfies (\ref{app_H_no_linear_factor}), so the left-hand side does too and the claim is proved. Thus as a polynomial in $\al_{i}$,   $B_{\lin}(b_{\ell,m}(\bx_{I'},\alpha_{i})) $ is not identically zero, and consequently $B_{\lin}(b_{\ell,m}(\bx_{I'},\alpha_{i}))\neq 0$ for all  but possibly $O_{n,D}(1)$ exceptional values of $\alpha_{i}$. Now consider any value $\al_{i}$ such that $B_{\lin}(b_{\ell,m}(\bX_{I'},\alpha_{i}))\equiv 0$ as a polynomial in $\bX_{I'}$; then certainly $B_{\lin}(b_{\ell,m}(\bx_{I'},\alpha_{i}))=0$. Thus aside from all  but possibly $O_{n,D}(1)$ exceptional values of $\alpha_{i}$, $B_{\lin}(b_{\ell,m}(\bX_{I'},\alpha_{i}))\not\equiv 0$, as desired.

       In the second case, when $1\not\in I$, let $I'=I$ and fix any  $i \in I'$.
        Suppose $\al_{i} \neq 0$. 
        Then taking a choice of $\bx_{ I\setminus\{i\}}$ with the property (\ref{app_H_no_linear_factor}) (applied with the choice $i_0=i$), $H(Y,X_{i},\bx_{I\setminus\{i\}})$ does not have a linear factor in $Y$ over $\overline{\Q}$ (and has degree $=\deg_{Y,X_i}H$);  therefore, the same properties hold for $H(Y,\al_{i} X_i,\bx_{I\setminus\{i\}})$ since $\al_i\neq 0$. Since $1 \not\in I$, this is equivalent to the statement, with $\al_{i} X_1$ now in the place of $\al_iX_{i}$,  that  $H(Y,\al_{i} X_{1},\bx_{I\setminus\{i\}})$ does not have a linear factor in $Y$ over $\overline{\Q}$ (and has total degree (as a function of $Y,X_1$) equal to $\deg_{Y,X_i} H$). On the other hand, in the notation of (\ref{app_H_expansion_X1}) (recalling $I' = I$ in the present case),  
        \[
            H(Y,\al_{i}X_{1},\bX_{I\setminus\{i\}})=H(Y,\al_{i}X_{1}+0,\bX_{I\setminus\{i\}})=\sum_{\substack{\ell,m\\\ell+m\leq D}}\left. b_{\ell,m}(\bX_{I'},\alpha_{i})\right|_{X_{i}=0} Y^{\ell}X_{1}^{m}.
            \] 
          From these two facts we may conclude that for any nonzero $\al_{i}$, $B_{\lin}(b_{\ell,m}(\bX_{I'},\alpha_{i}))$ evaluates to a nonzero value when specialized to  $X_{i}=0$ and $\bX_{I \setminus \{i\}} = \bx_{I \setminus \{i\}}.$ Consequently, for any nonzero $\al_{i}$ we conclude  $B_{\lin}(b_{\ell,m}(\bX_{I'},\alpha_{i})) \not\equiv 0$ as a polynomial in $\bX_{I'}$. 
   
        \end{proof}

\begin{proof}[Proof of Lemma \ref{lemma_app_shifting}]

    Let the polynomial $R$  be given as in the lemma, with an $|I|$-genuine polynomial $H$ such that
    \beq\label{R_H_first_lemma}
    \mathcal{L}_R=(L(\bX_I)[Y]/H(Y,\bX_I))(\bX_{I^c}).
    \eeq
    We first claim that for every $i\in I$, for all but $O_{n,D}(1)$ values of $\alpha_{i}$, the polynomial $R_{\alpha_{i},i}=R(Y,X_{1},...,X_{i}+\alpha_{i}X_{1},...,X_{n})$ has total degree $D$. We can write the polynomial $R_{\alpha_{i},i}$ as
    \begin{equation}
    R_{\alpha_{i},i}=\sum_{\bfj,\ell}p_{\bfj,\ell}(\alpha_{i})\bfX^{\bfj}Y^{\ell},
    \label{eq : nolossdegree}
    \end{equation}
    where for every $\bfj=(j_{1},...,j_{n})$ and for every $\ell$,  $p_{\bfj,\ell}(A_{i})$ is a polynomial of degree $\leq D$, whose coefficients depend on the coefficients of $R$. Note that $p_{\bfj,\ell}(A_i)\equiv 0$ if $\ell +j_{1}+\cdots +j_{n} >D$ so the  decomposition (\ref{eq : nolossdegree}) has $O_{n,D}(1)$ summands. Each (nonzero) polynomial $p_{\bfj,\ell}(A_i)$ vanishes for at most $D$ values of $\alpha_{i}$. Thus, for all but $O_{n,D}(1)$ values of $\alpha_{i}$, $p_{\bfj,\ell}(\alpha_{i})\neq 0$, for all $p_{\bfj,\ell}$ appearing in $(\ref{eq : nolossdegree})$, in which case the total degree of $ R_{\alpha_{i},i}$ is $D$, and the claim is proved.
    
    We now prove that for all but $O_{n,D}(1)$ choices of $\alpha_{i}\in L$,
     \[(\mathcal{L}_{R_{\alpha_{i},i}}\cap\overline{L(X_{2},...,X_{n})})(X_{1}) \subsetneq \mathcal{L}_{R_{\alpha_{i},i}}.
\]
 First, we deal separately with the case in which  $1 \in I$ and the distinguished index considered in the lemma is $i=1$. If $I=\{1\}$ then we already have that $(\mathcal{L}_{H}\cap\overline{L(X_{2},...,X_{n})})(X_{1})\subsetneq  \mathcal{L}_{H}$.
 This then implies that 
 $(\mathcal{L}_{R}\cap\overline{L(X_{2},...,X_{n})})(X_{1})\subsetneq  \mathcal{L}_{R}$
 via (\ref{R_H_first_lemma}).
 Moreover  the desired conclusion of the lemma is true for all $\al_{1}$ such that  $\alpha_{1}+1\neq 0$ (we can see this by a change of variable $X_{1}'=(1+\alpha_{1})X_{1}$). An analogous argument also works if $|I| \geq 2$ and $i=1 \in I$.
      Henceforward we may suppose that either $1\not\in I $ or $|I|\geq 2$, and in the latter case we only need to prove the lemma in the case when the distinguished index $i \neq 1$.

     Let $I'=I\setminus \{1\}$ if $1 \in I$ and $I'=I$ if $ 1\not\in I$.
  Now we fix $i \in I'$ and apply Lemma \ref{lemma_P_shiftedH_not_zero}:
thus aside from $O_{n,D}(1)$ possible values of $\al_{i} \in L$, the polynomial $B_{\lin}(b_{\ell,m}(\bX_{I'},\alpha_{i}))\not\equiv 0$ as a polynomial in $\bX_{I'}$.
   For each  $\al_{i}$ with the property $B_{\lin}(b_{\ell,m}(\bX_{I'},\alpha_{i}))\not\equiv 0$, there is an open set $U \in L^{|I'|}$ such that for all  $\bfx_{I'} \in U$, $B_{\lin}(b_{\ell,m}(\bx_{I'},\alpha_{i}))\neq 0$. Note that $B_{\lin}(b_{\ell,m}(\bx_{I'},\alpha_{i}))\neq 0$  occurs if and only if $H(Y,\alpha_{i}X_{1}+x_{i},\bfx_{ I'\setminus\{i\}})$ has no linear factor in $Y$ over $\overline{\Q} = \overline{L}$ and has total degree $=\deg_{Y,X_1} H(Y,X_1,X_2,\ldots,X_n)$.
   
   On the other hand, note that since by hypothesis $H(Y,\bX_I)$ is irreducible over $L(X_1,\ldots,X_n)$ then so is $H(Y,\al_iX_1+X_i,\bX_{I\setminus \{i\}})$, since it is obtained by a linear transformation. 
   Thus
   by the Hilbert Irreducibility Theorem (Lemma \ref{lemma_HIT}), there is a dense set $V \subset L^{|I'|}$ such that for all $\bx_{I'} \in V$, $H(Y,\alpha_{i}X_{1}+x_{i},\bfx_{ I'\setminus\{i\}})$ is irreducible over $L$. Since $U$ is open and $V$ is dense, we may choose $\bx_{I'} \in U \cap V$. 
   For this choice, by Theorem \ref{thm_gen_ppties} (III for $i_0=1$) $\Rightarrow$ (II for $i_0=1$) applied over the field $L$, 
   we learn that
    \begin{multline*}
    (L(\bfX)[Y]/H(Y,\alpha_{i}X_{1}+X_{i},\bX_{I\setminus\{i\}})\cap\overline{L(X_{2},...,X_{n})})(X_{1})\\
    \subsetneq L(\bfX)[Y]/H(Y,\alpha_{i}X_{1}+X_{i},\bX_{ I\setminus\{i\}}).
    \end{multline*}
    The final step is to note that 
    \beq\label{R_H_change_together}
    \mathcal{L}_{R_{\alpha_{i},i}} := L(\bX)[Y]/R_{\alpha_i,i} = L(\bX)[Y]/H(Y,\alpha_iX_1+X_i,\bX_{I\setminus\{i\}}),
    \eeq
    so that the previous identity is the conclusion of   Lemma \ref{lemma_app_shifting}, as desired.

   \end{proof}

\subsection{Step 2: shifting multiple variables}
  
For our second step we  iterate the shifting process $X_i \mapsto a_iX_1+X_i$ for each index $i\neq 1$, $i \in I$. The main goal of this section is the following lemma:
 \begin{lem}\label{lemma_shift_all_vars_R}
 Let $L/K/\Q$ be finite extensions.
Let $R\in\calO_{L}[Y,X_{1},...,X_{n}]$   be a polynomial of total degree $D$ such that $\mathcal{L}_R = L(X_1,\ldots,X_n)[Y]/R$ is an $\ell$-genuine extension of $L(X_1,\ldots,X_n)$ for some $\ell \geq 1$.    For any $\bfa\in\calO_K^{n-1}$ define
the polynomial 
 \[
    R_{\bfa}(Y,X_{1},...,X_{n}):=R(Y,X_{1},X_{2}+a_{2}X_{1},...,X_{n}+a_{n}X_{1})
    \]
      and correspondingly set 
    \[ \mathcal{L}_{R_\mathbf{a}} := L(X_1,\ldots,X_n)[Y]/R_{\mathbf{a}}.\]
  There exists a nonzero polynomial $g^R\in\calO_K[A_{2},...,A_{n}]$, such that for every $\bfa\in\calO_K^{n-1}$ with $g^R(\bfa)\neq 0$, 
 \[
 (\mathcal{L}_{R_{\bfa}}\cap\overline{L(X_{2},...,X_{n})})(X_{1}) \subsetneq \mathcal{L}_{R_{\bfa}}.
\]
Moreover $\deg g^R  \ll_{n,D, [L:K]} 1$.

\end{lem}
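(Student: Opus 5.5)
Since $\mathcal{L}_R$ is $\ell$-genuine for some $\ell\geq 1$, I first fix an index set $I\subset\{1,\ldots,n\}$ with $|I|\geq \ell$ and an $|I|$-genuine polynomial $H(Y,\bX_I)\in\Ocal_L[Y,\bX_I]$ such that $\mathcal{L}_R=(L(\bX_I)[Y]/H)(\bX_{I^c})$. Under the shift, $\mathcal{L}_{R_\ba}=L(\bX)[Y]/H_\ba$, where $H_\ba$ is $H$ with $X_i$ replaced by $X_i+a_iX_1$ for $i\in I\setminus\{1\}$. So only the coordinates $a_i$ with $i\in I$ matter. The argument splits into two cases.

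\emph{Case $1\in I$.} Here $H$ already satisfies (II) for $i_0=1$ of Theorem \ref{thm_gen_ppties}. Hence $B_{\lin}$ attached to $H(Y,X_1,\bX_{I'})$ is not identically zero, and I take $\bx_{I'}$ with $B_\lin\neq 0$. Next I consider the polynomial in $(\bX_{I'},\bA)$
\[ \Phi(\bX_{I'},\bA):=B_\lin\big(b_{\ell,m}(\bX_{I'},\bA)\big), \]
obtained by expanding $H(Y,X_1,(X_i+A_iX_1)_{i\in I'})$ in $Y,X_1$. At $\bA=0$ this reduces to the unshifted form, which is nonzero at $\bx_{I'}$. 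Therefore $\Phi$ is a nonzero polynomial in $(\bX_{I'},\bA)$. Writing $\Phi=\sum_\beta c_\beta(\bA)\bX_{I'}^\beta$, I take $g^R$ to be a nonzero coefficient $c_\beta(\bA)$ (with an extra factor described below). Then $g^R(\ba)\neq 0$ forces $\Phi(\cdot,\ba)\not\equiv 0$.

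\emph{Case $1\notin I$.} Fix any $i\in I$. The specialization $A_i=1$, $X_i=0$ turns $X_i+A_iX_1$ into $X_1$. With the remaining $A_j$ set to $0$ and $\bX_{I\setminus\{i\}}$ chosen by (III) for $i_0=i$, $\Phi$ becomes the test for $H(Y,X_1,\bx_{I\setminus\{i\}})$, which is nonzero. So again $\Phi\not\equiv 0$, and I choose $g^R$ as before. This is precisely the multivariable version of Lemma \ref{lemma_P_shiftedH_not_zero}, with several $A_i$ in place of one $\alpha_i$.

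Given $\ba$ with $g^R(\ba)\neq 0$, I then repeat the end of the proof of Lemma \ref{lemma_app_shifting}. Since $\Phi(\cdot,\ba)\not\equiv 0$, there is an open set $U$ of $\bx$ on which $H_\ba(Y,X_1,\bx)$ has no linear factor in $Y$ over $\overline{\Q}$ and has the full degree. By HIT (Lemma \ref{lemma_HIT}) applied to the irreducible $H_\ba$, there is a dense set $V$ on which this specialization is irreducible over $L$. Choosing $\bx\in U\cap V$, Theorem \ref{thm_gen_ppties} ((III) for $i_0=1$ $\Rightarrow$ (II) for $i_0=1$) gives
\[(\mathcal{L}_{H_\ba}\cap\overline{L(X_2,\ldots,X_n)})(X_1)\subsetneq\mathcal{L}_{H_\ba}.\]
Here the variables of $\bX_{I^c}$ are adjoined freely, so I work over $L(\bX_{I^c\setminus\{1\}})$, or note that their presence does not change the argument. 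Since $\mathcal{L}_{R_\ba}=\mathcal{L}_{H_\ba}(\bX_{I^c})$, this strict inclusion transfers to $\mathcal{L}_{R_\ba}$, which is the claim.

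To make Theorem \ref{thm_gen_ppties} applicable, $H_\ba$ must keep its degree in $Y,X_1$. I therefore multiply $g^R$ by a nonzero coefficient polynomial $p_{\bfj,\ell}(\bA)$ controlling the top-degree part, as in the first paragraph of the proof of Lemma \ref{lemma_app_shifting}. Because $\Phi$ has integer coefficients combined with coefficients of $H$ in $\Ocal_L$, $g^R$ a priori lies in $\Ocal_L[\bA]$. To land in $\Ocal_K[\bA]$, I replace $g^R$ by its norm $\prod_\sigma \sigma(g^R)$ over embeddings of $L/K$; this product is nonzero and divisible by $g^R$. The resulting degree is $\ll_{n,D,[L:K]}1$, since $\deg B_\lin\ll_{n,D}1$.

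The main obstacle is this transfer step. One must check that the strict inclusion for $\mathcal{L}_{H_\ba}$ really does pass to $\mathcal{L}_{R_\ba}$ when $I^c$ is nonempty. The intended argument is that the algebraic closure of $L(X_2,\ldots,X_n)$ inside $\mathcal{L}_{H_\ba}(\bX_{I^c})$ is generated by its closure inside $\mathcal{L}_{H_\ba}$ together with the free variables. This is a linear disjointness fact, and it is the step I would verify most carefully.
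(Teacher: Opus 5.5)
Your skeleton matches the paper's proof. You pass to the $|I|$-genuine $H$ with $\mathcal{L}_R=\mathcal{L}_H(\bX_{I^c})$, show that the linear-factor test for $H(Y,X_1,(X_i+A_iX_1)_{i\in I'})$ is a nonzero polynomial in $(\bX_{I'},\mathbf{A})$, take a nonzero $\bX_{I'}$-coefficient, norm it down to $\Ocal_K[\mathbf{A}]$, and conclude with Theorem~\ref{thm_gen_ppties} at $i_0=1$.

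The real difference is the nonvanishing step. The paper (Lemma~\ref{lemma_P_nonzero_NRAI}) fixes all but one shift parameter, uses Lemma~\ref{lemma_app_shifting} to get (II) for $i_0=1$ at one shift, and then uses Lemma~\ref{lemma_II_implies_III_gen} to produce a good point. You instead read a good point directly off (III) for the unshifted $H$ by specializing $\mathbf{A}$. This is a multivariable Lemma~\ref{lemma_P_shiftedH_not_zero}, and it bypasses Lemmas~\ref{lemma_app_shifting} and~\ref{lemma_II_implies_III_gen}.

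Two of your choices are more careful than the paper's:
\begin{itemize}
\item Your norm over all $K$-embeddings of $L$ is right for arbitrary $L/K$. The paper's product over $\Aut(L/K)$ is only guaranteed to have coefficients in $\Ocal_K$ when $L/K$ is Galois, which does hold for $L=L_F$.
\item The transfer step you flagged is fine. Set $M:=\mathcal{L}_{H_{\ba}}\cap\overline{L(\bX_{I'})}$. It is algebraically closed in $\mathcal{L}_{H_{\ba}}$, so in characteristic $0$ the extension $\mathcal{L}_{H_{\ba}}/M$ is regular. Hence $M(\bX_J)$ is algebraically closed in $\mathcal{L}_{H_{\ba}}(\bX_J)$ for $J=I^c\setminus\{1\}$, which gives $\mathcal{L}_{R_{\ba}}\cap\overline{L(X_2,\ldots,X_n)}=M(\bX_J)$. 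Finally $[\mathcal{L}_{H_{\ba}}(\bX_J):M(X_1,\bX_J)]=[\mathcal{L}_{H_{\ba}}:M(X_1)]>1$. The paper only asserts this step.
\end{itemize}

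The step that fails as written is the specialization itself. The Noether form has a fixed degree parameter and also vanishes when the degree drops. For $\Phi$ to test the shifted polynomial at generic $\mathbf{A}$, that parameter must be the generic $(Y,X_1)$-degree of $H(Y,X_1,(X_i+A_iX_1)_{i\in I'})$, namely $\deg H$. At your test points the $(Y,X_1)$-degree is at most $\deg_{Y,X_1}H$ (first case) or $\deg_{Y,X_i}H$ (second case). This can be smaller, and then $\Phi$ vanishes there whether or not a linear factor exists.
\begin{itemize}
\item First case: $H=Y^2-X_1X_2^2-X_1^2X_2$ is $2$-genuine, but $H(Y,X_1,x_2)$ has $(Y,X_1)$-degree at most $2<3$. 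So $\Phi(x_2,0)=0$ for every $x_2$.
\item Second case: $H=Y^2-X_2X_3^2-X_2^2X_3$ fails the same way at $A_2=1$, $A_3=0$, $X_2=0$.
\end{itemize}
So the claim that at $\mathbf{A}=0$ the test reduces to the unshifted form is false in general. The paper's evaluation at $\alpha_i=0$ in Lemma~\ref{lemma_P_shiftedH_not_zero} glosses over the same point. Your extra top-degree factor in $g^R$ does not help, since it only acts after $\Phi\not\equiv 0$ is known.

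A simple repair is to apply the test to $H(Y+X_1^N,X_1,(X_i+A_iX_1)_{i\in I'})$ with $N>\deg H$:
\begin{itemize}
\item it is still monic in $Y$;
\item it has a linear factor in $Y$ exactly when the unsubstituted polynomial does;
\item under every specialization its $(Y,X_1)$-degree is exactly $N\deg_Y H$, since $X_1^{N\deg_Y H}$ appears with coefficient $1$ and every other term has smaller degree.
\end{itemize}
By Lemma~\ref{lemma_actual_linear_factor} its Noether form then detects linear factors only. Your specializations go through verbatim, and all degrees stay $\ll_{n,D}1$. For the conclusion you then need only one $\bx$ at which $H_{\ba}(Y,X_1,\bx)$ has no linear factor, because (III)$\Rightarrow$(II) runs through Lemma~\ref{lemma_H_not_n_gen_linear_factor}. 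The HIT step and the top-degree factor become unnecessary.
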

\begin{rem} Note that here we require the shifts $\ba$ to lie in the lower ring of integers $\Ocal_K^{n-1}$. This is because later we will apply this to $R=M_F$ the minimal polynomial for some $F \in \Ocal_K$, where $M_F \in \Ocal_{L_F}$. Then we will use the relation $(M_F)_\ba = M_{F_\ba}$ from Lemma \ref{lemma_shift_before_or_later}, and we want $F_\ba$ also to have coefficients in $\Ocal_K$.
\end{rem}
 
     To prepare for the proof, we suppose that $R$ is given, so that  $\mathcal{L}_R$ is an $\ell$-genuine extension of $L(X_1,\ldots,X_n)$ for some $\ell \geq 1$. Let $I\subset\{1,...,n\}$ be a set with  $|I|\geq \ell$ such that there exists a polynomial $H(Y,\bX_I) \in \Ocal_L[Y,\bX_I]$ of total degree $d$, irreducible over $L(\bX_I)$ and $|I|$-genuine such that
    \beq\label{R_defines_H}
    \mathcal{L}_R=(L(\bX_I)[Y]/H(Y,\bX_I))(\bX_{I^c}).
    \eeq
    In the proof of the lemma, we will immediately reduce to the case where $1 \not\in I$ or $|I| \geq 2$.    We establish some notational conventions so that we can treat both the cases  together.  If $1 \not\in I$, then $H(Y,\bX_I)$ does not depend on $X_1$; we will set $I'=I$ and our principal study will be the polynomial $H(Y,X_i + A_iX_1)_{i \in I'}$, which is a polynomial in $Y,X_1, \bX_I$. If $1 \in I$, we will set $I' = I \setminus \{1\}$ and our principal study will be   the polynomial $H(Y,X_1,X_i + A_iX_1)_{i \in I'}$, which is again a polynomial in $Y,X_1, \bX_I$. With an abuse of notation, we will unify both of these cases by studying a polynomial we denote as $H(Y,X_1,X_i + A_iX_1)_{i \in I'}$ in either case.
    
  With the convention $I'=I\setminus\{1\}$ if $1 \in I$ and $I'=I$ if $1 \not\in I$, expand as a polynomial in $Y$ and $X_1$ the polynomial $H$ after shifting each $X_i$ with $i \neq 1$:
    \beq\label{app_H_expansion_X1.1}
    H(Y,X_{1},X_{i}+A_{i}X_{1})_{i \in I'}=\sum_{\substack{\ell,m\\\ell+m\leq D}}b_{\ell,m}(\bfX_{I'},\bfA_{I'})Y^{\ell}X_{1}^{m}.
    \eeq
   Consider the detection polynomial $B_{\lin}(b_{\ell,m}(\bfX_{I'},\bfA_{I'}))$ that detects if this polynomial, as a polynomial in $Y,X_1$, has a linear factor in $Y$ over $\overline{\Q}$ (or degree $< \deg_{Y,X_1} H$), as provided by Lemma \ref{lemma_actual_linear_factor}. Note that $B_{\lin}(b_{\ell,m}(\bfX_{I'},\bfA_{I'}))$  is a polynomial in $(\bfX_{I'},\bfA_{I'})$, with coefficients in $\Z$ and degree $\ll_{n,D} 1$. 
   Then upon specializing $\bfA_{I'} = \bfa_{I'} \in L^{|I'|}$,  $B_{\lin}(b_{\ell,m}(\bfX_{I'},\bfa_{I'}))=0$ if and only if $ H(Y,X_{1},X_{i}+a_{i}X_{1})_{i \in I'}$ has a linear factor in $Y$ over $\overline{\Q}$ (or the degree drops, as a polynomial in $Y$ and $X_1$). We claim:
   \begin{lem}\label{lemma_P_nonzero_NRAI}
    In the above setting, suppose $H(Y,\bX_I)$ is $|I|$-genuine. Suppose $|I| \geq 2$ or $1 \not\in I$, and define $I'=I \setminus \{1\}$ if $1 \in I$ or $I'=I$ if $1\not\in I$. Expand the polynomial $H(Y,X_1, X_i+A_iX_1)_{i\in I'}$  as in (\ref{app_H_expansion_X1.1}). 
      As a polynomial in $(\bfX_{I'},\bfA_{I'})$,  $B_{\lin}(b_{\ell,m}(\bfX_{I'},\bfA_{I'}))\not\equiv 0$.
   \end{lem}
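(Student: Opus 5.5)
To show $B_{\lin}(b_{\ell,m}(\bfX_{I'},\bfA_{I'}))\not\equiv 0$, it suffices to exhibit one specialization $(\bfx_{I'},\bfa_{I'})$ at which it is nonzero. The plan is to reduce to the one-variable shift already treated in Lemma \ref{lemma_P_shiftedH_not_zero}: fix a single index $i\in I'$ and set all other shift parameters $A_j$ ($j\in I'$, $j\neq i$) equal to $0$. With these specializations, the polynomial (\ref{app_H_expansion_X1.1}) becomes exactly the polynomial $H(Y,\alpha_iX_1+X_i,\bX_{I\setminus\{i\}})$ of (\ref{app_H_expansion_X1}), with $\alpha_i=A_i$. (When $1\in I$ we keep $X_1$ in its own slot; when $1\notin I$ the conventions of both lemmas agree with $I'=I$.) Since the coefficients $b_{\ell,m}$ are polynomial in $(\bfX_{I'},\bfA_{I'})$, the specializations commute with forming $B_{\lin}$:
\[
B_{\lin}\bigl(b_{\ell,m}(\bfX_{I'},\bfA_{I'})\bigr)\Big|_{A_i=\alpha_i,\ A_j=0\ (j\neq i)} = B_{\lin}\bigl(b_{\ell,m}(\bX_{I'},\alpha_i)\bigr),
\]
where the right-hand side is the detection polynomial of Lemma \ref{lemma_P_shiftedH_not_zero}. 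The degree bookkeeping is consistent: $B_{\lin}$ is taken with respect to the same ambient total degree bound in both lemmas, so the same form applies to the same coefficient vector.

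By Lemma \ref{lemma_P_shiftedH_not_zero}, which applies because $H$ is $|I|$-genuine and either $|I|\ge2$ or $1\notin I$, all but $O_{n,D}(1)$ values $\alpha_i\in L$ make the right-hand side a nonzero polynomial in $\bX_{I'}$. Since $L$ is infinite, some good $\alpha_i$ exists, and we may even take it in $\calO_K$. Then there is $\bx_{I'}$ with nonzero value, so the full polynomial in $(\bfX_{I'},\bfA_{I'})$ is not identically zero.

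The step I expect to need care is confirming that the specialization identity above is literally an identity of detection forms. In particular, $H$ must remain monic in $Y$ after the shift (it does, since the shift is linear in the $X$ variables only), so that Lemma \ref{lemma_actual_linear_factor} applies in both settings. We must also check that the reference degree $\deg_{Y,X_1}$ against which "degree drop" is measured is the same before and after setting $A_j=0$. If the degrees could differ, I would instead phrase both lemmas using the fixed ambient bound $D$ in Noether's Lemma \ref{lemma_Noether}. Under that phrasing the nonvanishing at the chosen point still certifies nonvanishing of the general form.
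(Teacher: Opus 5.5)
Your reduction to a one-variable slice fails at exactly the point you flagged, and the fallback you propose does not repair it.

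In Lemma \ref{lemma_P_nonzero_NRAI}, $B_{\lin}$ must be the Noether form for the generic $(Y,X_1)$-degree of the fully shifted polynomial $H(Y,X_1,X_i+A_iX_1)_{i\in I'}$. That degree is the total degree $D$ of $H$: the part of $(Y,X_1)$-degree $D$ is the top form $H_D$ with $X_i\mapsto A_iX_1$ for $i\in I'$, which is not identically zero. This is what the expansion over $\ell+m\leq D$ encodes. It is also what the later application of (IV)$\Rightarrow$(II) of Theorem \ref{thm_gen_ppties} to $H_{\ba_{I'}}$ in Lemma \ref{lemma_shift_all_vars_R} needs.

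Setting $A_j=0$ for $j\neq i$ can drop the $(Y,X_1)$-degree below $D$. When that happens, this form vanishes identically on the whole slice, whatever $\alpha_i$ is. For instance, take $I=\{2,3\}$ (so $1\notin I$ and $I'=I$) and $H=Y^2-X_2^2X_3^2-X_2$. This $H$ is monic in $Y$ and irreducible, with $D=4$. It is $2$-genuine, because $X_2(X_2X_3^2+1)$ times a function of $X_2$ alone, or of $X_3$ alone, is never a square.

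Both slices, $Y^2-(X_2+\alpha X_1)^2X_3^2-X_2-\alpha X_1$ and $Y^2-X_2^2(X_3+\alpha X_1)^2-X_2$, have $(Y,X_1)$-degree $2<4$. So $B_{\lin}$ vanishes identically on $\{A_3=0\}$ and on $\{A_2=0\}$. Yet $B_{\lin}\not\equiv 0$: at $a_2=a_3=1$, $x_2=x_3=0$ one gets $Y^2-X_1^4-X_1$, which has degree $4$ and no linear factor in $Y$. So no choice of $i$ works.

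Either reading of Lemma \ref{lemma_P_shiftedH_not_zero} breaks your argument. Read with its own degree normalization, your identity of forms is false. That normalization is what its proof certifies: when $1\notin I$, the witness $H(Y,\alpha_iX_1,\bx_{I\setminus\{i\}})$ has $(Y,X_1)$-degree $\deg_{Y,X_i}H$. If instead you impose the ambient bound $D$ in both lemmas, Lemma \ref{lemma_P_shiftedH_not_zero} itself becomes false for this $H$.

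The missing idea is to keep the detection form on the fully shifted polynomial and do the slicing at the degree-free, field-theoretic level. The paper fixes the shifts $a_i$ for $i\in I'\setminus\{i_k\}$. It then feeds the partially shifted polynomial $H(Y,X_1,X_i+a_iX_1,X_{i_k})_{i\in I'\setminus\{i_k\}}$ into Lemma \ref{lemma_app_shifting}. That polynomial is only known to be $\ell$-genuine for some $\ell\geq 1$, so Lemma \ref{lemma_P_shiftedH_not_zero} does not apply to it directly; this is exactly the generality that Lemma \ref{lemma_app_shifting} allows.

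The output is property (\ref{NRAI}) for $H_{\ba_{I'}}$, for all but $O_{n,D}(1)$ values of $a_{i_k}$. Lemma \ref{lemma_II_implies_III_gen}, applied to $H_{\ba_{I'}}$ itself, then gives a point $\bx_{I'}$ at which $H_{\ba_{I'}}(Y,X_1,\bx_{I'})$ has no linear factor in $Y$ and has $(Y,X_1)$-degree $\deg_{Y,X_1}H_{\ba_{I'}}$. This degree equals $D$ as soon as $\ba_{I'}$ avoids the proper Zariski-closed set where $H_D$ with $X_i\mapsto a_iX_1$ vanishes. So the fixed $a_i$ must be chosen generically; setting them to $0$, as your slice does, can violate this. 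With that choice, $B_{\lin}(b_{\ell,m}(\bx_{I'},\ba_{I'}))\neq 0$.
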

   \begin{proof}

   Fix any index $i_k \in I'$, and fix a choice of $a_i \in L$ for each $i \in I' \setminus \{i_k\},$ and then consider the polynomial $H(Y,X_{1},X_{i}+a_{i}X_{1},X_{i_{k}})_{i\in I'\setminus\{i_{k}\}}$. This polynomial defines  an $\ell$-genuine extension for some $\ell \geq 1$ since it certainly has nontrivial degree with respect to $X_{i_k}$, for example.  Moreover, it is irreducible over $L(\bX_I)$, since by hypothesis  $L(\bfX_{I})/H(Y,\bX_I)$ is a field; hence   $H(Y,X_{1},X_{i}+a_{i}X_{1},X_{i_{k}})_{i\in I'\setminus\{i_{k}\}}$ is irreducible since it is obtained by a linear transformation. Now, apply Lemma \ref{lemma_app_shifting} with $R=H(Y,X_{1},X_{i}+a_{i}X_{1},X_{i_{k}})_{i\in I'\setminus\{i_{k}\}}$ in the variables $Y,X_1,\bX_{I'}$, and with the distinguished index $i_k$. The outcome is that  for all but $O_{n,D}(1)$ choices of $a_{i_{k}}\in L$, the 
  tuple $\bfa_{I'} := (a_i)_{i \in I'} = (a_i, a_{i_k})_{i \in I' \setminus \{i_k\}}$ and the definition 
  \[  H_{\bfa_{I'}}(Y,X_1,\bX_{I'}) := H(Y,X_{1},X_{i}+a_{i}X_{1},X_{i_{k}}+a_{i_k}X_1)_{i\in I'\setminus\{i_{k}\}} ,\]
 we have $\deg H_{\bfa_{I'}}=\deg H$, and moreover
  the field 
  \[ \mathcal{L}_{H_{\bfa_{I'}}} := L(X_1,\bX_{I'})[Y]/ (H_{\bfa_{I'}}(Y,X_1,\bX_{I'}))\]
  has the property
   \beq\label{NRAI}
(\mathcal{L}_{H_{\bfa_{I'}}}\cap\overline{L(\bX_{I'})})(X_{1})\subsetneq \mathcal{L}_{H_{\bfa_{I'}}}.
\eeq
Now by an application of Lemma \ref{lemma_II_implies_III_gen} (Theorem \ref{thm_gen_ppties} (II) for $i_0=1$ $\Rightarrow$ (III) for $i_0=1$), this implies that there exists a point $\bx_{I'} \in L^{|I'|}$ such that $H_{\bfa_{I'}}(Y,X_1,\bx_{I'})$, as a polynomial in $Y,X_1$, does not have a linear factor in $Y$ over $\overline{\Q}$, and has $\deg H_{\bfa_{I'}}(Y,X_1,\bx_{I'})=\deg_{Y,X_1} H_{\ba_{I'}}(Y,X_1,\bX_{I'})$. Hence for this $\bfa_{I'}$ and $\bx_{I'}$,
$B_{\lin}(b_{\ell,m}(\bfx_{I'},\bfa_{I'}))\neq 0$. Consequently, 
    $B_{\lin}(b_{\ell,m}(\bfX_{I'},\bfA_{I'}))\not\equiv 0$ as a polynomial in $\bX_{I'}$, and the lemma is proved.
  \end{proof}

\begin{proof}[Proof of Lemma \ref{lemma_shift_all_vars_R}]
   We suppose that $R$ is given, so that  $\mathcal{L}_R$ is an $\ell$-genuine extension of $L(X_1,\ldots,X_n)$ for some $\ell \geq 1$. As in (\ref{R_defines_H}), let $I\subset\{1,...,n\}$ be a set with $|I|=\ell$ such that there exists an $|I|$-genuine polynomial $H(Y,\bX_I) \in \Ocal_L[Y,\bX_I]$ of total degree $D$  such that
    \[
    \mathcal{L}_R=(L(\bX_I)[Y]/H(Y,\bX_I))(\bX_{I^c}).
    \]

        If $I=\{1\}$, then observe that $(\mathcal{L}_{H} \cap \overline{L(X_2,\ldots,X_n)})(X_1) \subsetneq  \mathcal{L}_{H}$. Since $I=\{1\}$, for each index $j \neq 1$, then $\deg_{X_j}H=0$ so that shifts $X_j \mapsto X_j+a_jX_1$ do not change $H$. Hence $\mathcal{L}_{R_{\bfa}} \cap \overline{L(X_2,\ldots,X_n)})(X_1) \subsetneq  \mathcal{L}_{R_{\bfa}}$ for all $\bfa \in \Z^{n-1}$, so we may simply define the polynomial $g^R(\bfa)=c_0$ for some $c_0 \neq 0$. Henceforward we may assume $1 \not\in I$ or $|I| \geq 2$.

  Let $I' = I$ if $1 \not\in I$ and $I'= I\setminus \{1\}$ if $1 \in I$. Recall from the convention above that in both cases, we denote the polynomial we study by $H(Y,X_1,X_i + A_iX_1)_{i \in I'}$.
Note that it suffices to construct a nonzero polynomial  $g_{I'}$ with coefficients in $\calO_K$, in $|I'|$ variables, and of degree $\deg g_{I'} \ll_{n,D,[L:K]}1$ with the following property: for all $\bfa_{I'}\in \calO_K^{|I'|}$ with $g_{I'}(\bfa_{I'}) \neq 0$, the polynomial 
  \[  H_{\bfa_{I'}}(Y,X_1,\bX_{I'}) := H(Y,X_{1},X_{i}+a_{i}X_{1})_{i\in I'} ,\]
has associated field 
  \[ \mathcal{L}_{H_{\bfa_{I'}}} := L(X_1,\bX_{I'})[Y]/ (H_{\bfa_{I'}}(Y,X_1,\bX_{I'}))\]
with the property
   \beq\label{NRAI'}
(\mathcal{L}_{H_{\bfa_{I'}}}\cap\overline{L(\bX_{I'})})(X_{1})\subsetneq \mathcal{L}_{H_{\bfa_{I'}}}.
\eeq
 From this it would follow that
 \[
    R_{\bfa_{I'}}(Y,X_{1},...,X_{n}):=R(Y,X_{1},X_{i}+a_{i}X_{1},X_{j})_{i\in I',j\not\in I'\cup\{1\}},
    \]
is such that 
\[
 (\mathcal{L}_{R_{\bfa_{I'}}}\cap\overline{k(X_{2},...,X_{n})})(X_{1})\subsetneq \mathcal{L}_{R_{\bfa_{I'}}}.
\]
  The final step is then to observe that for each index $j \not\in I' \cup \{1\}$, then $\deg_{X_{j}} H=0$, so that shifts of the form $X_j \mapsto X_{j}+a_{j}X_{1}$ do not change  $H$.   Thus we may consider $g_{I'}$ as a polynomial in $\calO_K[A_2,\ldots,A_n]$ (i.e. in $n-1$ variables), which we call $g^R$, and the desired conclusion of the lemma holds for all $\bfa \in \calO_K^{n-1}$ such that $g^R(\bfa) \neq 0$.

 To construct the desired polynomial $g_{I'}$ in variables $A_i$   for $i \in I'$ that leads to (\ref{NRAI'}), we will apply Lemma \ref{lemma_P_nonzero_NRAI}.
  Expand
   \[
   B_{\lin}(b_{\ell,m}(\bfX_{I'},\bfA_{I'}))=\sum_{|\bfk|\leq D'} g_{\bfk}(\bfA_{I'})\bfX_{I'}^{\bfk},
   \]
 in which $D' \ll_{n, \deg R}1$, and $\deg g_{\bfk} \ll_{n,\deg R} 1$ for each $\bfk$.
By Lemma \ref{lemma_P_nonzero_NRAI}, $B_{\lin}(b_{\ell,m}(\bfX_{I'},\bfA_{I'})) \not\con 0$ so we can find $\bfk$ such that $g_{\bfk}(\bfA_{I'})\not\equiv 0$. Note that while $B_\lin$ has coefficients in $\Z$ (as a function of $b_{\ell,m}(\bX_{I'},\bfA_{I'})$), in the expansion above, \emph{a priori} $g_{\bfk}(\bfA_{I'})\in \calO_L[\bfA_{I'}]$. To construct our desired polynomial $g_{I'}\in \calO_K[\bfA_{I'}]$, define
$$
g_{I'}(\bfA_{I'}) := \prod_{\sigma\in \Aut(L/K)} \sigma(g_{\bfk}(\bfA_{I'})),
$$
where $\sigma\in \Aut(L/K)$ act on the coefficients of $g_{\bfk}$; observe that all the coefficients of $g_{I'}$ lie in $\Ocal_K$ as desired. Also, $\deg {g_{I'}} \ll_{n,\deg R,[L:K]} 1$.  It remains to check that when $g_{I'}(\bfa_{I'})\neq 0$, then $B_{\lin}(b_{\ell,m}(\bX_{I'},\bfa_{I'}))\not\equiv 0$. This follows from the fact that $g_{I'}(\bfa_{I'})\neq 0$ implies that $g_{\bfk}(\bfa_{I'})\neq 0$ and hence $B_{\lin}(b_{\ell,m}(\bX_{I'},\bfa_{I'}))\not\equiv 0$. That is, when $g_{I'}(\bfa_{I'})\neq 0$, then the property in (IV) of Theorem \ref{thm_gen_ppties} holds for the index $i_0=1$.  By Theorem \ref{thm_gen_ppties} ((IV) for $i_0=1$ $\Rightarrow$ (II) for $i_0=1$), it follows that when $g_{I'}(\bfa_{I'})\neq 0$, then
\[ (\mathcal{L}_{H_{\bfa_{I'}}}\cap\overline{L(X_{2},...,X_{n})})(X_{1})\subsetneq \mathcal{L}_{H_{\bfa_{I'}}},
\]
verifying (\ref{NRAI'}).
This suffices to complete the proof of the lemma.
\end{proof}

\subsection{Step 3: Proof of Proposition \ref{prop_app_construct_shift}}

By convention, let $\mathcal{L}_R$ denote $\mathcal{L}_{R_\mathbf{a}}$ when $\mathbf{a} = \boldsymbol{0}.$ 
  If $R$ has the property that 
  \[
\mathcal{L}_{R}\cap\overline{L(X_{2},...,X_{n})}=L(X_{2},...,X_{n}),
    \]
then the conclusion of the proposition is true for $\bfa=0$, and we are finished.
Thus we reduce consideration to the case in which
$
\mathcal{L}_{R}\cap\overline{L(X_{2},...,X_{n})} \supsetneq L(X_{2},...,X_{n}).
    $
    In this case, let $K_{1},...,K_{e}$ denote all the nontrivial extensions of $L(X_1,\ldots,X_n)$ contained in $\mathcal{L}_R$ (including $\mathcal{L}_R$ itself).  Under the hypotheses of the proposition, an application of Lemma \ref{lemma_at_least_strongly_1_gen} shows that for each $j=1,\ldots, e$, $K_j$ is an $\ell_j$-genuine extension of $L(X_1,\ldots,X_n)$ for some $\ell_j \geq 1$.  For each $j$, let $P_j(Y,X_1,\ldots,X_n)$ be a polynomial, irreducible over $L(X_1,\ldots,X_n)$, and monic in $Y$, such that $K_j = L(X_1,\ldots,X_n)[Y]/P_j$. Define the field $K_{j,\bfa} = L(X_1,\ldots,X_n)[Y]/(P_j)_{\bfa}$ generated by the shifted polynomial $(P_j)_{\bfa}$, for each $\bfa \in \calO_K^{n-1}$.  We claim that for a fixed $\bfa$, the fields $K_{1,\bfa},...,K_{e,\bfa}$  also  enumerate all the nontrivial intermediate extensions of $L(X_{1},...,X_{n})$ contained in $\mathcal{L}_{R_{\bfa}}$. To see this, first observe that, for every polynomial $G$ and every $\bfa$,
    \[
    \begin{split}
    (G_{-\bfa})_{\bfa}&=G_{-\bfa}(X_{1},X_{2}+a_{2}X_{1},...,X_{n}+a_{n}X_{1})\\&=G_{\bfa}(X_{1},(X_{2}-a_{2}X_{1})+a_{2}X_{1},...,(X_{n}-a_{n}X_{1})+a_{n}X_{1})\\&=G.
    \end{split}
    \]
    Now let $L(\bfX)\subsetneq \mathcal{K} \subset\mathcal{L}_{R_{\bfa}}$ be a nontrivial intermediate extension and let $G(Y,\bfX)$ be monic in $Y$ and irreducible over $L$ such that $\mathcal{K}=L(X_1,\ldots,X_n)[Y]/G$; in the nomenclature above, $\mathcal{K}_{-\bfa} = L(\bX)[Y]/G_{-\bfa}$. Then $k(\bfX)\subsetneq \mathcal{K}_{-\bfa}\subset\mathcal{L}_{R}$, hence $\mathcal{K}_{-\bfa}=K_{j}$ for some $j\in\{1,...,e\}$ which implies that $K_{j,\bfa}=(\mathcal{K}_{-\bfa})_{\bfa}=L(X_1,\ldots,X_n)[Y]/(G_{-\bfa})_{\bfa}=L(X_1,\ldots,X_n)[Y]/G=\mathcal{K}$.  This verifies the claim.

 We claim that Lemma \ref{lemma_shift_all_vars_R}  proves the existence of a vector $\bfa\in\calO_K^{n-1}$ such that for every $j=1,\ldots,e$,
   \beq\label{Kja}
     K_{j,\bfa}\supsetneq ( K_{j,\bfa}\cap \overline{L(X_2,...,X_n)})(X_1) .
     \eeq
    Indeed, in the notation above, for each $j$ we can apply that lemma with $R$ chosen to be the polynomial $P_j$ such that $K_j =L(\bX)[Y]/P_j$, which as remarked above is $\ell_j$-genuine over $L$ for some $\ell_j \geq 1$.
  Then upon taking the polynomials $g^{P_1},\ldots,g^{P_e}$ provided by that lemma, it suffices to choose $\bfa \in \Ocal_K^{n-1}$ that is a root of none of these polynomials.

    By Lemma \ref{lemma_count_in_ring}, for any $M\geq 1$
    \begin{equation*}
        \#\{\bx\in \calO_K^{n-1}: \|\bx\|\leq M\} \asymp_m M^{n-1},
    \end{equation*}
   in which $m=[K:\Q]\ll_D 1$.
    On the other hand, by the trivial bound in Lemma \ref{lemma_Schwartz_domain}, for a fixed $j$ and any $M \gg 1$,  
    \[\{\bx\in  \calO_K^{n-1}:\|\bx\|\leq M, g^{P_j}(\bx)=0\} \ll_{n,m} (\deg g^{P_j})M^{n-2}.\] 
  For each $j$, $\deg g^{P_j}\ll_{n,D, [L:\Q]} 1$. 
  Thus in total the union over $j=1,\ldots,e$ of the roots of $g^{P_j}$ 
contains at most $\ll_{n,D,[L:\Q]} M^{n-2}$ points. By applying this for any $M \gg_{n,D, [L:\Q]} 1$, 
there must be some element $\bx\in  \calO_K^{n-1}$ with $\|\bx\| \leq M$ for which none of the $g^{P_j}$ vanishes; hence, taking $M\asymp_{n,D,[L:\Q]} 1$,  
 there exists a suitable $\bfa$ with $\|\bfa\|\ll_{n,D,[L:\Q]} 1$.

 As a consequence of (\ref{Kja}), we claim that for every $j=1,\ldots,e$,
     \beq\label{Kj_k_conclusion}
     K_{j,\bfa}\not\subset ( \mathcal{L}_{R_\ba}\cap \overline{L(X_2,...,X_n)})(X_1) .
     \eeq
    Indeed, if not, then $ K_{j,\bfa}\subset  \overline{L(X_2,...,X_n)}(X_{1})$, and hence \
     \[
     K_{j,\bfa}=( K_{j,\bfa}\cap \overline{L(X_2,...,X_n)})(X_1),
     \]
     which contradicts (\ref{Kja}).
     Now
    denote $N_{R_\ba}:=\mathcal{L}_{R_\ba} \cap \overline{L(X_2,...,X_n)}$ so that the right-hand side in (\ref{Kj_k_conclusion}) is $N_{R_\ba}(X_1)$.
     Since  $K_{1,\bfa},...,K_{e,\bfa}$ enumerate  all   nontrivial intermediate extensions of $L(X_1,\ldots,X_n)$ contained in $\mathcal{L}_{R_{\ba}}$, yet $K_{j,\ba} \not\subset N_{R_\ba}(X_1)$ for every $j=1,\ldots,e$,  it follows that $N_{R_{\ba}}(X_{1})=L(X_{1},...,X_{n})$, i.e. $\mathcal{L}_{R_\ba} \cap \overline{L(X_2,...,X_n)}=N_{R_\ba}=L(X_{2},...,X_{n})$.
   This completes the proof of Proposition \ref{prop_app_construct_shift}, and hence also the proof of Theorem \ref{thm_appendix_shift_to_strongly_genuine}.

\begin{rem}
It is reasonable to ask whether this strategy of shifting $F(Y,\bX)$ by a choice of ``short'' $\bfa \in \Z^{n-1}$ with   $\|\bfa\| \ll_{n,D} 1$ could allow the methods of  \cite{BPW25x} to apply to a broader class of polynomials than presently achieved in that paper. In order to apply the methods leading to \cite[Theorem 6.4]{BPW25x} and hence to \cite[Theorems 1.1-1.3]{BPW25x}, it suffices for $F$ to be strongly $(1,n)$-allowable; this is the requirement that $F(Y,L(\bX))$ is strongly $n$-genuine for all $L \in \GL_n(\Q)$. Proposition  \ref{prop_app_construct_shift} can shift a polynomial $R(Y,\bX)$ by a vector $\ba\in \Z^{n-1}$ with $\|\ba\| \ll_{n,D,[L:\Q]} 1$ to a polynomial satisfying (\ref{app_alg_closed}) (weaker than being strongly $n$-genuine, by Theorem \ref{thm_strongly_gen_ppties} (I) $\Rightarrow$ (II)), but it is not clear that such a ``short'' shift can produce a strongly $(1,n)$-allowable polynomial.
On the other hand, the methods leading to \cite[Theorem 6.4]{BPW25x} only require a weaker   property that for every $L$ belonging to a finite set $\mathscr{L}$ of linear transformations, $F_L(Y,\bX):=F(Y,L(\bX))$  satisfies $\mathcal{L}_{F_L}\cap \overline{\Q(X_2,\ldots,X_n)}= \Q(X_2,\ldots,X_n)$ for all $L \in \mathscr{L}$; the linear transformations in $\mathscr{L}$ may themselves have large norms.  
\end{rem}

 \subsection*{Acknowledgements}
 The authors thank S. D. Cohen for his encouragement to pursue these investigations, and S. Chow, R. Cluckers, M. Parades and R. Sasyk for helpful remarks and references. 
L.P.  has been partially supported during portions of this research by NSF DMS-2200470, a Joan and Joseph Birman Fellowship, a Simons Fellowship, and a Guggenheim Fellowship, and thanks the Hausdorff Center for Mathematics for hosting several research periods as a Bonn Research Chair; the Mittag-Leffler Institute for hosting a research period in 2024; Rhodes House and IMJ-PRG (hosted by R. de la Bret\`{e}che) in 2025. 
K.W. visited Duke in 2023 with funding from NSF RTG-2231514, which supports the Number Theory group at Duke University. K.W. is partially supported by NSF under DGE-2039656 and DMS-2502864.

\bibliographystyle{alpha}

\bibliography{NoThBib}

\end{document}